\newcommand\cyr
\renewcommand\rmdefault{wncyr}
\renewcommand\sfdefault{wncyss}
\renewcommand\encodingdefault{OT2}
\DeclareTextFontCommand{\textcyr}{\cyr}
\newtheorem{thm}{Theorem}[section]
\newtheorem{prop}[thm]{Proposition}
\newtheorem{lem}[thm]{Lemma}
\newtheorem{cor}[thm]{Corollary}
\newtheorem{remark}[thm]{Remark}
\newtheorem{defin}[thm]{Definition}
\theoremstyle{definition}
\newtheorem{definition}[thm]{Definition}
\theoremstyle{remark}
\numberwithin{equation}{section}
\newcommand{\ost}{\Omega_{S}(t)}
\newcommand{\oso}{\Omega_{S}(0)}
\newcommand{\oft}{\Omega_{F}(t)}
\newcommand{\ofo}{\Omega_{F}(0)}
\newcommand{\rt}{\mathbb{R}^{3}}
\newcommand{\ct}{\mathbb{C}^{3}}
\newcommand{\mr}{\mathcal{R}}
\newenvironment{PDE} {\left\{ \begin{array}{ll} }  {\end{array} \right.}
  \newcommand {\EE}{\mathbb E}
  \newcommand {\RR}{\mathbb R}
  \newcommand {\norm}[1] {\| #1 \|}  
  \newcommand {\bignorm}[1]{\bigl\| #1 \bigr\|}
  \newcommand {\Bignorm}[1]{\Bigl\| #1 \Bigr\|}
  \newcommand {\SUCHTHAT}{:\;}
  \renewcommand {\Re}{\operatorname{Re}}
  \newcommand{\ds}{\displaystyle}
  \newcommand{\mx}{\mathcal{X}}
  \newcommand{\mz}{\mathcal{Z}}
   \newcommand{\my}{\mathcal{Y}}
    \newcommand{\dom}{\mathcal{D}}
    \newcommand{\cof}{\mathrm{Cof}}
\renewcommand{\leq}{\leqslant}
\renewcommand{\geq}{\geqslant}
\renewcommand{\ge}{\geqslant}
\DeclareMathOperator{\Cof}{Cof}
\renewcommand{\div}{\operatorname{div}}
\begin{document}

\title[]{Mathematical Analysis of the Motion of a Rigid Body in a  Compressible Navier-Stokes-Fourier Fluid. }

 \date{\today}

\author{Bernhard H. Haak}
\address{Institut de Math\'ematiques, Universit\'e de Bordeaux, Bordeaux INP, CNRS
F-33400 Talence, France}
\email{bernhard.haak@math.u-bordeaux.fr}

\author{Debayan Maity}
\address{Institut de Math\'ematiques, Universit\'e de Bordeaux, Bordeaux INP, CNRS
F-33400 Talence, France}
\email{debayan.maity@u-bordeaux.fr}

 \author{Tak\'eo Takahashi}
\address{INRIA Nancy - Grand Est, 615, rue du
  Jardin Botanique. 54600 Villers-l\`es-Nancy. France}
 \email{takeo.takahashi@inria.fr}
\author{Marius Tucsnak}
\address{Institut de Math\'ematiques, Universit\'e de Bordeaux, Bordeaux INP, CNRS
F-33400 Talence, France}
\email{marius.tucsnak@u-bordeaux.fr}

 \begin{abstract}
We study an initial and boundary value problem modelling the motion of a rigid body in a heat conducting gas. The solid is supposed to be a perfect thermal insulator. The gas is described by the compressible Navier-Stokes-Fourier equations, whereas the motion of the solid is governed by Newton's laws. The main results assert the existence of strong solutions, in an $L^p$-$L^q$ setting, both locally in time and globally in time for small data. The proof is essentially using the maximal regularity property of associated linear systems. This property is checked by proving the $\mr$-sectoriality of the corresponding operators, which in turn is obtained by a perturbation method.
 \end{abstract}

\maketitle

 {\bf Key words.} Compressible Navier-Stokes-Fourier System, fluid-particle interaction, strong solutions, $\mr$-sectorial operators, maximal regularity.

 {\bf AMS subject classifications.} 35Q30, 76D05, 76N10

\tableofcontents


\part{Introduction and Statement of the Main Results}

\section{Introduction}\label{sec_intro}
The purpose of this work is to provide existence and uniqueness results for a coupled PDEs-ODEs system which models the motion of a rigid body in a viscous heat conducting  gas. The rigid body is assumed to be a perfect insulator. As far as we know, this system has not been studied in the literature in three space dimensions. A related problem in one space dimension, the so-called {\em adiabatic piston problem} has been studied in \cite{FMNT}.

Let us now mention some related works from the literature. The one-dimensional piston problem with homogenous boundary conditions has been studied by Shelukhin \cite{Sh_unique,Sh_temp}. Maity, Takahashi and Tucsnak \cite{MTT17} proved existence and uniqueness of global in time strong solutions with nonhomogeneous boundary conditions in a Hilbert space setting.  Local in time existence and uniqueness of a heat conducting piston in $L^{p}$-$L^{q}$ framework is studied by Maity and Tucsnak \cite{MT17}. Concerning three-dimensional models,  global existence of  weak solutions for compressible fluid and rigid body interaction problems was studied  by Desjardins and Esteban \cite{DE00} and Feireisl \cite{FE03}.  Boulakia and Guerrero \cite{BoulGuer}  proved global  existence and uniqueness of strong solutions for small initial data within the Hilbert space framework. Hieber and Murata \cite{HiebMur} proved local in time existence and uniqueness in a $L^{p}$-$L^{q}$ setting. Let us also mention that an important influence on the methods in this work comes from several recent advances on the $L^{p}$-$L^{q}$ theory of viscous compressible fluids (without structure), see Enomoto and Shibata \cite{ShibaEno13} and Murata and Shibata \cite{ShibataMurata16}.

In this work we are interested in strong solutions and the main novelties we bring in are:
\begin{itemize}
\item The full nonlinear free boundary system coupling the compressible Navier-Stokes-Fourier system with the ODE system for the solid has not, at our knowledge, been studied in the literature.
\item The existence and uniqueness results are proved in a $L^{p}$-$L^{q}$ setting, which, at least as global existence is concerned,  is new even in the case when the fluid is barotropic.
\end{itemize}

The methodologies we employ for the local in time, versus the global in time (for small data), existence results are quite different. More precisely, in the proof of the local existence theorem, we begin by considering a linear ``cascade'' system.  The corresponding operator is proved to have the maximal regularity property in appropriate spaces by combining  various existing maximal $L^{p}$-$L^{q}$ regularity results for parabolic equations. This allows us to develop a quite simple fixed point procedure to obtain the local in time existence and uniqueness of solutions. More precisely, using this associated linear system we estimate the nonlinear terms with a coefficient involving the length of the considered time interval,
see \cref{prop:estimate} below. This is why this method is suitable for local existence results (but not relevant if we are interested in global existence for small data).

The strategy developed in proving global existence and uniqueness for small initial data is more involved.
More precisely, in this case it is essential to  linearize  around a stationary solution and to prove that the corresponding  linear system is exponentially stable. To prove this property we use a ``monolithic'' approach, which means  that the linear system preserves the coupling between fluid and structure. However, in order to obtain the maximal regularity property we repeatedly use a perturbation argument. Roughly speaking, this means we deduce the maximal regularity property for the coupled system from the corresponding properties of the linearized fluid equations with homogeneous boundary conditions.

The plan of the paper is as follows. In the next section, we introduce the governing equations and we state our main results. \cref{sec:notation} is devoted to notation, which introduces, in particular, several function spaces playing an important role in the remaining part of this work.

Part 2 is devoted to the proof of a local in time existence result. More precisely,  in \cref{sec:cov-local} we rewrite the governing equations  in Lagrangian coordinates,  in \cref{sec:linearized-problem-loc}  we prove the maximal $L^{p}$-$L^{q}$ regularity for an associate  ``cascade'' type linear system, whereas in \cref{sec:nonlin-terms-loc}, we derive the required estimates and Lipschitz properties of the nonlinear terms in order to apply a fixed-point procedure.

In Part 3 we prove global existence and uniqueness of solutions for small initial data. This is divided into several sections. In \cref{sec:cov-g}, we linearize the system around a constant steady state and we rewrite the system in the reference configuration.  In \cref{sec_back} we recall some results concerning maximal $L^{p}$ regularity for abstract Cauchy problems and its connections with the $\mr$-sectoriality property. In \cref{sec:lin-FSI} and  in \cref{sec:max-lin-g} we prove the maximal $L^{p}$-$L^{q}$ regularity of a linearized coupled fluid-structure interaction problem on  time interval $[0,\infty).$ We prove Lipschitz properties of the nonlinear terms  in \cref{sec:nl-est-g} and finally in \cref{sec:global-existence} we prove the global existence theorem.

\section{Statement of the Main Results} \label{main-res}

We consider a rigid structure immersed in a viscous heat conducting gas and we denote by $\ost$ the domain occupied by the solid at time $t \geq 0.$ We assume that the fluid and rigid  are contained in a smooth bounded domain
$\Omega \subset \mathbb{R}^{3}$. Moreover, we suppose that $\oso$ has a smooth boundary and that
\begin{align} \label{nocontact}
\mathrm{dist}(\Omega_{S}(0),\partial\Omega) \geqslant \nu >  0.
\end{align}
For any time $t \geq 0$,
$\Omega_{F}(t) = \Omega \setminus \overline{\Omega_{S}(t)}$ denotes
the region occupied by the fluid. The motion of the fluid is given by
\begin{equation} \label{fluid-eq}
\begin{cases}
\partial_{t} \rho + \operatorname{div} (\rho u) = 0 & \mbox{ in } (0,T) \times \oft, \\
\rho( \partial_{t} u + ( u \cdot \nabla) u )  - \operatorname{div} \sigma(u,p) = 0 & \mbox{ in } (0,T) \times \oft, \\
c_{v} \rho \left(\partial_{t} \vartheta + u \cdot \nabla \vartheta \right) +  p \operatorname{div} \ u - \kappa \Delta \vartheta
    =  \alpha ( \operatorname{div} \ u)^{2} + 2\mu Du:Du & \mbox{ in } (0,T) \times \oft,
\end{cases}
\end{equation}
where
\begin{gather}
\sigma(u,p) = 2\mu Du  + (\alpha \operatorname{div} \ u - p) I_3, \notag \\
Du  = \frac{1}{2} (\nabla u + \nabla u^{\top}), \notag\\
\mu > 0 \mbox{ and  } \alpha + \frac{2}{3} \mu > 0, \label{tak1.3}\\
A : B = \sum_{i,j}  a_{ij} b_{ij} \qquad \mbox{is the canonical scalar product of two $n\times n$ matrices},\notag \\
p = R \rho \vartheta, \qquad R \mbox{ is the universal gas constant.} \notag
\end{gather}
Note that we denote by $M^\top$ the transpose of a matrix $M$.

At time $t \geq 0$, let $a(t) \in \mathbb{R}^{3}$,
$Q(t) \in SO_{3}(\mathbb{R})$ and $\omega(t) \in \mathbb{R}^{3}$
denote the position of the center of mass, the orthogonal matrix
giving the orientation of the solid and the angular velocity of the
rigid body.  Therefore we have,
\begin{align*}
\dot Q(t)Q(t)^{-1} y = A(\omega(t)) y =  \omega(t) \times y, \quad \forall y \in \mathbb{R}^{3},
\end{align*}
where the skew-symmetric matrix $A(\omega)$ is given by
\begin{align*}
A(\omega) = \begin{pmatrix}
0 & -\omega_{3} & \omega_{2} \\ \omega_{3} & 0 & -\omega_{1} \\ -\omega_{2} & \omega_{1} & 0
\end{pmatrix}, \qquad  \omega \in \mathbb{R}^{3}
\end{align*}
and where $\dot f$ denotes the time derivative of $f$.

Without loss of generality we can assume that
\begin{align}
a(0) = 0 \quad \mbox{ and } \quad Q(0) = I_{3}.
\end{align}
Thus the domain occupied by the structure $\ost$ is given by 
\begin{align} \label{def:ost}
\ost = a(t) + Q(t) y, \quad t \geqslant 0, y \in \oso.
\end{align}
We denote by $m > 0$ the mass of rigid structure and
$J(t) \in \mathcal{M}_{3 \times 3} (\mathbb{R})$ its tensor of inertia
at time $t$. The equations of the structures are given by
\begin{equation} \label{eq:solid}
\begin{cases}
\displaystyle m\frac{d^{2}}{dt^{2}} a = - \int_{\partial \ost} \sigma(u,p) n \ d\gamma & \text{in} \ (0,T),  \\
\displaystyle  J \frac{d}{dt} \omega = (J\omega) \times \omega - \int_{\partial \ost} (x-a(t)) \times \sigma(u,p) n \ d\gamma & \text{in} \ (0,T),
\end{cases}
\end{equation}
where $n(t,x)$ is the unit normal to $\partial \ost$ at the point $x$
directed toward the interior of the rigid body.  We assume that the fluid velocity
satisfies the no-slip boundary conditions:
\begin{align}
&u(t,x) = 0, \quad x \in \partial\Omega, \notag \\
&u(t,x) = \dot a(t) + \omega(t) \times (x -a(t)) \qquad (x \in \partial\ost).
\end{align}
We also suppose that the structure is thermally insulating:
\begin{equation}\label{tak0.0}
\frac{\partial \vartheta}{\partial n }(t,x) = 0  \qquad\qquad (t\in (0,T), \ x \in \partial\Omega_{F}(t)).
\end{equation}
The above system is completed by the following initial conditions
\begin{align} \label{inicond}
&\rho(0, \cdot) = \rho_{0}, \quad u(0, \cdot) = u_{0}, \quad \vartheta(0, \cdot) = \vartheta_{0} \qquad (\mbox{in } \ofo), \notag \\
& a(0) = 0, \quad \dot a(0) = \ell_{0}, \quad Q(0) = I_{3}, \quad \omega(0) = \omega_{0}.
\end{align}

To state our main results we introduce some notation. Firstly $W^{s,q}(\Omega)$, with $s\geqslant 0$ and $q>1$, denote the usual Sobolev spaces.  Let $k \in \mathbb{N}$.
For every $0 < s < k, $ $1\leqslant p < \infty$, $1\leqslant q < \infty$, we define the  Besov spaces by real interpolation of Sobolev spaces
\begin{align*}
B^{s}_{q,p}(\Omega) = (L^{q}(\Omega), W^{k,q}(\Omega))_{s/k,p}\ .
\end{align*}
We refer to \cite{Adams} and \cite{Triebel} for a detailed presentation of Besov spaces.
We also need some notation specific to our problem.

Let $2< p < \infty$ and $3 < q< \infty$  such that $\displaystyle \frac{1}{p} + \frac{1}{2q} \neq \frac12$. We set
\begin{multline}
\mathcal{I}_{p,q} = \Big\{ (\rho_{0}, u_{0}, \vartheta_{0}, \ell_{0},\omega_{0}) \mid \rho_{0} \in W^{1,q}(\ofo), \
u_{0} \in B^{2(1-1/p)}_{q,p}(\ofo)^{3},
\\
\vartheta_{0} \in B^{2(1-1/p)}_{q,p}(\ofo),
 \ell_{0} \in \mathbb{R}^{3},\  \omega_{0} \in \mathbb{R}^{3},\  \min_{\overline{\ofo}} \rho_{0} > 0
 \\
 u_{0} = 0 \mbox{ on } \partial \Omega, \
\quad u_{0}(y) = \ell_{0} + \omega_{0} \times y \quad y\in \partial\oso  \} \Big\}.
\end{multline}
Such a definition has a sense since from the Sobolev embedding we have
$$
W^{1,q}(\ofo) \subset C(\overline{\ofo}).
$$
Moreover since $\ds \frac{1}{p} + \frac{1}{2q} < 1$,
if $f\in B^{2(1-1/p)}_{q,p}(\ofo)$, then (see, for instance, \cite[p.200]{Triebel}), $f$ admits a trace on $\partial \ofo$
with
$f_{|\partial \ofo}\in B^{2(1-1/p)-1/q}_{q,p}(\partial\ofo).$

The norm of $\mathcal{I}_{p,q}$ is the norm of
$$
W^{1,q}(\ofo)\times B^{2(1-1/p)}_{q,p}(\ofo)^{3} \times  B^{2(1-1/p)}_{q,p}(\ofo)\times \mathbb{R}^{6}.
$$
We introduce the set of initial data
\begin{align} \label{inispace}
\mathcal{I}^{cc}_{p,q} =
\begin{cases}
\mathcal{I}_{p,q} & \mbox{ if } \displaystyle \frac{1}{2} < \frac{1}{p} + \frac{1}{2q} < 1, \\
\left\{(\rho_{0}, u_{0}, \vartheta_{0}, \ell_{0},\omega_{0}) \in \mathcal{I}_{p,q}
             \mid \displaystyle \frac{\partial \vartheta_{0}}{\partial n}  = 0, \mbox{ on } \partial\ofo  \right\} & \mbox{ if }  \displaystyle \frac{1}{p} + \frac{1}{2q} < \frac{1}{2}.
\end{cases}
\end{align}
Again, the normal derivative in the above definition is well-defined due to the trace theorem for Besov spaces (see, for instance \cite[p.200]{Triebel}).

We also need a definition of Sobolev spaces in the time dependent domain $\Omega_{F}(t)$. Let $\Lambda(t,\cdot)$ be a $C^{1}$-diffeomorphism from $\ofo$ onto $\oft$ such that all the derivatives up to second order in space variable and all the derivatives up to first order in time variable exist. For all functions $v(t,\cdot) : \oft \mapsto \mathbb{R},$ we denote $\widehat v(t,y) = v(t, \Lambda(t,y))$ Then for any $1 < p,q < \infty$  we define
\begin{align*}
&L^{p}(0,T;L^{q}(\Omega_{F}(\cdot))) = \left\{ v  \mid \widehat v \in L^{p}(0,T;L^{q}(\ofo))\right\}, \\
&L^{p}(0,T;W^{2,q}(\Omega_{F}(\cdot))) = \left\{ v  \mid \widehat v \in L^{p}(0,T;W^{2,q}(\ofo))\right\}, \\
&W^{1,p}(0,T;L^{q}(\Omega_{F}(\cdot))) = \left\{ v  \mid \widehat v \in W^{1,p}(0,T;L^{q}(\ofo))\right\}, \\
&C([0,T];W^{1,q}(\Omega_{F}(\cdot))) = \left\{ v  \mid \widehat v \in C([0,T];W^{1,q}(\ofo))\right\}, \\
&C([0,T];B^{2(1-1/p)}_{q,p}(\Omega_{F}(\cdot))) = \left\{ v  \mid \widehat v \in C([0,T];B^{2(1-1/p)}_{q,p}(\ofo)\right\}.
\end{align*}

We are now in a position to state  our first main result.
\begin{thm} \label{mainthm}
Let $2 < p < \infty$ and $3 < q < \infty$ satisfying the condition $\displaystyle \frac{1}{p} + \frac{1}{2q} \neq \frac12$.  Assume that \eqref{nocontact} is satisfied and  $(\rho_{0},u_{0}, \vartheta_{0},\ell_{0},\omega_{0})$ belongs to $\mathcal{I}^{cc}_{p,q}$. Let $M> 0$ be such that
\begin{align} \label{ini-ball}
\|(\rho_{0}, u_{0}, \vartheta_{0}, \ell_{0},\omega_{0})\|_{\mathcal{I}_{p,q}} \leqslant M, \quad \frac{1}{M} \leqslant \rho_{0}(x) \leqslant M \mbox{ for } x \in \ofo.
\end{align}
Then, there exists $T > 0$, depending only on $M$ and $\nu$ such that the system \eqref{fluid-eq} - \eqref{inicond} admits a unique strong solution
\begin{gather*}
\rho \in W^{1,p}(0,T; W^{1,q}(\Omega_{F}(\cdot))) \cap C([0,T];W^{1,q}(\Omega_{F}(\cdot))), \\
u \in L^{p}(0,T; W^{2,q}(\Omega_{F}(\cdot))^{3}) \cap W^{1,p}(0,T; L^{q}(\Omega_{F}(\cdot))^{3}) \cap C([0,T]; B^{2(1-1/p)}_{q,p}(\Omega_{F}(\cdot))^{3}), \\
\vartheta \in L^{p}(0,T; W^{2,q}(\Omega_{F}(\cdot))) \cap W^{1,p}(0,T; L^{q}(\Omega_{F}(\cdot))) \cap C([0,T]; B^{2(1-1/p)}_{q,p}(\Omega_{F}(\cdot))), \\
a \in W^{2,p}(0,T;\mathbb{R}^{3}), \quad \omega \in W^{1,p}(0,T;\mathbb{R}^{3}).
\end{gather*}
Moreover, there exists a constant $M_{T} > 0$ such that $\displaystyle \frac{1}{M_{T}}\leqslant \rho(t,x) \leqslant M_{T} $ for all $t \in (0,T), x \in \oft$ and  $\mathrm{dist}(\Omega_{S}(t),\partial\Omega) \geqslant \nu/2$ for all $t \in [0,T].$
\end{thm}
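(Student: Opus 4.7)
The first step is to reduce the free-boundary system to the time-independent reference fluid domain $\ofo$. I would construct (this is the content of \cref{sec:cov-local}) a $C^{1}$-diffeomorphism $X(t,\cdot):\ofo\to\oft$ that coincides with the rigid motion $y\mapsto a(t)+Q(t)y$ in a neighbourhood of $\oso$ and with the identity near $\partial\Omega$, so that \eqref{nocontact} persists on a small time interval. Setting $\hat\rho(t,y)=\rho(t,X(t,y))$, $\hat u(t,y)=Q(t)^{\top}u(t,X(t,y))$ and $\hat\vartheta(t,y)=\vartheta(t,X(t,y))$, the coupled system \eqref{fluid-eq}--\eqref{inicond} rewrites as a quasilinear problem on $\ofo$ whose principal part reads
\begin{equation*}
\partial_{t}\hat\rho+\rho_{0}\div\hat u=F_{1},\qquad \rho_{0}\partial_{t}\hat u-\mu\Delta\hat u-(\alpha+\mu)\nabla\div\hat u+R\vartheta_{0}\nabla\hat\rho=F_{2},\qquad c_{v}\rho_{0}\partial_{t}\hat\vartheta-\kappa\Delta\hat\vartheta=F_{3},
\end{equation*}
coupled with the Newton/Euler ODEs for $(\ell,\omega)$ with $\ell=\dot a$, the boundary conditions $\hat u=\ell+\omega\times y$ on $\partial\oso$, $\hat u=0$ on $\partial\Omega$, and $\partial_{n}\hat\vartheta=0$ on $\partial\ofo$. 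The remainders $F_{i}$ collect all quadratic terms in the unknowns and all linear terms whose coefficients stem from $X(t,\cdot)-\mathrm{id}$ and $Q(t)-I_{3}$; they vanish at $t=0$, which is what will allow them to be controlled by a small factor $T^{\beta}$.

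\textbf{Linear cascade and maximal regularity.} The linear system obtained by freezing the principal parts above has a cascade structure. The Neumann heat equation for $\hat\vartheta$ with $C(\overline{\ofo})$-multiplier $c_{v}\rho_{0}$ is fully decoupled and enjoys $L^{p}(0,T;W^{2,q})\cap W^{1,p}(0,T;L^{q})$ maximal regularity by classical parabolic theory. The momentum/solid subsystem is a Lamé-type system coupled through the surface integrals in \eqref{eq:solid}; I would obtain its maximal regularity either via the $\mr$-sectoriality results of Enomoto–Shibata \cite{ShibaEno13} and Hieber–Murata \cite{HiebMur} adapted to the rigid-body framework, or by combining $\mr$-sectoriality of the compressible Lamé operator with a standard lifting of the inhomogeneous rigid-body boundary datum. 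Finally $\hat\rho$ is recovered from the transport equation driven by $\hat u$ via the method of characteristics, yielding the bound on $\|\hat\rho\|_{L^{\infty}(W^{1,q})\cap W^{1,p}(W^{1,q})}$ in terms of the data and $\|\hat u\|_{L^{p}(W^{2,q})\cap W^{1,p}(L^{q})}$. Here the hypothesis $q>3$ is used, since $W^{1,q}\hookrightarrow C^{0}$ is needed to keep the flow $W^{1,q}$-regular and close the density estimate.

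\textbf{Fixed point and main obstacle.} Let $\mx_{T}$ denote the solution space carrying the regularity asserted in the theorem, and $B_{R,T}\subset\mx_{T}$ the closed ball of radius $R$. To each $v=(\tilde\rho,\tilde u,\tilde\vartheta,\tilde\ell,\tilde\omega)\in B_{R,T}$ associate $\calT v$, the unique solution of the linear cascade with forcings $F_{i}(v)$ and initial data $(\rho_{0},u_{0},\vartheta_{0},\ell_{0},\omega_{0})$. The core nonlinear estimate, to be announced as \cref{prop:estimate}, is of the form
\begin{equation*}
\|F_{i}(v_{1})-F_{i}(v_{2})\|_{L^{p}(0,T;L^{q})}\leqslant C(M)\,T^{\beta}\,\|v_{1}-v_{2}\|_{\mx_{T}},\qquad \beta>0,
\end{equation*}
obtained using the time-continuity embeddings $L^{p}(W^{2,q})\cap W^{1,p}(L^{q})\hookrightarrow C(B^{2(1-1/p)}_{q,p})$ and $W^{1,p}(W^{1,q})\hookrightarrow C(W^{1,q})\hookrightarrow C(C^{0})$ (the former requiring the condition $\tfrac1p+\tfrac1{2q}\neq\tfrac12$), together with algebra properties of $W^{1,q}$ for $q>3$. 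Choosing $T=T(M,\nu)$ small makes $\calT$ map $B_{R,T}$ to itself and be a contraction; Banach's theorem produces the unique fixed point. The continuity of $\hat\rho$ in time keeps it inside $[1/M_{T},M_{T}]$, while $|a(t)|\leqslant T^{1-1/p}\|\dot a\|_{W^{1,p}}\leqslant\nu/2$ preserves \eqref{nocontact}. Pulling back through $X(t,\cdot)^{-1}$ then delivers the strong solution claimed in the theorem. The main obstacle I anticipate is the hyperbolic–parabolic mismatch: the coupling term $R\vartheta_{0}\nabla\hat\rho$ in the momentum equation enjoys no parabolic smoothing in the density, forcing the cascade to be solved in the order heat/momentum/transport, and requiring all constants—especially those arising from the rigid-body lifting and from the variable coefficients $\rho_{0},\vartheta_{0}$—to be tracked uniformly in $T$ so that the factor $T^{\beta}$ actually dominates them.
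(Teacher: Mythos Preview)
Your overall strategy---change to a fixed domain, maximal regularity for a decoupled linear cascade, then a fixed point with $T^\beta$-small nonlinear remainders---is correct and is exactly the philosophy of the paper. There are, however, two genuine differences worth noting.

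First, the change of variables. You propose a \emph{geometric} diffeomorphism $X(t,\cdot)$ equal to the rigid motion near $\oso$ and to the identity near $\partial\Omega$ (as in \cite{BoulGuer}). The paper instead takes $X$ to be the \emph{Lagrangian flow} of $u$, i.e.\ the solution of $\partial_t X(t,y)=u(t,X(t,y))$, $X(0,y)=y$. The advantage of the Lagrangian choice is that the convective term $u\cdot\nabla\rho$ in the continuity equation disappears automatically after the change of variables, so the transformed density equation is simply $\partial_t\widetilde\rho+\rho_0\operatorname{div}\widetilde u=\mathcal F_1$ with no transport term to handle by characteristics. Your geometric map leaves a residual drift $(\hat u-\partial_t X)\cdot\nabla\hat\rho$ that must be estimated separately; this works, but is an extra layer you do not need.

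Second, the placement of the pressure gradient. You write $R\vartheta_0\nabla\hat\rho$ in the principal part of the momentum equation and then flag the resulting hyperbolic--parabolic coupling as the ``main obstacle''. The paper simply avoids this obstacle: \emph{all} pressure contributions, including the linear ones, are pushed into the source $\mathcal F_2$, so the linear velocity equation is the pure Lam\'e system $\partial_t\widetilde u-\frac{\mu}{\rho_0}\Delta\widetilde u-\frac{\alpha+\mu}{\rho_0}\nabla\operatorname{div}\widetilde u=f_2$ with Dirichlet data $\widetilde\ell+\widetilde\omega\times y$. The cascade is then truly triangular: solve the ODEs for $(\widetilde\ell,\widetilde\omega)$, lift the rigid boundary datum and solve Lam\'e for $\widetilde u$, integrate in time for $\widetilde\rho$, and solve the Neumann heat equation for $\widetilde\vartheta$ independently. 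With this decoupling the $\nabla\widetilde\rho$ term in $\mathcal F_2$ is harmless, since $\|\nabla\widetilde\rho\|_{L^p(0,T;L^q)}\leqslant T^{1/p}\|\widetilde\rho\|_{L^\infty(0,T;W^{1,q})}$ already carries a small power of $T$.

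A minor structural point: the paper runs the Banach fixed point on the \emph{source-term} space $\mathcal B_{T,p,q}$ (the map is $(f_1,\dots,g_2)\mapsto(\mathcal F_1,\dots,\mathcal G_2)$ composed through the linear solution operator), rather than on the solution space as you describe. The two formulations are equivalent, but the source-term version makes the contraction constant $CT^\delta$ appear directly from the nonlinear estimates of \cref{prop:estimate} and \cref{prop:lipestimate}, without having to add and subtract the linear solution with zero source.
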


Our second main result asserts global existence and uniqueness under a smallness condition on the initial data.

\begin{thm} \label{mainthm_glob}
Let $2 < p < \infty$ and $3 < q < \infty$ satisfying the condition $\displaystyle \frac{1}{p} + \frac{1}{2q} \neq \frac12$. Assume that \eqref{nocontact} is satisfied. Let $\overline \rho > 0$ and $\overline \vartheta > 0$ be two given constants. Then there exists $\eta_{0} > 0$ such that, for all $\eta \in (0,\eta_{0})$
 there exist two constants  $\delta_{0} > 0$ and $C > 0,$  such that, for all $\delta \in (0,\delta_{0})$ and  for any $(\rho_{0} ,u_{0}, \vartheta_{0},\ell_{0},\omega_{0})$ in $\mathcal{I}^{cc}_{p,q}$ with
 \begin{align} \label{eq:br}
\frac{1}{|\ofo|} \int_{\ofo} \rho_{0} \ {\rm d}x = \overline\rho,
\end{align}
and
\begin{align*}
\|(\rho_{0} - \overline\rho, u_{0}, \vartheta_{0} - \overline \vartheta, \ell_{0},\omega_{0})\|_{\mathcal{I}_{p,q}} \leqslant \delta,
\end{align*}
the system \eqref{fluid-eq} - \eqref{inicond} admits a unique strong solution  $(\rho, u, \vartheta, \ell, \omega)$ in the class of functions satisfying
\begin{multline} \label{est:glob}
\|(\rho  - \overline\rho) \|_{L^{\infty}(0,\infty; W^{1,q}(\Omega_{F}(\cdot)))} + \|e^{\eta (\cdot)}\nabla \rho\|_{W^{1,p}(0,\infty; L^{q}(\Omega_{F}(\cdot)))}+ \norm{ e^{\eta (\cdot)} \partial_{t} \rho}_{L^{p}(0,\infty; L^{q}(\Omega_{F}(\cdot)))}  \\+ \|e^{\eta (\cdot)} \ u\|_{L^{p}(0,\infty; W^{2,q}(\Omega_{F}(\cdot))^{3})}
+  \|e^{\eta (\cdot)} \partial_{t} u\|_{L^{p}(0,\infty; L^{q}(\Omega_{F}(\cdot))^{3})} + \|e^{\eta (\cdot)} u\|_{L^{\infty}(0,\infty; B^{2(1-1/p)}_{q,p}(\Omega_{F}(\cdot))^{3})} \\
+ \|e^{\eta(\cdot)} \partial_{t} \vartheta \|_{L^{p}(0,\infty; L^{q}(\Omega_{F}(\cdot)))}   + \norm{e^{\eta(\cdot)} \nabla \vartheta }_{L^{p}(0,\infty; L^{q}(\Omega_{F}(\cdot)))} + \norm{e^{\eta(\cdot)} \nabla^{2} \vartheta}_{L^{p}(0,\infty; L^{q}(\Omega_{F}(\cdot)))} \\
+ \| (\vartheta - \overline \vartheta)\|_{L^{\infty}(0,\infty; B^{2(1-1/p)}_{q,p}(\Omega_{F}(\cdot)))}
+ \|e^{\eta (\cdot)} \dot a \|_{L^{p}(0,\infty;\mathbb{R}^{3})} + \norm{ e^{\eta (\cdot)} \ddot a }_{L^{p}(0,\infty;\mathbb{R}^{3})} \\
+ \|a\|_{{L^{\infty}(0,\infty;\mathbb{R}^{3})}}+ \|e^{\eta (\cdot)} \ \omega \|_{W^{1,p}(0,\infty;\mathbb{R}^{3})} \leqslant C \delta.
\end{multline}
Moreover, $\rho(t,x) \geqslant \displaystyle \frac{\overline\rho}{2}$ for all $t \in (0,\infty)$, $x \in \oft$ and
$\mathrm{dist}(\Omega_{S}(t),\partial\Omega) \geqslant \nu/2$ for all $t \in [0,\infty)$.
\end{thm}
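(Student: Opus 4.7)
The plan is to linearise around the steady state $(\overline\rho,0,\overline\vartheta,0,0)$, transport everything to the fixed reference domain $\ofo$, and close the problem by a contraction in the exponentially weighted space $\mathcal X_\eta$ defined by the left-hand side of \eqref{est:glob}. Following \cref{sec:cov-g}, I would use a near-identity change of variables $X(t,\cdot)\colon\ofo\to\oft$ that extends the rigid motion $y\mapsto a(t)+Q(t)y$ from a neighbourhood of $\partial\oso$ to a smooth diffeomorphism of $\Omega$ equal to the identity near $\partial\Omega$. Writing $(\widehat\rho,\widehat u,\widehat\vartheta)$ for the pulled-back unknowns and setting $\sigma=\widehat\rho-\overline\rho$, $\tau=\widehat\vartheta-\overline\vartheta$, the system becomes a coupled problem on $\ofo$ whose principal part is the constant-coefficient linearisation of the Navier--Stokes--Fourier system coupled with the rigid-body ODE, and whose right-hand sides $F_1,\dots,F_5$ are at least quadratic in $(\sigma,\widehat u,\tau,\ell,\omega)$ and their derivatives of order $\leq 1$.

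Second, I would prove maximal $L^p$-$L^q$ regularity on $[0,\infty)$ with the weight $e^{\eta t}$ for this linearised coupled system, which is the content of \cref{sec:lin-FSI,sec:max-lin-g}. The $\mr$-sectoriality of the coupled generator $\mathcal A$ is obtained, as stressed in the introduction, by a perturbation argument starting from the $\mr$-sectoriality of the fluid-only linearised compressible Navier--Stokes--Fourier operator with homogeneous boundary data (Enomoto--Shibata, Murata--Shibata); the rigid-body degrees of freedom are finite-dimensional and couple only through lower-order boundary terms, which are controlled in the correct scaling of the resolvent. Exponential decay then follows provided one removes the obvious $0$-eigenvalues generated by conservation of mass and by the insulated temperature Neumann kernel; the first is killed by the mean-value constraint \eqref{eq:br}, the second by a similar spectral reduction on the coupled energy block. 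Multiplication by $e^{\eta t}$ for $\eta\in(0,\eta_0)$ then yields the weighted maximal regularity estimate
\begin{equation*}
\|(\sigma,u,\tau,\ell,\omega)\|_{\mathcal X_\eta} \leq C\Bigl(\|(\rho_0-\overline\rho,u_0,\vartheta_0-\overline\vartheta,\ell_0,\omega_0)\|_{\mathcal I_{p,q}} + \textstyle\sum_{i=1}^{5} \|e^{\eta t}F_i\|_{\mathcal Y_\eta}\Bigr).
\end{equation*}

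Third, for $v^\ast=(\sigma^\ast,u^\ast,\tau^\ast,\ell^\ast,\omega^\ast)$ in a ball $B_R\subset\mathcal X_\eta$, let $\Phi(v^\ast)$ denote the solution of the weighted linearised system with right-hand sides $F_i(v^\ast)$ and with the prescribed initial data. The Lipschitz estimates of \cref{sec:nl-est-g} will provide
\begin{equation*}
\|\Phi(v_1)-\Phi(v_2)\|_{\mathcal X_\eta} \leq C\,R\,\|v_1-v_2\|_{\mathcal X_\eta},\qquad v_1,v_2\in B_R,
\end{equation*}
because each $F_i$ is at least quadratic and the exponential weight can be absorbed on a single factor, the others being uniformly controlled, for instance in $L^\infty_t W^{1,q}$, by the continuous embedding of the $\mathcal X_\eta$-class. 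For $\delta$ small the data term has norm $\leq C\delta$, so choosing $R$ of order $\delta$ turns $\Phi$ into a strict contraction on $B_R$; its fixed point is the sought global solution and \eqref{est:glob} follows from the linear bound. The closing geometric statements are then automatic: $\|a\|_{L^\infty}\leq C\delta$ preserves $\dist(\ost,\partial\Omega)\geq\nu/2$, and $\|\sigma\|_{L^\infty(W^{1,q})}\leq C\delta$ together with $W^{1,q}\emb L^\infty$ preserves $\rho\geq\overline\rho/2$, so $X(t,\cdot)$ remains a diffeomorphism for all $t\geq 0$.

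The hard part will be the exponential stability of $\mathcal A$. The density equation carries no parabolic smoothing, and decay of $\sigma$ comes only through the coupling $\overline\rho\div u$ with the dissipative velocity equation; meanwhile the insulated temperature equation has constants in its formal kernel, so a naive spectral picture would place $0$ in the spectrum with multiplicity two. Isolating these kernels and showing that, modulo the conservation laws fixed by \eqref{eq:br}, the remaining spectrum of $\mathcal A$ lies in $\{\Re\lambda<-\eta_0\}$ will require a careful analysis of the coupled resolvent at small $\lambda$, combined with a Gearhart--Pr\"uss argument exploiting the $\mr$-sectorial resolvent bounds on the imaginary axis; absorbing the $e^{\eta t}$ weight into the maximal regularity estimate depends crucially on this step.
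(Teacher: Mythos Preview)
Your overall architecture is right and matches the paper: linearise around $(\overline\rho,0,\overline\vartheta,0,0)$, prove $\mathcal R$-sectoriality of the coupled operator by perturbation of the fluid-only operator, establish exponential stability on the mean-zero subspace, upgrade to weighted maximal regularity, and close by a contraction in a ball of the weighted solution space. However, the change of variables you propose is not the one the paper uses, and in this regularity class your choice creates a genuine obstruction.

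You describe a \emph{geometric} change of variables that extends the rigid motion by a cutoff; but \cref{sec:cov-g} (like \cref{sec:cov-local}) uses the \emph{Lagrangian} flow map \eqref{Jacobi-g}. The distinction matters for the density equation. Under a geometric change the transport term survives as $(\widehat u-\dot X)\cdot (\nabla X)^{-\top}\nabla\sigma$, and this has to land in the source space for $f_1$, namely $L^p(0,\infty;W^{1,q}(\ofo))$. Taking one spatial derivative produces a term $w\cdot\nabla^2\sigma$, which would require $\sigma\in W^{2,q}$; but your solution class only controls $\sigma\in L^\infty(W^{1,q})$, so the estimate does not close. The paper avoids this entirely: in Lagrangian variables the density equation becomes $\partial_t\widetilde\rho+\overline\rho\,\div\widetilde u=\mathcal F_1$ with $\mathcal F_1$ as in \eqref{F1-g}, which contains no $\nabla\widetilde\rho$ and is a product of $W^{1,q}$ functions, hence in $W^{1,q}$ by the algebra property. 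This is exactly why the introduction singles out the Lagrangian choice as the way ``to tackle the term $u\cdot\nabla\rho$''.

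A second, smaller point: the temperature zero mode is not eliminated by a conservation law analogous to \eqref{eq:br}. The paper instead splits $\widetilde\vartheta=\widetilde\vartheta_m+\widetilde\vartheta_{avg}$ (and likewise for $\widetilde\rho$), works on $\my_m$ where both averages vanish and $\mathcal A_{FS}$ is exponentially stable (\cref{thm:stab-afs}), and treats $\widetilde\vartheta_{avg}$ as a scalar ODE driven by an $L^1(0,\infty)$ source; this gives only $\widetilde\vartheta_{avg}\in L^\infty$, not exponential decay, which is why the final estimate \eqref{est:glob} controls $\vartheta-\overline\vartheta$ in $L^\infty(B^{2(1-1/p)}_{q,p})$ without an exponential weight. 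Your ``spectral reduction on the coupled energy block'' should be replaced by this explicit average/mean-zero decomposition as in \cref{thm:nh-g}.
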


As shown in \cref{sec:global-existence}, a simple consequence of the above theorem is:

\begin{cor}\label{cor_density}
With the assumptions and notation in Theorem \ref{mainthm_glob} we have
\begin{align*}
\|u(t,\cdot)\|_{B^{2(1-1/p)}_{q,p}(\oft)^{3}} + \|\dot a(t)\|_{\rt} + \|\omega(t)\|_{\rt}\leqslant C \delta e^{-\eta t},
\end{align*}
\begin{align} \label{den}
\|\rho(t,\cdot) - \overline\rho\|_{W^{1,q}(\Omega_{F}(\cdot))} \leqslant C \delta e^{-\eta t},
\end{align}
where the constant $C$ is independent of $t>0$.
\end{cor}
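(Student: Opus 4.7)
The estimates for the three velocity-type quantities are direct consequences of \eqref{est:glob}. First, the bound on $u$ is read off immediately from the weighted $L^\infty$-term, since
$$
\norm{u(t,\cdot)}_{B^{2(1-1/p)}_{q,p}(\Omega_F(t))^3} \leqslant e^{-\eta t}\,\norm{e^{\eta(\cdot)}u}_{L^\infty(0,\infty;B^{2(1-1/p)}_{q,p}(\Omega_F(\cdot))^3)} \leqslant C\delta e^{-\eta t}.
$$
For $\dot a$, introduce $g(t):= e^{\eta t}\dot a(t)$. Then $g'(t) = \eta e^{\eta t}\dot a(t) + e^{\eta t}\ddot a(t)$, and by \eqref{est:glob} both $g$ and $g'$ belong to $L^p(0,\infty;\mathbb{R}^3)$ with norm at most $C\delta$. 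Since $p>2>1$, the embedding $W^{1,p}(0,\infty;\mathbb{R}^3)\hookrightarrow L^\infty(0,\infty;\mathbb{R}^3)$ applies and yields $|g(t)|\leqslant C\delta$, i.e.\ $|\dot a(t)|\leqslant C\delta e^{-\eta t}$. The estimate on $\omega$ is obtained in the same way; in fact $e^{\eta(\cdot)}\omega\in W^{1,p}(0,\infty;\mathbb{R}^3)$ is already explicit in \eqref{est:glob}, so the same embedding gives $\|\omega(t)\|_{\mathbb{R}^3}\leqslant C\delta e^{-\eta t}$.

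The density estimate \eqref{den} is the point requiring a little more work, since \eqref{est:glob} provides an exponential weight only on $\nabla \rho$, not on $\rho-\overline\rho$ itself. The gradient part is easy: the same Sobolev embedding applied to the term $\|e^{\eta(\cdot)}\nabla\rho\|_{W^{1,p}(0,\infty;L^q(\Omega_F(\cdot)))}\leqslant C\delta$ yields
$$
\norm{\nabla \rho(t,\cdot)}_{L^q(\Omega_F(t))} \leqslant C\delta e^{-\eta t}.
$$
To recover the $L^q$-norm of $\rho-\overline\rho$, I use mass conservation together with a Poincaré--Wirtinger inequality. The continuity equation, the no-slip condition $u=0$ on $\partial\Omega$ and the matching of velocities on $\partial\Omega_S(t)$, combined with the Reynolds transport theorem, give
$$
\frac{d}{dt}\int_{\Omega_F(t)}\rho\,dx \;=\; \int_{\Omega_F(t)}\partial_t\rho\,dx + \int_{\partial\Omega_S(t)}\rho\,u\cdot n\,d\gamma \;=\; -\int_{\Omega_F(t)}\operatorname{div}(\rho u)\,dx + \int_{\partial\Omega_F(t)}\rho\,u\cdot \nu_F\,d\gamma \;=\; 0,
$$
so the total fluid mass is preserved. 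Since $\Omega_S(t)$ is a rigid motion of $\Omega_S(0)$ we have $|\Omega_F(t)|=|\Omega_F(0)|$, and combining this with \eqref{eq:br} yields $\int_{\Omega_F(t)}(\rho(t,\cdot)-\overline\rho)\,dx = 0$ for every $t\geqslant 0$.

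It remains to apply the Poincaré--Wirtinger inequality on $\Omega_F(t)$ with a constant that does not depend on $t$. This is the step where a little care is needed: the family $\{\Omega_F(t)\}_{t\geqslant 0}$ has fixed outer boundary $\partial\Omega$, and the inner obstacle $\Omega_S(t)=a(t)+Q(t)\Omega_S(0)$ has a time-independent shape and stays at distance at least $\nu/2$ from $\partial\Omega$ by \cref{mainthm_glob}. Consequently all these domains are uniformly Lipschitz (one can pull them back to $\Omega_F(0)$ by a smooth diffeomorphism with Jacobian bounded above and below independently of $t$), and the Poincaré--Wirtinger constant can be chosen independent of $t$. This yields $\|\rho(t,\cdot)-\overline\rho\|_{L^q(\Omega_F(t))} \leqslant C\|\nabla\rho(t,\cdot)\|_{L^q(\Omega_F(t))}\leqslant C\delta e^{-\eta t}$, and adding the gradient bound proves \eqref{den}. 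The only non-routine point in the argument is the uniform-in-$t$ Poincaré constant, which is provided by the geometric control already built into the conclusions of \cref{mainthm_glob}.
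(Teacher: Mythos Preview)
Your proof is correct and follows essentially the same route as the paper: the velocity-type bounds are read off from \eqref{est:glob} via the embedding $W^{1,p}(0,\infty)\hookrightarrow L^\infty(0,\infty)$, and for the density you use mass conservation to get zero mean of $\rho-\overline\rho$ on $\Omega_F(t)$, then a Poincar\'e--Wirtinger inequality with a constant uniform in $t$ (the paper cites Boulkhemair--Chakib for this) together with the weighted gradient control from \eqref{est:glob}. The only cosmetic difference is the order of operations in the density argument: the paper applies Poincar\'e--Wirtinger first and then the $W^{1,p}\hookrightarrow L^\infty$ embedding, whereas you embed first and then apply Poincar\'e--Wirtinger pointwise in $t$; both are equivalent here.
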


To prove \cref{mainthm} and \cref{mainthm_glob} we follow a strategy which is widely used in the literature on existence and uniqueness of solutions for fluid-solid interaction models, which is:
\begin{itemize}
\item{\bf Step 1.} Since the domain of the fluid equation is one of the unknowns, we first rewrite the system in a fixed spatial domain. This can be achieved either by a ``geometric'' change of variables (see \cite{BoulGuer}), by using Lagrangian coordinates (see \cite{MTT17}) or by combining these two change of coordinates (see \cite{HiebMur}). In the present work, we found more convenient to use Lagrangian variables. Apart from allowing to rewrite the coupled system in a fixed cylindrical domain  this allows us to tackle the term $u \cdot \nabla \rho$ in the density equation.
\item{\bf Step 2.} Next we associate to the original nonlinear problem a linear one, involving source terms.
A crucial step here is to establish the $L^p$-$L^q$ regularity property for this linear problem. This is done by proving that the associated linear operators are $\mr$-sectorial in  an appropriate Banach spaces.
\item{\bf Step 3.}
We estimate the nonlinear terms in the governing equations and we use the Banach fixed point theorem to prove existence and uniqueness results in the reference configuration.
\item{\bf Step 4.} In the final step we come back to the original configuration.
\end{itemize}


\section{Notation} \label{sec:notation}

In this section, we fix some notations that we use throughout this
paper.  For $s \in (0,1)$ and a Banach space $U,$ $F^{s}_{p,q}(0,T,U)$
stands for $U$ valued Lizorkin-Triebel space. For precise definition
of such spaces we refer to \cite{Triebel,sch12}. If
$T < \infty$, this spaces can be characterised as follows (see
\cite{wei05})
\begin{align*}
F^{s}_{p,q}(0,T;U) =  \Big\{ f \in L^{p}(0,T,U) \mid | f|_{F^{s}_{p,q}(0,T;U)} < \infty \Big\},
\end{align*}
where
\begin{align*}
| f|_{F^{s}_{p,q}(0,T;U)} = \left(\int_{0}^{T}  \left( \int_{0}^{T-t} h^{-1 - sq}\|f(t+h) - f(t)\|^{q}_{U} \, {\rm d}h \right)^{p/q} \, {\rm d}t \right)^{1/p}.
\end{align*}
These spaces endowed with the natural norm
\begin{align}\label{num_LIZ}
\|f\|_{F^{s}_{p,q}(0,T;U)} = \|f\|_{L^{p}(0,T;U)} + | f|_{F^{s}_{p,q}(0,T;U)} .
\end{align}

If $T\in (0,\infty]$, we set $Q_{T}^{F} = (0,T) \times \ofo$ and 
\begin{align*}
W^{2,1}_{q,p} (Q_{T}^{F}) = L^{p}(0,T;W^{2,q}(\ofo))  \cap W^{1,p}(0,T;L^{q}(\ofo)),
\end{align*}
with
\begin{align*}
\|u\|_{W^{2,1}_{q,p} (Q_{T}^{F})} = \| u \|_{ L^{p}(0,T;W^{2,q}(\ofo))} + \|u\|_{W^{1,p}(0,T;L^{q}(\ofo))}.
\end{align*}

For $T\in (0,\infty]$, the space ${\mathcal S}_{T,p,q}$ is  defined by
 \begin{align} \label{solspace}
 \mathcal{S}_{T,p,q} &= \Big\{ (\rho,u,\vartheta, \ell, \omega) \mid \rho \in W^{1,p}(0,T;W^{1,q}(\ofo)), \quad  u \in \left( W^{2,1}_{q,p} (Q_{T}^{F})\right)^{3},  \notag \\
& \qquad\quad   \vartheta \in W^{2,1}_{q,p} (Q_{T}^{F}), \quad \ell \in W^{1,p}(0,T;\mathbb{R}^{3}), \quad \omega \in W^{1,p}(0,T;\mathbb{R}^{3})\Big\}
 \end{align}
and
 \begin{align*}
 \|(\rho,u,\vartheta,\ell,\omega)\|_{S_{T,p,q}}  = \|\rho\|_{W^{1,p}(0,T;W^{1,q}(\ofo))} +  \|u\|_{W^{2,1}_{q,p} (Q_{T}^{F})} + \|  \vartheta\|_{W^{2,1}_{q,p}(Q_{T}^{F})}  \notag \\
 \qquad \qquad \qquad + \|\ell\|_{W^{1,p}(0,T)} + \|\omega\|_{W^{1,p}(0,T)}.
 \end{align*}

 For any $T < \infty$ or $T=\infty$ we define $\mathcal{B}_{T,p,q}$ as follows
 \begin{multline}\label{com0.5}
 \mathcal{B}_{T,p,q} = \Big\{ (f_{1}, f_{2}, f_{3}, h, g_{1}, g_{2}) \mid f_{1} \in  L^{p}(0,T,W^{1,q}(\ofo)), f_{2} \in  L^{p}(0,T;L^{q}(\ofo))^{3}, \\ f_{3} \in  L^{p}(0,T;L^{q}(\ofo)),
  h \in F^{(1-1/q)/2}_{p,q}(0,T;L^{q}(\partial \ofo)) \cap L^{p}(0,T;W^{1-1/q,q}(\partial \ofo)), \\
  g_{1} \in L^{p}(0,T),   g_{2} \in L^{p}(0,T) \mbox{  with } h(0,y) = 0 \mbox{ for all } y \in \partial\ofo  \mbox{ if } \displaystyle \frac{1}{p} + \frac{1}{2q} < \frac{1}{2} \Big\},
 \end{multline}
 with
 \begin{align*}
 \|(f_{1},f_{2}, f_{3}, h, g_{1},g_{2})\|_{\mathcal{B}_{T,p,q}} =  \|f_{1}\|_{L^{p}(0,T;W^{1,q})} + \|f_{2}\|_{L^{p}(0,T;L^{q})}   + \|f_{3}\|_{L^{p}(0,T;L^{q})} \\ + \|h\|_{F^{(1-1/q)/2}_{p,q}(0,T;L^{q}(\partial \ofo)) \cap L^{p}(0,T;W^{1-1/q,q}(\partial \ofo))} + \|g_{1}\|_{L^{p}(0,T)} + \|g_{2}\|_{L^{p}(0,T)}.
 \end{align*}

\part{Local in Time Existence and Uniqueness}
\section{Lagrangian Change of Variables} \label{sec:cov-local}
In this section, we describe a change of variables, obtained by a
slight variation of the usual passage to Lagrangian coordinates, which
allows us to rewrite the governing equations in a fixed spatial domain
and to preserve the linear form of the transmission condition for the
velocity field.  More precisely, we consider the characteristics $X$
associated to the fluid velocity $u$, that is the solution of the
Cauchy problem
\begin{equation} \label{ode}
\begin{PDE}
\partial_{t}  X(t,y) & =  u(t,X(t,y))  \quad (t >0), \\
X(0,y)               &  = y\in \overline{\ofo}.
\end{PDE}
\end{equation}
Assume that $X(t,\cdot)$ is a $C^{1}$-diffeomorphism from $\ofo$ onto
$\oft$ for all $t \in (0,T)$ (see \eqref{tak0.1}).  For each
$t\in (0,T)$, we denote by $Y(t,\cdot) = [X(t,\cdot)]^{-1}$ the
inverse of $X(t,\cdot)$.  We consider the following change of
variables
\begin{gather}
\widetilde  \rho(t,y)  = \rho(t,X(t,y)) , \qquad \widetilde  u (t,y)   = Q^{-1}(t)u(t,X(t,y)), \notag \\
\widetilde  \vartheta(t,y)  = \vartheta(t,X(t,y)), \qquad \widetilde   p  = R \widetilde  \rho \widetilde  \vartheta , \label{lpq01}\\
\widetilde  \ell(t)   = Q^{-1}(t) \dot a(t), \qquad \widetilde  \omega(t)  = Q^{-1}(t) \omega(t), \notag
\end{gather}
for $(t,y) \in (0,T) \times \ofo$. 
In particular,
\begin{equation}
\label{lpq02} \rho(t,x)   = \widetilde  \rho(t,Y(t,x)), \quad  u(t,x)  = Q(t)\widetilde  u(t,Y(t,x)),\quad
\vartheta(t,x)   = \widetilde  \vartheta(t,Y(t,x)),
\end{equation}
for $(t,x) \in (0,T) \times \oft$.
This change of variables implies that
$(\widetilde  \rho, \widetilde  u, \widetilde  \vartheta, \widetilde  \ell, \widetilde  \omega, a, Q)$ satisfies
\begin{align}
\label{NL0.0} &
   \begin{PDE}
      \partial_{t}  \widetilde \rho + \rho_{0} \operatorname{div}  \widetilde  u = \mathcal{F}_{1} & \mbox{ in } (0,T) \times \ofo, \\
      \widetilde  \rho(0,\cdot) = \rho_{0} \hspace*{4.1cm} & \mbox{ in } \ofo, \\
   \end{PDE}\\
\label{NL0.1} &
   \begin{PDE}
       \partial_{t} \widetilde u - \dfrac{\mu}{ \rho_{0}} \Delta \widetilde  u - \dfrac{\alpha+\mu}{ \rho_{0}} \nabla (\operatorname{div} \ \widetilde u)
       = \mathcal{F}_{2}  & \mbox{ in } (0,T) \times \ofo,\\
       \widetilde u = 0 &\mbox{ on } (0,T) \times \partial \Omega,\\
       \widetilde u =  \widetilde \ell +  \widetilde\omega \times y & \mbox{ on } (0,T) \times \partial\oso, \\
       \widetilde u(0,\cdot) = u_{0} & \mbox{ in } \ofo,
     \end{PDE}  \\
\label{NL0.2} &
   \begin{PDE}
       m\dfrac{d}{dt} { \widetilde\ell} =  \mathcal{G}_{1}  & \qquad \qquad \qquad \mbox{ in } (0,T), \\
       J(0)\dfrac{d}{dt}\widetilde \omega  =   \mathcal{G}_{2} & \qquad \qquad \qquad \mbox{ in } (0,T), \\
       \widetilde \ell(0) = \ell_{0}, \quad
       \widetilde \omega(0) = \omega_{0}.
   \end{PDE}\\
\label{NL0.3} &
   \begin{PDE}
       \partial_{t}\widetilde \vartheta - \dfrac{\kappa}{\rho_{0}c_{v}}\Delta \widetilde \vartheta  \   = \mathcal{F}_{3} & \mbox{ in } (0,T) \times \ofo, \\
       \dfrac{\partial \widetilde \vartheta}{\partial n} = \mathcal{H}\cdot n &  \mbox{ on } (0,T) \times \partial \ofo,  \\[3mm]
       \widetilde \vartheta(0,\cdot) = \vartheta_{0} \hspace*{4.1cm} & \mbox{ in } \ofo,
     \end{PDE} \\
 \label{def-Q} &
 \begin{PDE}
 \dot a =Q \widetilde \ell & \qquad \qquad \qquad \mbox{ in } (0,T), \\
\dot Q  =  Q A(\widetilde  \omega) & \qquad \qquad \qquad \mbox{ in } (0,T), \\
a(0)=0, \quad Q(0) = I_{3},
 \end{PDE}
\end{align}
where
\begin{align} \label{Jacobi}
X(t,y)  = y + \int_{0}^{t} Q(s) \widetilde  u(s,y) \ {\rm d}s, \quad \mbox{ and } \quad \nabla Y(t,X(t,y)) = [\nabla X]^{-1}(t,y),
\end{align}
for every $y\in \Omega_F(0)$ and $t\geqslant 0$. Using  the notation
\begin{align} \label{Z}
Z(t,y) = \left( Z_{i,j}\right)_{1\leqslant i, j\leqslant 3}=[\nabla X]^{-1}(t,y) \qquad\qquad(t\geqslant 0,\ y\in \Omega_F(0)),
\end{align}
the remaining terms in \eqref{NL0.0}--\eqref{NL0.3} are defined by:
\begin{flalign} \label{F1}
& \mathcal{F}_{1}(\widetilde \rho,\widetilde  u,\widetilde  \vartheta,\widetilde  \ell,\widetilde \omega)
=  \;  -(\widetilde  \rho - \rho_{0}) \operatorname{div} \widetilde  u  -  \widetilde  \rho  (\nabla \widetilde  u) : \left[ (ZQ)^{\top} - I_3\right], &
\end{flalign}
\begin{flalign} \label{F2bisold}
(\mathcal{F}_{2})_i(\widetilde \rho,\widetilde  u,\widetilde  \vartheta,\widetilde  \ell,\widetilde \omega)
= &\;  - \frac{\widetilde  \rho}{\rho_0}(\widetilde \omega\times Q \widetilde  u)_i +\left(1-\frac{\widetilde  \rho}{\rho_0}\right)(\partial_t \widetilde  u)_{i}
       -  \frac{\widetilde  \rho}{\rho_{0}} \left[ (Q - I) \partial_{t} \widetilde  u\right]_{i} \notag\\
  &  + \frac{\mu}{\rho_0}\sum_{l,j,k} \frac{\partial^2 (Q\widetilde  u)_{i} }{\partial y_l\partial y_k}  \left(Z_{k,j}  - \delta_{k,j}\right) Z_{l,j}
    + \frac{\mu}{\rho_{0}}\sum_{l,k} \frac{\partial^2 (Q\widetilde  u)_{i} }{\partial y_l\partial y_k} \left( Z_{l,k} - \delta_{l,k} \right) \notag \\
   &   + \frac{\mu}{\rho_{0}} \left[ (Q - I) \Delta \widetilde  u\right]_{i}
     + \frac{\mu}{\rho_0}\sum_{l,j,k} Z_{l,j} \frac{\partial  (Q \widetilde  u)_i}{\partial y_k} \frac{\partial Z_{k,j}}{\partial y_{l}}\notag \\
   &  +\frac{\mu+\alpha}{\rho_{0}} \sum_{l,j,k} \frac{\partial^{2} (Qu)_{j}}{\partial y_{l} \partial y_{k}} \left( Z_{k,j} - \delta_{k,j}\right)  Z_{l,i}
    + \frac{\mu+\alpha}{\rho_0}  \sum_{l,j} \frac{\partial^{2} (Qu)_{j}}{\partial y_{l} \partial y_{j}} \left( Z_{l,i} - \delta_{l,i}\right) \notag \\
   & + \frac{\alpha + \mu}{\rho_{0}} \frac{\partial}{\partial y_{i}} \left[ \nabla \widetilde  u : (Q^{\top} - I_{3}) \right] +\ds
\frac{\alpha + \mu}{\rho_{0}} \sum_{l,j,k} Z_{l,i} \frac{\partial (Qu)_{j}}{\partial y_{k}} \frac{\partial Z_{k,j}}{\partial y_{l}}
\notag \\
& - R \frac{\widetilde  \vartheta}{\rho_{0}}  \left(Z^{\top} \nabla \widetilde  \rho \right)_{i}  -  R \frac{\widetilde  \rho}{\rho_{0}}  \left(Z^{\top} \nabla \widetilde  \vartheta \right)_{i}, &
\end{flalign}
\begin{flalign} \label{F3bis}
\mathcal{F}_{3}(\widetilde \rho,\widetilde  u,\widetilde  \vartheta,\widetilde  \ell,\widetilde \omega)
= & \;
     \left(\frac{\rho_{0} - \widetilde  \rho}{c_{v}\rho_{0}}\right)\partial_t \widetilde  \vartheta
        - \frac{R\widetilde  \vartheta\widetilde  \rho}{c_v \rho_0} \left[ ZQ\right]^\top : \nabla \widetilde  u
      + \frac{\kappa}{c_v \rho_0} \sum_{l,j,k} Z_{l,j}\frac{\partial \widetilde  \vartheta}{\partial y_k} \frac{\partial Z_{k,j}}{\partial y_l} \notag \\
     & + \frac{\kappa}{c_v \rho_0} \sum_{j,k,l} \frac{\partial^2 \widetilde  \vartheta}{\partial y_k\partial y_l} \left(Z_{k,j}-\delta_{k,j} \right)Z_{l,j}  + \frac{\kappa}{c_v \rho_0} \sum_{k,l} \frac{\partial^2 \widetilde  \vartheta}{\partial y_k\partial y_l} \left(Z_{l,k} -\delta_{l,k} \right)
     \notag \\
     &  + \frac{\alpha}{c_{v}\rho_0} \left(\left[ ZQ\right]^\top : \nabla \widetilde  u\right)^{2}
      + \frac{\mu}{2 c_{v} \rho_{0}} \left|\nabla \widetilde  u Z Q + \left(\nabla \widetilde  u Z Q\right)^{\top}\right|^2, &
\end{flalign}
\begin{flalign}
\label{H}
&\mathcal{H}(\widetilde \rho,\widetilde  u,\widetilde  \vartheta,\widetilde  \ell,\widetilde \omega) = \; \mathbbm{1}_{\partial \Omega} \mathcal{H}_{F} + \mathbbm{1}_{\partial \oso} \mathcal{H}_{S}, \\
\notag
&\mathcal{H}_{F} (\widetilde \rho,\widetilde  u,\widetilde  \vartheta,\widetilde  \ell,\widetilde \omega) =  \; (I_3- Z^{\top}) \nabla \widetilde  \vartheta
\qquad
\mathcal{H}_{S}  (\widetilde \rho,\widetilde  u,\widetilde  \vartheta,\widetilde  \ell,\widetilde \omega) =  (I_3 - (Z Q)^{\top})\nabla \widetilde  \vartheta ,\\
\label{G0}
 &\mathcal{G}_{0} (\widetilde \rho,\widetilde  u,\widetilde  \vartheta,\widetilde  \ell,\widetilde \omega)
=   \; \mu \left[\nabla \widetilde  u  Z Q + \left(\nabla \widetilde  u Z Q \right)^{\top}   \right]
 +\alpha \left(\left[Z  Q\right]^\top : \nabla \widetilde  u\right) I_3
 +  R   \widetilde  \rho \widetilde  \vartheta I_3,\\
\label{G1}
 & \mathcal{G}_{1} (\widetilde \rho,\widetilde  u,\widetilde  \vartheta,\widetilde  \ell,\widetilde \omega) = \; - m (\widetilde  \omega \times \widetilde  \ell) -   \int_{\partial \oso} {\mathcal G}_{0} n \ {\rm d} \gamma , \\
\label{G2}
& \mathcal{G}_{2}(\widetilde \rho,\widetilde  u,\widetilde  \vartheta,\widetilde  \ell,\widetilde \omega) = \;  J(0) \widetilde  \omega \times \widetilde  \omega  -  \int_{\partial \oso} y \times \mathcal{G}_{0} n \ {\rm d}\gamma . &
\end{flalign}

Using the above change of variables, our main result in \cref{mainthm} can be rephrased as:

\begin{thm} \label{mainthm2}
Let $2 < p < \infty$ and $3 < q < \infty$ satisfying the condition $\displaystyle \frac{1}{p} + \frac{1}{2q} \neq \frac12$.  Assume that  $(\rho_{0},u_{0}, \vartheta_{0},\ell_{0},\omega_{0})$ belongs to $\mathcal{I}^{cc}_{p,q}$ such that \eqref{ini-ball} holds.
Then, there exists $\widetilde T > 0$ such that for any $T\in (0,\widetilde T)$, the system \eqref{NL0.0} - \eqref{G2} admits a unique strong solution 
\begin{align} \label{regl}
& \widetilde\rho \in W^{1,p}(0,T; W^{1,q}(\Omega_{F}(0))) 
,\notag \\
& \widetilde u \in L^{p}(0,T; W^{2,q}(\Omega_{F}(0))^{3}) \cap W^{1,p}(0,T; L^{q}(\Omega_{F}(0))^{3}) \cap C([0,T]; B^{2(1-1/p)}_{q,p}(\Omega_{F}(0))^{3}), \notag\\
& \widetilde\vartheta \in L^{p}(0,T; W^{2,q}(\Omega_{F}(0))) \cap W^{1,p}(0,T; L^{q}(\Omega_{F}(0))) \cap C([0,T]; B^{2(1-1/p)}_{q,p}(\Omega_{F}(0))), \notag \\
& \widetilde\ell \in W^{1,p}(0,T;\mathbb{R}^{3}), \quad \widetilde\omega \in W^{1,p}(0,T;\mathbb{R}^{3}),\\
& a \in W^{2,p}(0,T;\mathbb{R}^{3}), \quad Q \in W^{2,p}(0,T;SO(3)), \notag \\
& X \in W^{1,p}(0,T;W^{2,q}(\ofo)) \cap W^{2,p}(0,T;L^{q}(\ofo)), \notag \\
& X(t,\cdot) : \ofo\to \oft \ \text{is a} \ C^1-\text{diffeormorphim for all} \ t\in [0,T]. \notag
\end{align}
Moreover, there exists a constant $M_{T} > 0$, such that $\displaystyle \frac{1}{M_{T}}\leqslant \widetilde\rho(t,y) \leqslant M_{T}$, for all $t \in (0,T), y \in \ofo$.
\end{thm}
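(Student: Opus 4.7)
The plan is to apply Banach's fixed point theorem to a map built from the maximal $L^{p}$-$L^{q}$ regularity of the linearized cascade system introduced in \cref{sec:linearized-problem-loc}. Given an element $(\widetilde\rho, \widetilde u, \widetilde\vartheta, \widetilde\ell, \widetilde\omega) \in \mathcal{S}_{T,p,q}$ lying in a closed ball of radius $R$ around the natural extension of the initial data, we first reconstruct $a$ and $Q$ from \eqref{def-Q}, $X$ from \eqref{ode}, and $Z$ from \eqref{Jacobi}--\eqref{Z}, and then evaluate the nonlinear source terms $\mathcal{F}_1, \mathcal{F}_2, \mathcal{F}_3, \mathcal{H}, \mathcal{G}_1, \mathcal{G}_2$ given by \eqref{F1}--\eqref{G2}. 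We define $\mathcal{N}(\widetilde\rho, \widetilde u, \widetilde\vartheta, \widetilde\ell, \widetilde\omega)$ to be the solution of the linear cascade system obtained by keeping the left-hand sides of \eqref{NL0.0}--\eqref{NL0.3} unchanged and prescribing the resulting tuple as source data; a fixed point of $\mathcal{N}$ is, by construction, a solution of \eqref{NL0.0}--\eqref{G2}.

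The first main ingredient is the maximal $L^{p}$-$L^{q}$ regularity of the linear cascade system proved in \cref{sec:linearized-problem-loc}: for source data in $\mathcal{B}_{T,p,q}$ and initial data in $\mathcal{I}^{cc}_{p,q}$, the linear problem has a unique solution in $\mathcal{S}_{T,p,q}$ with norm controlled by the data, uniformly for $T \in (0, T_{0}]$ for some fixed $T_{0} > 0$. The cascade structure is essential: one solves sequentially the $(\widetilde u, \widetilde\ell, \widetilde\omega)$ fluid-solid subsystem (a Stokes-like PDE coupled to the two solid ODEs via the stress integrals), then the scalar heat equation for $\widetilde\vartheta$ with inhomogeneous Neumann data $\mathcal{H}\cdot n$, and finally the transport equation for $\widetilde\rho$ along the characteristics of the reconstructed flow.

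The second main ingredient is the contraction property, obtained via \cref{prop:estimate}, showing that $\mathcal{N}$ leaves a suitable closed ball invariant and is a strict contraction on it, provided $T$ is small enough. Concretely, each component of the nonlinear source admits a bound of the form $C(R,M)\, T^{\kappa}$ in the corresponding slot of $\mathcal{B}_{T,p,q}$, together with a matching Lipschitz bound for differences. The small power $T^{\kappa}$ with $\kappa > 0$ arises from the embedding of the parabolic maximal regularity space into $C([0,T]; W^{1,q})$ (using $q > 3$ and the Sobolev embedding $W^{1,q} \hookrightarrow L^{\infty}$) combined with the fact that $Q(t) - I_{3}$, $Z(t,\cdot) - I_{3}$ and $\widetilde\rho(t,\cdot) - \rho_{0}$ all vanish at $t = 0$, hence have $L^{\infty}$-in-time norms on $[0,T]$ that tend to zero as $T \to 0$. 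The same smallness argument, applied to $\nabla X(t,\cdot) - I_{3} = \int_{0}^{t} \nabla(Q\widetilde u)(s,\cdot)\, {\rm d}s$, guarantees that $X(t,\cdot)$ remains a $C^{1}$-diffeomorphism from $\ofo$ onto its image and that $\mathrm{dist}(\Omega_{S}(t), \partial\Omega) \geqslant \nu/2$ throughout $[0,T]$, which closes the loop with the geometric hypotheses of \cref{mainthm}.

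The main technical obstacle is the coupling between the hyperbolic density equation and the parabolic blocks: the pressure contribution $R\,\widetilde\vartheta\,(Z^{\top}\nabla \widetilde\rho)/\rho_{0}$ appearing in $\mathcal{F}_{2}$ must be controlled in $L^{p}(0,T; L^{q})$ starting only from the $W^{1,p}(0,T; W^{1,q})$ regularity of $\widetilde\rho$. This is precisely why the Lagrangian frame is chosen: it turns $u\cdot\nabla\rho$ into a pure time derivative and provides $\widetilde\rho \in C([0,T]; W^{1,q})$ with $\nabla\widetilde\rho \in L^{\infty}_{t} L^{q}_{y}$, which, combined with the boundedness of $\widetilde\vartheta$ supplied by $C([0,T]; B^{2(1-1/p)}_{q,p}) \hookrightarrow L^{\infty}([0,T]\times\ofo)$, makes $\mathcal{F}_{2}$ admissible. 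Positivity and the bilateral bound $M_{T}^{-1} \leqslant \widetilde\rho \leqslant M_{T}$ follow from the explicit representation of $\widetilde\rho$ along characteristics together with an $L^{\infty}$ estimate on $\operatorname{div} \widetilde u$ of order $T^{1/p'}$. Finally, uniqueness in the Eulerian variables follows from uniqueness of the fixed point in the Lagrangian variables via the reversible change of coordinates \eqref{lpq01}--\eqref{lpq02}.
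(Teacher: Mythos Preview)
Your strategy is essentially the paper's: Banach fixed point built on maximal $L^p$--$L^q$ regularity for a linear cascade, together with nonlinear estimates carrying a factor $T^\delta$. Two minor deviations from the paper are worth flagging. First, the paper runs the contraction on the \emph{source} space $\mathcal{B}_{T,p,q}$ (the closed unit ball of $(f_1,f_2,f_3,h,g_1,g_2)$) rather than on the solution space $\mathcal{S}_{T,p,q}$; since the solution operator $F:\mathcal{B}_{T,p,q}\to\mathcal{S}_{T,p,q}$ of \cref{thm1} is affine, the two formulations are equivalent, but working in $\mathcal{B}_{T,p,q}$ spares you the choice of a center for the ball. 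Second, the paper's linear cascade is simpler than you describe: in \eqref{com0.2} the solid ODEs are $m\dot{\widetilde\ell}=g_1$, $J(0)\dot{\widetilde\omega}=g_2$ with \emph{given} $g_1,g_2$ (the stress integrals are pushed entirely into the nonlinear sources $\mathcal{G}_1,\mathcal{G}_2$), so no genuine fluid--solid coupling survives at the linear level; and the density equation \eqref{com0.0} in Lagrangian variables is the pointwise-in-$y$ ODE $\partial_t\widetilde\rho=f_1-\rho_0\operatorname{div}\widetilde u$, not a transport equation along reconstructed characteristics. These simplifications are precisely what make the cascade trivially solvable with constants uniform in $T\in(0,\widetilde T]$.
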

The proof of the above theorem relies on a fixed point theorem and a
  linearization. We describe below the main steps of the proof  using the maximal regularity of an associated linear problem and some estimates of the non linear terms involved in the fixed point procedure. For the clarity of the presentation , we postpone the detailed proofs of the maximal regularity and the estimates of the non linear terms technical results in \cref{sec:linearized-problem-loc} and \cref{sec:nonlin-terms-loc} respectively.

\begin{proof}[Proof of \cref{mainthm2}]
   Assume
\[
(\rho_{0},u_{0}, \vartheta_{0},\ell_{0},\omega_{0})\in\mathcal{I}^{cc}_{p,q},
\]
is given (see \eqref{inispace}) and \eqref{ini-ball} holds with $M>0$.

We consider the following linear problem.
\begin{align}
\label{com0.0} &
   \begin{PDE}
      \partial_{t}  \widetilde \rho + \rho_{0} \operatorname{div}  \widetilde  u = f_{1} & \mbox{ in } (0,T) \times \ofo, \\
      \widetilde  \rho(0,\cdot) = \rho_{0} \hspace*{4.1cm} & \mbox{ in } \ofo, \\
   \end{PDE}\\
\label{com0.1} &
   \begin{PDE}
       \partial_{t} \widetilde u - \dfrac{\mu}{ \rho_{0}} \Delta \widetilde  u - \dfrac{\alpha+\mu}{ \rho_{0}} \nabla (\operatorname{div} \ \widetilde u)   = f_{2}  & \mbox{ in } (0,T) \times \ofo,\\
       \widetilde u = 0 &\mbox{ on } (0,T) \times \partial \Omega,\\
       \widetilde u =  \widetilde \ell +  \widetilde\omega \times y & \mbox{ on } (0,T) \times \partial\oso, \\
       \widetilde u(0,\cdot) = u_{0} & \mbox{ in } \ofo,
     \end{PDE}  \\
\label{com0.2} &
   \begin{PDE}
       m\dfrac{d}{dt} { \widetilde\ell} =  g_{1}  & \mbox{ in } (0,T), \\
       J(0)\dfrac{d}{dt}\widetilde \omega  =   g_{2} & \mbox{ in } (0,T), \\
       \widetilde \ell(0) = \ell_{0}, \quad
       \widetilde \omega(0) = \omega_{0}.
   \end{PDE}\\
\label{com0.3} &
   \begin{PDE}
       \partial_{t}\widetilde \vartheta - \dfrac{\kappa}{\rho_{0}c_{v}}\Delta \widetilde \vartheta  \   = f_{3} & \mbox{ in } (0,T) \times \ofo, \\
       \dfrac{\partial \widetilde \vartheta}{\partial n} = h &  \mbox{ on } (0,T) \times \partial \ofo,  \\[3mm]
       \widetilde \vartheta(0,\cdot) = \vartheta_{0} \hspace*{4.1cm} & \mbox{ in } \ofo.
     \end{PDE}
\end{align}

where
\[
(\rho_{0},u_{0}, \vartheta_{0},\ell_{0},\omega_{0})\in\mathcal{I}^{cc}_{p,q},
\quad
(f_{1},f_{2},f_{3}, h,  g_{1},g_{2}) \in \mathcal{B}_{T,p,q}
\]
are given (see \eqref{inispace} and \eqref{com0.5}) and such that the
initial conditions satisfy \eqref{ini-ball} with $M>0$.

In  the following we shall denote for each $T_{*} \in (0,T]$, by
  $ \widetilde {\mathcal{B}}_{T_{*}}$ the closed unit ball in
  $\mathcal{B}_{T_{*},p,q}$.

\medskip

In \cref{sec:linearized-problem-loc} we will construct a solution
to (\ref{com0.0})---(\ref{com0.3}) providing a bounded 'solution
operator'
\[
    F:
    \left\{
    \begin{array}{rcl}
\mathcal{B}_{T,p,q} &\to& \mathcal{S}_{T,p,q} \\
(f_{1},f_{2},f_{3}, h,  g_{1},g_{2}) &\mapsto&  (\widetilde \rho, \widetilde u, \widetilde \vartheta,  \widetilde \ell, \widetilde \omega),
\end{array}
\right.
\]
where we recall that $\mathcal{B}_{T,p,q}$ and $\mathcal{S}_{T,p,q}$ are defined in \eqref{com0.5} and in  \eqref{solspace}.

In Section~\ref{sec:nonlin-terms-loc} we then prove norm estimates for the
nonlinear terms,
$\mathcal{F}_1, \mathcal{F}_{2}, \mathcal{F}_{3}, \mathcal{H}_F,
\mathcal{H}_S, \mathcal{G}_1, \mathcal{G}_2$.
More precisely, assuming that
$$
T\in (0,\widetilde T), \quad\text{with } \widetilde{T}<1\quad \text{small enough,}
$$
we show that
the obtained norm bounds here depend, up to a constant, on $T^\delta$
where $\delta$ depends on $p, q$ only. This allows  us to define the
operator
\[
\mathcal{N}:
 \left\{
    \begin{array}{rcl}
\mathcal{B}_{T,p,q} &\to& \mathcal{B}_{T,p,q} \\
(f_{1},f_{2},f_{3}, h,  g_{1},g_{2}) &\mapsto&  (\mathcal{F}_1,  \mathcal{F}_{2},  \mathcal{F}_{3},  \mathcal{H}, \mathcal{G}_1,  \mathcal{G}_2),
\end{array}
\right.
\]
and to show that, for sufficiently small $\widetilde T$, it becomes a self-map of the closed ball
$$
\{ (f_{1},f_{2},f_{3}, h,  g_{1},g_{2}) \in \mathcal{B}_{T,p,q} \mid \| (f_{1},f_{2},f_{3}, h,  g_{1},g_{2}) \|_{\mathcal{B}_{T,p,q}} \leq 1\}.
$$

Finally, in \cref{prop:lipestimate} a Lipschitz
estimate for $\mathcal{N}$ is proved, again with a Lipschitz constant
depending on $T^\delta$, provided that $T\in (0,\widetilde T)$. This allows us  to enforce a strict contraction
on the above closed ball and hence a fixed point, that provides a
solution to (\ref{NL0.0})---(\ref{G2}) satisfying \eqref{regl}. The bound of $\widetilde \rho$ will be obtained from the estimate \eqref{est:d-id}.
\end{proof}

From \cref{mainthm2} we can now deduce \cref{mainthm}.

\begin{proof}[Proof of \cref{mainthm}.]
  Let us assume that
  $(\rho_{0},u_{0},\vartheta_{0},\ell_{0},\omega_{0}) \in
  \mathcal{I}^{cc}_{p,q}$
  satisfies the condition \eqref{ini-ball}.

  Let $T<\widetilde T$ with $\widetilde T$ as in \cref{mainthm2}. In particular, there exists a unique solution
  $(\widetilde \rho, \widetilde u, \widetilde \vartheta, \widetilde g,
  \widetilde \omega)$
to the system \eqref{NL0.0} - \eqref{G0} satisfying
\begin{align*}
&\widetilde\rho \in W^{1,p}(0,  T; W^{1,q}(\Omega_{F}(0))) \\
&\widetilde u \in L^{p}(0, T; W^{2,q}(\Omega_{F}(0))^{3}) \cap W^{1,p}(0, T; L^{q}(\Omega_{F}(0))^{3}) \\
&\widetilde \vartheta \in L^{p}(0, T; W^{2,q}(\Omega_{F}(0))) \cap W^{1,p}(0, T; L^{q}(\Omega_{F}(0))) \\
&\widetilde \ell \in W^{1,p}(0, T;\mathbb{R}^{3}), \quad \widetilde \omega \in W^{1,p}(0, T;\mathbb{R}^{3}).
\end{align*}
Since  $ T \leqslant \widetilde T$, $X(t,\cdot)$ is $C^{1}-$ diffeomorphism from $\ofo$ into $\oft,$ we set   $Y(t,\cdot) = X^{-1}(t,\cdot)$ and  for $x \in \oft, \ t \geq 0$
\begin{align*}
& \rho(t,x) = \widetilde \rho(t,Y(t,x)), \\
& u(t,x) = Q(t)\widetilde u(t,Y(t,x)),  \\
& \vartheta(t,x) = \widetilde \vartheta(t,Y(t,x)),\\
& \dot a(t) = Q(t) \widetilde \ell(t) ,  \quad  \omega(t) = Q(t)\widetilde \omega(t).
\end{align*}
We can then check that $(\rho,u,\vartheta,a,\omega)$ satisfies the original system \eqref{fluid-eq} - \eqref{inicond} and
\begin{align*}
& \rho \in W^{1,p}(0,T; W^{1,q}(\Omega_{F}(\cdot))) \cap C([0,T];W^{1,q}(\Omega_{F}(\cdot))), \\
& u \in L^{p}(0,T; W^{2,q}(\Omega_{F}(\cdot))^{3}) \cap W^{1,p}(0,T; L^{q}(\Omega_{F}(\cdot))^{3}) \cap C([0,T]; B^{2(1-1/p)}_{q,p}(\Omega_{F}(\cdot))^{3}), \\
& \vartheta \in L^{p}(0,T; W^{2,q}(\Omega_{F}(\cdot))) \cap W^{1,p}(0,T; L^{q}(\Omega_{F}(\cdot))) \cap C([0,T]; B^{2(1-1/p)}_{q,p}(\Omega_{F}(\cdot))), \\
& a \in W^{2,p}(0,T;\mathbb{R}^{3}), \quad \omega \in W^{1,p}(0,T;\mathbb{R}^{3}).
\end{align*}
The uniqueness for the solution of \eqref{fluid-eq} - \eqref{inicond} follows from uniqueness of solution to the system \eqref{NL0.0} - \eqref{G0}. Since $a(t)$ and $\omega(t)$ belongs to $C([0,T];\rt),$ using \eqref{nocontact} we obtain  $\mathrm{dist}(\Omega_{S}(t),\partial\Omega) \geqslant \nu/2$ for all $t \in [0,T]$ if $T$ is small enough. This completes the proof of \cref{mainthm}.
\end{proof}

\section{Maximal $L^{p}-L^{q}$  Regularity for a Linear Problem.}
\label{sec:linearized-problem-loc}
In this section, we fix $\widetilde{T}>0$, and $1 < p, q < \infty$ such that
$\displaystyle \frac{1}{p} + \frac{1}{2q} \neq 1$ and $\displaystyle \frac{1}{p} + \frac{1}{2q} \neq \frac{1}{2}$.  We also take
$T\in (0,\widetilde{T}]$. We consider the linear system (\ref{com0.0})---(\ref{com0.3})
associated with \eqref{NL0.0}--\eqref{NL0.3}, where we replace the terms in the right-hand side
by given source terms. The initial data for the system \eqref{com0.0}---\eqref{com0.3} satisfies the following properties:
\begin{gather}
\rho_{0} \in W^{1,q}(\ofo) \cap C(\overline{\ofo}), \quad \min_{\overline{\ofo}} \rho_{0} \geqslant  M, \notag \\
u_{0} \in B^{2(1-1/p)}_{q,p}(\ofo)^{3}, \quad
\vartheta_{0} \in B^{2(1-1/p)}_{q,p}(\ofo), \quad
 \ell_{0} \in \mathbb{R}^{3},\  \omega_{0} \in \mathbb{R}^{3}, \label{ini-l}\\
  u_{0} = 0 \mbox{ on } \partial \Omega, \
\quad u_{0}(y) = \ell_{0} + \omega_{0} \times y \quad y\in \partial\oso \mbox{ if } \frac{1}{p} + \frac{1}{2q} < 1, \notag \\
\frac{\partial \vartheta_{0}}{\partial n}  = 0, \mbox{ on } \partial\ofo   \mbox{ if }  \displaystyle \frac{1}{p} + \frac{1}{2q} < \frac{1}{2}. \notag
\end{gather}

Observe that the linear system can be solved ``in cascades'':
Equation \eqref{com0.2} can be solved independently and admits a unique solution
\[
  (\widetilde \ell, \widetilde \omega)\in W^{1,p}(0,T)^{3} \times W^{1,p}(0,T)^{3}.
\]
Moreover there exists a constant $C=C(\widetilde{T})$ independent of $T$ such that
\begin{equation} \label{lw}
\norm{ \widetilde \ell }_{W^{1,p}(0,T)^{3}} + \norm{ \widetilde \omega }_{W^{1,p}(0,T)^{3}}
  \leqslant
C \Big(  \norm{ \ell_{0} }_{\mathbb{R}^{3}} + \norm{ \omega_{0} }_{\mathbb{R}^{3}} + \norm{ g_{1} }_{L^{p}(0,T)^{3}} + \norm{ g_{2} }_{L^{p}(0,T)^{3}} \Big).
\end{equation}

We also note that if we show that system \eqref{com0.1} admits a
unique solution $\widetilde{u}\in W^{2,1}_{q,p} (Q_{T}^{F})^{3}$, we
can use the Sobolev embedding to prove that system \eqref{com0.0}
admits a unique solution
$\widetilde \rho\in W^{1,p}(0,T;W^{1,q}(\ofo))$. There exists again a
constant $C=C(\widetilde{T})$ independent of $T$ such that
\begin{equation} \label{com0.4}
\left\|\widetilde \rho \right\|_{W^{1,p}(0,T;W^{1,q}(\ofo))} 
\leqslant  C\left( \norm{ \widetilde{u} }_{W^{2,1}_{q,p} (Q_{T}^{F})^{3}}
+\norm{ f_1 }_{L^{p}(0,T;W^{1,q}(\ofo))}+\norm{ \rho_0 }_{W^{1,q}(\ofo)}
\right).
\end{equation}

Consequently, in order to solve \eqref{com0.0}--\eqref{com0.3}, we
need to solve the two parabolic systems \eqref{com0.1} and
\eqref{com0.3}. This is done below by using \cite[Theorem
2.3]{DenkHieberPruss07}.

\begin{prop} \label{thm:v-s} With the above notation, assume
  $T\in (0,\widetilde{T}]$ and $u_{0}\in B^{2(1-1/p)}_{q,p}(\ofo)^{3}$ with
\begin{align}
u_{0} = 0 \mbox{ on } \partial \Omega,  \quad u_{0} = \ell_{0} + \omega_{0} \times y \mbox{ on } \partial\oso \quad \mbox{if} \quad \frac{1}{p} + \frac{1}{2q} < 1.
\end{align}
Then for any $f_{2} \in L^{p}(0,T;L^{q}(\ofo))^{3}$, system
\eqref{com0.1} admits a unique strong solution
$\widetilde u\in W^{2,1}_{q,p} (Q_{T}^{F})^{3}$. Moreover, there
exists a constant $C > 0$ depending only on $M$, $\widetilde{T}$ and $\ofo$
such that
\begin{multline} \label{est:lin-v}
\norm{ \widetilde u }_{W^{2,1}_{q,p} (Q_{T}^{F})^{3}}
\leqslant C \Big( \norm{ u_{0} }_{B^{2(1-1/p)}_{q,p}(\ofo)^{3}} +
\norm{ \ell_{0} }_{\mathbb{R}^{3}} + \norm{ \omega_{0} }_{\mathbb{R}^{3}} \\
+ \norm{ g_{1} }_{L^{p}(0,T)^{3}} + \norm{ g_{2} }_{L^{p}(0,T)^{3}} + \norm{ f_{2} }_{L^{p}(0,T;L^{q}(\ofo))^{3}}
\Big).
\end{multline}
\end{prop}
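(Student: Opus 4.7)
The plan is to reduce \eqref{com0.1} to a parabolic problem with homogeneous Dirichlet boundary conditions and then invoke maximal $L^p$--$L^q$ regularity for the resulting Lamé-type operator with variable coefficients, as provided by \cite[Theorem 2.3]{DenkHieberPruss07}. First I would observe that $\widetilde \ell,\widetilde \omega$ have already been obtained from \eqref{com0.2} with the $T$-independent bound \eqref{lw}, so they may be treated as prescribed data for \eqref{com0.1}. Using \eqref{nocontact}, the two components $\partial\Omega$ and $\partial\oso$ of $\partial\ofo$ are separated, so we can pick a smooth cut-off $\chi\in C^\infty(\overline{\ofo})$ with $\chi\equiv 1$ near $\partial\oso$ and $\chi\equiv 0$ near $\partial\Omega$, and define
\begin{equation*}
U(t,y)=\chi(y)\bigl(\widetilde\ell(t)+\widetilde\omega(t)\times y\bigr),\qquad (t,y)\in(0,T)\times\ofo.
\end{equation*}
Since $\chi$ is spatially smooth and $\widetilde\ell,\widetilde\omega\in W^{1,p}(0,T)^3$, this lift belongs to $W^{2,1}_{q,p}(Q_T^F)^3$ and satisfies the required boundary conditions, with norm controlled by $\norm{\widetilde\ell}_{W^{1,p}}+\norm{\widetilde\omega}_{W^{1,p}}$ (and hence by the right-hand side of \eqref{est:lin-v}, via \eqref{lw}).

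Next I would set $v=\widetilde u-U$, so that $v$ solves the homogeneous-boundary problem
\begin{equation*}
\begin{PDE}
\partial_t v-\dfrac{\mu}{\rho_0}\Delta v-\dfrac{\alpha+\mu}{\rho_0}\nabla(\operatorname{div} v)=\widetilde f_2 &\mbox{in } (0,T)\times\ofo,\\
v=0 & \mbox{on } (0,T)\times\partial\ofo,\\
v(0,\cdot)=u_0-U(0,\cdot) & \mbox{in } \ofo,
\end{PDE}
\end{equation*}
with $\widetilde f_2=f_2-\partial_t U+\tfrac{\mu}{\rho_0}\Delta U+\tfrac{\alpha+\mu}{\rho_0}\nabla(\operatorname{div}U)\in L^p(0,T;L^q(\ofo))^3$. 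Because $u_0=0$ on $\partial\Omega$ and $u_0=\ell_0+\omega_0\times y$ on $\partial\oso$ (when $\tfrac1p+\tfrac{1}{2q}<1$, i.e.\ whenever the trace is meaningful) while $U(0,\cdot)$ restricted to $\partial\ofo$ equals the same boundary values, the initial datum $v(0,\cdot)\in B^{2(1-1/p)}_{q,p}(\ofo)^3$ is compatible with the homogeneous Dirichlet condition. Since $\rho_0\in W^{1,q}(\ofo)\hookrightarrow C(\overline{\ofo})$ (because $q>3$) and $\rho_0\geq 1/M$, the coefficients $\mu/\rho_0$ and $(\alpha+\mu)/\rho_0$ are continuous and uniformly positive, so the principal symbol $\xi\mapsto \tfrac{\mu}{\rho_0}|\xi|^2 I_3+\tfrac{\alpha+\mu}{\rho_0}\xi\otimes\xi$ has eigenvalues bounded below by $\tfrac{\mu}{M}|\xi|^2$ and $\tfrac{\alpha+2\mu}{M}|\xi|^2$ in view of \eqref{tak1.3}. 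This gives parameter ellipticity, and the Dirichlet condition is well known to satisfy the Lopatinskii--Shapiro condition for this Lamé-type system. Therefore \cite[Theorem 2.3]{DenkHieberPruss07} provides a unique $v\in W^{2,1}_{q,p}(Q_T^F)^3$ together with the maximal-regularity estimate
\begin{equation*}
\norm{v}_{W^{2,1}_{q,p}(Q_T^F)^3}\leq C\bigl(\norm{\widetilde f_2}_{L^p(0,T;L^q(\ofo))^3}+\norm{v(0,\cdot)}_{B^{2(1-1/p)}_{q,p}(\ofo)^3}\bigr).
\end{equation*}

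Combining this with the bound on $U$ yields $\widetilde u=v+U\in W^{2,1}_{q,p}(Q_T^F)^3$ together with \eqref{est:lin-v}. To ensure that the constant $C$ depends only on $M$, $\widetilde T$ and $\ofo$ and not on $T\in(0,\widetilde T]$, I would first extend all source data ($f_2$ by zero, and $\widetilde\ell,\widetilde\omega,g_1,g_2$ by a bounded extension operator into $W^{1,p}(0,\widetilde T)$ and $L^p(0,\widetilde T)$) to the common interval $(0,\widetilde T)$, solve the system there, and then restrict to $(0,T)$; the constant so obtained depends on $\widetilde T$ but not on $T$. Uniqueness follows from the maximal regularity estimate applied to the difference of two solutions, which solves a homogeneous problem. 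The main technical obstacle is the verification that \cite[Theorem 2.3]{DenkHieberPruss07} applies to the Lamé system with merely continuous density coefficient $\rho_0$ and with initial data in the precise Besov class $B^{2(1-1/p)}_{q,p}$; this is where the continuity of $\rho_0$ (ensured by $W^{1,q}\hookrightarrow C(\overline\ofo)$ for $q>3$) and the compatibility condition built into the definition of $\mathcal{I}^{cc}_{p,q}$ are essential.
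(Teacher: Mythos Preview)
Your proof is correct and follows essentially the same approach as the paper: lift the inhomogeneous boundary data via a smooth cutoff equal to $1$ near $\partial\oso$ and $0$ near $\partial\Omega$, reduce to a homogeneous Dirichlet problem for the Lam\'e-type operator, and apply \cite[Theorem 2.3]{DenkHieberPruss07} after checking parameter-ellipticity and the Lopatinskii--Shapiro condition. For the $T$-independence of the constant, the paper proceeds slightly more directly by extending only the transformed source $\widehat f_2$ for the $v$-equation by zero to $(0,\widetilde T)$ (the bound on the lift $w$ being already $T$-independent via \eqref{lw}), which sidesteps any worry about the norm of a $W^{1,p}(0,T)\to W^{1,p}(0,\widetilde T)$ extension operator; your variant works as well provided you extend $g_1,g_2$ by zero and re-solve the ODE on $(0,\widetilde T)$ rather than extending $\widetilde\ell,\widetilde\omega$ directly.
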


\begin{proof}
We take $\eta \in C^{\infty}(\overline{\ofo})$ such that
\[
\eta =  0 \quad \text{on} \ \partial\Omega, \quad  \eta = 1  \quad \text{on} \ \partial\oso.\]
For $(t,y) \in (0,T) \times \ofo$, we set $w(t,y) = \eta(y)( \widetilde \ell(t) + \widetilde \omega(t) \times y)$. Therefore, using \eqref{lw} we see that there exists a positive constant $C$ depending on $\widetilde{T}$  and $\ofo$ such that
\begin{equation} \label{est:w}
  \begin{split}
& \quad \norm{ w(0,\cdot) }_{B^{2(1-1/p)}_{q,p}(\ofo)^{3}} + \norm{ w }_{W^{2,1}_{q,p} (Q_{T}^{F})^{3}} \\
\leqslant & \quad C \Big(  \norm{ \ell_{0} }_{\mathbb{R}^{3}} + \norm{ \omega_{0} }_{\mathbb{R}^{3}} + \norm{ g_{1} }_{L^{p}(0,T)^{3}} + \norm{ g_{2} }_{L^{p}(0,T)^{3}} \Big).
  \end{split}
\end{equation}
We look for the solution of  \eqref{com0.1} of the form  $\widetilde u = v + w$, where $v$ is the solution of
 \begin{equation} \label{eq:v}
   \begin{PDE}
   \ds   \partial_{t} v - \frac{\mu}{ \rho_{0}} \Delta v- \frac{\alpha+\mu}{ \rho_{0}} \nabla (\operatorname{div} \  v )   = \widehat f_{2}, \quad & \mbox{ in } (0,T) \times \ofo,  \\
   \ds   v = 0 & \mbox{ on } (0,T) \times \partial \ofo, \\
    \ds   v(0,\cdot) = v_{0} \quad & \mbox{ in } \ofo,
   \end{PDE}
 \end{equation}
with $v_{0} = u_{0} - w(0,\cdot)$ and
with
\[
\displaystyle \widehat f_{2} = f_{2} - \partial_{t} w + \frac{\mu}{\rho_{0}} \Delta w + \frac{\alpha + \mu}{\rho_{0}} \nabla( \operatorname{div} \ w).\]
We can check that $\widehat f_{2}$ belongs to $L^{p}(0,T;L^{q}(\ofo))^{3}$
and  that there exists a constant depending only on $M$ such that
\begin{align} \label{est:f2}
\norm{ \widehat f_{2} }_{L^{p}(0,T;L^{q}(\ofo))^{3}} \leqslant  C \Big( \norm{  f_{2} }_{L^{p}(0,T;L^{q}(\ofo))^{3}} + \norm{ w }_{W^{2,1}_{q,p} (Q_{T}^{F})^{3}} \Big).
\end{align}
Moreover,  since $v_{0} \in B^{2(1-1/p)}_{q,p}(\ofo)^{3}$ with $v_{0} = 0$ on $\partial \ofo$ if $\displaystyle \frac{1}{p} + \frac{1}{2q} < 1$,
we can apply \cite[Theorem 2.3]{DenkHieberPruss07} and deduce that \eqref{eq:v} admits unique solution $v \in W^{2,1}_{q,p}(Q_{T}^{F})^{3}$.

More precisely, here the ellipticity of the interior symbol can be checked since $\rho_{0}(y) \geq \frac{1}{M} > 0$ for all $y \in \ofo,$ $\mu > 0$ and $\alpha + \frac{2}{3} \mu \geqslant 0.$
 The Lopatinsky-Shapiro condition also can be verified (see \cite[Section 6]{Agra}).

This yields that \eqref{com0.1} admits a unique solution $\widetilde u\in W^{2,1}_{q,p}(Q_{T}^{F})^{3}$.
In order to prove estimate \eqref{est:lin-v}, we apply again \cite[Theorem 2.3]{DenkHieberPruss07} on the system
\[
\begin{PDE}
\ds \partial_{t} \widehat v - \frac{\mu}{ \rho_{0}} \Delta \widehat v- \frac{\alpha+\mu}{ \rho_{0}} \nabla (\operatorname{div} \  \widehat v )   = \widehat f_{T}, \quad & \mbox{ in } (0,\widetilde{T}) \times \ofo,  \\
  \widehat v = 0 & \mbox{ on } (0,\widetilde{T}) \times \partial \ofo \\
 \widehat v(0) = v_{0} \quad & \mbox{ in } \ofo.
\end{PDE}
\]
In particular, for any $\widehat f \in L^{p}(0,\widetilde{T};L^{q}(\ofo)^3)$,
there exists a unique solution
$$\widehat v\in L^{p}(0,\widetilde{T};\linebreak W^{2,q}(\ofo)^3) \cap W^{1,p}(0,\widetilde{T},L^{q}(\ofo)^3)$$
of the above system and by the closed graph theorem,
there exists a constant $C_{\widetilde{T}}>0$ such that
\begin{flalign*}
&\norm{ \widehat v }_{L^{p}(0,\widetilde{T};W^{2,q}(\ofo)^3)} + \norm{ \widehat v }_{W^{1,p}(0,\widetilde{T},L^{q}(\ofo)^3)} \leqslant C_{\widetilde{T}} \Big( \norm{ v_{0} }_{B^{2(1-1/p)}_{q,p}(\ofo)^{3}} + \norm{ \widehat f }_{L^{p}(0,\widetilde{T};L^{q}(\ofo)^3)} \Big).&
\end{flalign*}
Then we take
\begin{align*}
\widehat f= \begin{cases}
\widehat f_{2} &  \mbox { if } 0 < t \leqslant T, \\
0 & \mbox{ if }  T < t \leqslant \widetilde{T},
\end{cases}
\end{align*}
and by the uniqueness of the solution, we note that  $\widehat v = v$ for all $t \in [0,T]$. Thus the above estimate, \eqref{est:w} and \eqref{est:f2} yield
\eqref{est:lin-v}.
\end{proof}

Next we consider system \eqref{com0.3}.
\begin{prop} \label{thm:heat}
With the above notation, assume $T\in (0,\widetilde{T}]$, $\vartheta_{0} \in B^{2(1-1/p)}_{q,p}(\ofo)$ and $h \in F^{(1-1/q)/2}_{p,q}(0,T;L^{q}(\partial \ofo)) \cap L^{p}(0,T;W^{1-1/q,q}(\partial \ofo))$
with the compatibility condition
  \begin{align}
  \frac{\partial \vartheta_{0}}{\partial n}= h(0,\cdot) = 0    \quad \mbox{on} \ \partial\ofo  \quad \mbox{if} \quad \displaystyle \frac{1}{p} + \frac{1}{2q} < \frac{1}{2}.
  \end{align}
 Then for any $f_{3} \in L^{p}(0,T;L^{q}(\ofo))$, system \eqref{com0.3} admits a unique strong solution $\widetilde \vartheta\in W^{2,1}_{q,p} (Q_{T}^{F})$. Moreover, there exists a constant $C > 0$, depending only on $M$ and $\widetilde{T}$, such that
\begin{multline} \label{est:heat}
\norm{ \widetilde \vartheta }_{W^{2,1}_{q,p} (Q_{T}^{F})}  \leqslant C \Big( \norm{ \vartheta_{0} }_{B^{2(1-1/p)}_{q,p}(\ofo)} + \norm{ f_{3} }_{L^{p}(0,T;L^{q}(\ofo))} +\norm{ h }_{F^{(1-1/q)/2}_{p,q}(0,T;L^{q}(\partial \ofo))} \\ + \norm{ h }_{L^{p}(0,T;W^{1-1/q,q}(\partial \ofo))} \Big).
\end{multline}
\end{prop}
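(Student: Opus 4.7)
The plan is to apply \cite[Theorem 2.3]{DenkHieberPruss07} directly to the parabolic Neumann boundary value problem \eqref{com0.3} on the larger interval $[0,\widetilde{T}]$, and then to restrict the solution to $[0,T]$ by uniqueness, following the final part of the proof of \cref{thm:v-s}. In contrast with the velocity equation, no preliminary lifting of the boundary data is needed here, since the cited theorem handles inhomogeneous boundary data directly, provided they lie in the trace space $F^{(1-1/q)/2}_{p,q}(0,T;L^{q}(\partial \ofo)) \cap L^{p}(0,T;W^{1-1/q,q}(\partial \ofo))$, which is precisely the space to which $h$ belongs by assumption.

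First, I would verify the structural hypotheses of \cite[Theorem 2.3]{DenkHieberPruss07}. The operator $-\frac{\kappa}{\rho_{0}c_{v}}\Delta$ is uniformly elliptic on $\ofo$: since $\rho_{0}\in W^{1,q}(\ofo)\hookrightarrow C(\overline{\ofo})$ is bounded from below by a positive constant depending on $M$ (see \eqref{ini-l}) and $\kappa, c_{v}>0$, the diffusion coefficient $\kappa/(\rho_{0}c_{v})$ is continuous and bounded between two positive constants. The Lopatinski-Shapiro condition for the Neumann operator $\partial_{n}$ applied to a scalar second order elliptic operator is classical and can be verified as in \cite[Section 6]{Agra}. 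The compatibility condition $\partial_{n}\vartheta_{0}=h(0,\cdot)$ on $\partial \ofo$ is automatic when $\frac{1}{p}+\frac{1}{2q}>\frac{1}{2}$ and is explicitly assumed in the statement when $\frac{1}{p}+\frac{1}{2q}<\frac{1}{2}$.

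Second, to produce a constant independent of $T\in(0,\widetilde{T}]$, I would extend $f_{3}$ by zero on $(T,\widetilde{T})$ and extend $h$ by a bounded extension operator on the relevant Besov and Lizorkin--Triebel spaces on intervals, preserving the vanishing trace $h(0,\cdot)=0$ when this is required. Applying \cite[Theorem 2.3]{DenkHieberPruss07} on $[0,\widetilde{T}]$ to the extended data then provides a unique $\widehat{\vartheta}\in W^{2,1}_{q,p}(Q_{\widetilde{T}}^{F})$ and, via the closed graph theorem, an estimate with a constant $C_{\widetilde{T}}$ depending only on $M$, $\widetilde{T}$, and $\ofo$. Uniqueness forces the restriction of $\widehat{\vartheta}$ to $[0,T]$ to coincide with $\widetilde{\vartheta}$, and the operator norms of the extensions can be chosen uniformly in $T\in(0,\widetilde{T}]$, so \eqref{est:heat} follows.

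The main technical point is producing a $T$-uniform bounded extension of $h$ that preserves both the Lizorkin--Triebel regularity in time and the compatibility condition at $t=0$ when $\frac{1}{p}+\frac{1}{2q}<\frac{1}{2}$. This is handled by standard reflection and cutoff constructions on intervals, which yield $T$-uniform operator bounds precisely because the assumed vanishing trace at $t=0$ provides the compatibility needed for the reflected function to remain in the required Lizorkin--Triebel class.
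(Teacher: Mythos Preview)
Your overall strategy is correct and close to the paper's, but the treatment of the key technical point---the $T$-uniform extension of $h$---is muddled, and the paper handles it by a different and cleaner device.

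The paper first decomposes $\widetilde\vartheta=\widetilde\vartheta_1+\widetilde\vartheta_2$, where $\widetilde\vartheta_1$ carries the source $f_3$ and the initial datum $\vartheta_0$ with homogeneous Neumann data (handled exactly as you describe, by zero-extending $f_3$ forward to $(0,\widetilde T)$), and $\widetilde\vartheta_2$ carries only the inhomogeneous boundary datum $h$ with zero source and zero initial data. For $\widetilde\vartheta_2$ the paper does \emph{not} extend $h$ forward. Instead it extends $h$ by zero \emph{backward} to $(T-\widetilde T,0)$, obtaining $\widehat h$ on the fixed-length interval $(T-\widetilde T,T)$; the zero extension preserves the $F^{(1-1/q)/2}_{p,q}$ and $L^p(W^{1-1/q,q})$ norms exactly (this is where $h(0,\cdot)=0$ in the high-regularity regime, or the absence of a trace in the low-regularity regime, is used). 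Solving on $(T-\widetilde T,T)$ with zero initial data at $t=T-\widetilde T$ gives a solution that is identically zero on $(T-\widetilde T,0)$ by uniqueness and agrees with $\widetilde\vartheta_2$ on $(0,T)$, and the constant comes from the fixed-length interval.

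Your proposal instead extends $h$ forward to $(0,\widetilde T)$ by ``reflection and cutoff'' and asserts $T$-uniform bounds ``precisely because the assumed vanishing trace at $t=0$ provides the compatibility needed for the reflected function.'' That reasoning is off: the condition $h(0,\cdot)=0$ governs compatibility at the \emph{left} endpoint and is what makes a zero extension to $t<0$ admissible; it says nothing about a reflection at $t=T$. A bare reflection/cutoff at $t=T$ on the interval $(0,T)$ does not obviously give bounds independent of $T$ as $T\to 0$. One can repair your route by first zero-extending to $(-\infty,T]$ (this is where $h(0,\cdot)=0$ enters) and then applying a translation-invariant Seeley-type reflection at $t=T$, whose norm is $T$-independent; but this two-step construction is exactly what your sentence obscures. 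The paper's backward-in-time trick sidesteps the forward extension entirely and is the cleaner argument.
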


\begin{proof}
The existence and the regularity results follow from \cite[Theorem 2.3]{DenkHieberPruss07}.

Since we need a constant $C$ in \eqref{est:heat} independent of $T\in (0,\widetilde{T}]$ and this fact is not explicitly stated in \cite{DenkHieberPruss07}, we provide below a short argument showing that the constant $C$ can indeed be chosen to be uniform for $T\in (0,\widetilde{T}]$.  To this aim, we decompose $\widetilde \vartheta$ in the form $\widetilde \vartheta = \widetilde \vartheta_{1} + \widetilde \vartheta_{2}$, where $\widetilde \vartheta_{1}$ solves
\begin{equation} \label{eq:heat1}
  \begin{PDE}
 \ds \partial_{t}\widetilde \vartheta_{1} - \frac{\kappa}{\rho_{0}c_{v}}\Delta \widetilde \vartheta_{1}  \   = f_{3} \quad & \mbox{ in } (0,T) \times \ofo,   \\
 \ds  \frac{\partial \widetilde \vartheta_{1}}{\partial n} = 0  \quad & \mbox{ on } (0,T) \times \partial \ofo, \\
\ds \widetilde \vartheta_{1}(0) = \vartheta_{0}  \quad & \mbox{ in } \ofo,
  \end{PDE}
\end{equation}
and $\widetilde \vartheta_{2}$ solves
\begin{equation} \label{eq:heat2}
  \begin{PDE}
  \ds \partial_{t}\widetilde \vartheta_{2} - \frac{\kappa}{\rho_{0}c_{v}}\Delta \widetilde \vartheta_{2}  \   = 0 \quad & \mbox{ in } (0,T) \times \ofo,   \\
 \ds \frac{\partial \widetilde \vartheta_{2}}{\partial n} = h \quad   & \mbox{ on } (0,T) \times \partial \ofo, \\
\ds \widetilde \vartheta_{2}(0) = 0  \quad & \mbox{ in } \ofo.
  \end{PDE}
\end{equation}
Proceeding as in the proof of \cref{thm:v-s}, we first obtain
\begin{align}
\norm{ \widetilde \vartheta_{1} }_{W^{2,1}_{q,p} (Q_{T}^{F})}  \leqslant C \Big( \norm{ \vartheta_{0} }_{B^{2(1-1/p)}_{q,p}(\ofo)} + \norm{ f_{3} }_{L^{p}(0,T;L^{q}(\ofo))} \Big),
\end{align}
where the constant $C$ may depend  on $\widetilde{T}$ but  is independent of $T$. Let us set
\begin{align*}
 \widehat h = \begin{cases}
 h &  \mbox { if } 0 < t \leqslant T, \\
 0 & \mbox{ if }   T - \widetilde{T} < t \leqslant 0.
\end{cases}
\end{align*}
We first verify that
$$
\widehat h  \in F^{(1-1/q)/2}_{p,q}(T-\widetilde{T},T;L^{q}(\partial \ofo))
\cap L^{p}(T-\widetilde{T},T;W^{1-1/q,q}(\partial \ofo)).$$
Obviously  $\widehat h $  belongs to $L^{p}(T-\widetilde{T},T;W^{1-1/q,q}(\partial \ofo))$. The fact that $\widehat h$ belongs to $F^{(1-1/q)/2}_{p,q}(T-\widetilde{T},T;L^{q}(\partial \ofo))$ follows from  \cite[Remark 2, Section 3.4.3, p.211]{Triebel}. Moreover
\begin{multline*}
 \norm{ \widehat h }_{F^{(1-1/q)/2}_{p,q}(T-\widetilde{T},T;L^{q}(\partial \ofo))} + \norm{ \widehat h }_{L^{p}(T-\widetilde{T},T;W^{1-1/q,q}(\partial \ofo))} \\
 =  \norm{ h }_{F^{(1-1/q)/2}_{p,q}(0,T;L^{q}(\partial \ofo))}   + \norm{ h }_{L^{p}(0,T;W^{1-1/q,q}(\partial \ofo))}.
\end{multline*}
We consider the system
\begin{equation} \label{eq:heat3}
  \begin{PDE}
 \partial_{t}\widehat \vartheta_{2} - \frac{\kappa}{\rho_{0}c_{v}}\Delta \widehat \vartheta_{2}  \   = 0 \quad & \mbox{ in } (T-\widetilde{T},T) \times \ofo,   \\
  \frac{\partial \widehat \vartheta_{2}}{\partial n} = \widehat h  \quad & \mbox{ on } (T-\widetilde{T},T) \times \partial \ofo, \\
\widehat \vartheta_{2}(T-\widetilde{T}) = 0  \quad & \mbox{ in } \ofo.
  \end{PDE}
\end{equation}
Note that, $\widehat \vartheta_{2} = 0$ for all $t\in [T- \widetilde{T},0]$ and  $\widehat \vartheta_{2} = \widetilde \vartheta_{2} $ for all $t \in [0,T]$. Therefore, we have
\begin{align*}
  & \; \norm{  \widetilde \vartheta_{2} }_{L^{p}(0,T;W^{2,q}(\ofo))} +  \norm{  \widetilde \vartheta_{2}  }_{W^{1,p}(0,T,L^{q}(\ofo))}  \\
= &\; \norm{ \widehat \vartheta_{2} }_{L^{p}(T-\widetilde{T},T;W^{2,q}(\ofo))} + \norm{ \widehat \vartheta_{2} }_{W^{1,p}(T-\widetilde{T},T,L^{q}(\ofo))}  \\
\leqslant & \; C_{\widetilde{T}} \Big( \norm{ \widehat h }_{F^{(1-1/q)/2}_{p,q}(T-\widetilde{T},T;L^{q}(\partial \ofo))} + \norm{ \widehat h }_{L^{p}(T-\widetilde{T},T;W^{1-1/q,q}(\partial \ofo))} \Big)  \\
\leqslant & \; C_{\widetilde{T}} \Big( \norm{ h }_{F^{(1-1/q)/2}_{p,q}(0,T;L^{q}(\partial \ofo))}  + \norm{ h }_{L^{p}(0,T;W^{1-1/q,q}(\partial \ofo))}  \Big).
\end{align*}
This completes the proof of the proposition.
\end{proof}

Combining \cref{thm:v-s} and \cref{thm:heat}, we obtain the following result
\begin{thm} \label{thm1}
    Let $\widetilde{T}$ be an arbitrary fixed given time. Let $1 < p < \infty$ and $1 < q < \infty$ satisfying the conditions $\ds \frac{1}{p} + \frac{1}{2q} \neq 1$ and $\ds \frac{1}{p} + \frac{1}{2q} \neq \frac12$. Let  $(\rho_{0},u_{0}, \vartheta_{0},\ell_{0},\omega_{0})$ satisfy the assumptions \eqref{ini-l}.
    Then for any  $(f_{1},f_{2},f_{3}, h,  g_{1},g_{2}) \in \mathcal{B}_{T,p,q}$ and $T \in (0,\widetilde{T})$ the system \eqref{com0.0}--\eqref{com0.3} admits a unique solution $(\widetilde \rho, \widetilde u,\widetilde \vartheta,\widetilde \ell,\widetilde \omega) \in \mathcal{S}_{T,p,q}$
    and there exists a constant $C > 0$ depending  on $p,q,M,\widetilde{T}$  and independent of $T$ such that
\begin{equation} \label{est:thm1}
\norm{ (\widetilde \rho,\widetilde u,\widetilde \vartheta,\widetilde \ell,\widetilde \omega) }_{S_{T,p,q}}
  \leqslant C  \Big(  \norm{ (\rho_{0}, u_{0}, \vartheta_{0}, \ell_{0},\omega_{0}) }_{\mathcal{I}_{p,q}}  +  \norm{ (f_{1},f_{2}, f_{3}, h, g_{1},g_{2}) }_{\mathcal{B}_{T,p,q}} \Big).
\end{equation}
\end{thm}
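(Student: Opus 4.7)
The plan is to exploit the triangular cascade structure of the linear system \eqref{com0.0}--\eqref{com0.3}: the subsystems \eqref{com0.2}, \eqref{com0.1}, \eqref{com0.3} and \eqref{com0.0} can be solved in sequence so that each step depends only on quantities already controlled, after which the corresponding norm estimates are summed.

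First I would solve the ODE subsystem \eqref{com0.2} for $(\widetilde\ell,\widetilde\omega)$. Since $m>0$ and $J(0)$ is invertible, direct integration in time produces a unique solution in $W^{1,p}(0,T;\RR^3)^2$ together with \eqref{lw}, whose constant is bounded uniformly for $T\in (0,\widetilde T]$. With $(\widetilde\ell,\widetilde\omega)$ in hand, \cref{thm:v-s} applies to \eqref{com0.1}: the hypothesis \eqref{ini-l} provides the regularity of $u_0$ and the compatibility $u_0=\ell_0+\omega_0\times y$ on $\partial\oso$ needed in the subcritical regime $\tfrac1p+\tfrac1{2q}<1$, so we obtain $\widetilde u\in W^{2,1}_{q,p}(Q_T^F)^3$ with the bound \eqref{est:lin-v}. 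Analogously, \cref{thm:heat} applied to \eqref{com0.3} gives $\widetilde\vartheta\in W^{2,1}_{q,p}(Q_T^F)$ with \eqref{est:heat}; the required compatibility $\partial_n\vartheta_0=h(0,\cdot)=0$ in the case $\tfrac1p+\tfrac1{2q}<\tfrac12$ is already encoded in $\mathcal{I}^{cc}_{p,q}$ and in the definition \eqref{com0.5} of $\mathcal{B}_{T,p,q}$.

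The remaining density equation \eqref{com0.0} is, once $\widetilde u$ is known, a pointwise-in-$y$ ODE in $t$ with values in $W^{1,q}(\ofo)$, namely $\partial_t\widetilde\rho=f_1-\rho_0\,\div\widetilde u$ with $\widetilde\rho(0,\cdot)=\rho_0$. Integrating in time and using that $\rho_0\in W^{1,q}(\ofo)\cap C(\overline{\ofo})$ acts as a pointwise multiplier on $W^{1,q}(\ofo)$ under the standing hypothesis \eqref{ini-l}, one obtains $\widetilde\rho\in W^{1,p}(0,T;W^{1,q}(\ofo))$ together with \eqref{com0.4}, uniformly in $T\in (0,\widetilde T]$ because the integral is taken over an interval of length at most $\widetilde T$. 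Summing \eqref{lw}, \eqref{est:lin-v}, \eqref{est:heat} and \eqref{com0.4} yields \eqref{est:thm1}, and uniqueness follows from the uniqueness at each stage of the cascade.

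The main technical point, and the reason the preceding propositions were formulated with such care, is the $T$-uniformity of the constant $C$: nothing in the cascade itself introduces a $T$-dependence, but a direct application of maximal regularity on $(0,T)$ would, in general, produce constants blowing up as $T\to 0^+$ or as $T\to\widetilde T$. The extension-by-zero arguments inside the proofs of \cref{thm:v-s} and \cref{thm:heat}, which reduce the problem on $(0,T)$ to one posed on a fixed interval of length $\widetilde T$, were designed precisely to absorb this difficulty, so at the level of \cref{thm1} the assembly is essentially an algebraic combination of the four estimates.
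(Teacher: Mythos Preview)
Your proposal is correct and follows essentially the same cascade argument as the paper: solve \eqref{com0.2} for $(\widetilde\ell,\widetilde\omega)$, then apply \cref{thm:v-s} to \eqref{com0.1}, \cref{thm:heat} to \eqref{com0.3}, integrate \eqref{com0.0} in time using \eqref{com0.4}, and sum the resulting estimates. The paper in fact does not write out a separate proof for \cref{thm1}, stating only that it follows by combining \cref{thm:v-s} and \cref{thm:heat} with the preceding observations \eqref{lw} and \eqref{com0.4}; your write-up makes this assembly explicit and correctly identifies the $T$-uniformity of the constants as the point absorbed by the extension arguments inside those propositions.
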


\section{Estimating the Nonlinear Terms} \label{sec:nonlin-terms-loc}

In order to prepare the forthcoming fixed point argument, we provide
in this section estimates of
$\mathcal{F}_1,\ \mathcal{F}_2,\ \mathcal{F}_3, \mathcal{G}_1,
\mathcal{G}_2$,
$\mathcal{H}_F$ and $\mathcal{H}_S$ defined in \eqref{F1}-\eqref{G2}
where $(f_{1},f_{2},f_{3}, h, g_{1},g_{2})$ are given and where
$(\widetilde \rho, \widetilde u,\widetilde \vartheta,\widetilde
\ell,\widetilde \omega)$
is the corresponding solution of \eqref{com0.0}---\eqref{com0.3}
given by Theorem~\ref{thm1}.

\medskip

Assume $2 < p < \infty$ and $3 < q < \infty$ satisfy
$\ds \frac{1}{p} + \frac{1}{2q} \neq \frac12$. Let $p'$
denote the conjugate of $p$, i.e.,
$\frac{1}{p} + \frac{1}{p'} = 1$. We will frequently use the following immediate consequences of H\"older's inequality.
\begin{align}
\norm{ f }_{L^{p}(0,T)} \leqslant T^{\nicefrac{1}{p} - \nicefrac{1}{r}} \norm{ f }_{L^{r}(0,T)}, \qquad &\mbox{ for all } f \in L^{r}(0,T), \ r > p,  \label{est:lplr} \\
 \norm{ f }_{L^{\infty}(0,T)} \leqslant T^{\nicefrac{1}{p'}} \norm{ f }_{W^{1,p}(0,T)}, \qquad &\mbox{ for all } f \in W^{1,p}(0,T), f(0) = 0. \label{est:liw1p}
\end{align}
In the following, when no confusion is possible, we will use the notation
\[
\norm{  \cdot  }_{W^{r,p}(0,T;W^{s,q})} = \norm{ \cdot }_{W^{r,p}(0,T;W^{s,q}(\ofo))}.
\]
We next recall three estimates which play an essential role in the
remaining part of this section. For the first two estimates we refer
to the relevant literature, whereas for the third one we
provide a short proof.

\begin{prop} \cite[Lemma 4.2]{ShibataMurata16} \label{prop:LiBesov}
  Let $1 < p,q < \infty$ and $T$ be any positive number. Let $\Omega$
  be a smooth domain in $\mathbb{R}^{n}$. Then for any
  $u \in W^{2,1}_{q,p}((0,T)\times \Omega)$,
\begin{align} \label{eq:u-lib}
\sup_{t \in (0,T)} \norm{ u(t) }_{B^{2(1-\nicefrac{1}{p})}_{q,p}(\Omega)} \leqslant C \left( \norm{ u(0) }_{B^{2(1-\nicefrac{1}{p})}_{q,p}(\Omega)} + \norm{ u }_{W^{2,1}_{q,p}((0,T)\times \Omega)}\right),
\end{align}
where the constant $C$ is independent of $T$.
\end{prop}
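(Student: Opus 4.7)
The proposition is the standard trace/embedding theorem for the maximal regularity space $W^{2,1}_{q,p}$, and the only genuine subtlety is to obtain a constant $C$ independent of the time horizon $T$. The plan is to reduce it to the classical result on the half-line $(0,\infty)$,
\[
W^{2,1}_{q,p}((0,\infty) \times \Omega) \hookrightarrow BC([0,\infty); B^{2(1-\nicefrac{1}{p})}_{q,p}(\Omega)),
\]
which can be found in Amann or Triebel, and the job is therefore to build an extension operator whose norm does not blow up with $T$.

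First, I would split $u = u^{\sharp} + v$ as follows. Let $u^{\sharp}$ be a fixed ``lifting'' of the initial datum $u(0,\cdot)$ to the half-line, chosen so that
\[
u^{\sharp}(0,\cdot) = u(0,\cdot) \quad \text{and} \quad \|u^{\sharp}\|_{W^{2,1}_{q,p}((0,\infty)\times \Omega)} \leqslant C \|u(0,\cdot)\|_{B^{2(1-\nicefrac{1}{p})}_{q,p}(\Omega)},
\]
where $C$ depends only on $p,q,\Omega$. Such a lifting exists precisely because the trace map is surjective onto the Besov space; one concrete choice is to take $u^{\sharp}$ to be the solution on $(0,\infty)$ of the heat equation with initial value $u(0,\cdot)$ and a suitable boundary condition. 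The remainder $v = u - u^{\sharp}$ then satisfies $v(0,\cdot) = 0$ on $\Omega$ and $\|v\|_{W^{2,1}_{q,p}((0,T)\times\Omega)} \leqslant \|u\|_{W^{2,1}_{q,p}} + C\|u(0)\|_{B^{2(1-\nicefrac{1}{p})}_{q,p}}$; the first term here is the one whose bound must be $T$-uniform, but notice that we can now extend $v$ across $t=0$ by zero without breaking the regularity (this is where the vanishing initial trace is crucial).

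Second, I would extend $v$ across $t=T$ by a higher-order Hestenes reflection: set
\[
(Ev)(t,\cdot) = \sum_{k=1}^{m} \lambda_k\, v\bigl(T + k(T-t),\cdot\bigr), \qquad t \in (T, T + T/m),
\]
with the finite collection $\{\lambda_k\}$ chosen so that $Ev$ and $\partial_t(Ev)$ match $v$ and $\partial_t v$ at $t=T$. Multiplying by a fixed (i.e., $T$-independent) smooth cutoff $\chi(t-T)$ supported near $T$ turns this into an extension to $(0,\infty)$ (after combining with the zero extension on the left). Because the Hestenes reflection changes the $L^p$-in-time and $W^{2,q}$-in-space norms only by constants depending on $\{\lambda_k\}$, and the cutoff $\chi$ is independent of $T$, the resulting extension operator satisfies $\|Ev\|_{W^{2,1}_{q,p}((0,\infty)\times\Omega)} \leqslant C \|v\|_{W^{2,1}_{q,p}((0,T)\times\Omega)}$ with $C$ independent of $T$.

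Finally, applying the classical half-line embedding to $u^{\sharp} + Ev$ yields
\[
\sup_{t \in (0,T)} \|u(t)\|_{B^{2(1-\nicefrac{1}{p})}_{q,p}(\Omega)} \leqslant C\bigl(\|u(0)\|_{B^{2(1-\nicefrac{1}{p})}_{q,p}(\Omega)} + \|u\|_{W^{2,1}_{q,p}((0,T)\times\Omega)}\bigr).
\]
The hard part is the $T$-uniformity, which is handled precisely by the splitting above: once the initial trace is absorbed into $u^{\sharp}$, the remainder has vanishing initial value, and the only remaining extension operation (Hestenes reflection plus a $T$-independent cutoff) has a $T$-uniform operator norm.
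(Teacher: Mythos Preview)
The paper does not prove this proposition; it simply quotes \cite[Lemma 4.2]{ShibataMurata16}. Your overall strategy---split off a lifting $u^\sharp$ of the initial trace, reduce to the case of zero initial data, extend, and invoke the half-line embedding---is the standard one and is essentially what that reference does.

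There is, however, a gap in your right-hand extension step when $T$ is small. The Hestenes reflection you write down is only defined for $t\in(T,T+T/m)$, an interval of length $T/m$ that shrinks with $T$. A genuinely $T$-independent cutoff $\chi(\cdot-T)$ has a fixed support width and will overshoot the reflected region once $T$ is small enough, leaving your extension undefined there. If instead you rescale the cutoff to fit the interval, its time derivative picks up a factor comparable to $1/T$, and the $W^{1,p}$-in-time bound ceases to be uniform.

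The cleanest repair is to drop the right-hand extension altogether. Since $v(0,\cdot)=0$, the extension of $v$ by zero to $(-\infty,T)$ belongs to $W^{2,1}_{q,p}((-\infty,T)\times\Omega)$ with exactly the same norm; the time reversal $t\mapsto T-t$ then lands you in $W^{2,1}_{q,p}((0,\infty)\times\Omega)$ with norm unchanged, and the half-line embedding gives
\[
\sup_{t\in(0,T)}\|v(t)\|_{B^{2(1-1/p)}_{q,p}}\leqslant C\,\|v\|_{W^{2,1}_{q,p}((0,T)\times\Omega)}
\]
with $C$ independent of $T$. Alternatively, if you want to keep the Hestenes reflection, first do the zero extension on the left so that $v$ is defined on all of $(-\infty,T)$; then the formula $\sum_k\lambda_k\,v(T+k(T-t))$ makes sense for every $t>T$ and vanishes identically for $t>2T$, so no cutoff is needed and the norm bound is manifestly $T$-uniform.
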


To state the second  estimate, we use the Lizorkin-Triebel space
$F^{s}_{p,q}(0,T;X)$ defined in \eqref{num_LIZ}.

\begin{prop} \label{prop:Ntrace}\cite[Proposition 6.4]{DenkHieberPruss07}
Let $1 < p,q < \infty$  and $T$ be any positive number. Let $\Omega$ be a smooth domain in $\mathbb{R}^{n}$. Then for any $u \in W^{2,1}_{q,p}((0,T)\times \Omega)$,  $\nabla u |_{\partial \Omega}$ belongs to $ F^{(1-\nicefrac{1}{q})/2}_{p,q}(0,T;L^{q}(\partial \Omega)) \cap L^{p}(0,T;W^{1-\nicefrac{1}{q},q}(\partial \Omega))$. Moreover,
\begin{align} \label{est:Ntrace}
\bignorm{ \nabla u\cdot n}_{F^{(1-\nicefrac{1}{q})/2}_{p,q}(0,T;L^{q}(\partial \Omega)) \cap L^{p}(0,T_;W^{1-\nicefrac{1}{q},q}(\partial \Omega))} \leqslant C \left( \norm{ u(0) }_{B^{2(1-\nicefrac{1}{p})}_{q,p}(\Omega)} + \norm{ u }_{W^{2,1}_{q,p}((0,T)\times \Omega)}\right),
\end{align}
where the constant $C$ is independent of time $T$.
\end{prop}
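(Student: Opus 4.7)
\emph{Proof plan.} The statement is cited from \cite[Proposition 6.4]{DenkHieberPruss07}, and my plan is to verify the two boundary-regularity components separately and to handle the $T$-independence of the constant by extension. The spatial part is the easier one: for almost every $t$ we have $u(t,\cdot) \in W^{2,q}(\Omega)$, so $\nabla u(t,\cdot) \in W^{1,q}(\Omega)^{n}$, and the classical trace theorem yields $\nabla u(t,\cdot)\cdot n \in W^{1-1/q,q}(\partial\Omega)$ with the standard estimate $\|\nabla u(t)\cdot n\|_{W^{1-1/q,q}(\partial\Omega)} \leqslant C \|u(t)\|_{W^{2,q}(\Omega)}$. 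Raising to the $p$-th power and integrating over $(0,T)$ gives the desired $L^{p}(0,T; W^{1-1/q,q}(\partial\Omega))$ bound with a constant depending only on $\Omega$ and $q$.

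The temporal Lizorkin-Triebel part is the delicate one. I would localize near $\partial \Omega$ via a partition of unity, flatten the boundary by a diffeomorphism, and reduce matters to the half-space $\mathbb{R}^{n}_{+}$ with boundary $\{x_{n}=0\}$. On the half-space, the parabolic scaling $\tau \sim |\xi|^{2}$ underlying the class $W^{2,1}_{q,p}$ means that the normal derivative $\partial_{x_{n}}$ carries the same anisotropic weight as $\partial_{t}^{1/2}$. Since the spatial trace costs $1/q$ of a normal derivative, it correspondingly costs $1/(2q)$ of a time derivative in the Lizorkin-Triebel scale, yielding the index $(1-1/q)/2$. The formal proof goes through the Fourier multiplier characterization of anisotropic Triebel-Lizorkin spaces, using the vector-valued Mikhlin-Marcinkiewicz theorem, or equivalently through the $H^{\infty}$-calculus of the parabolic operator combined with trace interpolation identities (see \cite{DenkHieberPruss07,wei05}).

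Finally, to obtain a constant independent of $T$, I would extend $u$ by higher-order reflection across $t=0$ (after subtracting a lifting of the initial value $u(0,\cdot) \in B^{2(1-1/p)}_{q,p}(\Omega)$ to make the reflection compatible) and a cutoff near $t=T$, producing $\widetilde u \in W^{2,1}_{q,p}(\mathbb{R}\times\Omega)$ whose norm is controlled by the norm of $u$ plus $\|u(0,\cdot)\|_{B^{2(1-1/p)}_{q,p}(\Omega)}$, with a constant depending only on $\Omega$, $p$, $q$. Applying the trace estimate on the whole line --- where it is $T$-independent tautologically --- and restricting back to $(0,T)$ gives the stated bound. The main obstacle is the correct identification of the secondary microscopic index $q$ in the Lizorkin-Triebel space, which does not follow from real interpolation alone and requires the vector-valued Fourier-analytic multiplier theory; this is where the argument depends essentially on the machinery developed in \cite{DenkHieberPruss07}.
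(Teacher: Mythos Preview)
The paper does not supply its own proof of this proposition: it is one of the two estimates that the authors merely cite from the literature (see the sentence preceding \cref{prop:LiBesov}: ``For the first two estimates we refer to the relevant literature\ldots''), with \cite[Proposition~6.4]{DenkHieberPruss07} as the source. So there is nothing in the paper to compare your argument against.

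That said, your sketch is a faithful outline of how the cited result is obtained. The spatial trace bound is indeed the elementary part, and the Lizorkin--Triebel time regularity with secondary index $q$ is precisely the content of the anisotropic trace theory in \cite{DenkHieberPruss07}, which rests on the vector-valued Fourier multiplier machinery you mention. Your extension argument for $T$-independence is also the standard device and explains why $\norm{u(0)}_{B^{2(1-1/p)}_{q,p}}$ appears on the right-hand side. One small caution: a naive cutoff near $t=T$ would not preserve $W^{1,p}$ in time with a $T$-independent constant; the cleaner route is to extend only across $t=0$ (after lifting the initial datum) to obtain a function on $(-\infty,T)$, or equivalently to work on the half-line $(0,\infty)$ from the start, which is how the $T$-independence is typically achieved in this setting.
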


The third one of the estimates mentioned above is given in the following result.

\begin{prop} \label{prop:TL-product} Let $U_{1},U_{2}$ and $U_{3}$ be
  three Banach spaces and $\Phi: U_{1} \times U_{2} \to U_{3}$ a
  bounded bilinear map. Let us assume that
  $f \in F^{s}_{p,q}(0,T;U_{1})$ and $g \in W^{1,p}(0,T;U_{2})$ for some
  $s \in (0, 1)$, $p > 2$ and $q >3$. Let us assume that $g(0) = 0$.
  If $s + \frac{1}{p} < 1$, then we have
\begin{align} \label{lem6.15est}
\norm{ \Phi(f,g) }_{F^{s}_{p,q}(0,T;U_{3})} \leqslant C T^{\delta}    \norm{ g }_{W^{1,p}(0,T;U_{2})} \norm{ f }_{F^{s}_{p,q}(0,T;U_{2})},
\end{align}
for some positive constant $\delta$ depending only on $p,q$ and $s$
and the constant $C$ is independent of time $T$.
\end{prop}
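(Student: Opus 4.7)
\smallskip
\noindent\textbf{Proof proposal.} The plan is a direct computation from the integral definition of the seminorm in \eqref{num_LIZ}, using bilinearity to separate the $f$–increment from the $g$–increment. Write
\[
\Phi(f,g)(t+h)-\Phi(f,g)(t)
  = \Phi\bigl(f(t+h)-f(t),\,g(t+h)\bigr) + \Phi\bigl(f(t),\,g(t+h)-g(t)\bigr),
\]
and set $M=\|\Phi\|$. Two elementary bounds do all the work. Since $g(0)=0$ and $p>2$, H\"older's inequality gives
\[
\|g(t)\|_{U_2} \leqslant t^{1/p'}\|g'\|_{L^p(0,t;U_2)} \leqslant T^{1/p'}\|g\|_{W^{1,p}(0,T;U_2)},
\]
and the same argument applied to the increment yields
\[
\|g(t+h)-g(t)\|_{U_2} \leqslant h^{1/p'}\|g\|_{W^{1,p}(0,T;U_2)}.
\]

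\smallskip

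Insert the first of these two bounds into the first piece above: one obtains, after integrating in $h$ with the weight $h^{-1-sq}$ and then in $t$ with exponent $p/q$, a contribution
\[
M\,T^{1/p'}\|g\|_{W^{1,p}(0,T;U_2)}\; |f|_{F^{s}_{p,q}(0,T;U_1)}.
\]
For the second piece the $g$–increment absorbs a factor $h^{q/p'}$, which makes the weighted integral
\[
\int_0^{T-t} h^{-1-sq}\,h^{q/p'}\,\mathrm{d}h
 \;=\; \int_0^{T-t} h^{q(1/p'-s)-1}\,\mathrm{d}h
\]
converge \emph{precisely} because the hypothesis $s+1/p<1$ ensures $q(1/p'-s)>0$; its value is bounded by $C\,T^{q(1/p'-s)}$. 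Taking the $L^{p/q}$–norm in $t$ then produces
\[
C\,T^{1/p'-s}\,\|g\|_{W^{1,p}(0,T;U_2)}\;\|f\|_{L^{p}(0,T;U_1)}.
\]
Finally the $L^p$–part of the Lizorkin–Triebel norm of $\Phi(f,g)$ is controlled by $M\,T^{1/p'}\|g\|_{W^{1,p}}\|f\|_{L^p}$ via the $L^\infty$–bound on $g$. Summing the three contributions and using $T^{1/p'}\leqslant T^{1/p'-s}$ for $T$ bounded (which is the regime of interest; alternatively one simply writes $\delta=\min(1/p',1/p'-s)$), we conclude \eqref{lem6.15est} with $\delta = 1-1/p-s>0$.

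\smallskip

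The main subtlety, and indeed the only point where the hypothesis $s+1/p<1$ enters, is the convergence at $h=0$ of the singular integral $\int_0^{T-t} h^{q(1/p'-s)-1}\,\mathrm{d}h$ arising from the second piece: without this strict inequality the $g$–increment, which only gains a H\"older $1/p'$, cannot compensate the weight $h^{-1-sq}$ of the Triebel seminorm. Everything else reduces to H\"older's inequality and to the elementary $L^\infty$ and H\"older bounds on $g$ that come from $g(0)=0$. Note that the constants produced are independent of $T$ because every use of H\"older's inequality is on the interval $(0,T)$ itself; the $T$–dependence is made explicit by the exponents $1/p'$ and $1/p'-s$.
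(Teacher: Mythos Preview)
Your proof is correct and follows essentially the same approach as the paper: the same bilinear splitting of the increment, the same $L^\infty$ bound $\|g\|_{L^\infty}\leqslant T^{1/p'}\|g\|_{W^{1,p}}$ from $g(0)=0$ for the $f$-increment piece, and the same H\"older estimate $\|g(t+h)-g(t)\|\leqslant h^{1/p'}\|g\|_{W^{1,p}}$ for the $g$-increment piece, with the hypothesis $s+1/p<1$ used exactly to ensure convergence of the resulting $h$-integral. Your identification of $\delta=1/p'-s$ is the explicit value implicit in the paper's argument.
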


\begin{proof}
From the boundedness of $\Phi$ and \eqref{est:liw1p} we infer
\begin{align}
\norm{ \Phi(f,g) }_{L^{p}(0,T;U_{3})} \leqslant C \norm{ f }_{L^{p}(0,T;U_{1})} \norm{ g }_{L^{\infty}(0,T;U_{2})} \leqslant C T^{\nicefrac{1}{p'}} \norm{ f }_{L^{p}(0,T;U_{1})} \norm{ g }_{W^{1,p}(0,T;U_{2})},
\end{align}
since $g(0) = 0$. Using  again the boundedness of $\Phi$,
\begin{align*}
   \; |\Phi(f,g)|_{F^{s}_{p,q}(0,T;U_{3})}^{p}
= & \; \int_{0}^{T}  \left( \int_{0}^{T-t} h^{-1 - sq}\norm{ \Phi(f(t+h), g(t+h)) - \Phi(f(t),g(t)) }^{q}_{U_{3}} \ dh \right)^{\nicefrac{p}{q}} \ dt  \\
\leqslant & \; C_{p,q} \int_{0}^{T} \left(\int_{0}^{T-t} h^{-1 - sq} \norm{ (f(t+h) - f(t) }^{q}_{U_{1}} \norm{ g(t+h) }^{q}_{U_{2}}   \  dh \right)^{\nicefrac{p}{q}} \ dt \\
          &  +  \; C_{p,q} \int_{0}^{T} \left( \int_{0}^{T-t} h^{-1 - sq} \norm{ f(t) }^{q}_{U_{1}} \norm{ g(t+h) - g(t) }^{q}_{U_{2}} \ dh \right)^{\nicefrac{p}{q}} \ dt \\
= & \; I_{1} + I_{2}.
\end{align*}
We estimate $I_{1}$ using  \eqref{est:liw1p}
\[
I_{1} \leqslant C_{p,q} \norm{ g }_{L^{\infty}(0,T;U_{2})}^{p}  | f|_{F^{s}_{p,q}(0,T;U_{1})}^{p}
\leqslant C_{p,q} T^{\nicefrac{p}{p'}} \norm{ g }_{W^{1,p}(0,T;U_{2})}^{p} \norm{ f }^{p}_{F^{s}_{p,q}(0,T;U_{1})}.
\]
Since $g \in W^{1,p}(0,T;U_2)$, by using H\"older's inequality we have
\[
\norm{ g(t+h,\cdot) - g(t,\cdot) }_{U_{2}} \leqslant  h^{\nicefrac{1}{p'}}\norm{ g }_{W^{1,p}(0,T;U_{2})}, \mbox{ for all } h \in  (0, T -t),  t \in (0,T).
\]
Using the above estimate and the fact that $0 < s + \nicefrac{1}{p} < 1$, we get
\begin{align*}
I_{2} &\leqslant C_{p,q} \norm{ g }_{W^{1,p}(0,T;U_{2})}^{p}\int_{0}^{T} \norm{ f(t) }_{U_{1}}^{p} \left(\int_{0}^{T-t} h^{-1 -sq} \ h^{\nicefrac{q}{p'}}\ dh  \right)^{\nicefrac{p}{q}} \ dt \\
& \leqslant C_{p,q} \norm{ g }_{W^{1,p}(0,T;U_{2})}^{p} \int_{0}^{T} \norm{ f(t) }_{U_{1}}^{p} \left( \frac{(T-t)^{q(1 - \nicefrac{1}{p} - s)}}{q(1 - \nicefrac{1}{p} - s)}\right)^{\nicefrac{p}{q}} \ dt \\
& \leqslant C_{p,q,s} T^{p(1 - \nicefrac{1}{p} - s)}   \norm{ g }_{W^{1,p}(0,T;U_{2})}^{p}  \norm{ f }^{p}_{L^{p}(0,T;U_{1})}.
\end{align*}
Combining the above estimates, we obtain \eqref{lem6.15est}.
\end{proof}

Our aim is to estimate the non linear terms in \eqref{F1}-\eqref{G2}:
\begin{prop} \label{prop:estimate}
Let $2 < p < \infty$ and $3 < q < \infty$ satisfying
  the condition
  $ \frac{1}{p} + \frac{1}{2q} \neq \frac12$. Let
  $(\rho_{0},u_{0}, \vartheta_{0},\ell_{0},\omega_{0}) \in
  \mathcal{I}^{cc}_{p,q}$
  such that \eqref{ini-ball} holds. There exist $\widetilde{T}<1$, a constant $\delta > 0$ depending only on $p$ and $q$,
  and a constant $C > 0$ depending only on $p,q,M,\widetilde{T}$
  such that for $T\in (0,\widetilde{T}]$
  and for $(f_{1},f_{2},f_{3}, h,  g_{1},g_{2}) \in \mathcal{B}_{T,p,q}$ satisfying
  $$
 \| (f_{1},f_{2},f_{3}, h,  g_{1},g_{2}) \|_{\mathcal{B}_{T,p,q}} \leq 1,
$$
  the solution of $(\widetilde \rho, \widetilde u, \widetilde \vartheta, \widetilde
  \ell, \widetilde \omega ) \in \mathcal{S}_{T,p,q}$
  of \eqref{com0.0}---\eqref{com0.3} verifies 
    \begin{equation*}
  \left\| (\mathcal{F}_{1},\mathcal{F}_{2},\mathcal{F}_{3}, \mathcal{H},  \mathcal{G}_{1}, \mathcal{G}_{2}) \right\|_{\mathcal{B}_{T,p,q}} \leq CT^{\delta}.
  \end{equation*}
\end{prop}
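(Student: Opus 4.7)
The overall plan is to extract a factor $T^\delta$ from every product term appearing in \eqref{F1}--\eqref{G2} by isolating a ``small'' factor which vanishes at $t=0$ (typically $\widetilde\rho-\rho_0$, $Q-I_3$, $Z-I_3$, or $ZQ-I_3$) and combining it with a ``bounded'' factor controlled by the a priori estimate from \cref{thm1} under the hypothesis $\|(f_1,\dots,g_2)\|_{\mathcal{B}_{T,p,q}}\le 1$. Throughout we use the Sobolev embedding $W^{1,q}(\ofo)\hookrightarrow L^\infty(\ofo)$ (since $q>3$), which makes $W^{1,q}(\ofo)$ a Banach algebra, Hölder's inequality in time, the consequences \eqref{est:lplr}--\eqref{est:liw1p} of zero initial value, and \cref{prop:LiBesov}--\cref{prop:Ntrace} for the non-trivial temporal traces.

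\textbf{Step 1 (geometric smallness).} From $\dot Q = QA(\widetilde\omega)$, $Q(0)=I_3$ and the bound $\|\widetilde\omega\|_{W^{1,p}(0,T)}\le C$, we deduce $Q-I_3\in W^{1,p}(0,T)$ vanishing at $t=0$, so \eqref{est:liw1p} gives $\|Q-I_3\|_{L^\infty}\le CT^{1/p'}$ and similarly $Q-I_3\in W^{1,p}(0,T)$ with norm $\le CT^\delta$. Differentiating \eqref{Jacobi} in $y$, one has $\nabla X - I_3 = \int_0^t Q\,\nabla(Q\widetilde u)\,ds$, hence
\begin{align*}
\|\nabla X - I_3\|_{L^\infty(0,T;W^{1,q})} \le CT^{1-1/p}\|\widetilde u\|_{L^p(0,T;W^{2,q})}\le CT^\delta.
\end{align*}
Choosing $\widetilde T$ small, $\nabla X$ stays uniformly close to $I_3$ so $Z=(\nabla X)^{-1}$ is well defined and satisfies the same bound, as does $ZQ-I_3$.

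\textbf{Step 2 (density smallness).} Integrating \eqref{com0.0} in time yields $\widetilde\rho(t)-\rho_0 = \int_0^t(f_1-\widetilde\rho\,\operatorname{div}\widetilde u)\,ds$, which with Hölder in time gives
\begin{align*}
\|\widetilde\rho-\rho_0\|_{L^\infty(0,T;W^{1,q})}\le CT^{1/p'}\bigl(\|f_1\|_{L^p(0,T;W^{1,q})}+\|\widetilde u\|_{L^p(0,T;W^{2,q})}\|\widetilde\rho\|_{L^\infty(0,T;W^{1,q})}\bigr),
\end{align*}
and a bootstrap on a small interval gives $\|\widetilde\rho-\rho_0\|_{L^\infty(0,T;W^{1,q})}\le CT^\delta$ and the required lower/upper bounds on $\widetilde\rho$.

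\textbf{Step 3 (interior/interior-trace terms).} Every term in $\mathcal{F}_1$ and $\mathcal{F}_2$ is of the form (small coefficient)$\cdot$(bounded factor). For instance, $(\widetilde\rho-\rho_0)\operatorname{div}\widetilde u$ is estimated in $L^p(0,T;W^{1,q})$ by $\|\widetilde\rho-\rho_0\|_{L^\infty(W^{1,q})}\|\widetilde u\|_{L^p(W^{2,q})}\le CT^\delta$; $(Q-I_3)\partial_t\widetilde u$ is estimated in $L^p(0,T;L^q)$ by $\|Q-I_3\|_{L^\infty}\|\partial_t\widetilde u\|_{L^p(L^q)}\le CT^\delta$; terms involving $\partial Z/\partial y$ or $\nabla Z$ are handled identically, the algebra property of $W^{1,q}$ absorbing products of spatial factors. $\mathcal{F}_3$ is estimated in $L^p(0,T;L^q)$ by the same recipe, with the quadratic dissipation terms estimated by $\|\nabla\widetilde u\|_{L^{2p}(L^{2q})}^2\le CT^{2\delta}\|\widetilde u\|_{W^{2,1}_{q,p}}^2$ using the embedding of $W^{2,1}_{q,p}$ into better mixed norms. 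The rigid body terms $\mathcal{G}_1,\mathcal{G}_2$ reduce to integrating $\mathcal{G}_0$ over $\partial\oso$, so the $W^{1,q}\hookrightarrow L^q(\partial\oso)$ trace bound combined with the same small/bounded decomposition yields $\|\mathcal{G}_i\|_{L^p(0,T)}\le CT^\delta$; the quadratic term $\widetilde\omega\times\widetilde\ell$ likewise loses a $T^{1/p'}$ via \eqref{est:liw1p}.

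\textbf{Step 4 (the main obstacle: the Neumann datum $\mathcal{H}$).} The truly delicate piece is $\mathcal{H}\in F^{(1-1/q)/2}_{p,q}(0,T;L^q(\partial\ofo))\cap L^p(0,T;W^{1-1/q,q}(\partial\ofo))$. The $L^p(W^{1-1/q,q})$ component is again a pointwise product of the small factor $I_3-Z^\top$ (resp.\ $I_3-(ZQ)^\top$) by $\nabla\widetilde\vartheta|_{\partial\ofo}$, controlled via the trace $W^{1,q}(\ofo)\hookrightarrow W^{1-1/q,q}(\partial\ofo)$ and the algebra property. The Lizorkin--Triebel norm is where \cref{prop:TL-product} is indispensable: writing $\mathcal H_F=(I_3-Z^\top)\nabla\widetilde\vartheta$, the factor $I_3-Z^\top$ belongs to $W^{1,p}(0,T;W^{1,q}(\partial\ofo))$ and vanishes at $t=0$ (by Step~1 applied on the trace), while $\nabla\widetilde\vartheta|_{\partial\ofo}\in F^{(1-1/q)/2}_{p,q}(0,T;L^q(\partial\ofo))$ by \cref{prop:Ntrace}. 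Since $s=(1-1/q)/2$ satisfies $s+1/p<1$ (thanks to $\frac1p+\frac1{2q}\neq\frac12$ and the usual range conditions), \cref{prop:TL-product} applied with the pointwise multiplication $W^{1-1/q,q}\times L^q\to L^q$ delivers the factor $T^\delta$; the same treatment handles $\mathcal H_S$. Combining Steps~1--4 yields $\|(\mathcal F_1,\mathcal F_2,\mathcal F_3,\mathcal H,\mathcal G_1,\mathcal G_2)\|_{\mathcal B_{T,p,q}}\le CT^\delta$, completing the plan.
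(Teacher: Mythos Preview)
Your strategy and four-step breakdown mirror the paper's proof closely: the same ``small'' factors ($\widetilde\rho-\rho_0$, $Q-I_3$, $Z-I_3$, $ZQ-I_3$), the same a priori bound from \cref{thm1}, and your handling of $\mathcal H$ via \cref{prop:TL-product} is exactly the paper's argument. The minor formula slips (you write $Q\nabla(Q\widetilde u)$ instead of $Q\nabla\widetilde u$ in Step~1, and $\widetilde\rho\operatorname{div}\widetilde u$ instead of $\rho_0\operatorname{div}\widetilde u$ in Step~2) are harmless.

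The one real gap is your treatment of $\mathcal G_1,\mathcal G_2$ in Step~3. The stress $\mathcal G_0$ in \eqref{G0} is \emph{not} of the form ``small$\times$bounded'': after splitting $ZQ=(ZQ-I_3)+I_3$ its principal part is $2\mu D\widetilde u+\alpha(\operatorname{div}\widetilde u)I_3$, which carries no small coefficient. If you then use only the trace $W^{1,q}(\ofo)\hookrightarrow L^q(\partial\oso)$ you land on $\|\widetilde u\|_{L^p(0,T;W^{2,q})}$, which is bounded by $C$ with \emph{no} factor $T^\delta$. The paper closes this with an interpolation step you do not state: from
$\|\widetilde u(t)\|_{W^{1+s,q}}\le C\|\widetilde u(t)\|_{W^{2,q}}^{(1+s)/2}\|\widetilde u(t)\|_{L^q}^{(1-s)/2}$ one gets, for any $s\in(1/q,1)$,
\[
\|\widetilde u\|_{L^p(0,T;W^{1+s,q})}\le CT^{(1-s)/2p},
\]
and then takes the trace from $W^{s,q}(\ofo)$ to $L^q(\partial\oso)$. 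This same interpolation (the paper's \eqref{est:imp}--\eqref{est:lisv}) also underlies the clean estimate of the quadratic dissipation terms in $\mathcal F_3$ via $\|\nabla\widetilde u\|_{L^p(L^\infty)}\,\|\nabla\widetilde u\|_{L^\infty(L^q)}$; your $L^{2p}(L^{2q})$ route is equivalent but ultimately rests on the same bound. Finally, note that several terms in $\mathcal F_2$ (the pressure-gradient contributions $R\tfrac{\widetilde\vartheta}{\rho_0}Z^\top\nabla\widetilde\rho$, $R\tfrac{\widetilde\rho}{\rho_0}Z^\top\nabla\widetilde\vartheta$, and the Coriolis term $\tfrac{\widetilde\rho}{\rho_0}\widetilde\omega\times Q\widetilde u$) do \emph{not} carry a coefficient vanishing at $t=0$; their $T^\delta$ comes instead from plain H\"older in time against an $L^\infty(0,T;W^{1,q})$ bound (as in \eqref{est:lpdma}), which your sketch should make explicit.
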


\begin{proof}
We consider $\widetilde{T}<1$ and we assume that $T\in (0,\widetilde{T}]$.
The constants $C$ appearing in this proof depend only on $M$.

From  \eqref{est:thm1} in \cref{thm1}, we first obtain
\begin{equation}
	\norm{  (\widetilde \rho, \widetilde u, \widetilde \vartheta, \widetilde \ell, \widetilde \omega)  }_{\mathcal{S}_{T,p,q}} \leqslant C.  \label{est:all}
\end{equation}

Combining \eqref{est:liw1p} and \eqref{est:all}, we deduce
\begin{equation} \label{est:d-id}
   \norm{ \widetilde \rho- \rho_{0} }_{L^{\infty}(0,T;W^{1,q}(\ofo))} \leqslant C T^{\nicefrac{1}{p'}}
\end{equation}
and
\begin{equation}\label{tak0.6}
   \norm{ \widetilde \rho }_{L^{\infty}(0,T;W^{1,q})} \leqslant  C.
\end{equation}
In a similar manner, we can obtain
\begin{equation}\label{tak0.7}
   \norm{ \widetilde \omega   }_{L^{\infty}(0,T)}, \quad \norm{ \widetilde \ell }_{L^{\infty}(0,T)} \leqslant  C,
\end{equation}
and combining these estimates with \eqref{est:lplr} yields
\begin{equation}
\norm{ \widetilde  \rho }_{L^{p}(0,T;W^{1,q})}+ \norm{ \widetilde \omega   }_{L^{p}(0,T)} + \norm{ \widetilde \ell }_{L^{p}(0,T)} \leqslant  C T^{\nicefrac{1}{p}}. \label{est:lpdma}
\end{equation}

Since $2 < p < \infty$, one has
$B^{2(1-\nicefrac{1}{p})}_{q,p}(\ofo) \hookrightarrow W^{1,q}(\ofo)$. Therefore,
using \cref{prop:LiBesov} and \eqref{est:all} we get
\begin{equation}
\norm{ \widetilde u }_{L^{\infty}(0,T;W^{1,q})}+  \norm{ \widetilde \vartheta  }_{L^{\infty}(0,T;W^{1,q})} \leqslant  C. \label{est:livt}
\end{equation}

For all $s \in (0,1)$ we have by complex interpolation
\[
 \norm{  \widetilde u (t,\cdot)  }_{W^{1+s,q}(\ofo)}   \leqslant  C\norm{   \widetilde u (t,\cdot) }_{W^{2,q}(\ofo)}^{(1+s)/2} \norm{  \widetilde u(t,\cdot) }_{L^{q}(\ofo)}^{(1-s)/2},
\]
and thus
\[
 \norm{  \widetilde u }_{L^{p}(0,T, W^{1+s,q}(\ofo)}
 \leqslant
 C T^{(1-s)/2p} \norm{ \widetilde u }_{L^{\infty}(0,T;L^{q}(\ofo)}^{\nicefrac{1-s}2}  \norm{ \widetilde u }_{L^{p}(0,T;W^{2,q}(\ofo))}^{\nicefrac{1+s}2}.
\]
Therefore,  using \eqref{est:lplr}, \eqref{est:all} and \eqref{est:livt}, we get
\begin{equation}
\norm{ \widetilde u }_{L^{p}(0,T;W^{1+s,q})} + \norm{ \widetilde \vartheta }_{L^{p}(0,T;W^{1+s,q})} \leqslant  C T^{(1-s)/2p}, \quad s \in (0,1). \label{est:imp}
\end{equation}
Combining the above estimate with the fact that  $L^{\infty}(\ofo) \hookrightarrow W^{s,q}(\ofo))$ for  $s \in (\nicefrac{3}{q},1)$,
we deduce
\begin{equation}\label{est:lisv}
\norm{    \widetilde u }_{L^{p}(0,T,L^{\infty})}
+ \norm{ \nabla\widetilde u }_{L^{p}(0,T,L^{\infty})}
+\norm{ \nabla\widetilde \vartheta  }_{L^{p}(0,T,L^{\infty})}
\leqslant C T^{(1-s)/2p},  \quad s \in (\nicefrac{3}{q},1).
\end{equation}

The solution of \eqref{def-Q} satisfies $Q\in SO(3)$ and thus $|Q(t)|=1$ for all $t$. In particular,
$|\dot Q|\leq C|\widetilde \omega|$ and we deduce from \eqref{est:liw1p} and \eqref{est:all}
\begin{equation} \label{est:Q}
\norm{ Q }_{L^{\infty}(0,T; \RR^{3\times3})} \leqslant C \qquad\text{and}\qquad  \norm{ Q -I_{3}  }_{L^{\infty}(0,T; \RR^{3\times3})} \leqslant C T^{\nicefrac{1}{p'}}.
\end{equation}

Let $X$ be defined as in \eqref{Jacobi}. Then
\[
\sup_{t \in (0,T) }\norm{ \nabla X(t,\cdot) - I_3 }_{W^{1,q}(\ofo)} \leqslant C \int_{0}^{T} \norm{ \nabla \widetilde u    }_{W^{1,q}(\ofo)} \leqslant  C T^{\nicefrac{1}{p'}} \norm{ \nabla \widetilde u }_{L^{p}(0,T;W^{1,q})} .
\]
Now using $W^{1,q}(\ofo)\subset L^\infty(\ofo)$ and \eqref{est:all} we deduce from the above estimate
\begin{equation} \label{est:Jli}
    \sup_{t \in (0,T)}   \norm{ \nabla X(t, \cdot) - I_{3} }_{L^{\infty}(\ofo)} \leqslant C T^{\nicefrac{1}{p'}}
\end{equation}
In particular, there exists $\widetilde T$ such that
\begin{equation}\label{tak0.1}
  \norm{ \nabla X(t, \cdot) - I_{3} }_{L^{\infty}(\ofo)} \leqslant \frac12
\end{equation}
for all $0 < t < T \leqslant  \widetilde T$. This implies that $\nabla X(t, \cdot)$ is invertible and we can thus define $Z = [\nabla X]^{-1}$.
More precisely, combining
\[
\partial_{t} \nabla X (t,y) = Q(t) \nabla \widetilde u(t,y),
\]
and \eqref{est:all} and \eqref{est:Q} we get
\[
\norm{ \partial_{t} \nabla X }_{L^{p}(0,T;W^{1,q})} \leqslant  \norm{ Q }_{L^{\infty}(0,T)} \norm{ \nabla \widetilde u }_{L^{p}(0,T;W^{1,q})} \leqslant C,
\]
where $C$ depends only on $M$. The above estimate and \eqref{est:Jli}
yield
\[
\norm{  \nabla X  }_{W^{1,p}(0,T;W^{1,q}(\ofo))} + \norm{ \nabla X  }_{L^{\infty}(0,T;W^{1,q}(\ofo))}\leqslant C.
\]
Since  $W^{1,p}(0,T;W^{1,q}(\ofo))$ and
$L^{\infty}(0,T;W^{1,q}(\ofo))$ are algebras for $p >2$ and
$q > 3$, this implies
\begin{gather}
\norm{  \det \nabla X  }_{W^{1,p}(0,T;W^{1,q}(\ofo))} + \norm{ \det \nabla X  }_{L^{\infty}(0,T;W^{1,q}(\ofo))}  \leqslant C ,  \\
\norm{  \cof \nabla X  }_{W^{1,p}(0,T;W^{1,q}(\ofo))} + \norm{ \cof \nabla X  }_{L^{\infty}(0,T;W^{1,q}(\ofo))}  \leqslant C.
\end{gather}
From \eqref{tak0.1}, we deduce that $\det \nabla X \geq C>0$ in $(0,T)\times \ofo$ and thus
from
\[
Z = \frac{1}{\det \nabla X} (\cof \nabla X)^\top,
\]
we deduce
\begin{equation}\label{tak0.2}
\norm{  Z  }_{W^{1,p}(0,T;W^{1,q}(\ofo))} + \norm{  Z  }_{L^{\infty}(0,T;W^{1,q}(\ofo))} \leqslant C.
\end{equation}
\begin{equation} \label{est:X}
  \begin{array}{ll}
\norm{  \nabla X  }_{W^{1,p}(0,T;W^{1,q}(\ofo))} + \norm{ \nabla X  }_{L^{\infty}(0,T;W^{1,q}(\ofo))}  & \leqslant C ,  \\
  \end{array}
\end{equation}
The above estimate combined with \eqref{est:Q} and with \eqref{est:all} implies
\begin{equation}\label{tak0.3}
\norm{  QZ  }_{W^{1,p}(0,T;W^{1,q}(\ofo))} + \norm{  QZ  }_{L^{\infty}(0,T;W^{1,q}(\ofo))} \leqslant C.
\end{equation}

We are now in position to estimate the non linear terms in \eqref{F1}-\eqref{G2}:
\paragraph{\underline{Estimate of  $\mathcal{F}_{1}$}}
\begin{equation}\label{tak0.4}
\norm{ \mathcal{F}_{1} }_{L^{p}(0,T;W^{1,q}(\ofo))} \leqslant C T^{\nicefrac{1}{p'}}.
\end{equation}
Since $W^{1,q}(\ofo)$ is an algebra for $q >3$, we can write
\begin{multline}
\norm{ \mathcal{F}_{1} }_{L^{p}(0,T;W^{1,q}(\ofo))}
\leq
C\norm{  \widetilde \rho -  \rho_{0}  }_{L^{\infty}(0,T;W^{1,q})} \norm{ \operatorname{div} \ \widetilde u }_{L^{p}(0,T;W^{1,q})}
\\
+C
\norm{ \widetilde \rho  }_{L^{\infty}(0,T;W^{1,q})} \norm{ QZ-I_{3} }_{L^{\infty}(0,T;W^{1,q})}
               \norm{ \nabla \widetilde u }_{L^{p}(0,T;W^{1,q})}.
\end{multline}
Combining the above estimate with \eqref{tak0.6}, \eqref{est:d-id}, \eqref{est:all}, \eqref{tak0.3} and \eqref{est:liw1p}, we deduce \eqref{tak0.4}.

\paragraph{\underline{Estimate of  $\mathcal{F}_{2}$}}
\begin{equation}\label{tak0.5}
\norm{ \mathcal{F}_{2} }_{L^{p}(0,T;W^{1,q})} \leqslant C T^{\nicefrac{1}{p}}.
\end{equation}

Let us recall the definition \eqref{F2bisold} of $\mathcal{F}_{2}$:
\begin{align*}
(\mathcal{F}_{2})_i&(\widetilde\rho,\widetilde u,\widetilde \vartheta,\widetilde \ell,\widetilde\omega) =
 \;  -\frac{\widetilde \rho}{\rho_0}(\widetilde\omega\times Q \widetilde u)_i +\left(1-\frac{\widetilde \rho}{\rho_0}\right)(\partial_t \widetilde u)_{i}
     -  \frac{\widetilde \rho}{\rho_{0}} \left[ (Q - I) \partial_{t} \widetilde u\right]_{i} \\
& \; + \frac{\mu}{\rho_0}\sum_{l,j,k} \frac{\partial^2 (Q\widetilde u)_{i} }{\partial y_l\partial y_k}  \left(Z_{k,j}  - \delta_{k,j}\right) Z_{l,j}
 + \frac{\mu}{\rho_{0}}\sum_{l,k} \frac{\partial^2 (Q\widetilde u)_{i} }{\partial y_l\partial y_k} \left( Z_{l,k} - \delta_{l,k} \right)\\
& \; + \frac{\mu}{\rho_{0}} \left[ (Q - I) \Delta \widetilde u\right]_{i} +\frac{\mu}{\rho_0}\sum_{l,j,k} Z_{l,j} \frac{\partial  (Q \widetilde u)_i}{\partial y_k} \frac{\partial Z_{k,j}}{\partial y_{l}}\\
& \; + \frac{\mu+\alpha}{\rho_{0}} \sum_{l,j,k} \frac{\partial^{2} (Qu)_{j}}{\partial y_{l} \partial y_{k}} \left( Z_{k,j} - \delta_{k,j}\right)  Z_{l,i}  +
       \frac{\mu+\alpha}{\rho_0}  \sum_{l,j} \frac{\partial^{2} (Qu)_{j}}{\partial y_{l} \partial y_{j}} \left( Z_{l,i} - \delta_{l,i}\right) \\
& \; + \frac{\alpha + \mu}{\rho_{0}} \frac{\partial}{\partial y_{i}} \left[ \nabla \widetilde u : (Q^{\top} - I_{3}) \right] +
       \frac{\alpha + \mu}{\rho_{0}} \sum_{l,j,k} Z_{l,i} \frac{\partial (Qu)_{j}}{\partial y_{k}} \frac{\partial Z_{k,j}}{\partial y_{l}}\\
& \; - R \frac{\widetilde \vartheta}{\rho_{0}}  \left(Z^{\top} \nabla \widetilde \rho \right)_{i}  -  R \frac{\widetilde \rho}{\rho_{0}}  \left(Z^{\top} \nabla \widetilde \vartheta \right)_{i}
\end{align*}
\textbullet \ Estimate of first term of ${\mathcal F}_{2} :$   using \eqref{est:lplr},\eqref{tak0.6}, \eqref{tak0.7} and \eqref{est:livt}, we have
\begin{align*}
             \;   \Bignorm{ \frac{\widetilde \rho}{\rho_0}(\widetilde\omega\times Q \widetilde u)_i  }_{L^{p}(0,T;L^{q})}
\leqslant &  \; C \bignorm{ \widetilde \rho  }_{L^{\infty}(0,T;W^{1,q})} \bignorm{ \widetilde \omega }_{L^{\infty}(0,T; \RR^3 )}  \norm{ \widetilde u  }_{L^{p}(0,T;L^{q})} \\
\leqslant & C T^{\nicefrac{1}{p}}.
\end{align*}
\textbullet \ Estimate of second term of ${\mathcal F}_{2} :$  using  \eqref{est:all}  and \eqref{est:d-id} we have
\begin{align*}
            \Bignorm{ \left(1-\frac{\widetilde \rho}{\rho_0}\right)(\partial_t \widetilde u)_{i}}_{L^{p}(0,T;L^{q})}
\leqslant & \; C \norm{ \widetilde \rho  -  \rho_{0} }_{L^{\infty}(0,T;W^{1,q})}  \norm{ \partial_{t} \widetilde u  }_{L^{p}(0,T;L^{q})} \\
\leqslant & \; C T^{\nicefrac{1}{p'}}.
\end{align*}
\textbullet \ Estimate of third term of ${\mathcal F}_{2} :$ using   \eqref{est:all}, \eqref{tak0.6} and  \eqref{est:Q}
\begin{align*}
\bignorm{ \frac{\widetilde \rho}{\rho_{0}} \left[ (Q - I) \partial_{t} \widetilde u\right]_{i}}_{L^{p}(0,T;L^{q})}
 \leqslant &\;C \norm{ \widetilde \rho }_{L^{\infty}(0,T;W^{1,q})} \norm{ Q - I_{3} }_{L^{\infty}(0,T)} \norm{ \widetilde u  }_{W^{1,p}(0,T;L^{q})}  \\
 \leqslant &\;C T^{\nicefrac{1}{p'}}.
\end{align*}
\textbullet \ Estimate of fourth term of ${\mathcal F}_{2}$ (the estimate of fifth, eighth and ninth therm of ${\mathcal F}_{2}$ are similar)
\begin{align*}
          & \; \Bignorm{  \frac{\mu}{\rho_0}\sum_{l,j,k} \frac{\partial^2 (Q\widetilde u)_{i} }{\partial y_l\partial y_k}  \left(Z_{k,j}  - \delta_{k,j}\right) Z_{l,j}  }_{L^{p}(0,T;L^{q})} \\
\leqslant & \; C\norm{ \widetilde u }_{L^{p}(0,T;W^{2,q})}    \bignorm{  Z - I_3}_{L^{\infty}(0,T;W^{1,q})}  \norm{ Z  }_{L^{\infty}(0,T;W^{1,q})} \\
\leqslant &\; C T^{\nicefrac{1}{p'}},
\end{align*}
by using \eqref{est:all}, \eqref{tak0.2} and \eqref{est:liw1p}.

\textbullet \ Estimate of sixth and tenth term of ${\mathcal F}_{2} :$
\begin{align*}
 &\; \Bignorm{  \frac{\mu}{\rho_{0}} \left[ (Q - I) \Delta \widetilde u\right]_{i} }_{L^{p}(0,T;L^{q})}
+ \Bignorm{  \frac{\alpha + \mu}{\rho_{0}} \frac{\partial}{\partial y_{i}} \left[ \nabla \widetilde u : (Q^{\top} - I_{3}) \right] }_{L^{p}(0,T;L^{q})} \\
\leqslant &\;C \norm{ Q - I_{3} }_{L^{\infty}(0,T, \RR^{3\times3})} \norm{ \widetilde u }_{L^{p}(0,T;W^{2,q})} \\
\leqslant &\;C T^{\nicefrac{1}{p'}}.  \qquad \quad  (\mbox{ using } \eqref{est:all} \mbox{ and }  \eqref{est:Q} )
\end{align*}
\textbullet \ Estimate of seventh term and similarly, the eleventh term of ${\mathcal F}_{2}$:
notice that for any $1\leqslant j,k,l \leqslant 3$ and all $y \in \ofo$,
$\ds \frac{\partial Z_{k,j}}{\partial y_l} (0,y) = 0$. Therefore, using
\eqref{est:liw1p} we have
\[ 
           \Bignorm{ \frac{\partial Z_{k,j}}{\partial y_l} }_{L^{\infty}(0,T;L^{q})}
\leqslant T^{\nicefrac{1}{p'}} \Bignorm{ \frac{\partial Z_{k,j}}{\partial y_l} }_{W^{1,p}(0,T;L^{q})}
\leqslant T^{\nicefrac{1}{p'}} \norm{ Z }_{W^{1,p}(0,T;W^{1,q})} \leqslant C T^{\nicefrac{1}{p'}}.
\] 
Using this estimate, along with  \eqref{est:all}, \eqref{est:Q} and \eqref{tak0.2} we infer
\begin{align*}
          &  \; \Bignorm{  \frac{\mu}{\rho_0}\sum_{l,j,k} Z_{l,j} \frac{\partial  (Q \widetilde u)_i}{\partial y_k} \frac{\partial Z_{k,j}}{\partial y_{l}} }_{L^{p}(0,T;L^{q})} \\
\leqslant & \; C \norm{ Q }_{L^{\infty}(0,T)} \bignorm{  Z}_{L^{p}(0,T;W^{1,q})} \bignorm{  \nabla \widetilde u}_{L^{p}(0,T;W^{1,q})} \sum_{j,k,l} \Bignorm{ \frac{\partial Z_{k,j}}{\partial y_l} }_{L^{\infty}(0,T;L^{q})} \\
\leqslant & \; C T^{\nicefrac{1}{p'}}.
\end{align*}
\textbullet \ Estimate of twelfth  term of ${\mathcal F}_{2} :$
\begin{align*}
          \bignorm{  R \frac{\widetilde \vartheta}{\rho_{0}}  \left(Z^{\top} \nabla \widetilde \rho \right)_{i} }_{L^{p}(0,T;L^{q})}
\leqslant & \; C \norm{ \widetilde \vartheta  }_{L^{p}(0,T;W^{1,q})} \norm{ Z }_{L^{\infty}(0,T;W^{1,q})} \norm{ \nabla \widetilde \rho }_{L^{\infty}(0,T;L^{q})} \\
\leqslant & \; C  T^{\nicefrac{1}{p}}.   \qquad \quad  (\mbox{ using } \eqref{tak0.6}, \eqref{tak0.2}  \mbox{ and } \eqref{est:livt} )
\end{align*}
\textbullet \ Estimate of last  term of ${\mathcal F}_{2} :$
\begin{align*}
\bignorm{  R \frac{\widetilde \rho}{\rho_{0}}  \left(Z^{\top} \nabla \widetilde \vartheta \right)_{i}}_{L^{p}(0,T;L^{q})}
\leqslant & \; C \norm{ \widetilde \rho }_{L^{\infty}(0,T;W^{1,q})} \norm{ Z }_{L^{\infty}(0,T;W^{1,q})} \norm{ \nabla \widetilde \vartheta }_{L^{p}(0,T;L^{q})}\\
\leqslant & \; C  T^{\nicefrac{1}{p}}.   \qquad \quad  (\mbox{ using } \eqref{tak0.6}, \eqref{tak0.2}  \mbox{ and } \eqref{est:livt} )
\end{align*}
We deduce \eqref{tak0.5} by noticing that $1/p\leq 1/p'$.

\paragraph{\underline{Estimate of  $\mathcal{F}_{3}$}}
\begin{equation}\label{tak1.0}
\norm{ \mathcal{F}_{3} }_{L^{p}(0,T;W^{1,q}(\ofo))} \leqslant C T^{(1-s)/2p},  \quad s \in (\nicefrac{3}{q}, 1).
\end{equation}
We recall that $\mathcal{F}_{3}$ is defined by \eqref{F3bis}.
The estimate of first five terms of ${\mathcal F}_{3}$ are similar to
estimates of terms of ${\mathcal F}_{2}$  and we skip their proofs. The estimate of the last two terms are similar and we only consider one of these terms:
using \eqref{est:lisv}, \eqref{tak0.3} yields, for $s \in (\nicefrac{3}{q}, 1)$,
\begin{equation} \label{est:div1}
              \bignorm{  \left[ Z Q\right]^\top : \nabla \widetilde u }_{L^{p}(0,T;L^{\infty})}
\leqslant   \norm{ ZQ }_{L^{\infty}(0,T,W^{1,q})}  \norm{ \nabla\widetilde u }_{L^{p}(0,T,L^{\infty})}
\leqslant  C T^{(1-s)/2p}.
\end{equation}
Using \eqref{tak0.3} and \eqref{est:livt}, we obtain
\begin{equation} \label{est:div2}
              \bignorm{  \left[ Z Q\right]^\top : \nabla \widetilde u }_{L^{\infty}(0,T;L^{q})}
\leqslant   \norm{ ZQ }_{L^{\infty}(0,T,W^{1,q})}  \norm{ \nabla\widetilde u }_{L^{\infty}(0,T,L^{q})}
\leqslant  C.
\end{equation}
where the constant $C$ depends only on $M$. Combining \eqref{est:div1} and \eqref{est:div2} we obtain
\begin{align*}
\Bignorm{ \frac{\alpha}{c_{v}\rho_0} \left(\left[ Z Q\right]^\top : \nabla \widetilde u\right)^{2} }_{L^{p}(0,T;L^{q})}
\leqslant &\; C \bignorm{  \left[ Z Q\right]^\top : \nabla \widetilde u }_{L^{p}(0,T;L^{\infty})} \bignorm{  \left[ Z Q\right]^\top : \nabla \widetilde u }_{L^{\infty}(0,T;L^{q})} \\
\leqslant &\; C T^{(1-s)/2p}, \quad s \in (\nicefrac{3}{q}, 1).
\end{align*}

\paragraph{\underline{Estimate of $\mathcal{G}_{1}$ and $\mathcal{G}_{2}$ }}
\begin{equation}\label{tak1.2}
\norm{ \mathcal{G}_1}_{L^{p}(0,T)} + \norm{ \mathcal{G}_2}_{L^{p}(0,T)}  \leqslant C T^{\delta},
\end{equation}
where $\mathcal{G}_1$ and $\mathcal{G}_2$ are defined by \eqref{G1} and \eqref{G2}

We first show that
\begin{equation}\label{tak1.1}
\Bignorm{ \int_{\partial \oso}{\mathcal{G}}_{0}n}_{L^{p}(0,T)} + \Bignorm{ \int_{\partial \oso} y \times {\mathcal{G}}_{0}n }_{L^{p}(0,T)} \leqslant C T^{\delta},
\end{equation}
where $\mathcal{G}_{0}$ is defined by \eqref{G0}.

Using \eqref{est:imp}  and \eqref{tak0.3}
$$
\left \|\nabla \widetilde  u  Z Q \right\|_{L^{p}(0,T;W^{s,q})}
\leq
C \left \|  Z Q \right\|_{L^{\infty}(0,T;W^{1,q})}
\left\| \widetilde  u \right\|_{L^{p}(0,T;W^{1+s,q})}
\leq C T^{(1-s)/2p}, \quad s \in (\nicefrac{1}{q},1).
$$
Using the trace theorem, we deduce that
$$
\left \|\int_{\partial \oso} \nabla \widetilde  u  Z Q \cdot n \ d \gamma \right\|_{L^{p}(0,T)}
\leq C T^{(1-s)/2p}, \quad s \in (\nicefrac{1}{q},1).
$$
The other terms can be estimated similarly.

On the other hand, from \eqref{tak0.7},
$$
\norm{ \widetilde \omega \times \widetilde \ell }_{L^{p}(0,T)} + \norm{J(0) \widetilde \omega \times \widetilde \omega }_{L^{p}(0,T)}
\leqslant C T^{\nicefrac{1}{p}}
$$
and combining this with \eqref{tak1.1}, we deduce \eqref{tak1.2}.

 \paragraph{\underline{Estimate of  $\mathcal{H}_{F}$  and $\mathcal{H}_{S}$}}
 \begin{align}
&\norm{ \mathcal{H}_{F} \cdot n }_{F^{(1-\nicefrac{1}{q})/2}_{p,q}(0,T;L^{q}(\partial \Omega)) \;\cap\; L^{p}(0,T;W^{1-\nicefrac{1}{q},q}(\partial \Omega))} \leqslant C T^{\delta},   \\
& \norm{ \mathcal{H}_{S} \cdot n }_{F^{(1-\nicefrac{1}{q})/2}_{p,q}(0,T;L^{q}(\partial \oso)) \;\cap\; L^{p}(0,T;W^{1-\nicefrac{1}{q},q}(\partial \oso))} \leqslant C T^{\delta},
\end{align}
where $\mathcal{H}_{F}$  and $\mathcal{H}_{S}$ are defined by \eqref{H}.

Recall that, $\mathcal{H}_{F} = (I_3-Z^{\top}) \nabla \widetilde \vartheta$.
Using \eqref{est:all}, \eqref{tak0.2} and \eqref{est:liw1p}, and recalling that $W^{1,q}(\ofo)$ is an algebra we first obtain
\begin{align*}
\bignorm{  \mathcal{H}_{F} \cdot n}_ {L^{p}(0,T;W^{1-\nicefrac{1}{q}}(\partial \Omega))}
\leqslant & \; C \bignorm{ \mathcal{H}_{F} }_ {L^{p}(0,T;W^{1,q}(\ofo))} \\
\leqslant & \; C \bignorm{  (I_3-Z^{\top}) }_{L^{\infty}(0,T;W^{1,q}(\ofo))} \norm{ \nabla \widetilde \vartheta }_{L^{p}(0,T;W^{1,q}(\ofo))}  \\
\leqslant & \; C T^{\nicefrac{1}{p'}}.
\end{align*}
To estimate the 
Lizorkin-Triebel norm of $\mathcal{H}_{F} \cdot n$, we shall use
\cref{prop:TL-product} with parameter $s{=}(1{-}\nicefrac{1}{q})/2$ : for
$U_{1} = U_{3} = L^{q}(\partial \Omega)$,
$U_{2} = W^{1-\nicefrac{1}{q},q}(\partial \Omega)$ and
$\Phi(f, g) = f \cdot g$.  Since $3 < q < \infty$,
$W^{1-\nicefrac{1}{q},q}(\partial \Omega) \hookrightarrow
L^\infty(\partial \Omega)$
and so the hypothesis of the proposition on $\Phi$  are met. Since
$2 < p < \infty$, we also have $s + \nicefrac{1}{p} < 1$. We write
\[
   \mathcal{H}_{F}|_{\partial \Omega} \cdot n = \sum_{j,k} \Bigl[\left(\delta_{j,k} - Z_{j,k}\right) \frac{\partial \widetilde \vartheta}{\partial y_{k}}\Bigr] (t,y) n_{j}(y), \quad y \in \partial\Omega
\]
By \cref{prop:Ntrace} and \eqref{est:all}, there exists a constant $C$ depending only on $M$ such that
\[
\Bignorm{ \frac{\partial \widetilde \vartheta}{\partial y_{k}}  n_{j}}_{F^{(1-\nicefrac{1}{q})/2}_{p,q}(0,T;L^{q}(\partial \Omega))} \leqslant C
\]
for all $1 \leqslant j,k \leqslant 3$. On the other hand, using \eqref{tak0.2}, one has
\[
\bignorm{  \delta_{j,k} - Z_{j,k}}_{W^{1,p}(0,T;W^{1-\nicefrac{1}{q},q}(\partial \Omega))} \leqslant C \bignorm{  \delta_{j,k} - Z_{j,k}}_{W^{1,p}(0,T;W^{1,q}(\ofo))} \leqslant C,
\]
where the constant $C$ depends only on $M$. Finally
$\left(\displaystyle \delta_{j,k} - Z_{j,k}\right)(0,y) = 0$ for all
$1 \leqslant j,k \leqslant 3$. From \cref{prop:TL-product} we obtain
\[
\bignorm{  \mathcal{H}_{F}|_{\partial \Omega} \cdot n }_{F^{(1-\nicefrac{1}{q})/2}_{p,q}(0,T;L^{q}(\partial \Omega))} \leqslant C T^{\delta}.
\]
The estimate of $\mathcal{H}_{S} \cdot n$ is similar.
\end{proof}

\begin{prop} \label{prop:lipestimate}
Let $2 < p < \infty$ and $3 < q < \infty$ satisfying
  the condition
  $ \frac{1}{p} + \frac{1}{2q} \neq \frac12$. Let
  $(\rho_{0},u_{0}, \vartheta_{0},\ell_{0},\omega_{0}) \in
  \mathcal{I}^{cc}_{p,q}$
  such that \eqref{ini-ball} holds. There exists $\widetilde{T}<1$, a constant $\delta > 0$ depending only on $p$ and $q$,
  and a constant $C > 0$ depending only on $p,q,M,\widetilde{T}$
  such that for $T\in (0,\widetilde{T}]$ we have the following property:
for $(f_{1}^{j},f_{2}^{j},f_{3}^{j},h^{j},g_{1}^{j},g_{2}^{j}) \in \mathcal{B}_{T,p,q}$ satisfying
  $$
 \| (f_{1}^j,f_{2}^j,f_{3}^j, h^j,  g_{1}^j,g_{2}^j) \|_{\mathcal{B}_{T,p,q}} \leq 1,
$$
  for $j=1,2$, let
  $(\widetilde \rho^{j}, \widetilde u^{j}, \widetilde \vartheta^{j}, \widetilde \ell^{j}, \widetilde \omega^{j})$
  be the solution of \eqref{com0.0}---\eqref{com0.3} corresponding to
  the source term $(f_{1}^{j},f_{2}^{j},f_{3}^{j},h^{j},g_{1}^{j},g_{2}^{j})$.
  Let us set
\begin{align*}
& \mathcal{F}_{1}^{j} =  \mathcal{F}_{1}(\widetilde\rho^{j},\widetilde u^{j},\widetilde \vartheta^{j},\widetilde \ell^{j},\widetilde \omega^{j}), \quad \mathcal{F}_{2}^{j} =  \mathcal{F}_{2} (\widetilde\rho^{j},\widetilde u^{j},\widetilde \vartheta^{j},\widetilde \ell^{j},\widetilde \omega^{j}), \quad \mathcal{F}_{3}^{j} =  \mathcal{F}_{3}(\widetilde\rho^{j},\widetilde u^{j},\widetilde \vartheta^{j},\widetilde \ell^{j},\widetilde \omega^{j})  \\
& \mathcal{H}_{F}^{j} =  \mathcal{H}_{F}(\widetilde\rho^{j},\widetilde u^{j},\widetilde \vartheta^{j},\widetilde \ell^{j},\widetilde \omega^{j}), \quad \mathcal{H}_{S}^{j} =  \mathcal{H}_{S}(\widetilde\rho^{j},\widetilde u^{j},\widetilde \vartheta^{j},\widetilde \ell^{j},\widetilde \omega^{j}),  \mathcal{H}^{j} = \; \mathbbm{1}_{\partial \Omega} \mathcal{H}_{F}^{j} + \mathbbm{1}_{\partial \oso} \mathcal{H}_{S}^{j}  \\
& \mathcal{G}_{1}^{j} =  \mathcal{G}_{1}(\widetilde\rho^{j},\widetilde u^{j},\widetilde \vartheta^{j},\widetilde \ell^{j},\widetilde \omega^{j}), \quad \mathcal{G}_{2}^{j} =  \mathcal{G}_{2}(\widetilde\rho^{j},\widetilde u^{j},\widetilde \vartheta^{j},\widetilde \ell^{j},\widetilde \omega^{j}),
\end{align*}
Then
\begin{equation*}
  \Big\| (\mathcal{F}_{1}^{1}- {\mathcal{F}}_{1}^{2},{\mathcal{F}}_{2}^{1} - {\mathcal{F}}_{2}^{2},{\mathcal{F}}_{3}^{1} - {\mathcal{F}}_{3}^{2}, \mathcal{H}^{1} -  \mathcal{H}^{2},  \mathcal{G}^{1}_{1} - \mathcal{G}^{2}_{1}, \mathcal{G}_{2}^{1} - \mathcal{G}_{2}^{2}) \Big\|_{\mathcal{B}_{T,p,q}} \leq CT^{\delta}.
  \end{equation*}
\end{prop}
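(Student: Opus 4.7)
The plan is to mimic the proof of \cref{prop:estimate}, working throughout with differences in place of the original quantities. Set
$$
\overline \rho = \widetilde \rho^1 - \widetilde \rho^2, \quad \overline u = \widetilde u^1 - \widetilde u^2, \quad \overline \vartheta = \widetilde \vartheta^1 - \widetilde \vartheta^2, \quad \overline \ell = \widetilde \ell^1 - \widetilde \ell^2, \quad \overline \omega = \widetilde \omega^1 - \widetilde \omega^2,
$$
and $\overline f_k = f_k^1 - f_k^2$, $\overline h = h^1 - h^2$, $\overline g_k = g_k^1 - g_k^2$. Since the system \eqref{com0.0}---\eqref{com0.3} is linear and both solutions are obtained from the \emph{same} initial data, the tuple $(\overline \rho, \overline u, \overline \vartheta, \overline \ell, \overline \omega)$ solves the same system with zero initial conditions and with source $(\overline f_1, \overline f_2, \overline f_3, \overline h, \overline g_1, \overline g_2)$. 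Applying \cref{thm1} yields
$$
\| (\overline \rho, \overline u, \overline \vartheta, \overline \ell, \overline \omega) \|_{\mathcal{S}_{T,p,q}} \leqslant C \| (\overline f_1, \overline f_2, \overline f_3, \overline h, \overline g_1, \overline g_2) \|_{\mathcal{B}_{T,p,q}}.
$$

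Next, I would derive Lipschitz estimates for the auxiliary quantities $Q^j, X^j, Z^j$. The difference $\overline Q = Q^1 - Q^2$ satisfies a linear ODE driven by $\overline \omega$, hence via \eqref{est:liw1p} and the uniform bound \eqref{est:Q} it is controlled by $C T^{1/p'} \|\overline \omega\|_{W^{1,p}}$. Integrating the formula \eqref{Jacobi} yields a similar bound for $\overline X = X^1 - X^2$ in $L^\infty(0,T;W^{2,q}(\ofo))$, and the identity $Z = \det(\nabla X)^{-1} (\Cof \nabla X)^\top$, combined with the non-degeneracy \eqref{tak0.1} and the algebra property of $W^{1,q}(\ofo)$ (valid since $q>3$), transfers this to a Lipschitz estimate for $\overline Z = Z^1 - Z^2$ in the norms used in \eqref{tak0.2}--\eqref{tak0.3}.

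The bulk of the proof then consists in writing each difference $\mathcal{F}_i^1 - \mathcal{F}_i^2$, $\mathcal{H}^1 - \mathcal{H}^2$, $\mathcal{G}_i^1 - \mathcal{G}_i^2$ as a finite sum of multilinear products and applying the telescoping identity $A_1 B_1 - A_2 B_2 = (A_1 - A_2) B_1 + A_2 (B_1 - B_2)$ together with its multilinear analogue. In each resulting term, exactly one factor is a ``difference'' quantity (among $\overline \rho, \overline u, \overline \vartheta, \overline \ell, \overline \omega, \overline Q, \overline Z$) whose norm is controlled by $\|(\overline f_1, \ldots, \overline g_2)\|_{\mathcal{B}_{T,p,q}}$, while the remaining factors are estimated, uniformly in either solution, by the a priori bounds already derived in the proof of \cref{prop:estimate}. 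Each term then inherits a factor $T^\delta$ exactly as in that proof.

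The main technical obstacle I anticipate is controlling $\mathcal{H}^1 - \mathcal{H}^2$ in the Lizorkin--Triebel norm $F^{(1-1/q)/2}_{p,q}(0,T; L^q(\partial \ofo))$. After telescoping $(I_3 - Z^\top)\nabla \widetilde \vartheta$, the critical pieces are $(Z^1 - Z^2)^\top \nabla \widetilde \vartheta^2$ and $(I_3 - (Z^2)^\top) \nabla \overline \vartheta$; to invoke \cref{prop:TL-product} on each one must identify the factor that vanishes at $t = 0$. This is automatic for $\overline Z$, since $Z^j(0) = I_3$ for $j=1,2$, and for $\overline \vartheta$, since $\overline \vartheta(0) = 0$, its normal trace belongs to the required Lizorkin--Triebel space by \cref{prop:Ntrace} and vanishes at $t=0$ as well. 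With this care, \cref{prop:TL-product} delivers the factor $T^\delta$ and closes the argument.
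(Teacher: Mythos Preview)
Your proposal is correct and follows essentially the same route as the paper: apply \cref{thm1} to the difference (with zero initial data) to control $(\overline\rho,\overline u,\overline\vartheta,\overline\ell,\overline\omega)$ in $\mathcal{S}_{T,p,q}$, derive Lipschitz bounds for $\overline Q$, $\nabla\overline X$, and $\overline Z$ via the ODE \eqref{def-Q} and formula \eqref{Jacobi} (the paper uses Gr\"onwall for $\overline Q$, which is what your ``linear ODE'' remark amounts to), and then telescope each nonlinearity and estimate term by term exactly as in \cref{prop:estimate}. One small clarification on the $\mathcal{H}$ piece: for $(I_3-(Z^2)^\top)\nabla\overline\vartheta$ the factor that must vanish at $t=0$ in \cref{prop:TL-product} is $g=I_3-(Z^2)^\top$ (which it does, since $Z^2(0)=I_3$), while the role of $\overline\vartheta(0)=0$ is only to suppress the initial-data term in the bound \eqref{est:Ntrace} for $\nabla\overline\vartheta\cdot n$.
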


\begin{proof}
The proof of this proposition is similar to the proof of \cref{prop:estimate} and we only give here some few ideas.
From  \eqref{est:thm1} in \cref{thm1}, we first obtain
\begin{multline}
	\norm{  (\widetilde \rho^1, \widetilde u^1, \widetilde \vartheta^1, \widetilde \ell^1, \widetilde \omega^1)
	-(\widetilde \rho^2, \widetilde u^2, \widetilde \vartheta^2, \widetilde \ell^2, \widetilde \omega^2)
	}_{\mathcal{S}_{T,p,q}}
	\\
	\leqslant C
	\bignorm{ (f_{1}^{1},f_{2}^{1},f_{3}^{1},h^{1},g_{1}^{1},g_{2}^{1}) - (f_{1}^{2},f_{2}^{2},f_{3}^{2},h^{2},g_{1}^{2},g_{2}^{2})}_{\mathcal{B}_{T,p,q}}.  \label{est:all-LIP}
\end{multline}

Combining \eqref{est:liw1p} and \eqref{est:all-LIP}, we deduce
\begin{equation} \label{est:d-id-LIP}
   \norm{ \widetilde \rho^1- \widetilde \rho^2 }_{L^{\infty}(0,T;W^{1,q}(\ofo))} \leqslant C T^{\nicefrac{1}{p'}}
   \bignorm{ (f_{1}^{1},f_{2}^{1},f_{3}^{1},h^{1},g_{1}^{1},g_{2}^{1}) - (f_{1}^{2},f_{2}^{2},f_{3}^{2},h^{2},g_{1}^{2},g_{2}^{2})}_{\mathcal{B}_{T,p,q}}.
\end{equation}
In a similar manner, we can obtain
\begin{multline}\label{tak0.7-LIP}
   \norm{ \widetilde \omega^1-\widetilde \omega^2   }_{L^{\infty}(0,T)}+ \norm{ \widetilde \ell^1-\widetilde \ell^2 }_{L^{\infty}(0,T)}
   \\
   \leqslant  C T^{\nicefrac{1}{p'}}
   \bignorm{ (f_{1}^{1},f_{2}^{1},f_{3}^{1},h^{1},g_{1}^{1},g_{2}^{1}) - (f_{1}^{2},f_{2}^{2},f_{3}^{2},h^{2},g_{1}^{2},g_{2}^{2})}_{\mathcal{B}_{T,p,q}}.
\end{multline}

Since $2 < p < \infty$, one has
$B^{2(1-\nicefrac{1}{p})}_{q,p}(\ofo) \hookrightarrow W^{1,q}(\ofo)$. Therefore,
using \cref{prop:LiBesov} and \eqref{est:all-LIP} we get
\begin{multline}
\norm{ \widetilde u^1-\widetilde u^2 }_{L^{\infty}(0,T;W^{1,q})}+  \norm{ \widetilde \vartheta^1-\widetilde \vartheta^2  }_{L^{\infty}(0,T;W^{1,q})}
\\
\leqslant  C
	\bignorm{ (f_{1}^{1},f_{2}^{1},f_{3}^{1},h^{1},g_{1}^{1},g_{2}^{1}) - (f_{1}^{2},f_{2}^{2},f_{3}^{2},h^{2},g_{1}^{2},g_{2}^{2})}_{\mathcal{B}_{T,p,q}}. \label{est:livt-LIP}
\end{multline}

Proceeding as in the proof of \cref{prop:estimate}, we can then deduce
\begin{multline}\label{est:lisv-LIP}
\norm{    \widetilde u^1-\widetilde u^2  }_{L^{p}(0,T,L^{\infty})}
+ \norm{ \nabla\widetilde u^1-\nabla \widetilde u^2  }_{L^{p}(0,T,L^{\infty})}
+\norm{ \nabla \widetilde \vartheta^1-\nabla \widetilde \vartheta^2  }_{L^{p}(0,T,L^{\infty})}
\\
\leqslant C T^{(1-s)/2p}
\bignorm{ (f_{1}^{1},f_{2}^{1},f_{3}^{1},h^{1},g_{1}^{1},g_{2}^{1}) - (f_{1}^{2},f_{2}^{2},f_{3}^{2},h^{2},g_{1}^{2},g_{2}^{2})}_{\mathcal{B}_{T,p,q}},
\quad s \in (\nicefrac{3}{q},1).
\end{multline}

Let us denote by $Q^1$ and $Q^2$ the solution of \eqref{def-Q} associated with $\widetilde \omega^1$, $\widetilde \omega^2$. Then $Q^1-Q^2$ satisfies
\begin{equation} \label{def-Q-LIP}
\begin{cases}
\dfrac{d}{dt}\left( Q^1-Q^2 \right)  = \left( Q^1-Q^2 \right)  A(\widetilde  \omega^1) + Q^2  A(\widetilde  \omega^1-\widetilde  \omega^2) & \mbox{ in } (0,T), \\
\left( Q^1-Q^2 \right)(0) = 0,
\end{cases}
\end{equation}
and thus from \eqref{tak0.7}, \eqref{tak0.7-LIP} and Gr\"onwall's lemma, we obtain
\begin{equation} \label{est:Q-LIP}
\norm{ Q^1-Q^2 }_{L^{\infty}(0,T; \RR^{3\times3})} \leqslant CT \bignorm{ (f_{1}^{1},f_{2}^{1},f_{3}^{1},h^{1},g_{1}^{1},g_{2}^{1}) - (f_{1}^{2},f_{2}^{2},f_{3}^{2},h^{2},g_{1}^{2},g_{2}^{2})}_{\mathcal{B}_{T,p,q}}.
\end{equation}

Let $X^1$, $X^2$ be defined as in \eqref{Jacobi} with $(Q^1,\widetilde u^1)$ and  $(Q^2,\widetilde u^2)$. Then
\[
\partial_{t} \nabla (X^1-X^2)  = \left(Q^1-Q^2\right) \nabla \widetilde u^1+Q^2 \nabla \left(\widetilde u^1-\widetilde u^2\right),\quad
\nabla (X^1-X^2)(0,\cdot)=0.
\]
and from \eqref{est:all}, \eqref{est:Q}, \eqref{est:all-LIP} and \eqref{est:Q-LIP} we get
\begin{multline*}
\norm{ \partial_{t} \nabla (X^1-X^2) }_{L^{p}(0,T;W^{1,q})}
\\
\leqslant
\norm{ Q^1-Q^2 }_{L^{\infty}(0,T)} \norm{ \nabla \widetilde u^2 }_{L^{p}(0,T;W^{1,q})}
+
\norm{ Q^2 }_{L^{\infty}(0,T)} \norm{ \nabla \left(\widetilde u^1-\widetilde u^2\right) }_{L^{p}(0,T;W^{1,q})}
\\
\leqslant C \bignorm{ (f_{1}^{1},f_{2}^{1},f_{3}^{1},h^{1},g_{1}^{1},g_{2}^{1}) - (f_{1}^{2},f_{2}^{2},f_{3}^{2},h^{2},g_{1}^{2},g_{2}^{2})}_{\mathcal{B}_{T,p,q}}.
\end{multline*}
The above estimate yields
\begin{multline*}
\norm{  \nabla (X^1-X^2)  }_{W^{1,p}(0,T;W^{1,q}(\ofo))}
\\
\leqslant
C \bignorm{ (f_{1}^{1},f_{2}^{1},f_{3}^{1},h^{1},g_{1}^{1},g_{2}^{1}) - (f_{1}^{2},f_{2}^{2},f_{3}^{2},h^{2},g_{1}^{2},g_{2}^{2})}_{\mathcal{B}_{T,p,q}}.
\end{multline*}
the above estimate and \eqref{est:liw1p} yield
\begin{multline*}
 \norm{ \nabla (X^1-X^2)  }_{L^{\infty}(0,T;W^{1,q}(\ofo))}
\\
\leqslant
C T^{1/p'} \bignorm{ (f_{1}^{1},f_{2}^{1},f_{3}^{1},h^{1},g_{1}^{1},g_{2}^{1}) - (f_{1}^{2},f_{2}^{2},f_{3}^{2},h^{2},g_{1}^{2},g_{2}^{2})}_{\mathcal{B}_{T,p,q}}.
\end{multline*}
The rest of the proof runs as the proof of \cref{prop:estimate}.
\end{proof}

\part{Global in Time Existence }

\section{Linearization and Lagrangian Change of Variables} \label{sec:cov-g}
In this section, we slightly modify the change of variables introduced in \cref{sec:cov-local} and we rephrase the global existence and uniqueness result in \cref{mainthm_glob} in terms of the functions issued from this change of variables. The reason of  this modification  is that, here we need to linearize the system around the constant steady state $(\overline\rho, 0, \overline \vartheta, 0, 0).$ More precisely, define
\begin{gather}
\widetilde \rho(t,y)  = \rho(t,X(t,y))  - \overline\rho, \qquad \widetilde u (t,y)   = Q^{-1}(t)u(t,X(t,y)),  \\
\widetilde \vartheta(t,y)  = \vartheta(t,X(t,y)) - \overline\vartheta, \qquad \widetilde  p  = R \widetilde \rho \widetilde \vartheta , \\
\widetilde \ell(t)   = Q^{-1}(t) \dot a(t), \qquad \widetilde \omega(t)  = Q^{-1}(t) \omega(t),
\end{gather}
for $(t,y) \in (0,\infty) \times \ofo,$ where  $X$ has been  defined as in \eqref{ode}.

Then $(\widetilde  \rho, \widetilde  u, \widetilde  \vartheta, \widetilde  \ell, \widetilde  \omega)$ satisfies the following system
\begin{alignat}{2} \label{sys:NL-G}
&\partial_{t} \widetilde\rho + \overline \rho \div\widetilde u = {\mathcal F}_{1}  &\mbox{ in } (0,\infty) \times \ofo, \notag \\
& \partial_{t} \widetilde u - \div\sigma_{l}(\widetilde \rho,\widetilde u,\widetilde\vartheta) = {\mathcal F}_{2} &  \mbox{ in } (0,\infty) \times \ofo, \notag \\
& \partial_{t} \widetilde \vartheta  - \frac{\kappa}{\overline\rho c_{v}} \Delta \widetilde\vartheta + \frac{R \overline \vartheta}{c_{v}} \div\widetilde u = {\mathcal F}_{3} & \mbox{ in } (0,\infty) \times \ofo, \notag \\
&\widetilde  u = 0  & \mbox{ on } (0,\infty) \times \partial\Omega, \notag \\
&\widetilde u = \widetilde\ell + \widetilde\omega \times y & \mbox{ on } (0,\infty) \times \partial \oso \\
& \frac{\partial \widetilde \vartheta}{\partial n} = {\mathcal H} \cdot n & \mbox{ on } (0,\infty) \times \partial \ofo, \notag \\
& \frac{d}{dt} \widetilde \ell = - m^{-1} \int_{\partial\oso} \sigma_{l} (\widetilde\rho,\widetilde u,\widetilde\vartheta) n \ d\gamma + {\mathcal G}_{1} & \quad t \in (0,\infty) \notag \\
&\frac{d}{dt} \omega = - J(0)^{-1} \int_{\partial\oso} y \times \sigma_{l} (\rho,u,\vartheta) n \ d\gamma  + {\mathcal G}_{2} & t \in (0,\infty)  \notag \\
&   \widetilde\rho(0) = \rho_{0} - \overline\rho , \quad u(0) = u_{0},  \quad  \vartheta(0) = \vartheta_{0} - \overline \vartheta  & \mbox{ in } \ofo, \notag \\
&   \ell(0) = \ell_{0},  \quad \omega(0) = \omega_{0},\notag
\end{alignat}
where
\begin{align}
\sigma_{l}( \widetilde \rho,  \widetilde u,  \widetilde \vartheta) = \frac{2\mu}{\overline\rho} D \widetilde u + \left(\frac{\alpha}{\overline\rho} \div \widetilde u  - \frac{R\overline \vartheta}{\overline\rho}  \widetilde \rho  -R  \widetilde \vartheta \right) I_{3}, \quad D( \widetilde u) = \frac12(\nabla  \widetilde u + \nabla  \widetilde u ^{T}),
\end{align}
\begin{align} \label{def-Q-g}
\dot Q  =  Q A(\widetilde  \omega), \quad Q(0) = I_{3}
\end{align}
and
\begin{align} \label{Jacobi-g}
X(t,y)  = y + \int_{0}^{t} Q(s) \widetilde  u(s,y) \ {\rm d}s, \quad \mbox{ and } \quad \nabla Y(t,X(t,y)) = [\nabla X]^{-1}(t,y),
\end{align}
for every $y\in \Omega_F(0)$ and $t\geqslant 0$. Using  the notation
\begin{align} \label{Z-g}
Z(t,y) = \left( Z_{i,j}\right)_{1\leqslant i, j\leqslant 3}=[\nabla X]^{-1}(t,y) \qquad\qquad(t\geqslant 0,\ y\in \Omega_F(0)),
\end{align}
the remaining terms in \eqref{sys:NL-G} are defined by
\begin{equation} \label{F1-g}
\mathcal{F}_{1}(\widetilde \rho,\widetilde  u,\widetilde  \vartheta,\widetilde  \ell,\widetilde \omega)
= - \widetilde \rho \div\widetilde u  - (\widetilde \rho+\overline\rho) (Z^{\top} - I_{3}) : \nabla \widetilde u
\end{equation}
\begin{flalign}\label{F2-g}
(\mathcal{F}_{2})_i(\widetilde \rho,\widetilde  u,\widetilde  \vartheta,\widetilde  \ell,\widetilde \omega)=&  -\frac{\widetilde  \rho + \overline\rho}{\overline\rho}(\widetilde \omega\times Q \widetilde  u)_i +\widetilde \rho (\partial_t \widetilde  u)_{i}  -  \frac{\widetilde  \rho + \overline\rho}{\overline\rho} \left[ (Q - I) \partial_{t} \widetilde  u\right]_{i} \notag \\
& + \frac{\mu}{\overline\rho}\sum_{l,j,k} \frac{\partial^2 (Q\widetilde  u)_{i} }{\partial y_l\partial y_k}  \left(Z_{k,j}  - \delta_{k,j}\right) Z_{l,j}  + \frac{\mu}{\overline\rho}\sum_{l,k} \frac{\partial^2 (Q\widetilde  u)_{i} }{\partial y_l\partial y_k} \left( Z_{l,k} - \delta_{l,k} \right)
\notag \\
& + \frac{\mu}{\overline\rho} \left[ (Q - I) \Delta \widetilde  u\right]_{i} +\frac{\mu}{\overline\rho}\sum_{l,j,k} Z_{l,j} \frac{\partial  (Q \widetilde  u)_i}{\partial y_k} \frac{\partial Z_{k,j}}{\partial y_{l}}
\notag \\
& +\frac{\mu+\alpha}{\overline\rho} \sum_{l,j,k} \frac{\partial^{2} (Qu)_{j}}{\partial y_{l} \partial y_{k}} \left( Z_{k,j} - \delta_{k,j}\right)  Z_{l,i}   +
\frac{\mu+\alpha}{\overline\rho} \sum_{l,j} \frac{\partial^{2} (Qu)_{j}}{\partial y_{l} \partial y_{j}} \left( Z_{l,i} - \delta_{l,i}\right)
\notag \\
& + \frac{\alpha + \mu}{\overline\rho} \frac{\partial}{\partial y_{i}} \left[ \nabla \widetilde  u : (Q^{\top} - I_{3}) \right] +
\frac{\alpha + \mu}{\overline\rho}\sum_{l,j,k} Z_{l,i} \frac{\partial (Qu)_{j}}{\partial y_{k}} \frac{\partial Z_{k,j}}{\partial y_{l}}
\notag \\
& - R \frac{\widetilde  \vartheta}{\overline\rho}  \left(Z^{\top} \nabla \widetilde  \rho \right)_{i}  -  R \frac{\widetilde  \rho}{\overline\rho}  \left(Z^{\top} \nabla \widetilde  \vartheta \right)_{i} - \frac{R\overline \vartheta}{\overline\rho}\left[ \left( Z^{\top} - I\right) \nabla \widetilde \rho\right]_{i} \notag \\
& - R\left[ \left( Z^{\top} - I\right) \nabla \widetilde \vartheta\right]_{i}, &
\end{flalign}

\begin{flalign}\label{F3-G}
\mathcal{F}_{3}(\widetilde \rho,\widetilde  u,\widetilde  \vartheta,\widetilde  \ell,\widetilde \omega) = & - \frac{\widetilde \rho}{\overline\rho}\partial_t \widetilde  \vartheta
-\frac{R}{c_v \overline\rho} \left(\widetilde \vartheta\widetilde \rho +  \overline\rho \widetilde\vartheta + \overline \vartheta \widetilde \rho\right) \left[ ZQ\right]^\top : \nabla \widetilde  u  - \frac{R\overline \vartheta}{c_{v}}  \left(\left[ ZQ\right]^\top - I_{3} \right) : \nabla \widetilde  u \notag \\
& +\frac{\kappa}{c_v \overline\rho} \sum_{l,j,k} Z_{l,j}\frac{\partial \widetilde  \vartheta}{\partial y_k} \frac{\partial Z_{k,j}}{\partial y_l}
+\frac{\kappa}{c_v \overline\rho} \sum_{k,l} \frac{\partial^2 \widetilde  \vartheta}{\partial y_k\partial y_l} \left(Z_{l,k} -\delta_{l,k} \right)
Z_{l,j} \notag \\
& + \frac{\kappa}{c_v \overline\rho} \sum_{j,k,l} \frac{\partial^2 \widetilde  \vartheta}{\partial y_k\partial y_l} \left(Z_{k,j}-\delta_{k,j} \right)Z_{l,j}
\notag \\
& + \frac{\alpha}{c_{v}\overline\rho} \left(\left[ ZQ\right]^\top : \nabla \widetilde  u\right)^{2}
+ \frac{\mu}{2 c_{v} \overline\rho} \left|\nabla \widetilde  u Z Q + \left(\nabla \widetilde  u Z Q\right)^{\top}\right|^2, &
\end{flalign}
\begin{flalign} \label{H-g}
& \mathcal{H}(\widetilde \rho,\widetilde  u,\widetilde  \vartheta,\widetilde  \ell,\widetilde \omega) = \; \mathbbm{1}_{\partial \Omega} \mathcal{H}_{F} + \mathbbm{1}_{\partial \oso} \mathcal{H}_{S} \notag \\
& \mathcal{H}_{F} (\widetilde \rho,\widetilde  u,\widetilde  \vartheta,\widetilde  \ell,\widetilde \omega) = (I_3- Z^{\top}) \nabla \widetilde  \vartheta , \quad \mathcal{H}_{S}  (\widetilde \rho,\widetilde  u,\widetilde  \vartheta,\widetilde  \ell,\widetilde \omega) =  (I_3 - (Z Q)^{\top})\nabla \widetilde  \vartheta, &
\end{flalign}
\begin{flalign} \label{G0-g}
 \mathcal{G}_{0} (\widetilde \rho,\widetilde  u,\widetilde  \vartheta,\widetilde  \ell,\widetilde \omega) \; =  & \; \frac{\mu}{\overline\rho} \left[\nabla \widetilde  u  (Z Q - I_{3}) + [(ZQ)^{\top} - I_{3}]\left(\nabla \widetilde  u  \right)^{\top}   \right] \notag \\
 & +\frac{\alpha}{\overline\rho} \left((\left[Z  Q\right]^\top - I_{3}) : \nabla \widetilde  u\right) I_3
 +  R   \widetilde  \rho \widetilde  \vartheta I_3, &
\end{flalign}
\begin{flalign} \label{G-g}
&\mathcal{G}_{1} (\widetilde \rho,\widetilde  u,\widetilde  \vartheta,\widetilde  \ell,\widetilde \omega) = - \frac{m}{\overline\rho} (\widetilde  \omega \times \widetilde  \ell) -   \int_{\partial \oso} {\mathcal G}_{0} n \ {\rm d} \gamma, \notag \\
& \mathcal{G}_{2}(\widetilde \rho,\widetilde  u,\widetilde  \vartheta,\widetilde  \ell,\widetilde \omega) =  \frac{J(0)}{\overline\rho} \widetilde  \omega \times \widetilde  \omega  -  \int_{\partial \oso} y \times \mathcal{G}_{0} n \ {\rm d}\gamma. &
 \end{flalign}

Using the above change of variables, \cref{mainthm_glob} can be rephrased as follows.
\begin{thm}  \label{thm:g-f-2}
Let $2 < p < \infty$ and $3 < q < \infty$ satisfying the condition $\displaystyle \frac{1}{p} + \frac{1}{2q} \neq \frac12$. Assume that \eqref{nocontact} is satisfied.  Let $\overline \rho > 0$ and $\overline \vartheta > 0$ be two given constants. Then there exists $\eta_{0} > 0$ such that, for all $\eta \in (0,\eta_{0})$
 there exist two constants  $\delta_{0} > 0$ and $C > 0$ such that, for all $\delta \in (0,\delta_{0})$ and 
 for any $(\rho_{0},u_{0}, \vartheta_{0},\ell_{0},\omega_{0})$ belongs to $\mathcal{I}^{cc}_{p,q}$ satisfying
 \begin{align} \label{eq:br-g}
\frac{1}{|\ofo|} \int_{\ofo} \rho_{0} \ {\rm d} x = \overline\rho,
\end{align}
and
\begin{align} \label{eq:ini-g}
\|(\rho_{0} - \overline\rho, u_{0}, \vartheta_{0} - \overline \vartheta, \ell_{0},\omega_{0})\|_{\mathcal{I}_{p,q}} \leqslant \delta,
\end{align}
the system \eqref{sys:NL-G} - \eqref{G0-g} admits a unique solution
$(\widetilde \rho, \widetilde u,\widetilde \vartheta,\widetilde \ell,\widetilde \omega)$ with
\begin{multline} \label{est:g}
\|\widetilde\rho \|_{L^{\infty}(0,\infty; W^{1,q}(\ofo))} + \|e^{\eta (\cdot)}\nabla\widetilde \rho\|_{W^{1,p}(0,\infty; L^{q}(\ofo))}+ \norm{ e^{\eta (\cdot)} \partial_{t} \widetilde\rho}_{L^{p}(0,\infty; L^{q}(\Omega_{F}(0)))}  \\+ \|e^{\eta (\cdot)} \ \widetilde u\|_{L^{p}(0,\infty; W^{2,q}(\Omega_{F}(0))^{3})}
+  \|e^{\eta (\cdot)} \partial_{t}\widetilde u\|_{L^{p}(0,\infty; L^{q}(\Omega_{F}(0))^{3})} + \|e^{\eta (\cdot)} \widetilde u\|_{L^{\infty}(0,\infty; B^{2(1-1/p)}_{q,p}(\Omega_{F}(0))^{3})} \\
+ \|e^{\eta(\cdot)} \partial_{t}\widetilde \vartheta \|_{L^{p}(0,\infty; L^{q}(\Omega_{F}(0)))}   + \norm{e^{\eta(\cdot)} \nabla \widetilde \vartheta }_{L^{p}(0,\infty; L^{q}(\Omega_{F}(0)))} + \norm{e^{\eta(\cdot)} \nabla^{2} \widetilde \vartheta}_{L^{p}(0,\infty; L^{q}(\Omega_{F}(0)))} \\
+ \| \widetilde \vartheta \|_{L^{\infty}(0,\infty; B^{2(1-1/p)}_{q,p}(\Omega_{F}(0)))}
+ \|e^{\eta (\cdot)} \widetilde \ell \|_{W^{1,p}(0,\infty;\mathbb{R}^{3})}  \\
+ \|e^{\eta (\cdot)} \ \widetilde \omega \|_{W^{1,p}(0,\infty;\mathbb{R}^{3})} \leqslant C \delta.
\end{multline}
Moreover, $X \in L^{\infty}(0,\infty;W^{2,q}(\ofo))^{3} \cap W^{1,\infty}(0,\infty;W^{1,q}(\ofo))$ and 
$ X(t,\cdot) : \ofo\to \oft $ is a  $C^1$-diffeormorphim for all  $t\in [0,\infty).$
\end{thm}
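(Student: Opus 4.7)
The plan is to recast (\ref{sys:NL-G})--(\ref{G-g}) as a fixed-point equation in the weighted space whose norm is the left-hand side of (\ref{est:g}), in direct analogy with the local existence argument of Part 2 but now on the half-line $(0,\infty)$ with exponential weights $e^{\eta t}$. Two ingredients must be supplied: (i) maximal $L^p$-$L^q$ regularity on $[0,\infty)$, with rate $\eta_0>0$, for the linear system obtained by replacing the nonlinear terms in (\ref{sys:NL-G}) by prescribed source data; and (ii) quadratic-in-$\delta$ estimates, plus Lipschitz estimates, for the nonlinear map $\mathcal{N}=(\mathcal{F}_1,\mathcal{F}_2,\mathcal{F}_3,\mathcal{H},\mathcal{G}_1,\mathcal{G}_2)$, viewed as a self-map on the weighted analogue of $\mathcal{B}_{T,p,q}$.

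For ingredient (i) I would write the homogeneous linear problem as an abstract Cauchy problem $U'=\mathcal{A}U$ with $U=(\widetilde\rho,\widetilde u,\widetilde\vartheta,\widetilde\ell,\widetilde\omega)$ in a Banach space $\mathcal{X}$ incorporating the mean-zero constraint $\int_{\ofo}\widetilde\rho\,dy=0$ dictated by (\ref{eq:br-g}); this kills the kernel associated with constant densities. Following the monolithic perturbation strategy announced in the introduction, $\mathcal{A}$ is to be built from the linearized compressible Navier-Stokes-Fourier operator with homogeneous Dirichlet and Neumann data, whose $\mathcal{R}$-sectoriality is available from Enomoto-Shibata and Murata-Shibata, and the transmission condition $\widetilde u=\widetilde\ell+\widetilde\omega\times y$ together with the ODE for $(\widetilde\ell,\widetilde\omega)$ is then grafted on as a finite-rank, compact perturbation. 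Proving $\mathcal{R}$-sectoriality of $-\mathcal{A}$ in a sector shifted by $-2\eta_0$ and applying Weis's theorem (recalled in \cref{sec_back}) yields maximal $L^p$ regularity on $(0,\infty)$ together with exponential decay, hence a bounded solution operator $F$ controlling every weighted norm appearing in (\ref{est:g}).

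For ingredient (ii) I would revisit the nonlinear terms (\ref{F1-g})--(\ref{G-g}) and show that on a ball of radius $C\delta$ in the image of $F$ they are controlled in the weighted $\mathcal{B}_{\infty,p,q}$-norm by $C\delta^2$, with an analogous bound for differences. The mechanism is that each nonlinear summand is either a product of two small quantities (for example $\widetilde\rho\,\mathrm{div}\widetilde u$ or $\widetilde\vartheta\widetilde\rho$) or carries a factor $Q-I_3$, $Z-I_3$, $ZQ-I_3$; the latter, thanks to the exponential integrability of $\widetilde\omega$ and $\nabla\widetilde u$ encoded in (\ref{est:g}), is itself $O(\delta)$ in $L^\infty(0,\infty;W^{1,q})$ via the integral representations (\ref{def-Q-g})--(\ref{Z-g}). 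The algebra property of $W^{1,q}(\ofo)$ for $q>3$, the embedding $B^{2(1-1/p)}_{q,p}\hookrightarrow W^{1,q}$, and the Lizorkin-Triebel product estimate of \cref{prop:TL-product} together with the trace bound of \cref{prop:Ntrace} then supply all the weighted-norm estimates. The small parameter is now $\delta$ (smallness of the data), replacing the $T^\delta$ that drove the contraction in \cref{prop:estimate}--\cref{prop:lipestimate}. Composing with $F$ produces a strict contraction on a ball of radius $C\delta$ for $\delta\in(0,\delta_0)$ small, giving the unique fixed point.

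The main obstacle is step (i): establishing both $\mathcal{R}$-sectoriality and a genuine spectral gap for the monolithic operator $\mathcal{A}$. The density equation is hyperbolic, so decay cannot be read off from a naive energy identity; it must be extracted from the coupling between transport and the parabolic momentum equation through the pressure gradient $(R\overline\vartheta/\overline\rho)\nabla\widetilde\rho$, which requires a careful resolvent analysis on the mean-zero subspace. The inhomogeneous transmission condition on $\partial\oso$ adds a further complication, typically handled by a divergence-preserving lifting so that the solid coupling appears as a compact, lower-order perturbation of the fluid operator. Once ingredients (i) and (ii) are in place, uniqueness and the pointwise bounds $\rho\geqslant\overline\rho/2$, $\mathrm{dist}(\Omega_S(t),\partial\Omega)\geqslant\nu/2$, and the diffeomorphism property of $X(t,\cdot)$ follow from (\ref{est:g}) by arguments identical in spirit to the passage from \cref{mainthm2} to \cref{mainthm}, and \cref{cor_density} is a direct consequence of the embedding $W^{1,p}(0,\infty;W^{1,q})\hookrightarrow L^\infty(0,\infty;W^{1,q})$ applied to the weighted norm.
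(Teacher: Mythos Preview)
Your overall plan matches the paper's: establish $\mathcal{R}$-sectoriality and exponential stability of the monolithic linear operator (Sections~\ref{sec:lin-FSI}--\ref{sec:max-lin-g}), derive quadratic nonlinear estimates in the weighted spaces (Section~\ref{sec:nl-est-g}), and close by Banach fixed point (Section~\ref{sec:global-existence}). The identification of the hyperbolic density transport as the main obstacle to a spectral gap, and the use of a divergence-free lifting for the transmission condition, are both correct and are precisely what the paper does.

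There is, however, a genuine gap in your handling of the mean-zero constraint. You propose to work on the subspace where $\int_{\ofo}\widetilde\rho\,dy=0$, arguing that \eqref{eq:br-g} places the initial data there. But this subspace is \emph{not} invariant under the nonlinear dynamics: after the Lagrangian change of variables, mass conservation reads $\int_{\ofo}(\widetilde\rho+\overline\rho)\det(\nabla X)\,dy=\overline\rho\,|\ofo|$, which is not equivalent to $\int_{\ofo}\widetilde\rho\,dy=0$; concretely, the source $\mathcal{F}_1$ in \eqref{F1-g} has nonzero spatial mean in general. The same issue arises for the temperature, which you do not mention at all: constant $\widetilde\vartheta$ also lies in the kernel of the linear operator (Neumann condition), and $\mathcal{F}_3$, $\mathcal{H}$ are not mean-free either. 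Consequently the exponential semigroup bound is available only on the mean-zero subspace $\my_m$, yet the fixed-point map cannot be set up there because $\mathcal{N}$ does not land in it.

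The paper resolves this by splitting $\widetilde\rho=\widetilde\rho_m+\widetilde\rho_{avg}$ and $\widetilde\vartheta=\widetilde\vartheta_m+\widetilde\vartheta_{avg}$ as in \eqref{tak1.8}, applying the $\mathcal{R}$-sectoriality/exponential-stability machinery only to the mean-free components, and controlling the scalar averages separately in $L^\infty(0,\infty)$ via the additional estimates $\|\mathcal{F}_{1,avg}\|_{L^1(0,\infty)}$, $\|\mathcal{F}_{3,avg}\|_{L^1(0,\infty)}$, $\|\mathcal{H}_{avg}\|_{L^1(0,\infty)}$ (Theorem~\ref{thm:nh-g} and Proposition~\ref{prop:NL-g}). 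This is exactly why several norms in \eqref{est:g}---notably $\|\widetilde\rho\|_{L^\infty(0,\infty;W^{1,q})}$ and $\|\widetilde\vartheta\|_{L^\infty(0,\infty;B^{2(1-1/p)}_{q,p})}$---carry no exponential weight: the averages are merely bounded, not decaying. As written, your scheme would not close without this decomposition and the extra $L^1$-in-time control of the averaged sources.
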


\section{Some Background on $\mathcal{R}$ Sectorial Operators}\label{sec_back}

In this section, we recall some definitions and results on maximal
$L^p$--regularity and $\mr$-boundedness.
In what follows, we use Rademacher random variables, that is symmetric random variables with value in $\{-1,1\}$.
We first recall the notion of $\mr$-boundedness.

\begin{definition} [$\mr$-bounded family of operators]
  Let $\mathcal{X}$ and $\mathcal{Y}$ be Banach spaces. A family of operators $\mathcal{T}
  \subset \mathcal{L}(\mathcal{X},\mathcal{Y})$ is called $\mr-$bounded if there exist
  $p\in [1,\infty)$ and a constant $C>0$, such that for any integer $N \ge 1$,
  any $T_1, \ldots T_N \in  \mathcal{T}$,
  any independent Rademacher random variables $r_1, \ldots, r_N$,
  and any $x_1, \ldots, x_N \in \mathcal{X}$,
\[
   \left(\EE \Bignorm{ \sum_{j=1}^{N} r_{j} T_{j} x_{j}}_\mathcal{Y}^p\right)^{1/p} \leq C
   \left(\EE \Bignorm{ \sum_{j=1}^{N} r_{j} x_{j}}_\mathcal{X}^p\right)^{1/p}.
\]
The smallest constant $C$ in the above inequality is called the
$\mr_p$-bound of $\mathcal{T}$ on $\mathcal{L}(\mathcal{X},\mathcal{Y})$ and is denoted by
$\mr_p(\mathcal{T})$. As usual we denote by $\EE$ the expectation. 
\end{definition}

For more information on $\mr$-boundedness we refer to
\cite{ClePruss,DenkHieberPruss,KunstmannWeis:Levico} and references therein.
In particular, it is proved in \cite[p.26]{DenkHieberPruss} that this definition is independent of $p\in [1,\infty)$.

We also recall some useful properties (see Proposition 3.4 in \cite{DenkHieberPruss}):
\begin{equation}  \label{eq:Rbdd-1}
    \mr_p(\mathcal{S} + \mathcal{T}) \leqslant \mr_p(\mathcal{S} ) +  \mr_p( \mathcal{T}),
  \quad
  \mr_p(\mathcal{S} \mathcal{T} ) \leqslant \mr_p(\mathcal{T} )   \mr_p( \mathcal{S}).
\end{equation}


For any $\beta \in (0,\pi)$, we write
\[
   \Sigma_{\beta} = \{ \lambda \in \mathbb{C} \setminus \{0\} \mid   |\arg(\lambda)| < \beta \}.
\]
We now come to the second central definition.
\begin{defin}[sectorial and $\mr$-sectorial operators]
Let $A$ be a densely defined closed linear operator on a Banach space $\mathcal{X}$ with domain $\mathcal{D}(A)$.
We say that $A$ is a sectorial operator of angle $\tau \in(0, \pi)$
if for any $\beta\in (\tau,\pi)$,
$\Sigma_{\pi-\beta} \subset \rho(A)$ and
\[
  R_\beta =  \left\{ \lambda(\lambda - A)^{-1} \SUCHTHAT \lambda   \in \Sigma_{\pi-\beta} \right\}
\]
is bounded  in $\mathcal{L}(\mathcal{X})$. In that case, we write
$$
M_\beta(A) = \sup_{\lambda  \in \Sigma_{\pi-\beta}} \norm{ \lambda(\lambda - A)^{-1} }_{\mathcal{L}(\mathcal{X})}.
$$
Analogously, we say that $A$ is a $\mr$-sectorial operator of angle $\tau$
if $A$ is a sectorial operator of angle $\tau$ and if for any $\beta \in (\tau, \pi)$,
$R_\beta$ is $\mr$-bounded. We denote $\mr_{p, \beta}(A)$ the $\mr_p$-bound of $R_\beta$.
\end{defin}

One can replace in the above definitions $R_\beta$ by
$$
\widetilde{R_\beta}= \left\{ A(\lambda - A)^{-1} \SUCHTHAT \lambda   \in \Sigma_{\pi-\beta} \right\}.
$$
In that case, we denote the uniform bound and the $\mr$-bound by
$\widetilde{M}_\beta(A)$ and $\widetilde{\mr_{p, \beta}}(A)$.

The importance of $\mr$-sectorial operators is explained by the following result:
\begin{thm}[Weis]\label{thm:weis-Lp-maxreg-char}
Let $\mathcal{X}$ be a UMD Banach space and $A$ a densely defined, closed linear operator on $\mathcal{X}$.
Then the following assertions are equivalent
\begin{enumerate}
\item For any $T\in \mathbb{R}_+^*$, $f\in L^p(0,T;\mathcal{X})$
\begin{equation} \label{eq:max0}
u' = A u + f \quad \text{in} \quad (0,T), \quad u(0) = 0
\end{equation}
admits a unique solution $u$ satisfying the above equation almost everywhere and such that $Au\in L^{p}(0,T;\mathcal{X}).$
\item $A$ is $\mr$-sectorial of angle $\tau < \nicefrac{\pi}{2}$.
\end{enumerate}
\end{thm}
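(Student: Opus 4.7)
The plan is to reduce the proof to an operator-valued Fourier multiplier theorem which holds precisely on UMD spaces.

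For the implication $(b) \Rightarrow (a)$, I would first extend $f$ by zero outside $(0,T)$ to obtain $f \in L^p(\RR; \mathcal{X})$ and look for the solution via the Fourier transform on the line. The equation $u' - Au = f$ with $u(0)=0$ formally forces $\widehat{u}(\xi) = (i\xi - A)^{-1} \widehat{f}(\xi)$, so that $Au$ has Fourier symbol
\[
M(\xi) = A(i\xi - A)^{-1} = -I + i\xi(i\xi - A)^{-1}.
\]
Since $A$ is $\mr$-sectorial of some angle $\tau < \pi/2$, the imaginary axis minus the origin lies in $\rho(A)$, and the families $\{M(\xi) : \xi \neq 0\}$ and $\{\xi M'(\xi) : \xi \neq 0\}$ are both $\mr$-bounded, the second using the identity $M'(\xi) = -iA(i\xi-A)^{-2}$ together with the stability properties \eqref{eq:Rbdd-1}. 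The operator-valued Mikhlin multiplier theorem on UMD spaces then yields that the Fourier multiplier with symbol $M$ is bounded on $L^p(\RR; \mathcal{X})$, and restricting to $(0,T)$ gives the required maximal regularity bound, together with uniqueness through linearity.

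For the converse $(a) \Rightarrow (b)$, I would first invoke the classical observation of de Simon and Dore that maximal $L^p$-regularity forces $A$ to generate a bounded analytic semigroup $(e^{tA})_{t\geq 0}$, and in particular $A$ is already sectorial of some angle strictly less than $\pi/2$. To upgrade sectoriality to $\mr$-sectoriality, I would test the maximal regularity estimate against randomized piecewise constant inputs of the form
\[
f(t,\omega) = \sum_{j=1}^{N} r_{j}(\omega)\, \mathbbm{1}_{[t_{j}, t_{j}+\varepsilon]}(t)\, x_{j},
\]
for independent Rademacher variables $r_{j}$ and vectors $x_{j} \in \mathcal{X}$. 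Combining the variation-of-constants representation $u(t) = \int_0^t e^{(t-s)A} f(s)\,ds$ with Kahane's contraction principle and passing to the limit $\varepsilon \to 0$ extracts an $\mr$-bound for $\{\lambda(\lambda-A)^{-1}\}$ on a right half-plane via Laplace transform; contour deformation inside the sectoriality sector then propagates this $\mr$-bound to a sector of angle strictly greater than $\pi/2$ around the origin.

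The main obstacle, and the genuine source of depth of the theorem, is the operator-valued Mikhlin multiplier theorem invoked in $(b) \Rightarrow (a)$: it requires the UMD hypothesis on $\mathcal{X}$ in an essential way, as the counterexamples of Kalton and Lancien show that on arbitrary Banach spaces $\mr$-sectoriality is strictly weaker than maximal $L^p$-regularity. Once that multiplier theorem is available, the remaining ingredients — contour representations of semigroups and resolvents, Kahane's inequality, and the elementary stability properties of $\mr$-bounds recorded in \eqref{eq:Rbdd-1} — are routine.
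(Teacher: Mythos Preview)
The paper does not prove this theorem; immediately after the statement it writes ``This result is due to \cite{Weis01} (see also \cite[p.45]{DenkHieberPruss})'' and moves on. Your sketch is essentially the standard argument of Weis: the operator-valued Mikhlin theorem on UMD spaces for $(b)\Rightarrow(a)$, and extraction of $\mr$-bounds from the maximal regularity inequality via randomized inputs for $(a)\Rightarrow(b)$, so there is nothing to compare against here and your outline is in line with the cited references.
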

This result is due to \cite{Weis01} (see also \cite[p.45]{DenkHieberPruss}). We recall that $\mathcal{X}$ is a UMD Banach space if the Hilbert transform is bounded in $L^p(\mathbb{R};\mathcal{X})$ for $p\in (1,\infty)$. In particular, the closed subspaces of $L^q(\Omega)$ for $q\in (1,\infty)$ are UMD Banach spaces. We refer the reader to \cite[pp.141--147]{Amann} for more information on UMD spaces.

We can also add an initial condition in \eqref{eq:max0} and consider the following system:
\begin{equation} \label{eq:max-reg-g}
u' = A u + f \quad \text{in}\quad  (0,\infty), \quad u(0) = u_{0}.
\end{equation}

 \begin{cor} \label{thm:max-reg-g}
 Let $\mathcal{X}$ be a UMD Banach space, $1 < p < \infty$ and let $A$ be a closed, densely defined operator in $\mathcal{X}$ with domain $\mathcal{D}(A).$
 Let us assume that $A$ is a $\mr$-sectorial operator of angle $\tau < \nicefrac{\pi}{2}$ and that the semigroup generated by $A$ has negative exponential type.
 Then for every
 $u_{0} \in (\mathcal{X}, \mathcal{D}(A))_{1-1/p,p}$ and for every $f \in L^{p}(0,\infty;\mathcal{X}),$
 \cref{eq:max-reg-g} admits a unique solution in $L^{p}(0,\infty;\mathcal{D}(A)) \cap W^{1,p}(0,\infty;\mathcal{X}).$
 \end{cor}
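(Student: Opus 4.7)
The plan is to decompose the solution as $u = u_1 + u_2$, where $u_1(t) = e^{tA} u_0$ handles the initial datum (with zero source) and $u_2$ handles the source term (with zero initial datum). Since $A$ is sectorial of angle strictly less than $\pi/2$, it generates a bounded analytic semigroup $(e^{tA})_{t\geqslant 0}$ on $\mathcal{X}$, so $u_1$ is well defined, and $u = u_1 + u_2$ evidently satisfies \eqref{eq:max-reg-g} with $u(0) = u_0$. Uniqueness of $u$ then follows from the usual semigroup argument applied to the difference of two solutions.

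For $u_1$, I would invoke the standard trace characterisation of the real interpolation space $(\mathcal{X}, \mathcal{D}(A))_{1-1/p,p}$: an element $x \in \mathcal{X}$ belongs to this space if and only if $t \mapsto A e^{tA} x$ lies in $L^p(0,1;\mathcal{X})$, with equivalent norms. This directly yields $u_1 \in L^p(0,1;\mathcal{D}(A)) \cap W^{1,p}(0,1;\mathcal{X})$. To extend the regularity to the whole half-line I combine the analyticity bound $\norm{A e^{tA}}_{\mathcal{L}(\mathcal{X})} \leqslant C/t$ with the negative exponential type of the semigroup to obtain $\norm{A e^{tA}}_{\mathcal{L}(\mathcal{X})} \leqslant C e^{-\alpha t}$ for $t \geqslant 1$ and some $\alpha > 0$. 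Hence $\norm{u_1(t)}_{\mathcal{D}(A)}$ decays exponentially on $[1,\infty)$, so $u_1 \in L^p(0,\infty;\mathcal{D}(A)) \cap W^{1,p}(0,\infty;\mathcal{X})$.

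For $u_2$, \cref{thm:weis-Lp-maxreg-char} yields on each finite window $(0,T)$ a unique solution $u_2^T$ in the required regularity class. Uniqueness enforces compatibility as $T$ grows, so these local solutions patch to a single function $u_2$ on $(0,\infty)$ solving the equation almost everywhere. The substantial point is then a uniform-in-$T$ estimate, that is, maximal $L^p$-regularity on the half-line. Here I would exploit that the negative exponential type of the semigroup places a neighbourhood of the imaginary axis in $\rho(A)$ and, together with $\mathcal{R}$-sectoriality of angle $<\pi/2$, yields $\mathcal{R}$-boundedness of $\{\lambda(\lambda - A)^{-1} : \Re\lambda \geqslant 0,\ \lambda \neq 0\}$. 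An operator-valued Mikhlin multiplier theorem in the UMD space $\mathcal{X}$, applied to the symbol $\xi \mapsto A(i\xi - A)^{-1}$, then upgrades the finite-interval Weis theorem to maximal regularity on $(0,\infty)$ and delivers the required estimate for $u_2$.

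The main obstacle is precisely this passage from a finite to an infinite time horizon for $u_2$; for $u_1$ the argument is routine once the trace-space characterisation is in hand, and uniqueness of $u$ is automatic. Everything else amounts to standard patching and bookkeeping.
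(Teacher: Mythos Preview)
Your decomposition $u = u_1 + u_2$ and your treatment of $u_1$ via the trace characterisation of $(\mathcal{X},\mathcal{D}(A))_{1-1/p,p}$ together with exponential decay is exactly what the paper invokes through \cite[Theorem~1.8.2]{Tri95}, so that part matches.

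The difference lies in the passage from finite to infinite time for $u_2$. The paper simply cites Dore \cite[Theorem~2.4]{Dor91}: once Weis's theorem gives maximal $L^p$-regularity on some finite interval, Dore's result upgrades this to maximal regularity on $(0,\infty)$ under the sole additional hypothesis that the semigroup has negative exponential type --- no second pass through Mikhlin is needed. Your route instead reproves this step from scratch, by arguing $\mathcal{R}$-boundedness of $\{\lambda(\lambda-A)^{-1}:\Re\lambda\geqslant 0\}$ and then applying the operator-valued Mikhlin theorem on the line. This is correct, but it needs one extra ingredient you left implicit: $\mathcal{R}$-sectoriality of angle $<\pi/2$ gives $\mathcal{R}$-boundedness only on open sectors excluding the imaginary axis, so to cover $i\mathbb{R}$ you must use that $0\in\rho(A)$ (from negative type) and that $\{\lambda(\lambda-A)^{-1}:\Re\lambda\geqslant 0,\ |\lambda|\leqslant R\}$ is relatively compact in $\mathcal{L}(\mathcal{X})$, hence $\mathcal{R}$-bounded. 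With that filled in your argument goes through. The paper's approach is shorter because Dore's theorem packages this extension once and for all; your approach is more self-contained and shows explicitly where the negative type enters the Fourier-multiplier estimate.
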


\begin{proof}
The proof follows from the above theorem, \cite[Theorem 2.4]{Dor91} and  \cite[Theorem 1.8.2]{Tri95}.
\end{proof}

In view of the above results, it is natural to consider the perturbation theory of
$\mr$-sectoriality. The following  result was obtained in \cite[Corollary~2]{KunstmannWeis:Pisa2001}.
\begin{prop} \label{pr:perturb}
Let $A$ be a $\mr$-sectorial operator of angle $\tau$
on a Banach space $\mathcal{X}$ and let $\beta \in (\tau, \pi)$.
Let $B$ be  a linear  operator on $\mathcal{X}$ such that $\mathcal{D}(A) \subset \mathcal{D}(B)$ and
\begin{equation} \label{c:small}
    \norm{ B x  }_{\mathcal{X}} \leq a  \norm{  Ax }_{\mathcal{X}} + b \norm{ x }_{\mathcal{X}},
\end{equation}
for some $a,b\geq 0$.
If $a < \left(\widetilde M_\beta(A) \widetilde{\mr_{p, \beta}}(A)\right)^{-1}$ then
$A+B -\lambda$ is $\mr$-sectorial for each
\[
  \lambda >  \lambda_0 = \frac{b M_\beta(A)
  \widetilde{\mr_{p, \beta}}(A)} {1-a \widetilde{M}_\beta(A) \widetilde{\mr_{p, \beta}}(A)}.
\]
\end{prop}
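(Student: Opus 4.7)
The natural starting point is the resolvent factorisation
\[
\mu - (A+B) = \bigl(I - B(\mu-A)^{-1}\bigr)(\mu - A),
\]
valid for $\mu \in \rho(A)$. The plan is to show that for $\mu \in \Sigma_{\pi-\beta}$ with $|\mu|$ sufficiently large, the family $\{B(\mu-A)^{-1}\}$ is $\mr$-bounded with $\mr$-bound strictly less than $1$. Once this is in hand, the Neumann series
\[
\bigl(I - B(\mu-A)^{-1}\bigr)^{-1} = \sum_{k=0}^\infty \bigl(B(\mu-A)^{-1}\bigr)^k
\]
defines an $\mr$-bounded family via the product property \eqref{eq:Rbdd-1}, and composing on the left with $\mu(\mu-A)^{-1}$ will yield the $\mr$-boundedness of $\{\mu(\mu-(A+B))^{-1}\}$ on the corresponding region. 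The $\mr$-sectoriality of $A+B-\lambda$ will then follow from a translation of the spectral parameter.

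The core estimate concerns the $\mr$-bound of $\{B(\mu-A)^{-1}\}$. Given Rademacher variables $r_j$, vectors $x_j\in\mx$ and parameters $\mu_j$ in the relevant region, linearity of $B$ gives
\[
\sum_j r_j B(\mu_j-A)^{-1} x_j = B\Bigl(\sum_j r_j (\mu_j-A)^{-1} x_j\Bigr),
\]
so that applying the pointwise relative bound \eqref{c:small} to $y := \sum_j r_j (\mu_j-A)^{-1} x_j$ yields
\[
\Bignorm{\sum_j r_j B(\mu_j-A)^{-1} x_j} \leq a\Bignorm{\sum_j r_j A(\mu_j-A)^{-1} x_j} + b\Bignorm{\sum_j r_j (\mu_j-A)^{-1} x_j}.
\]
Passing to $L^p$ in the Rademacher expectation and using Minkowski's inequality, I would bound the first term by $a\,\widetilde{\mr_{p,\beta}}(A)\,(\EE\|\sum_j r_j x_j\|^p)^{1/p}$, directly from the $\mr$-boundedness of $\{A(\mu-A)^{-1}\}$. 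For the second term I would exploit the factorisation $(\mu-A)^{-1} = \mu^{-1}\cdot \mu(\mu-A)^{-1}$, combine Kahane's contraction principle applied to the scalar factors $\mu_j^{-1}$ (with $|\mu_j^{-1}| \leq \lambda_0^{-1}$) together with the operator-norm estimate $\|\mu(\mu-A)^{-1}\|\leq M_\beta(A)$, and arrange the argument so that the $\mr$-boundedness of $\{A(\mu-A)^{-1}\}$ is what is ultimately invoked, producing a bound of the form $b\lambda_0^{-1} M_\beta(A)\widetilde{\mr_{p,\beta}}(A)$. The value of $\lambda_0$ in the statement is precisely what is needed so that the sum of the two contributions equals $a\widetilde{M}_\beta(A)\widetilde{\mr_{p,\beta}}(A) + b\lambda_0^{-1}M_\beta(A)\widetilde{\mr_{p,\beta}}(A) < 1$.

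With such a sub-unit $\mr$-bound for $\{B(\mu-A)^{-1}\}$, the product property \eqref{eq:Rbdd-1} shows that each partial sum of the Neumann series has $\mr$-bound growing geometrically, so the series converges and gives an $\mr$-bounded family. Multiplying on the left by $\mu(\mu-A)^{-1}$ yields the $\mr$-boundedness of $\{\mu(\mu-(A+B))^{-1}\}$ on $\{\mu \in \Sigma_{\pi-\beta}: |\mu|>\lambda_0\}$. To conclude I would translate: for $\lambda>\lambda_0$ and $\mu$ in a sector $\Sigma_{\pi-\beta'}$ with $\beta'\in(\tau,\pi)$ suitably chosen (e.g.\ $\beta'\geq \pi/2$ so that $\Re\mu\geq 0$), one checks that $\mu+\lambda$ belongs to $\Sigma_{\pi-\beta}$ with $|\mu+\lambda|>\lambda_0$, and the resolvent of $A+B-\lambda$ at $\mu$ coincides with $(\mu+\lambda-(A+B))^{-1}$; it therefore inherits the $\mr$-bounded behaviour, giving the $\mr$-sectoriality of $A+B-\lambda$.

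The main technical obstacle I anticipate lies in the second step: matching the exact constant in $\lambda_0$ requires a careful decomposition of $(\mu-A)^{-1}$ and a precise use of Kahane's principle, so that the $\mr$-bound $\widetilde{\mr_{p,\beta}}(A)$ appears in the $b$-contribution while the scalar size is controlled through $M_\beta(A)/|\mu|$ rather than through an additional $\mr$-bound factor. A naive argument tends to produce an extraneous term of the form $1 + \widetilde{\mr_{p,\beta}}(A)$, and the finer estimate needed here is what pins down the exact ratio in the definition of $\lambda_0$.
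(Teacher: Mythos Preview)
The paper does not prove this proposition; it merely quotes it as \cite[Corollary~2]{KunstmannWeis:Pisa2001}. Your Neumann-series strategy via an $\mr$-bound estimate for $\{B(\mu-A)^{-1}\}$ is precisely the approach of that reference, so structurally you are on the right track and there is nothing in the paper itself to compare against beyond the citation.

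One comment on the constants. The direct computation you outline --- apply \eqref{c:small} to $\sum_j r_j(\mu_j-A)^{-1}x_j$ and pass to $L^p$ in expectation --- yields $a\,\widetilde{\mr_{p,\beta}}(A)$ for the $a$-contribution and, after Kahane's contraction on the scalars $\mu_j^{-1}$, a bound of order $b\lambda^{-1}\mr_{p,\beta}(A)$ for the $b$-contribution. Neither term carries the factors $\widetilde{M}_\beta(A)$ or $M_\beta(A)$ that appear in the stated threshold, so the specific combination $a\widetilde{M}_\beta(A)\widetilde{\mr_{p,\beta}}(A)+b\lambda^{-1}M_\beta(A)\widetilde{\mr_{p,\beta}}(A)$ does not fall out of the steps you describe; your attempt to ``arrange the argument'' to produce it is where the sketch becomes vague. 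You are right to flag this as the delicate point. For the purposes of the present paper only the qualitative conclusion matters --- that $A+B-\lambda$ is $\mr$-sectorial for all sufficiently large $\lambda$, with a threshold that is linear in $b$ and finite provided $a$ is small enough --- and that much your argument does deliver.
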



\section{ Linearized Fluid-Structure Interaction System} \label{sec:lin-FSI}

In this section we study the fluid-structure system linearized around
$(\overline \rho, 0, \overline \vartheta, 0, 0)$, $\overline \rho > 0$, $\overline \vartheta > 0$.
More precisely, we consider the following linear system
\begin{alignat}{2} \label{eq:linearS}
&\partial_{t} \rho + \overline \rho \div u = 0  & \mbox{ in } (0,\infty) \times \ofo, \notag \\
& \partial_{t} u - \div\sigma_{l}(\rho,u,\vartheta) = 0 &  \mbox{ in } (0,\infty) \times \ofo, \notag \\
& \partial_{t} \vartheta  - \frac{\kappa}{\overline\rho c_{v}} \Delta \vartheta + \frac{R \overline \vartheta}{c_{v}} \div u = 0 & \mbox{ in } (0,\infty) \times \ofo, \notag \\
& u = \ell + \omega \times y  & \mbox{ on } (0,\infty) \times \partial \oso \\
& u = 0 & \mbox{ on } (0,\infty) \times \partial\Omega, \notag \\
& \frac{\partial \vartheta}{\partial n} = 0  & \mbox{ on } (0,\infty) \times \partial\ofo, \notag \\
& \frac{d}{dt} \ell = - m^{-1} \int_{\partial\oso} \sigma_{l} (\rho,u,\vartheta) n \ d\gamma  &  \quad t \in (0,\infty) \notag \\
&\frac{d}{dt} \omega = - J(0)^{-1} \int_{\partial\oso} y \times \sigma_{l} (\rho,u,\vartheta) n \ d\gamma & \quad t \in (0,\infty)  \notag \\
&   \rho_{0} = \rho_{0} , \quad u(0) = u_{0},  \quad  \vartheta(0) = \vartheta_{0}  & \quad \mbox{ in } \ofo, \notag \\
&   \ell(0) = \ell_{0},  \quad \omega(0) = \omega_{0},\notag
\end{alignat}
where
\begin{equation}\label{tak1.5}
\sigma_{l}(\rho, u, \vartheta) = \frac{2\mu}{\overline\rho} Du + \left(\frac{\alpha}{\overline\rho} \div u  - \frac{R\overline \vartheta}{\overline\rho} \rho  -R \vartheta \right) I_{3}, \quad D(u) = \frac12(\nabla u + \nabla u ^{T}).
\end{equation}

Our aim is to show that the linearized operator is $\mr$-sectorial in a suitable functional space.
In order to do this, we first consider the case of a linearized compressible Navier-Stokes-Fourier system without rigid body and we use \cref{pr:perturb} in order to deal with the equations for the rigid body.

\subsection{Linearized compressible Navier-Stokes-Fourier system} \label{subsec:cns-L}
In this subsection we discuss some properties of the generator of the semigroup describing the linearization around an equilibrium state of the Navier-Stokes-Fourier system.  Most of these properties follow from the corresponding linearized compressible Navier-Stokes system and of the heat equation, and in this case  we just provide the appropriate references.  Our contribution is to show that the coupling terms can be seen as perturbations and thus tackled using either direct estimates or abstract perturbation results for $\mr$-sectorial operators.

 Let us set
\begin{align}
\mathcal{X} = W^{1,q}(\ofo) \times L^{q}(\ofo)^{3} \times L^{q}(\ofo)
\end{align}
and consider the operator $A_{F}:\mathcal{D}(A_{F}) \to \mathcal{X}$ defined by
\begin{multline*}
\mathcal{D}(A_{F}) = \Bigg\{ (\rho,u,\vartheta)  \in W^{1,q}(\ofo) \times W^{2,q}(\ofo)^{3} \times W^{2,q}(\ofo)  \mid
\\
u=0 \mbox{ on } \partial \ofo, \quad \frac{\partial \vartheta}{\partial n} = 0 \mbox{ on }  \partial \ofo
\Bigg\},
\end{multline*}

\begin{align}
A_{F} = \begin{pmatrix}
0 & \displaystyle - \overline\rho \div & 0 \\
- \displaystyle \frac{R\overline \vartheta}{\overline\rho} \nabla &\displaystyle \frac{\mu}{\overline\rho}\Delta + \frac{\alpha + \mu}{\overline\rho} \nabla \div  & -R \nabla  \\
 0 &  \displaystyle -\frac{R\overline \vartheta}{c_{v}} \div & \displaystyle \frac{\kappa}{\overline\rho c_{v}} \Delta
\end{pmatrix}.
\end{align}
  Let us first study some properties of the operator $A_{F}$.
 \begin{thm} \label{thm:rsec-af}
Assume $1 <q< \infty$. Then there exists $\gamma_{0} > 0$ such that  $A_{F} - \gamma_{0}$
is an ${\mathcal R}$-sectorial operator in $\mathcal{X}$ of angle $< \pi/2.$
\end{thm}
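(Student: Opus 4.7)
The plan is to realise $A_{F}$ as a lower-order perturbation of a block-diagonal operator whose two diagonal blocks are each already known to be $\mr$-sectorial, and then to invoke the Kunstmann--Weis perturbation theorem \cref{pr:perturb}. First I would decompose $A_{F} = A_{0} + B$ on $\mathcal{D}(A_{F})$, where $A_{0}$ consists of the $(\rho,u)$-block corresponding to the linearised barotropic compressible Navier--Stokes operator $L_{\rm NS}$ together with the decoupled Neumann Laplacian $\frac{\kappa}{\overline\rho c_{v}}\Delta$ acting on $\vartheta$, and where $B$ gathers the two temperature/velocity coupling terms $-R\nabla\vartheta$ (in the momentum equation) and $-\frac{R\overline\vartheta}{c_{v}}\div u$ (in the energy equation).

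Second, both diagonal blocks of $A_{0}$ are known to be $\mr$-sectorial of angle strictly less than $\pi/2$ after a suitable spectral shift: for $L_{\rm NS}$ on $W^{1,q}(\ofo) \times L^{q}(\ofo)^{3}$ with Dirichlet boundary condition on the velocity this is precisely the content of Enomoto--Shibata \cite{ShibaEno13}, and for the Neumann Laplacian on $L^{q}(\ofo)$ it is classical. The stability of $\mr$-sectoriality under direct sums (together with \eqref{eq:Rbdd-1}) then yields a $\gamma_{1} > 0$ for which $A_{0} - \gamma_{1}$ is $\mr$-sectorial of angle less than $\pi/2$ in $\mathcal{X}$, with domain $\mathcal{D}(A_{0}) = \mathcal{D}(A_{F})$ and with graph norm equivalent to $\|\rho\|_{W^{1,q}(\ofo)} + \|u\|_{W^{2,q}(\ofo)} + \|\vartheta\|_{W^{2,q}(\ofo)}$.

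Third, I would verify that $B$ is a lower-order perturbation of $A_{0}$ in the sense of \eqref{c:small}, with the prefactor $a$ arbitrarily small. Indeed, $B$ only involves first-order spatial derivatives of $\vartheta$ and $u$, while the graph norm of $A_{0}$ controls their full $W^{2,q}$-norms. Using Ehrling-type interpolation estimates
\[
\|\nabla\vartheta\|_{L^{q}(\ofo)} \leqslant \varepsilon \|\vartheta\|_{W^{2,q}(\ofo)} + C_{\varepsilon} \|\vartheta\|_{L^{q}(\ofo)},\qquad
\|\div u\|_{L^{q}(\ofo)} \leqslant \varepsilon \|u\|_{W^{2,q}(\ofo)} + C_{\varepsilon} \|u\|_{L^{q}(\ofo)},
\]
together with the graph-norm equivalence above, one obtains $\|Bx\|_{\mathcal{X}} \leqslant a\|A_{0} x\|_{\mathcal{X}} + b\|x\|_{\mathcal{X}}$ for any prescribed $a>0$ and a corresponding $b = b(a)$.

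Fourth, choosing $a$ so small that $a < \bigl(\widetilde M_{\beta}(A_{0}-\gamma_{1})\,\widetilde{\mr_{p,\beta}}(A_{0}-\gamma_{1})\bigr)^{-1}$ and applying \cref{pr:perturb} with the perturbation $B$, I would conclude that $A_{F} - \gamma_{0} = (A_{0} - \gamma_{1}) + B - (\gamma_{0} - \gamma_{1})$ is $\mr$-sectorial of angle less than $\pi/2$ for every $\gamma_{0}$ large enough. The main obstacle I anticipate is not the abstract perturbation step but the careful invocation of the Enomoto--Shibata result for $L_{\rm NS}$ in the exact mixed-order functional setting used here: one has to make sure that the cited statement yields $\mr$-sectoriality on $W^{1,q}(\ofo)\times L^{q}(\ofo)^{3}$ in a bounded smooth domain with Dirichlet condition on $u$, and that the corresponding graph norm really is equivalent to $\|\rho\|_{W^{1,q}(\ofo)} + \|u\|_{W^{2,q}(\ofo)}$, so that the interpolation bound genuinely gives a small constant in front of $\|A_{0}x\|_{\mathcal{X}}$ rather than of some weaker norm.
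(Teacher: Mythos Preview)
Your proposal is correct and follows the same perturbation-of-$\mr$-sectoriality strategy as the paper, but with a different splitting of $A_{F}$. The paper takes as unperturbed operator the upper-triangular matrix
\[
A_{F}^{0} = \begin{pmatrix} 0 & -\overline\rho\,\div & 0 \\ 0 & A_{u} & 0 \\ 0 & 0 & A_{\vartheta} \end{pmatrix},
\]
citing Enomoto--Shibata only for the pure Lam\'e block $A_{u}$ (and \cite{DenkHieberPruss} for $A_{\vartheta}$); it then writes out $\lambda(\lambda - A_{F}^{0})^{-1}$ explicitly from the triangular structure and verifies its $\mr$-boundedness directly via \eqref{eq:Rbdd-1}, using that $\div(\lambda-A_{u})^{-1}$ is $\mr$-bounded into $W^{1,q}$. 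The perturbation $B_{F}$ in the paper therefore carries all \emph{three} first-order coupling terms, including $-\frac{R\overline\vartheta}{\overline\rho}\nabla\rho$. Your version instead absorbs the full barotropic operator $L_{\rm NS}$ into the unperturbed block and invokes the stronger Enomoto--Shibata statement for the coupled $(\rho,u)$-system on $W^{1,q}\times L^{q}$, so that only the two temperature couplings remain in $B$. Both routes work: yours is slightly shorter at the cost of citing a heavier external result, while the paper's is more self-contained and makes the $\mr$-boundedness of the density--velocity off-diagonal term explicit. The obstacle you flag is real but is resolved by the same reference---the Enomoto--Shibata theorem is stated precisely in the $W^{1,q}(\ofo)\times L^{q}(\ofo)^{3}$ setting with Dirichlet velocity data, with graph norm equivalent to $W^{1,q}\times W^{2,q}$.
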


\begin{proof}
We first define
\begin{equation}\label{Au}
\mathcal{D}(A_{u}) = (W^{2,q}(\ofo) \cap W^{1,q}_{0}(\ofo))^{3},
\quad
A_{u} = \frac{\mu}{\overline\rho}\Delta + \frac{\alpha + \mu}{\overline\rho} \nabla \div,
\end{equation}
and
\begin{equation}\label{Atheta}
\dom(A_{\vartheta}) = \left\{ \vartheta \in W^{2,q}(\ofo) \mid \frac{\partial \vartheta}{\partial n} = 0\mbox{ on } \partial \ofo\right\},
\quad A_{\vartheta} = \frac{\kappa}{\overline \rho c_{v}} \Delta.
\end{equation}

From \cite[Theorem 8.2]{DenkHieberPruss}, there exists $\gamma_\vartheta\in \mathbb{R}$ such that $A_{\vartheta}-\gamma_\vartheta$ is $\mr$-sectorial of angle $< \nicefrac{\pi}{2}$.
Using \cite[Theorem 2.5]{ShibaEno13}, we also obtain the existence of
$\gamma_u\in \mathbb{R}$ such that $A_{u}-\gamma_u$ is $\mr$-sectorial of angle $< \nicefrac{\pi}{2}$.

In particular there exist $\gamma$ and $\beta<\pi/2$ such that,
\begin{equation}\label{scl01}
\mr_{p} \left\{ \lambda(\lambda - A_{u})^{-1} \SUCHTHAT \lambda   \in \gamma+\Sigma_{\pi-\beta} \right\}<\infty,
\quad
\mr_{p} \left\{ A_{u}(\lambda - A_{u})^{-1} \SUCHTHAT \lambda   \in \gamma+\Sigma_{\pi-\beta} \right\}<\infty,
\end{equation}
and
\begin{equation}\label{scl02}
\mr_{p} \left\{ \lambda(\lambda - A_{\vartheta})^{-1} \SUCHTHAT \lambda   \in \gamma+\Sigma_{\pi-\beta} \right\}<\infty
\end{equation}
respectively in $\mathcal{L}(L^{q}(\ofo)^{3})$ and $\mathcal{L}(L^{q}(\ofo))$.
We deduce from the properties \eqref{eq:Rbdd-1} that
\begin{equation}\label{scl03}
\mr_{p} \left\{ \div(\lambda - A_{u})^{-1} \SUCHTHAT \lambda   \in \gamma+\Sigma_{\pi-\beta} \right\}<\infty
\end{equation}
in $\mathcal{L}(L^{q}(\ofo)^{3},W^{1,q}(\ofo))$.

We rewrite the operator $A_{F}$  in the form $A_{F} = A_{F}^0 + B_{F}$, with
$$
A_{F}^0 = \begin{pmatrix}
0 & \displaystyle - \overline\rho \div & 0
\\ 0 & A_u  & 0  \\
 0 &  0 & A_{\vartheta}
\end{pmatrix}
\mbox{ and }
B_{F} = \begin{pmatrix}
0 & 0 & 0 \\  - \displaystyle \frac{R\overline \vartheta}{\overline\rho} \nabla & 0   & -R \nabla  \\
 0 &  \displaystyle -\frac{R\overline \vartheta}{c_{v}} \div & 0
\end{pmatrix}.
$$
Some standard calculation shows that for $\lambda   \in \gamma+\Sigma_{\pi-\beta}$,
$$
\lambda (\lambda I  - A_{F}^0)^{-1} =  \begin{pmatrix}
I &  - \overline\rho \div (\lambda I - A_{u})^{-1} & 0 \\ 0 & \lambda (\lambda I  -  A_{u})^{-1} & 0 \\
0 & 0 & \lambda(\lambda I - A_{\vartheta})^{-1}
\end{pmatrix}.
$$

Using again the properties \eqref{eq:Rbdd-1}, we deduce from the above formula and from \eqref{scl01}--\eqref{scl03} that
$$
\mr_{p} \left\{ \lambda(\lambda - A_{F}^0)^{-1} \SUCHTHAT \lambda   \in \gamma+\Sigma_{\pi-\beta} \right\}<\infty
$$
in $\mathcal{L}(\mathcal{X})$.

Now for all $(\rho, u, \vartheta) \in \dom(A_{F})$ we have
\begin{equation} \label{e:bf1}
\left\|B_{F} \begin{pmatrix}
\rho \\ u \\ \vartheta \end{pmatrix} \right\|_{\mx}
\leqslant C \left(\|\rho\|_{W^{1,q}(\ofo)}+\|u\|_{W^{1,q}(\ofo)} + \|\vartheta\|_{W^{1,q}(\ofo)}\right).
\end{equation}
Using the compactness of the embedding
$W^{2,q}(\ofo) \hookrightarrow W^{1,q}(\ofo)$
and a classical result (see \cite[Chapter 3, Lemma 2.1]{Tem79}), we deduce that for any
$\delta > 0$, there exists $C(\delta) > 0$ such that for all $f \in W^{2,q}(\ofo)$
\begin{align*}
\|f\|_{W^{1,q}(\ofo)} \leqslant \delta \|f\|_{W^{2,q}(\ofo)} + C(\delta) \|f\|_{L^{q}(\ofo)}.
\end{align*}
Applying the above estimate to \eqref{e:bf1} we obtain, for every $\delta > 0$, there exists $C(\delta) > 0$ such that
\begin{align*}
\left\|B_{F} \begin{pmatrix}
\rho \\ u \\ \vartheta \end{pmatrix} \right\|_{\mx} \leqslant \delta  \left\|A_{F}^0 \begin{pmatrix}
\rho \\ u \\ \vartheta \end{pmatrix}\right\|_{\mx} + C(\delta) \left\|\begin{pmatrix}
\rho \\ u \\ \vartheta \end{pmatrix}\right\|_{\mx}.
\end{align*}
Therefore applying \cref{pr:perturb} we complete the proof of the theorem.
\end{proof}

Now we want to show that the operator $A_{F}$ is invertible in a suitable subspace of $\mx$. For this purpose we consider the following problem
\begin{equation}\label{eq:inv-af}
\begin{cases}
\displaystyle \overline \rho \div u = f_{1} & \mbox{ in }  \ofo,  \\
\displaystyle   -  \frac{\mu}{\overline\rho}\Delta u - \frac{\alpha + \mu}{\overline\rho} \nabla (\div\  u) + \frac{R\overline \vartheta}{\overline\rho} \nabla  \rho + R \nabla\vartheta = f_{2} & \mbox{ in } \ofo,  \\
\displaystyle    - \frac{\kappa}{\overline\rho c_{v}} \Delta \vartheta + \frac{R \overline \vartheta}{c_{v}} \div u = f_{3} & \mbox{ in } \ofo, \\
\displaystyle   u = 0 \mbox{ on } \partial\ofo, \quad \frac{\partial \vartheta}{\partial n} = 0 \mbox{ on } \partial\ofo.
\end{cases}
\end{equation}
By integrating the first and the third equations of \eqref{eq:inv-af} and by using the boundary conditions, we see that we need to impose the following compatibility conditions on $f_1$ and $f_3$:
$$
 \int_{\ofo} f_{1} \ dx = \int_{\ofo}  f_{3} \ dx= 0.
$$

We thus define
\begin{align}
L^{q}_{m}(\ofo) = \left\{ f \in L^{q}(\ofo) \mid \int_{\ofo} f \ dx = 0 \right\},
\end{align}
and
\begin{align} \label{def:xm}
\mx_{m} = \left[W^{1,q}(\ofo) \cap L^{q}_{m}(\ofo)\right] \times L^{q}(\ofo)^{3} \times L^{q}_{m}(\ofo).
\end{align}
Since $\mx_{m}$ is invariant under $(e^{tA_{F}})_{t \geqslant 0}$ the operator $A_{F}$ may be restricted to $\mx_{m}.$ The {\em{part of $A_{F}$ in $\mx_{m}$}} is the restriction of $A_{F}$ to the domain $\mathcal{D}(A_{F}) \cap \mx_{m}$ (\cite[Definition 2.4.1]{TW09}).

\begin{thm} \label{thm:inv-af}
The part of $A_{F}$ in $\mx_{m}$ is invertible in $\mx_{m}$: for every $(f_{1}, f_{2}, f_{3}) \in \mx_{m}$, the system \eqref{eq:inv-af} admits a unique solution $(\rho, u, \vartheta) \in \mathcal{D}(A_{F}) \cap \mx_{m}$ satisfying
\begin{multline}
\|\rho\|_{W^{1,q}(\ofo)} + \|u\|_{W^{2,q}(\ofo)} + \|\vartheta\|_{W^{2,q}(\ofo)} \\
\leqslant C ( \|f_{1}\|_{W^{1,q}(\ofo)} + \|f_{2}\|_{L^{q}(\ofo)^{3}} + \|f_{3}\|_{L^{q}(\ofo)} ).
\end{multline}
\end{thm}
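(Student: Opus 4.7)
The plan is to exploit the cascade structure of \eqref{eq:inv-af}: the first equation fixes $\div u = f_1/\overline\rho$ as a scalar constraint, which simultaneously decouples the temperature equation from the velocity and turns the momentum equation into a Stokes-type system in which $\rho$ appears in the guise of a pressure. I would therefore solve \eqref{eq:inv-af} in two successive steps, each relying on well-established $L^q$-elliptic theory on the smooth bounded domain $\ofo$.

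First, substituting $\div u = f_1/\overline\rho$ into the third equation yields the Neumann problem
\[
-\frac{\kappa}{\overline\rho\, c_v}\Delta\vartheta = f_3 - \frac{R\overline\vartheta}{c_v\overline\rho}\,f_1 \quad \text{in } \ofo, \qquad \frac{\partial \vartheta}{\partial n} = 0 \quad \text{on } \partial\ofo,
\]
whose compatibility condition is automatic since $\int_{\ofo} f_1 = \int_{\ofo} f_3 = 0$. The standard $L^q$-theory for the Neumann Laplacian then yields a unique mean-zero solution $\vartheta \in W^{2,q}(\ofo)\cap L^q_m(\ofo)$ together with the estimate $\|\vartheta\|_{W^{2,q}(\ofo)} \leqslant C(\|f_1\|_{W^{1,q}(\ofo)} + \|f_3\|_{L^q(\ofo)})$.

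Next, with $\vartheta$ known, I would substitute $\div u = f_1/\overline\rho$ into the second equation and introduce $p := (R\overline\vartheta/\overline\rho)\rho$, recasting the remaining problem as the inhomogeneous Stokes system
\[
-\frac{\mu}{\overline\rho}\Delta u + \nabla p = f_2 - R\nabla\vartheta + \frac{\alpha+\mu}{(\overline\rho)^2}\nabla f_1, \quad \div u = \frac{f_1}{\overline\rho}\ \text{in } \ofo, \quad u = 0 \ \text{on } \partial\ofo.
\]
Since $\int_{\ofo}(f_1/\overline\rho) = 0$, the classical $L^q$-theory for the Stokes problem produces a unique velocity $u\in W^{2,q}(\ofo)^3$ and a pressure $p$ determined up to an additive constant, which I would fix by imposing $p\in L^q_m(\ofo)$; equivalently, $\rho = (\overline\rho/(R\overline\vartheta))\,p \in W^{1,q}(\ofo)\cap L^q_m(\ofo)$, with the usual $W^{2,q}\times W^{1,q}$ bound in terms of the right-hand side, hence in terms of $\|f_1\|_{W^{1,q}}$, $\|f_2\|_{L^q}$ and $\|\vartheta\|_{W^{2,q}}$.

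Combining the two steps produces a triple $(\rho,u,\vartheta)\in \mathcal{D}(A_F)\cap \mx_m$ solving \eqref{eq:inv-af}, together with the announced estimate; uniqueness in $\mx_m$ follows because any solution must satisfy the same divergence constraint and the same mean-zero normalisations, thereby reducing to uniqueness for the Neumann and Stokes subproblems. The only conceptual point that is not purely routine is the recognition of $(R\overline\vartheta/\overline\rho)\nabla\rho$ as a Stokes pressure gradient: this is what permits the resolution of $\rho$ in $W^{1,q}(\ofo)$ despite the complete absence of a boundary condition on $\rho$, and it is the reason why the appropriate ambient space is $\mx_m$ rather than $\mx$.
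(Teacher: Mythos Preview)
Your proof is correct and follows essentially the same approach as the paper's: first decouple and solve the Neumann problem for $\vartheta$ using $\div u = f_1/\overline\rho$, then recast the remaining equations as an inhomogeneous Stokes system in which $\rho$ plays the role of the pressure. The paper cites \cite[Theorem~2.9(1)]{ShibaEno13} for this Stokes-type system directly, whereas you spell out the rescaling $p = (R\overline\vartheta/\overline\rho)\rho$ and invoke classical Stokes $L^q$-theory, but the arguments are otherwise identical.
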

\begin{proof}

Replacing $\div u = f_{1}/\overline \rho$ in the third equation of \eqref{eq:inv-af} yields
\begin{align*}
- \frac{\kappa}{\overline \rho c_{v}} \Delta \vartheta= f_{3} - \frac{R\overline \vartheta}{\overline\rho c_{v}} f_{1} \mbox{ in } \ofo, \quad\frac{\partial \vartheta}{\partial n} = 0 \mbox{ on } \partial\ofo.
\end{align*}
Since $f_1, f_3\in L^q_m(\ofo)$, by the standard elliptic theory (see for instance \cite[chapter 3]{Tr87}),
the above system admits a unique solution $\vartheta \in W^{2,q}(\ofo) \cap L^{q}_{m}(\ofo)$ and we have the estimate
\begin{align} \label{inv-af-1}
\|\vartheta\|_{W^{2,q}(\ofo)} \leqslant C ( \|f_{1}\|_{W^{1,q}(\ofo)}  + \|f_{3}\|_{L^{q}(\ofo)} ).
\end{align}
Then, we are reduced to solve the Stokes type system
$$
\begin{cases}
\displaystyle \overline \rho \div u = f_{1},  \mbox{ in }  \ofo,  \\
\displaystyle  -  \frac{\mu}{\overline\rho}\Delta u  + \frac{R\overline \vartheta}{\overline\rho} \nabla  \rho   = \overline f_{2},  \mbox{ in } \ofo,  \\
\displaystyle  u = 0 \mbox{ on } \partial\ofo,
\end{cases}
$$
with $\overline f_{2} = f_{2} - R \nabla\vartheta  + \frac{\alpha + \mu}{\overline\rho^{2}} \nabla f_{1}$. In particular, $\overline f_{2} \in L^{q}(\ofo)^{3}$ with
\begin{align} \label{inv-af-2}
\|\overline f_{2} \|_{L^{q}(\ofo)^{3}} \leqslant C ( \|f_{1}\|_{W^{1,q}(\ofo)} + \|f_{2}\|_{L^{q}(\ofo)^{3}} + \|\vartheta\|_{W^{2,q}(\ofo)} ).
\end{align}
From \cite[Theorem 2.9(1)]{ShibaEno13}, the above system admits a unique solution
$$
(\rho, u) \in \left[W^{1,q}(\ofo)  \cap L^{q}_{m}(\ofo)\right] \times W^{2,q}(\ofo)^{3}
$$
satisfying the estimate
\begin{align} \label{inv-af-3}
\|\rho\|_{W^{1,q}(\ofo)} + \|u\|_{W^{2,q}(\ofo)}  \leqslant C \left( \|f_{1}\|_{W^{1,q}(\ofo)} + \|\overline f_{2}\|_{L^{q}(\ofo)^{3}}\right).
\end{align}
The proof follows from the estimates \eqref{inv-af-1} - \eqref{inv-af-3}.
\end{proof}

\subsection{Rewriting ~(\ref{eq:linearS}) in an operator form}

Let us consider the following problem

\begin{equation}\label{eq:lift-1}
\begin{cases}
 - \mu \Delta u_{s} + R\overline \vartheta \nabla \rho_{s}  = 0  \mbox{ in } \ofo,  \quad  \div u_{s} = 0 \mbox{ in } \ofo,  \\
 u_{s} = \ell + \omega \times y \mbox{ on } \partial \oso,\quad   u_{s} = 0 \mbox{ on } \partial\Omega,\\
\displaystyle \int_{\ofo} \rho_s \ dy =0.
\end{cases}
\end{equation}

\begin{lem} \label{lem:lift}
Let $(\ell, \omega) \in \ct \times \ct$ and let $\{e_{i}\}$ denote the canonical basis in $\mathbb{C}^{3}$. Then the solution $(\rho_{s},u_{s})$ of \eqref{eq:lift-1} can be expressed as follows
 \begin{align}
  \rho_{s} = \sum_{i=1}^{3} \ell_{i} P_{i} + \sum_{i=4}^{6} \omega_{i-3} P_{i}, \quad u_{s} = \sum_{i=1}^{3} \ell_{i} U_{i} + \sum_{i=4}^{6} \omega_{i-3} U_{i},
\end{align}
where  $(U_{i},P_{i})$, $i = 1, 2, \cdots, 6$ solves the following systems
\begin{equation} \label{eq:UP}
\begin{cases}
-\mu \Delta U_{i} + R \overline \vartheta \nabla P_{i} = 0 \mbox{ in } \ofo, \\
\div U_{i} = 0\quad \mbox{ in } \ofo,  \\
 U_{i} = 0, \quad \mbox{ on } \partial\Omega, \\
 U_{i} = e_{i} \mbox{ on } \partial\ofo, \quad (i=1,2,3),\\
 U_{i} = e_{i-3} \times y \mbox{ on } \partial\ofo, \quad (i=4,5,6),\\
 \displaystyle \int_{\ofo} P_i \ dy =0.
\end{cases}
\end{equation}
Moreover,
\begin{align*}
\begin{pmatrix}
\displaystyle
\int_{\partial\oso} \sigma_{l}(\rho_{s}, u_{s},0 )n \  d\gamma \\ \displaystyle \int_{\partial\oso} y \times \sigma_{l}(\rho_{s}, u_{s},0)n \  d\gamma
\end{pmatrix} = \mathbb{A} \begin{pmatrix}
\ell \\ \omega
\end{pmatrix},
\end{align*}
where
\begin{align} \label{matA}
\displaystyle \mathbb {A}_{i,j} = \frac{2\mu}{\overline\rho} \int_{\ofo} DU_{i} : DU_{j} \ dx.
\end{align}
\end{lem}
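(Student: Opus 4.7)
The plan: The lemma has two independent parts. First, a representation of $(\rho_s,u_s)$ as a linear combination of six basis solutions $(U_i,P_i)$, which will follow from linearity of \eqref{eq:lift-1} in $(\ell,\omega)$ once well-posedness of each $(U_i,P_i)$ is established. Second, an explicit symmetric bilinear expression for the force/torque matrix $\mathbb{A}$, which I will obtain from a single integration-by-parts identity based on the crucial observation that $\sigma_l(\rho_s,u_s,0)$ is divergence free.

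I would begin by verifying the well-posedness of \eqref{eq:UP}. Each system is a stationary Stokes problem on the smooth bounded domain $\ofo$ with Dirichlet data that is either constant ($U_i = e_i$, $i=1,2,3$) or affine ($U_i = e_{i-3}\times y$, $i=4,5,6$) on $\partial\oso$ and zero on $\partial\Omega$. The compatibility condition $\int_{\partial\ofo} U_i\cdot n\, d\gamma = 0$ holds in all six cases (for translations because $\int_{\partial \oso} n\, d\gamma = 0$, and for rotations by applying the divergence theorem to $e_{i-3}\times y$ inside $\oso$), so classical Stokes theory yields a unique $(U_i,P_i)\in W^{2,q}(\ofo)^3 \times (W^{1,q}(\ofo)\cap L^q_m(\ofo))$. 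Linearity of \eqref{eq:lift-1} in $(\ell,\omega)$ and uniqueness then give the stated representation of $(\rho_s,u_s)$.

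For the formula \eqref{matA}, the key remark is that using $\div u_s = 0$, the momentum equation in \eqref{eq:lift-1} reads $\overline\rho\,\div \sigma_l(\rho_s,u_s,0) = -\mu\Delta u_s + R\overline\vartheta\nabla \rho_s = 0$. Testing this identity against $U_j$ and integrating by parts over $\ofo$, and using that $U_j = 0$ on $\partial\Omega$, I obtain
\[
\int_{\partial \oso} \sigma_l(\rho_s,u_s,0) n \cdot U_j\, d\gamma = \int_{\ofo} \sigma_l(\rho_s,u_s,0) : \nabla U_j\, dy,
\]
where $n$ (pointing into the solid) coincides with the outward unit normal to $\ofo$ on $\partial\oso$. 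On the left-hand side, for $j=1,2,3$ the boundary trace $U_j = e_j$ extracts the $j$-th component of the force $\int_{\partial\oso}\sigma_l n\, d\gamma$, while for $j=4,5,6$ the identity $(e_{j-3}\times y)\cdot (\sigma_l n) = e_{j-3}\cdot(y\times \sigma_l n)$ gives the $(j{-}3)$-th component of the torque. On the right-hand side, since $\div U_j = 0$, the pressure-type term $-\frac{R\overline\vartheta}{\overline\rho}\rho_s I_3 : \nabla U_j = -\frac{R\overline\vartheta}{\overline\rho}\rho_s\,\div U_j$ vanishes, while symmetry of $Du_s$ gives $\frac{2\mu}{\overline\rho}Du_s : \nabla U_j = \frac{2\mu}{\overline\rho}Du_s : DU_j$. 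Expanding $u_s = \sum_i \ell_i U_i + \sum_i \omega_i U_{i+3}$ on the right produces exactly the matrix \eqref{matA} applied to $(\ell,\omega)^\top$, and in passing shows that $\mathbb{A}$ is symmetric.

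The only delicate points are bookkeeping: the orientation of the normal (the paper's convention points into the solid, hence agrees with the outward normal to $\ofo$ on $\partial\oso$), the triple-product identity for the torque components, and the observation that both the $\alpha\,\div u_s\,I_3$ and the $\rho_s\,I_3$ contributions to $\sigma_l:\nabla U_j$ drop out thanks to $\div U_j = 0$ (and $\div u_s = 0$ for the first); once these are in place, everything reduces to the one integration-by-parts identity above.
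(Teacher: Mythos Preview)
Your proposal is correct and self-contained; the paper, by contrast, does not give a proof at all but simply refers the reader to Happel and Brenner's monograph on low Reynolds number hydrodynamics. Your argument is essentially the classical one: well-posedness of the six Stokes problems (with the compatibility check $\int_{\partial\ofo} U_i\cdot n\,d\gamma=0$), linearity for the representation formula, and then the single integration-by-parts identity obtained by testing the divergence-free stress $\sigma_l(\rho_s,u_s,0)$ against $U_j$. The bookkeeping you flag (normal orientation, the triple-product identity for the torque components, and the vanishing of the pressure and $\alpha\,\div u_s$ contributions thanks to $\div U_j=0$) is exactly what is needed, and you have it right. One cosmetic point: in \eqref{eq:UP} the nonhomogeneous boundary data is written on $\partial\ofo$ but is of course meant on $\partial\oso$; you have already read it that way.
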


\begin{proof}
See  \cite[Chapter 5]{HB65}.
\end{proof}

Let us set
\[ \mathcal{Z} = W^{1,q}(\ofo)\times W^{2,q}(\ofo)^{3} \times W^{2,q}(\ofo) \mbox{ and }  \my = \mx \times \ct \times \ct. \]
We introduce the Dirichlet operator $D_{s} \in \mathcal{L}(\ct \times \ct; \mz )$ defined by
\begin{align}
D_{s} \begin{pmatrix}
\ell \\ \omega
\end{pmatrix} = \begin{pmatrix}
\rho_{s} \\ u_{s} \\ 0
\end{pmatrix},
\end{align}
 where $(\rho_{s},u_{s})$ is the solution of the system \eqref{eq:lift-1}.  In view of \cref{lem:lift}, the operator $D_{s}$ can also be defined as
\begin{align}
D_{s} \begin{pmatrix}
\ell \\ \omega
\end{pmatrix} = \begin{pmatrix}
P_{1} & P_{2} & \cdots & P_{6} \\ U_{1} & U_{2} & \cdots & U_{6} \\ 0 & 0 & \cdots & 0
\end{pmatrix}
\begin{pmatrix}
\ell \\ \omega
\end{pmatrix},
\end{align}
where  $(U_{i},P_{i})$, $i = 1, 2, \cdots, 6$ are the solutions of the systems \eqref{eq:UP}. Let us recall the operator $(A_{F}, \mathcal{D}(A_{F};\mx))$ introduced in \cref{subsec:cns-L}. By \cref{thm:rsec-af} we know that, the operator $A_{F}$ generates a $C^{0}$ semigroup on $\mx$. It is well-known that, the operator $A_{F}$ has an extension, also denoted by $A_{F}$, such that $A_{F} \in \mathcal{L}(\mx, \dom(A_{F}^{*})')$, where $A_{F}^{*}$ denotes the adjoint operator of $A_{F}$ and $\dom(A_{F}^{*})'$ denotes the dual of $\dom(A_{F}^{*})$ (see \cite[Chapter 2, Section 5]{EngNag}).
Let us now briefly describe,  how to rewrite system \eqref{eq:linearS} as an evolution equation. All the details can be found in \cite{MT17}.

We  introduce the operator $\mathcal{A}_{FS}: \mathcal{D}(\mathcal{A}_{FS}) \to \my$ defined by
\begin{align*}
\mathcal{D}(\mathcal{A}_{FS}) = \left\{ (\rho, u, \vartheta, \ell, \omega) \in \my \mid  A_{F} \begin{pmatrix}
\rho \\ u \\ \vartheta \end{pmatrix}   - A_{F} D_{s} \begin{pmatrix}
\ell \\ \omega
\end{pmatrix} \in \mathcal{X} \right\},
\end{align*}
\begin{align} \label{calA}
{\mathcal A}_{FS}  =
\begin{pmatrix}  A_{F}  &    -  A_{F}D_{s} \\  C & 0  \end{pmatrix},
\end{align}
where  $C \in \mathcal{L}( \mz, \ct \times \ct)$ is defined by
\begin{align} \label{def-C}
C \begin{pmatrix}
\rho \\ u \\ \vartheta
\end{pmatrix} =
\begin{pmatrix}
\displaystyle - m^{-1} \int_{\partial\oso} \sigma_{l} (\rho,u,\vartheta) n \ d\gamma   \\
\displaystyle  - J(0)^{-1} \int_{\partial \oso} y \times \sigma_{l} (\rho,u,\vartheta) n \ d\gamma
\end{pmatrix}.
\end{align}

\begin{prop}
Let $1 < p < \infty$ and $1 < q < \infty$. Let $\ell \in W^{1,p}(0,\infty;\ct)$, $\omega \in W^{1,p}(0,\infty;\ct)$, $\rho \in W^{1,p}(0,\infty;W^{1,q}(\ofo))$,  $u \in W^{2,1}_{q,p}(Q_{F}^{\infty})^{3}$  and $\vartheta  \in W^{2,1}_{q,p}(Q_{F}^{\infty})$. Then $(\rho,v, \vartheta,\ell,\omega)$ is a solution of  the system \eqref{eq:linearS} if and only if
\begin{align} \label{eq:evo-fs}
\frac{d}{dt} \begin{pmatrix}
\rho \\ u \\ \vartheta \\ \ell \\ \omega \end{pmatrix} = {\mathcal A}_{FS} \begin{pmatrix}
\rho \\ u \\ \vartheta \\ \ell \\ \omega \end{pmatrix} \quad \text{in} \ \dom(A_{F}^{*})' \times \ct \times \ct,  \quad
\begin{pmatrix}
\rho(0) \\ u(0) \\ \vartheta(0) \\ \ell(0) \\ \omega(0)  \end{pmatrix} = \begin{pmatrix} \rho_{0} \\ u_{0} \\ \vartheta_{0} \\ \ell_{0} \\ \omega_{0}
 \end{pmatrix}.
\end{align}
\end{prop}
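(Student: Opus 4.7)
The plan is to show the equivalence by a direct verification that uses the lifting $D_s$ to convert the inhomogeneous boundary condition $u=\ell+\omega\times y$ on $\partial\oso$ into a homogeneous one compatible with $\mathcal{D}(A_F)$. First I would check that a tuple $(\rho,u,\vartheta,\ell,\omega)$ with the indicated regularity satisfying \eqref{eq:linearS} automatically lies in $\mathcal{D}(\mathcal{A}_{FS})$ at each time. Writing $(\rho,u,\vartheta)-D_s(\ell,\omega)^{\top}=(\rho-\rho_s,u-u_s,\vartheta)$, the boundary conditions of \eqref{eq:linearS} together with those of \eqref{eq:lift-1} give $u-u_s=0$ on $\partial\Omega\cup\partial\oso$ and $\partial\vartheta/\partial n=0$ on $\partial\ofo$, hence this difference belongs to $\mathcal{D}(A_F)$ (provided enough spatial regularity, which follows from $u\in L^p(0,\infty;W^{2,q})$ and $\vartheta\in L^p(0,\infty;W^{2,q})$). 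So $A_F((\rho,u,\vartheta)^{\top}-D_s(\ell,\omega)^{\top})\in\mathcal{X}$ almost everywhere, which is exactly the defining condition of $\mathcal{D}(\mathcal{A}_{FS})$.

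Next I would verify the three rows of the abstract equation \eqref{eq:evo-fs} componentwise, using that the column $(\rho_s,u_s,0)^{\top}=D_s(\ell,\omega)^{\top}$ satisfies \eqref{eq:lift-1}. For the density row, since $\div u_s=0$, the first component of $A_F D_s(\ell,\omega)^{\top}$ vanishes, so the first component of $\mathcal{A}_{FS}(\rho,u,\vartheta,\ell,\omega)^{\top}$ is simply $-\overline\rho\div u$, matching $\partial_t\rho$. For the momentum row, the key identity is that from \eqref{eq:lift-1} we have $\mu\Delta u_s = R\overline\vartheta\,\nabla\rho_s$; combined with $\div u_s=0$ this yields that the velocity component of $A_F D_s(\ell,\omega)^{\top}$ is zero, so the velocity component of $\mathcal{A}_{FS}(\rho,u,\vartheta,\ell,\omega)^{\top}$ reduces to
\[
-\tfrac{R\overline\vartheta}{\overline\rho}\nabla\rho + \tfrac{\mu}{\overline\rho}\Delta u + \tfrac{\alpha+\mu}{\overline\rho}\nabla\div u - R\nabla\vartheta = \div\sigma_l(\rho,u,\vartheta),
\]
after expanding $\div(2\mu Du)=\mu\Delta u + \mu\nabla\div u$; this is exactly $\partial_t u$. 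The temperature row is immediate since the third component of $D_s(\ell,\omega)^{\top}$ is zero. Finally, the ODE block follows directly from the definition of the operator $C$ in \eqref{def-C}, which encodes the two Newton-Euler equations of \eqref{eq:linearS}.

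For the converse direction I would simply reverse each of these identifications. If $(\rho,u,\vartheta,\ell,\omega)$ satisfies \eqref{eq:evo-fs}, then membership in $\mathcal{D}(\mathcal{A}_{FS})$ at each time forces $(\rho-\rho_s,u-u_s,\vartheta)\in\mathcal{D}(A_F)$, yielding the spatial regularity and the boundary conditions $u=\ell+\omega\times y$ on $\partial\oso$, $u=0$ on $\partial\Omega$, and $\partial\vartheta/\partial n=0$ on $\partial\ofo$. The three row-identities above, read backwards, recover the density, momentum and temperature PDEs, while the $C$-block reproduces the solid ODEs and the initial data clearly correspond. The main care point, rather than an obstacle, is handling the extension of $A_F$ to $\mathcal{L}(\mathcal{X},\mathcal{D}(A_F^*)')$ when applying it to $D_s(\ell,\omega)^{\top}$, which in general lies outside $\mathcal{D}(A_F)$: this is legitimate because the combination $A_F(\rho,u,\vartheta)^{\top}-A_F D_s(\ell,\omega)^{\top}$ is defined by first taking the $\mathcal{D}(A_F)$-element $(\rho,u,\vartheta)^{\top}-D_s(\ell,\omega)^{\top}$, and the displayed evolution equation is therefore an equality in $\mathcal{X}\times\ct\times\ct$ rather than in the larger dual space.
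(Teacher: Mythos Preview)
Your proof is correct and follows exactly the standard route that the paper alludes to but does not spell out: the paper simply states ``We skip the proof since it is standard'' and refers the reader to \cite{MT17} for details. Your argument---lifting the boundary trace via $D_s$, checking that $(\rho,u,\vartheta)-D_s(\ell,\omega)\in\mathcal{D}(A_F)$, and then verifying the rows of $\mathcal{A}_{FS}$ using the identities $\div u_s=0$ and $\mu\Delta u_s=R\overline\vartheta\,\nabla\rho_s$ from \eqref{eq:lift-1}---is precisely the computation behind this folklore equivalence, and your remark on the role of the extension $A_F\in\mathcal{L}(\mathcal{X},\mathcal{D}(A_F^*)')$ is the correct way to make sense of $A_F D_s(\ell,\omega)$ when $D_s(\ell,\omega)\notin\mathcal{D}(A_F)$.
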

We skip the proof since it is  standard. We end this subsection by recalling an equivalence of norms in $\mathcal{D}(\mathcal{A}_{FS})$
(see \cite[Lemma 1.24]{MT17}).
\begin{lem} \label{lem:equiv-norm}
The map
\begin{align*}
(\rho, u, \vartheta,\ell, \omega) \mapsto  \|(\rho, u, \vartheta)\|_{\mz} + \|(\ell, \omega)\|_{\ct \times \ct}
\end{align*}
or equivalently the map
\begin{align*}
(\rho, u , \vartheta, \ell, \omega) \mapsto  \|\rho\|_{W^{1,q}(\ofo)} + \|u\|_{W^{2,q}(\ofo)^{3}} + \|\vartheta\|_{W^{2,q}(\ofo)} + \|\ell\|_{\ct} + \|\omega\|_{\ct}
\end{align*}
is a norm on $\mathcal{D}(\mathcal{A}_{FS})$ equivalent to the graph norm.
\end{lem}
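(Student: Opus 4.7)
The strategy is to split $(\rho, u, \vartheta) = w + D_s(\ell, \omega)$, where $w$ will lie in $\dom(A_F)$, and then to show that on $\dom(A_F)$ the graph norm of $A_F$ is equivalent to the full $\mz$-norm via a closed graph/open mapping argument. This will give the nontrivial direction of the equivalence; the other direction is a direct boundedness check.

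First I would establish the easy inequality. Inspection of the coefficients shows that $A_F : \mz \to \mx$ is bounded; \cref{lem:lift} provides the boundedness of $D_s : \ct \times \ct \to \mz$; and \eqref{def-C} together with the trace theorem produces the boundedness of $C : \mz \to \ct \times \ct$. Combining these, and noting that $\|(\rho,u,\vartheta)\|_\mx \leq C\|(\rho,u,\vartheta)\|_\mz$, yields
\[
\|(\rho, u, \vartheta, \ell, \omega)\|_\my + \|\mathcal{A}_{FS}(\rho, u, \vartheta, \ell, \omega)\|_\my \leq C\bigl(\|(\rho, u, \vartheta)\|_\mz + \|(\ell, \omega)\|_{\ct \times \ct}\bigr).
\]

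For the reverse inequality, set $w := (\rho, u, \vartheta) - D_s(\ell, \omega)$. By the very definition of $\dom(\mathcal{A}_{FS})$ we have $w \in \dom(A_F)$, and the first $\mx$-component of $\mathcal{A}_{FS}(\rho, u, \vartheta, \ell, \omega)$ is precisely $A_F w$. The crucial observation is that the identity map from $\dom(A_F)$ endowed with the $\mz$-norm to $\dom(A_F)$ endowed with the graph norm $\|\cdot\|_\mx + \|A_F \cdot\|_\mx$ is a continuous bijection between two Banach spaces. Both norms are complete: the graph norm because $A_F$ is closed on $\mx$, which is a consequence of the $\mr$-sectoriality of $A_F - \gamma_0$ established in \cref{thm:rsec-af}; and the $\mz$-norm because $\dom(A_F)$ is a closed subspace of $\mz$, being cut out by the trace conditions $u|_{\partial \ofo} = 0$ and $\partial_n \vartheta|_{\partial \ofo} = 0$ (continuous functionals on $\mz$). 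Continuity of the identity in the $\mz$-to-graph direction is exactly the boundedness of $A_F : \mz \to \mx$ invoked above. The open mapping theorem therefore delivers
\[
\|w\|_\mz \leq C\bigl(\|w\|_\mx + \|A_F w\|_\mx\bigr).
\]

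To conclude, using $\|D_s(\ell, \omega)\|_\mz \leq C\|(\ell, \omega)\|_{\ct \times \ct}$ from \cref{lem:lift} and the fact that the $\my$-norm already contains $\|(\ell, \omega)\|_{\ct \times \ct}$, the decomposition $(\rho, u, \vartheta) = w + D_s(\ell, \omega)$ combined with the previous displayed estimate produces
\[
\|(\rho, u, \vartheta)\|_\mz + \|(\ell, \omega)\|_{\ct \times \ct} \leq C\bigl( \|(\rho, u, \vartheta, \ell, \omega)\|_\my + \|\mathcal{A}_{FS}(\rho, u, \vartheta, \ell, \omega)\|_\my \bigr),
\]
which is the reverse inequality. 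The main obstacle in this plan is the abstract closedness of $A_F$ on $\mx$; this closedness is ultimately a byproduct of the elliptic regularity cascade behind \cref{thm:rsec-af} and \cref{thm:inv-af}, but once it is granted the open mapping argument makes the equivalence of norms nearly automatic.
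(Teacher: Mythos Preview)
Your argument is correct. The paper does not actually prove this lemma; it merely cites \cite[Lemma~1.24]{MT17}. Your proof supplies exactly the kind of argument one expects behind such a citation: the decomposition $(\rho,u,\vartheta)=w+D_s(\ell,\omega)$ with $w\in\dom(A_F)$, together with the open mapping theorem applied to the identity between $(\dom(A_F),\|\cdot\|_\mz)$ and $(\dom(A_F),\text{graph norm})$, is the standard mechanism. The only point worth making explicit is that the implication ``$w\in\mx$ and $A_Fw\in\mx$ $\Rightarrow$ $w\in\dom(A_F)$'' relies on the identification of $\dom(A_F)$ with the preimage of $\mx$ under the extended operator $A_F:\mx\to\dom(A_F^*)'$, which is a standard property of the extrapolation scale for generators (and is implicit in the paper's use of this extension just before \eqref{calA}). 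Once that is granted, your chain of inequalities is complete.
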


\subsection{${\mathcal R}$-sectoriality of the operator $\mathcal{A}_{FS}$}
In this subsection we prove the following theorem

\begin{thm}  \label{thm:R2}
Let $1 < q < \infty$. Then there exists $\gamma_{3} > 0$ such that  $\mathcal{A}_{FS} - \gamma_{3}$ is an ${\mathcal R}$-sectorial operator in
$\mathcal{Y}$ of angle $< \pi/2.$
\end{thm}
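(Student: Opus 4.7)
The plan is to reduce the study of $\mathcal{A}_{FS}$ to that of the pure fluid operator $A_{F}$ from \cref{thm:rsec-af} by means of the lifting $D_{s}$, and then to treat the structure coupling as an $A_{F}$-relatively bounded perturbation with \emph{arbitrarily small} relative bound, so that \cref{pr:perturb} applies.

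First, I would introduce the bounded linear isomorphism
$$M:\mathcal{Y}\to\mathcal{Y},\qquad M(\widetilde w,\ell,\omega)=\bigl(\widetilde w+D_{s}(\ell,\omega),\,\ell,\,\omega\bigr),$$
whose inverse is $(\rho,u,\vartheta,\ell,\omega)\mapsto ((\rho,u,\vartheta)-D_{s}(\ell,\omega),\,\ell,\,\omega)$. Using the definition of $\mathcal{D}(\mathcal{A}_{FS})$ together with~\eqref{eq:lift-1}, the subspace $M^{-1}\mathcal{D}(\mathcal{A}_{FS})$ is exactly $\mathcal{D}(A_{F})\times\mathbb{C}^{3}\times\mathbb{C}^{3}$: subtracting $D_{s}(\ell,\omega)$ removes the inhomogeneous velocity datum on $\partial\oso$ so that $\widetilde w$ lies in $\mathcal{D}(A_{F})$. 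A direct computation, in which the two occurrences of $A_{F}D_{s}(\ell,\omega)$ cancel, yields
$$\mathcal{A}_{FS}M(\widetilde w,\ell,\omega)=\bigl(A_{F}\widetilde w,\,C\widetilde w+CD_{s}(\ell,\omega)\bigr),$$
so that, as $\mathcal{R}$-sectoriality is invariant under similarity by a bounded isomorphism, it is enough to prove $\mathcal{R}$-sectoriality (after a shift) of
$$\widetilde{\mathcal{A}}:=M^{-1}\mathcal{A}_{FS}M=\begin{pmatrix} A_{F}-D_{s}C & -D_{s}CD_{s} \\ C & CD_{s} \end{pmatrix}$$
on $\mathcal{D}(A_{F})\times\mathbb{C}^{6}$.

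Write $\widetilde{\mathcal{A}}=\mathcal{A}^{0}+\mathcal{B}$ with
$$\mathcal{A}^{0}=\begin{pmatrix} A_{F} & 0 \\ 0 & 0 \end{pmatrix},\qquad \mathcal{B}=\begin{pmatrix} -D_{s}C & -D_{s}CD_{s} \\ C & CD_{s} \end{pmatrix}.$$
By \cref{thm:rsec-af}, $A_{F}-\gamma_{0}$ is $\mathcal{R}$-sectorial of angle $<\pi/2$ on $\mathcal{X}$, while the zero operator on $\mathbb{C}^{6}$ is trivially $\mathcal{R}$-sectorial after a shift; the direct sum of two $\mathcal{R}$-sectorial operators of the same angle is $\mathcal{R}$-sectorial of that angle, so that $\mathcal{A}^{0}-\gamma_{0}$ is $\mathcal{R}$-sectorial of angle $<\pi/2$ on $\mathcal{X}\times\mathbb{C}^{6}$. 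The essential analytic point is to bound $\mathcal{B}$ relatively to $\mathcal{A}^{0}$ with arbitrarily small relative constant. Its only a priori unbounded entries are those involving $C$ applied to $\widetilde w\in\mathcal{D}(A_{F})$, and $C\widetilde w$ depends on $\widetilde w$ only through boundary traces of $\sigma_{l}(\widetilde w)$ on $\partial\oso$. By the trace theorem, for any $s\in(1/q,1)$,
$$\|C\widetilde w\|_{\mathbb{C}^{6}}\leqslant C_{0}\bigl(\|\rho\|_{W^{1,q}(\ofo)}+\|u\|_{W^{1+s,q}(\ofo)}+\|\vartheta\|_{W^{s,q}(\ofo)}\bigr).$$
Using the elliptic equivalence $\|\widetilde w\|_{\mathcal{Z}}\lesssim \|A_{F}\widetilde w\|_{\mathcal{X}}+\|\widetilde w\|_{\mathcal{X}}$ on $\mathcal{D}(A_{F})$ (obtained from \cref{thm:inv-af} together with a decomposition along the two-dimensional kernel of $A_{F}$ formed by constants in $\rho$ and $\vartheta$) and the interpolation $\|f\|_{W^{\sigma,q}}\leqslant \eta\|f\|_{W^{2,q}}+C(\eta)\|f\|_{L^{q}}$ valid for $0\leqslant\sigma<2$, one obtains, for every $\eta>0$,
$$\|C\widetilde w\|_{\mathbb{C}^{6}}\leqslant \eta\,\|A_{F}\widetilde w\|_{\mathcal{X}}+C(\eta)\,\|\widetilde w\|_{\mathcal{X}},\qquad \widetilde w\in\mathcal{D}(A_{F}).$$
Since $D_{s}:\mathbb{C}^{6}\to\mathcal{X}$ is bounded and finite-rank, and since $CD_{s}$ and $D_{s}CD_{s}$ act on the finite-dimensional factor $\mathbb{C}^{6}$ and thus contribute only to the $b$-coefficient, one gets
$$\|\mathcal{B}Y\|_{\mathcal{X}\times\mathbb{C}^{6}}\leqslant \eta'\|\mathcal{A}^{0}Y\|_{\mathcal{X}\times\mathbb{C}^{6}}+C(\eta)\|Y\|_{\mathcal{X}\times\mathbb{C}^{6}}$$
with $\eta'\to 0$ as $\eta\to 0$. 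Choosing $\eta$ so small that $\eta'<\bigl(\widetilde{M}_{\beta}(\mathcal{A}^{0}-\gamma_{0})\,\widetilde{\mr_{p,\beta}}(\mathcal{A}^{0}-\gamma_{0})\bigr)^{-1}$ and applying \cref{pr:perturb} produces $\gamma_{3}>\gamma_{0}$ such that $\widetilde{\mathcal{A}}-\gamma_{3}$, hence $\mathcal{A}_{FS}-\gamma_{3}$, is $\mathcal{R}$-sectorial of angle $<\pi/2$ on $\mathcal{Y}$.

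The main obstacle I anticipate is the smallness of the relative bound. The $\rho|_{\partial\oso}$-term of $C$ is controlled by $\|\rho\|_{W^{1,q}(\ofo)}$, which is of the same order as the $\mathcal{X}$-norm of $\rho$ itself, so that $A_{F}$ offers no additional regularity gain here; this contribution must be placed entirely into the $b$-constant of \cref{pr:perturb}, which is acceptable but requires a careful separation of the three boundary traces. The smallness of the $a$-constant is then obtained exclusively from the velocity-gradient and temperature-gradient traces via the interpolation between $W^{2,q}$ and $L^{q}$, where the gain of two derivatives supplied by $A_{F}$ is decisive.
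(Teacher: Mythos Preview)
Your argument is correct and reaches the same conclusion via the same two key tools (\cref{thm:rsec-af} and \cref{pr:perturb}), but the decomposition is organized differently from the paper's. The paper splits $\mathcal{A}_{FS}=\mathcal{A}_{FS,1}+B_{FS}$ with
\[
\mathcal{A}_{FS,1}=\begin{pmatrix} A_{F} & -A_{F}D_{s} \\ 0 & 0 \end{pmatrix},\qquad
B_{FS}=\begin{pmatrix} 0 & 0 \\ C & 0 \end{pmatrix},
\]
computes the resolvent of the upper-triangular part explicitly as
\[
\lambda(\lambda I-\mathcal{A}_{FS,1})^{-1}=\begin{pmatrix} \lambda(\lambda I-A_{F})^{-1} & -(\lambda I-A_{F})^{-1}A_{F}D_{s} \\ 0 & I \end{pmatrix},
\]
and invokes \cref{lem:equiv-norm} directly for the graph-norm equivalence needed in the relative-bound estimate of $B_{FS}$. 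Your route instead conjugates by the shear isomorphism $M$, which turns $\mathcal{D}(\mathcal{A}_{FS})$ into the honest product $\mathcal{D}(A_{F})\times\mathbb{C}^{6}$ and lets you take the unperturbed operator block-diagonal. The benefit of your approach is that it sidesteps the extended operator $A_{F}D_{s}\in\mathcal{D}(A_{F}^{*})'$ altogether and works only with the classical $A_{F}$ on its domain; the cost is a slightly larger perturbation $\mathcal{B}$ (four nonzero blocks instead of one), and you re-derive the graph-norm equivalence from \cref{thm:inv-af} plus a kernel decomposition rather than citing \cref{lem:equiv-norm}. Either way the analytic heart is identical: the trace estimate for $C$, the interpolation inequality giving arbitrarily small relative bound from the $u$- and $\vartheta$-gradient terms, and your careful remark that the $\rho$-trace contributes only to the $b$-constant is exactly what happens (implicitly) in the paper as well.
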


\begin{proof}
We write $\mathcal{A}_{FS}$ in the form $\mathcal{A}_{FS} = \mathcal{A}_{FS, 1} + B_{FS}$ where
\begin{align*}
\mathcal{A}_{FS, 1}  = \begin{pmatrix}  A_{F}  &    -  A_{F}D_{s} \\  0 & 0  \end{pmatrix}, \quad
B_{FS} = \begin{pmatrix}  0  &    0 \\  C & 0  \end{pmatrix}.
\end{align*}
Observe that
\begin{align*}
\lambda (\lambda I  - \mathcal{A}_{FS, 1})^{-1} =  \begin{pmatrix}
\lambda (\lambda I -  A_{F})^{-1} & -(\lambda I -  A_{F})^{-1} A_{F} D_{s} \\ 0 & I
\end{pmatrix}.
\end{align*}
Therefore by \cref{thm:rsec-af} and \eqref{eq:Rbdd-1}, there exists $\gamma$ such that
${\mathcal A}_{FS,1}-\gamma$ is $\mr$-sectorial.

Now, by using trace results on Sobolev spaces, we deduce that for any $(\rho, u, \vartheta) \in \mathcal{Z}$,
\begin{align*}
\left\| C \begin{pmatrix}
\rho \\ u \\ \vartheta
\end{pmatrix}\right\|_{\ct \times \ct} \leqslant C (\|\rho\|_{W^{s,q}(\ofo)} + \|u\|_{W^{1+s,q}(\ofo)} + \|\vartheta\|_{W^{s,q}(\ofo)}), \quad s \in (1/q,1).
\end{align*}
Since the embedding $W^{1,q}(\ofo) \hookrightarrow W^{s,q}(\ofo)$ is compact for $s \in (1/q,1),$
we obtain that for any $\delta > 0$ there exists $C(\delta) > 0$ such that
\begin{align*}
\left\| C \begin{pmatrix}
\rho \\ u \\ \vartheta
\end{pmatrix}\right\|_{\ct \times \ct} \leqslant \delta \|(\rho, u, \vartheta)\|_{\mz} + C(\delta) \|(\rho, u, \vartheta)\|_{\mx}.
\end{align*}
By \cref{lem:equiv-norm}, this implies that for any $\delta > 0$ there exists $C(\delta) > 0$ such that
\begin{align*}
\left\| C \begin{pmatrix}
\rho \\ u \\ \vartheta
\end{pmatrix}\right\|_{\ct \times \ct} \leqslant \delta \left\| \mathcal{A}_{FS,1}\begin{pmatrix}
\rho \\ u \\ \vartheta \\ \ell \\ \omega
\end{pmatrix}\right\|_{\my} + C(\delta) \left\|\begin{pmatrix}
\rho \\ u \\ \vartheta \\ \ell \\ \omega
\end{pmatrix}\right\|_{\my}.
\end{align*}
Therefore the proof follows from  \cref{pr:perturb}.
\end{proof}


\subsection{Exponential stability of the  semigroup $e^{t \mathcal{A_{FS}}}$} \label{sec:stab-afs}

The aim of this subsection is to show the operator $\mathcal{A}_{FS}$ generates an analytic semigroup of negative type  in a suitable subspace of $\my$. Let us set
\[ \my_{m} = \mx_{m} \times \ct \times \ct,\]
where $\mx_{m}$ is defined as in \eqref{def:xm}. One can check that
the space  $\my_{m}$ is invariant under $(e^{t{\mathcal A}_{FS}})_{t \geqslant 0}$.
We prove the following theorem
\begin{thm} \label{thm:stab-afs}
Let $1 < q < \infty$. Then the part of $\mathcal{A}_{FS}$ in $\my_{m}$  generates an exponentially stable semigroup $(e^{t{\mathcal A}_{FS}})_{t\geqslant 0}$ on $\my_{m}$.  In other words, there exist constants $C > 0$ and $\eta_{0} > 0$ such that
\begin{align}
\|e^{t{\mathcal A}_{FS}} (\rho_{0}, u_{0}, \vartheta_{0}, \ell_{0}, \omega_{0})^{\top}\|_{\my_{m}} \leqslant C e^{-\eta_{0} t } \| (\rho_{0}, u_{0}, \vartheta_{0}, \ell_{0}, \omega_{0})^{\top}\|_{\my_{m}},
\end{align}
for all $ (\rho_{0}, u_{0}, \vartheta_{0}, \ell_{0}, \omega_{0})^{\top} \in \my_{m}$.
\end{thm}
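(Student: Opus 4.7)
The plan is to combine analyticity of $e^{t\mathcal{A}_{FS}}$ coming from Theorem~\ref{thm:R2} with a spectral inclusion of the form $\sigma(\mathcal{A}_{FS}|_{\mathcal{Y}_m}) \subset \{\mathrm{Re}\,\lambda \leq -\eta_0\}$. Once this inclusion is established, the fact that $\mathcal{A}_{FS} - \gamma_3$ is $\mr$-sectorial of angle less than $\pi/2$ guarantees that the restriction of $e^{t\mathcal{A}_{FS}}$ to the invariant subspace $\mathcal{Y}_m$ is analytic, so the spectral bound equals the growth bound, and the contour-shift representation
\begin{equation*}
e^{t \mathcal{A}_{FS}} = \frac{1}{2\pi i}\int_\Gamma e^{\lambda t}(\lambda - \mathcal{A}_{FS})^{-1}\,d\lambda,
\end{equation*}
with $\Gamma$ routed through $\mathrm{Re}\,\lambda = -\eta_0$, together with the $\mr$-sectorial resolvent bound along $\Gamma$, yields the required exponential decay.

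I would prove the spectral inclusion in two stages. First, to obtain $0 \in \rho(\mathcal{A}_{FS}|_{\mathcal{Y}_m})$ I would adapt Theorem~\ref{thm:inv-af} to the coupled setting: given data in $\mathcal{Y}_m$, I write $u = u_h + D_s(\ell,\omega)$ with $D_s$ the lift from Lemma~\ref{lem:lift}, thereby reducing the stationary problem $-\mathcal{A}_{FS}\Phi = $ data to a Stokes--Fourier system for $(\rho,u_h,\vartheta)$ with homogeneous boundary data, to which Theorem~\ref{thm:inv-af} applies, coupled algebraically with the solid equations via a finite-dimensional linear system whose matrix is an invertible perturbation of the identity thanks to the symmetry and positivity of the added-mass matrix $\mathbb{A}$. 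Second, I would rule out the imaginary axis by the standard energy argument: an eigenvector $\Phi = (\rho,u,\vartheta,\ell,\omega)\in\mathcal{D}(\mathcal{A}_{FS})\cap\mathcal{Y}_m$ at $i\xi\in i\mathbb{R}$ lies in $L^2$ by Sobolev embedding, and testing the momentum, temperature and density equations against the suitably weighted complex conjugates (while using the no-slip, transmission and homogeneous Neumann boundary conditions) gives the identity
\begin{equation*}
i\xi\, \mathcal{E}(\Phi) = -\frac{2\mu}{\overline\rho}\int_{\ofo}|Du|^2 - \frac{\alpha}{\overline\rho}\int_{\ofo}|\div u|^2 - \frac{\kappa}{\overline\rho c_v \overline\vartheta}\int_{\ofo}|\nabla\vartheta|^2,
\end{equation*}
where
\begin{equation*}
\mathcal{E}(\Phi) = \tfrac{1}{2}\int_{\ofo}\Big(\tfrac{R\overline\vartheta}{\overline\rho^2}|\rho|^2 + |u|^2 + \tfrac{c_v}{\overline\rho\overline\vartheta}|\vartheta|^2\Big)\,dy + \tfrac{m}{2\overline\rho}|\ell|^2 + \tfrac{1}{2\overline\rho}J(0)\omega\cdot\overline\omega,
\end{equation*}
the stress integrals on $\partial\oso$ canceling exactly against the solid equations. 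Taking real parts forces $Du = 0$ and $\nabla\vartheta = 0$; rigidity of $u$ combined with $u=0$ on $\partial\Omega$ yields $u\equiv0$ and $(\ell,\omega)=0$; $\vartheta$ is constant, hence zero by the zero-mean constraint built into $\mathcal{Y}_m$; finally the density equation gives $\rho = 0$ as soon as $\xi\neq 0$, while $\xi=0$ is excluded by the first stage.

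The main obstacle is to upgrade the absence of eigenvalues on $i\mathbb{R}$ to the full spectral inclusion, because the transport-type density equation prevents the resolvent of $\mathcal{A}_{FS}|_{\mathcal{Y}_m}$ from being compact and hence the spectrum in the closed right half-plane is not automatically discrete. I plan to handle this by combining the $\mr$-sectorial resolvent decay at infinity, which excludes a large-$|\lambda|$ region of the right half-plane, with a Fredholm/perturbation analysis on the remaining bounded region: the coupling with the rigid body is finite-dimensional and thus a relatively compact perturbation of the fluid block $A_F$, while for the latter the essential spectrum of the compressible Navier--Stokes--Fourier linearization in a bounded domain with Dirichlet/Neumann conditions lies in a strict left half-plane due to the viscous damping of the acoustic part (a property that can be extracted from the $\mr$-sectoriality estimates underlying \cite{ShibaEno13}). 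This reduces $\sigma(\mathcal{A}_{FS}|_{\mathcal{Y}_m})\cap\{\mathrm{Re}\,\lambda\geq 0\}$ to a discrete set of eigenvalues, which the energy identity above shows to be empty, completing the proof.
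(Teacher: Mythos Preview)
Your overall architecture matches the paper's: show $\{\Re\lambda \geq 0\}\subset\rho(\mathcal{A}_{FS}|_{\mathcal{Y}_m})$, then invoke analyticity (from Theorem~\ref{thm:R2}) so that the spectral bound controls the growth bound. Your treatment of $\lambda=0$ via invertibility of $A_F$ on $\mathcal{X}_m$ combined with the positive definiteness of the added-mass matrix $\mathbb{A}$ is essentially the paper's Theorem~\ref{thm:resolvent-1}, and your $L^2$ energy identity is the same dissipation computation as in the paper's Theorem~\ref{thm:resolvent-2}.

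The genuine gap is in the step you flag yourself as the ``main obstacle'': upgrading ``no eigenvalues on $i\mathbb{R}$'' to ``$i\mathbb{R}\subset\rho(\mathcal{A}_{FS}|_{\mathcal{Y}_m})$''. Your proposed route---locate $\sigma_{\mathrm{ess}}(A_F)$ in a strict left half-plane and then treat the solid coupling as a relatively compact perturbation---is not something you can extract from the $\mr$-sectoriality estimates of \cite{ShibaEno13}. Those estimates give resolvent bounds on a \emph{shifted} sector $\gamma_0+\Sigma_{\pi-\beta}$ and say nothing about the essential spectrum in the bounded strip $0\leq\Re\lambda\leq\gamma_0$; the transport-type density component makes this a genuinely nontrivial question, not a bookkeeping exercise.

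The paper bypasses the essential-spectrum issue entirely by a simple algebraic device you are missing: for each fixed $\lambda$ with $\Re\lambda\geq 0$ and $\lambda\neq 0$, substitute $\rho=\lambda^{-1}(f_1-\overline\rho\,\div u)$ into the remaining equations. This eliminates the density unknown and yields a $\lambda$-dependent operator $\mathcal{A}_\lambda$ acting only on $(u,\vartheta,\ell,\omega)$, whose domain involves $W^{2,q}$ for $u$ and $\vartheta$. For this reduced operator the resolvent \emph{is} compact (by $W^{2,q}\hookrightarrow L^q$ compactly, plus the finite-dimensional solid part), so the Fredholm alternative applies directly: existence is equivalent to uniqueness, and your energy identity supplies uniqueness. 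The elimination also explains why the extra $\frac{R\overline\vartheta\,\Re\lambda}{|\lambda|^2}\int|\div u|^2$ term appears in the paper's dissipation identity and why no separate density term is needed there. Once $\{\Re\lambda\geq 0\}\subset\rho(\mathcal{A}_{FS}|_{\mathcal{Y}_m})$ is established pointwise, compactness of $\{\Re\lambda\geq 0\}\setminus(\gamma_3+\Sigma_{\pi-\beta_3})$ together with the uniform resolvent bound on the sector finishes the proof.
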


We consider the following resolvent problem
\begin{align}  \label{eq:resolvent-s}
&\lambda \rho + \overline \rho \div u = f_{1},  \mbox{ in }  \ofo, \notag \\
& \lambda u - \div\sigma_{l}(\rho,u,\vartheta) = f_{2},  \mbox{ in } \ofo, \notag \\
& \lambda \vartheta  - \frac{\kappa}{\overline\rho c_{v}} \Delta \vartheta + \frac{R \overline \vartheta}{c_{v}} \div u = f_{3}, \mbox{ in } \ofo, \\
& u = \ell + \omega \times y \mbox{ on } \partial \oso \quad  u = 0 \mbox{ on } \partial\Omega, \quad \frac{\partial \vartheta}{\partial n} = 0 \mbox{ on } \partial\ofo, \notag \\
& \lambda \ell = - m^{-1} \int_{\partial\oso} \sigma_{l} (\rho,u,\vartheta) n \ d\gamma  + g_{1},  \notag \\
&\lambda \omega = - J(0)^{-1} \int_{\partial\oso} y \times \sigma_{l} (\rho,u,\vartheta) n \ d\gamma  + g_{2}. \quad  \notag
\end{align}

We want to show that the set $\left\{\lambda \in \mathbb{C} \mid \Re \lambda \geqslant 0 \right\}$, i.e. the entire right half plane is contained in  the resolvent set of part of $\mathcal{A}_{FS}$ in $\my_{m}.$ This will be achieved in two steps. In the first step we show that $0$ belongs to the resolvent of  part of $\mathcal{A}_{FS}$ in $\my_{m}.$ In the second step,  we show that the set $\{ \lambda \in \mathbb{C} \setminus \{0\} \mid 0 \leqslant \Re \lambda\}$  is contained in the resolvent of  part of $\mathcal{A}_{FS}$ in $\my_{m}.$

\begin{remark}
If $\lambda = 0$, integrating the first and third equations of \eqref{eq:resolvent-s} and using the boundary  conditions of $u$ and $\vartheta$ we obtain
\begin{align*}
\int_{\ofo} f_{1} \ dy= \int_{\ofo} f_{3} \ dy= 0.
\end{align*}
Therefore in order to show $\mathcal{A}_{FS}$ generates an exponentially stable semigroup it is necessary to  consider $\my_{m}$ instead of $\my$.
\end{remark}

\begin{thm} \label{thm:resolvent-1}
Let $1 < q < \infty$ and $\lambda = 0$. Then for every $(f_{1}, f_{2}, f_{3}, g_{1}, g_{2}) \in \my_{m}$ the system \eqref{eq:resolvent-s} admits a unique solution $(\rho, u, \vartheta, \ell, \omega) \in \mathcal{D}({\mathcal A}_{FS}) \cap \my_{m}$ satisfying the estimate
\begin{multline}
\|\rho\|_{W^{1,q}(\ofo)} + \|u\|_{W^{2,q}(\ofo)} + \|\vartheta \|_{W^{2,q}(\ofo)} + \|\ell\|_{\ct} + \|\omega\|_{\ct} \leqslant C \|(f_{1}, f_{2}, f_{3}, g_{1}, g_{2})\|_{\my_{m}}.
\end{multline}
\end{thm}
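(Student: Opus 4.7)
The plan is to solve the stationary problem \eqref{eq:resolvent-s} with $\lambda=0$ in three cascaded substeps. First I would exploit that $\lambda = 0$ in the continuity equation gives $\div u = f_{1}/\overline\rho$; substituting this into the third equation of \eqref{eq:resolvent-s} yields the Neumann problem
\[
-\frac{\kappa}{\overline\rho c_{v}}\Delta\vartheta = f_{3} - \frac{R\overline\vartheta}{\overline\rho c_{v}}f_{1}\quad\text{in } \ofo, \qquad \frac{\partial\vartheta}{\partial n} = 0\quad\text{on } \partial\ofo.
\]
The compatibility condition $\int_{\ofo}\bigl(f_{3} - \tfrac{R\overline\vartheta}{\overline\rho c_{v}}f_{1}\bigr)\,dy = 0$ is ensured by $(f_{1}, f_{2}, f_{3}) \in \mx_{m}$, so standard elliptic theory furnishes a unique $\vartheta \in W^{2,q}(\ofo) \cap L^{q}_{m}(\ofo)$ with the expected bound in terms of $\|f_{1}\|_{W^{1,q}}$ and $\|f_{3}\|_{L^{q}}$.

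Next, using the Dirichlet lift $D_{s}$ of \cref{lem:lift}, I write $u = v + u_{s}(\ell,\omega)$ and $\rho = \tau + \rho_{s}(\ell,\omega)$. Because $(\rho_{s}, u_{s})$ already solves \eqref{eq:lift-1}, the pair $(\tau, v)$ obeys a homogeneous-Dirichlet stationary compressible Stokes system whose right-hand side depends only on $f_{1}, f_{2}$ and the already known $\vartheta$, and does \emph{not} involve $(\ell, \omega)$. Invoking the Stokes solvability result \cite[Theorem 2.9(1)]{ShibaEno13} used in the proof of \cref{thm:inv-af}, I obtain a unique $(\tau,v) \in [W^{1,q}(\ofo)\cap L^{q}_{m}(\ofo)] \times [W^{2,q}(\ofo) \cap W^{1,q}_{0}(\ofo)]^{3}$ with the expected norm bound.

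To finish, $(\ell,\omega)$ is determined by the force and torque balances of \eqref{eq:resolvent-s}, which at $\lambda = 0$ read $\int_{\partial\oso}\sigma_{l}\, n\, d\gamma = m g_{1}$ and $\int_{\partial\oso} y\times\sigma_{l}\, n\, d\gamma = J(0)g_{2}$. Testing the momentum equation against the rigid-lift fields $U_{j}$ from \eqref{eq:UP}, integrating by parts and using $\div U_{j} = 0$, $U_{j}|_{\partial\Omega}=0$, and $U_{j}|_{\partial\oso}\in\{e_{j},\, e_{j-3}\times y\}$, the boundary integral $\int_{\partial\oso}(\sigma_{l}n)\cdot U_{j}\,d\gamma$ collapses to $\int_{\ofo}\tfrac{2\mu}{\overline\rho} Du : DU_{j}\,dy$ plus a known linear functional of the data (the $\rho$- and $\vartheta$-contributions in $\sigma_{l}$ vanish since $\div U_{j} = 0$). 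Substituting $u = v + \sum_{i=1}^{3}\ell_{i}U_{i} + \sum_{i=4}^{6}\omega_{i-3}U_{i}$, the six balance equations take the form
\[
\mathbb{A}\begin{pmatrix}\ell\\ \omega\end{pmatrix} = \begin{pmatrix} m g_{1}\\ J(0) g_{2}\end{pmatrix} + \Psi(v, f_{2}, \vartheta),
\]
with $\mathbb{A}$ as in \eqref{matA} and $\Psi$ bounded linearly in the already-controlled quantities.

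The main obstacle, and the only genuinely geometric step, is the invertibility of $\mathbb{A}$. Since $\mathbb{A}\xi\cdot\xi = \tfrac{2\mu}{\overline\rho}\|D(\sum_{j=1}^{6}\xi_{j}U_{j})\|_{L^{2}(\ofo)}^{2}$, a vector $\xi$ in the kernel would make $\sum_{j}\xi_{j}U_{j}$ a rigid motion in $\ofo$ by Korn's inequality; vanishing on $\partial\Omega$ forces this motion to be identically zero, and then linear independence of the boundary traces $\{e_{j},\, e_{j-3}\times y\}_{j=1,\dots,6}$ on $\partial\oso$ yields $\xi = 0$. Hence $\mathbb{A}$ is positive definite. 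Inverting it pins down $(\ell,\omega)$ uniquely. Reassembling $\rho = \tau + \rho_{s}(\ell,\omega)$, $u = v + u_{s}(\ell,\omega)$ together with $\vartheta$ from the first step gives the required solution in $\mathcal{D}(\mathcal{A}_{FS}) \cap \my_{m}$, and the announced estimate follows by chaining the bounds of the three substeps.
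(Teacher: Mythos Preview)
Your proof is correct and follows essentially the same route as the paper: both reduce the problem to (i) invertibility of $A_F$ on $\mathcal{X}_m$ (which you reproduce explicitly by first solving the Neumann problem for $\vartheta$ and then the Stokes-type system for $(\tau,v)$, exactly as in \cref{thm:inv-af}), and (ii) invertibility of the $6\times 6$ matrix $\mathbb{A}$. The paper packages these two ingredients via the block-inverse formula $\mathcal{A}_{FS}^{-1}=\begin{pmatrix} A_F^{-1}-D_s(CD_s)^{-1}CA_F^{-1} & D_s(CD_s)^{-1}\\ -(CD_s)^{-1}CA_F^{-1} & (CD_s)^{-1}\end{pmatrix}$, while you unwind the same computation by hand; your Korn-based argument for the positive definiteness of $\mathbb{A}$ is a useful detail that the paper only asserts.
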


\begin{proof}
When  $\lambda = 0$ it is easy to see that \eqref{eq:resolvent-s} is equivalent to
\begin{align*}
\mathcal{A}_{FS} (\rho, u, \vartheta, \ell, \omega)^{\top} =  (f_{1}, f_{2}, f_{3}, g_{1}, g_{2})^{\top}.
\end{align*}
Thus to prove the theorem, we first show that the operator $\mathcal{A}_{FS}$ is invertible.  One can easily check that, if the operators $A_{F}$ and $CD_{s}$ are invertible then the operator $\mathcal{A}_{FS}$ is invertible and its inverse is given by the formula
\begin{align*}
\mathcal{A}_{FS}^{-1} =  \begin{pmatrix}
A_{F}^{-1}  - D_{s} (CD_{s})^{-1} C A_{F}^{-1} &  D_{s} (CD_{s})^{-1} \\
 - (CD_{s})^{-1} C A_{F}^{-1} & (CD_{s})^{-1}
\end{pmatrix}.
\end{align*}
We know that $A_{F}$ is invertible on $\mx_{m}$ (\cref{thm:inv-af}). Thus to complete the proof we need to verify that $CD_{s}$ is an invertible matrix.
From \cref{lem:lift}, we can see that
\begin{align}
CD_{s} = - \mathbb{M}^{-1} \mathbb{A} \quad  \mbox{where}  \quad \mathbb{M} = \begin{pmatrix}
mI & 0 \\ 0 & J(0)
\end{pmatrix}.
\end{align}
Since the matrix $\mathbb{A}$ is self-adjoint and positive, we deduce the result.
\end{proof}

\begin{thm} \label{thm:resolvent-2}
Assume $1 < q < \infty$ and $\lambda \in \mathbb{C} \setminus \{0\},$ with $\Re \lambda\geq 0$.
Then for any $(f_{1}, f_{2}, f_{3}, g_{1}, g_{2}) \in \my_{m}$, the system \eqref{eq:resolvent-s} admits a unique solution  $(\rho, u , \vartheta, \ell, \omega) \in \mathcal{D}({\mathcal A}_{FS}) \cap \my_{m}$ satisfying the estimate
\begin{multline} \label{est:resolvent-2}
\|\rho\|_{W^{1,q}(\ofo)} + \|u\|_{W^{2,q}(\ofo)} + \|\vartheta\|_{W^{2,q}(\ofo)} + \|\ell\|_{\ct}
+ \|\omega\|_{\ct}
\\
\leqslant C \|(f_{1}, f_{2}, f_{3}, g_{1}, g_{2}) \|_{\my_{m}}.
\end{multline}
\end{thm}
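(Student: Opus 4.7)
The plan is to show that $\Lambda := \{\lambda \in \CC : \Re \lambda \geq 0\} \setminus \{0\}$ lies in the resolvent set of the part of $\mathcal{A}_{FS}$ in $\my_{m}$ and that \eqref{est:resolvent-2} holds uniformly on $\Lambda$. The argument naturally splits into three steps: injectivity via an energy identity, existence via an elimination of the density coupled with Fredholm theory, and the uniform bound by combining the $\mr$-sectoriality from \cref{thm:R2} with continuity of the resolvent.

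For injectivity, I would set $f_1=f_2=f_3=g_1=g_2=0$ in \eqref{eq:resolvent-s} and form an energy identity by multiplying the density, momentum and temperature equations against the complex conjugates of $\rho$, $u$, $\vartheta$ with weights $R\overline\vartheta/\overline\rho^{\,2}$, $1$ and $c_v/\overline\vartheta$ respectively, integrating over $\ofo$ and by parts, and absorbing the boundary integral on $\partial \oso$ using the two rigid-body ODEs tested with $m$ times the conjugate of $\ell$ and $J(0)$ times the conjugate of $\omega$. Taking real parts, the cross terms cancel and one obtains the dissipation identity
\begin{equation*}
(\Re \lambda)\,\mathcal{E}(\rho,u,\vartheta,\ell,\omega) + \mathcal{D}(u,\vartheta) = 0,
\end{equation*}
where $\mathcal{E}=(R\overline\vartheta/\overline\rho^{\,2})\norm{\rho}_{L^{2}}^{2}+\norm{u}_{L^{2}}^{2}+(c_{v}/\overline\vartheta)\norm{\vartheta}_{L^{2}}^{2}+m|\ell|^{2}+\langle J(0)\omega,\omega\rangle$ and $\mathcal{D}=(2\mu/\overline\rho)\norm{Du}_{L^{2}}^{2}+(\alpha/\overline\rho)\norm{\div u}_{L^{2}}^{2}+(\kappa/(\overline\rho\,\overline\vartheta))\norm{\nabla \vartheta}_{L^{2}}^{2}$, and nonnegativity of $\mathcal{D}$ follows from \eqref{tak1.3} after the standard deviatoric/spherical decomposition of $Du$. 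For $\Re \lambda \geq 0$ both terms vanish, so $Du \equiv 0$ and $\nabla \vartheta \equiv 0$ in $\ofo$; $Du=0$ combined with $u|_{\partial \Omega}=0$ forces $u\equiv 0$ in $\ofo$ (a rigid vector field vanishing on $\partial \Omega$ is zero), the transmission condition on $\partial \oso$ then yields $\ell=\omega=0$, the zero-mean constraint on $\vartheta$ gives $\vartheta=0$, and finally $\lambda \rho=0$ with $\lambda \neq 0$ gives $\rho=0$.

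For existence, since $\lambda \neq 0$ the density equation can be solved as $\rho=(f_{1}-\overline\rho\,\div u)/\lambda$. Substituting into the momentum equation produces a closed elliptic-type system in $(u,\vartheta,\ell,\omega)$, whose velocity block reads
\begin{equation*}
\lambda u - \tfrac{\mu}{\overline\rho}\Delta u - \Big(\tfrac{\alpha+\mu}{\overline\rho} + \tfrac{R\overline\vartheta}{\lambda}\Big)\nabla \div u + R \nabla \vartheta \ = \ f_{2} - \tfrac{R\overline\vartheta}{\overline\rho\,\lambda}\nabla f_{1},
\end{equation*}
coupled with the unchanged temperature equation and the two rigid-body ODEs. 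Because $\Re \lambda \geq 0$ and \eqref{tak1.3} holds, the coefficient in front of $\nabla \div u$ has strictly positive real part, so the principal part remains elliptic and the Lopatinsky--Shapiro condition of \cite[Theorem 2.3]{DenkHieberPruss07} is satisfied uniformly on compact subsets of $\Lambda$. The resulting operator from $W^{2,q}(\ofo)^{3} \times W^{2,q}(\ofo) \times \ct \times \ct$ (with the boundary and mean-zero constraints) into $L^{q}(\ofo)^{3} \times L^{q}(\ofo) \times \ct \times \ct$ is Fredholm of index zero, obtained as an invertible model principal part plus a compact perturbation via the compact embedding $W^{2,q}(\ofo) \hookrightarrow W^{1,q}(\ofo)$ and the compactness of the rigid-body trace coupling $C$ already exploited in the proof of \cref{thm:R2}. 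Injectivity from the first step then upgrades this Fredholm property into a bounded inverse, and $\rho$ is recovered in $W^{1,q}(\ofo)\cap L^{q}_{m}(\ofo)$ from the explicit formula.

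For the uniform resolvent bound, I would combine the $\mr$-sectoriality of $\mathcal{A}_{FS}-\gamma_{3}$ from \cref{thm:R2}, which gives $\norm{(\lambda-\mathcal{A}_{FS})^{-1}}_{\mathcal{L}(\my_{m})} \leq C/|\lambda|$ on the shifted sector of aperture strictly greater than $\pi/2$ (hence for all $\lambda$ with $\Re \lambda \geq 0$ and $|\lambda|$ sufficiently large), with \cref{thm:resolvent-1} ensuring $0 \in \rho(\mathcal{A}_{FS}|_{\my_{m}})$ and the continuity of $\lambda \mapsto (\lambda-\mathcal{A}_{FS})^{-1}$ on $\rho(\mathcal{A}_{FS}) \cap \{\Re \lambda \geq 0\}$ (a consequence of the existence step), which yields boundedness on compact subsets of $\Lambda \cup \{0\}$. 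The main obstacle is the Fredholm step for the reduced system: one must verify in the $L^{q}$-framework, taking care of the rigid-body coupling and the $\lambda$-dependent coefficient in front of $\nabla \div u$, that the reduced operator has closed range and index zero with bounds controllable uniformly in $\lambda \in \Lambda$ away from $0$ and $\infty$; this is where the perturbative structure developed in \cref{sec:lin-FSI} has to be reused.
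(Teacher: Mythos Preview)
Your approach is essentially the one in the paper: eliminate $\rho$ via $\rho=(f_{1}-\overline\rho\,\div u)/\lambda$, reduce existence to uniqueness by Fredholm alternative (compact resolvent), and prove uniqueness through an energy identity. Two remarks are in order.

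First, there is a genuine gap in the injectivity step when $1<q<2$. Your dissipation identity presupposes that $(\rho,u,\vartheta)$ lie in $L^{2}(\ofo)$, but a priori you only have $W^{1,q}\times W^{2,q}\times W^{2,q}$ regularity. The paper handles this by a bootstrap on the \emph{reduced} system: having built the operator $\mathcal{A}_{\lambda}$ and shown that $\widetilde\lambda I-\mathcal{A}_{\lambda}$ is invertible for some large $\widetilde\lambda$ (by the perturbation argument of \cref{thm:R2}), one rewrites the homogeneous equation as $(\widetilde\lambda I-\mathcal{A}_{\lambda})(u,\vartheta,\ell,\omega)^{\top}=(\widetilde\lambda-\lambda)(u,\vartheta,\ell,\omega)^{\top}$ and uses $W^{2,q}\hookrightarrow L^{q^*}$ to climb the integrability scale until $L^{2}$ is reached. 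Only then is the energy computation legitimate. You should insert this step.

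Second, your Fredholm argument is correct in spirit but the citation of \cite[Theorem 2.3]{DenkHieberPruss07} is misplaced: that is a parabolic maximal regularity result, not an elliptic Fredholm statement, and it does not directly accommodate the rigid-body transmission conditions. The paper's route is cleaner: define the reduced fluid-structure operator $\mathcal{A}_{\lambda}$ explicitly, show $\widetilde\lambda I-\mathcal{A}_{\lambda}$ is invertible for large $\widetilde\lambda$ exactly as in \cref{thm:R2}, observe that $(\widetilde\lambda I-\mathcal{A}_{\lambda})^{-1}$ is compact (finite-dimensional rigid part plus compact Sobolev embedding), and apply the Fredholm alternative to $I+(\lambda-\widetilde\lambda)(\widetilde\lambda I-\mathcal{A}_{\lambda})^{-1}$. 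Finally, note that the uniform-in-$\lambda$ bound you sketch is not part of \cref{thm:resolvent-2} itself; in the paper that argument (sectorial estimate for large $|\lambda|$ plus compactness near the imaginary axis) is carried out in the proof of \cref{thm:stab-afs}.
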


\begin{proof}
Let us fix $\lambda \in \mathbb{C} \setminus \{0\},$ with $\Re \lambda\geq 0$.
By setting $\rho = \displaystyle \frac{1}{\lambda} (f_{1} - \overline \rho \div u )$,  the system \eqref{eq:resolvent-s} can be rewritten as
\begin{equation} \label{eq:R-unique}
\begin{cases}
\displaystyle \lambda u - \div\widehat \sigma_\lambda(u,\vartheta) = \widehat f_{2},  \mbox{ in }  \ofo,  \\
\displaystyle \lambda \vartheta  - \frac{\kappa}{\overline\rho c_{v}} \Delta \vartheta + \frac{R \overline \vartheta}{c_{v}} \div u = f_{3}, \mbox{ in } \ofo, \\
\displaystyle
u = \ell + \omega \times y \mbox{ on } \partial \oso \quad  u = 0 \mbox{ on } \partial\Omega, \quad \frac{\partial \vartheta}{\partial n} = 0 \mbox{ on } \partial\ofo,  \\
\displaystyle
\lambda \ell = - m^{-1} \int_{\partial \oso} \widehat \sigma_\lambda (u, \vartheta) \  n \ d\gamma  + \widehat g_{1}   \\
\displaystyle
\lambda \omega  = - J(0)^{-1} \int_{\partial \oso} y \times \widehat \sigma_\lambda (u, \vartheta) n \ d\gamma + \widehat g_{2}
\end{cases}
\end{equation}
where
$$
\widehat \sigma_\lambda (u, \vartheta) =  \frac{2\mu}{\overline\rho} D(u) + \left(\frac{1}{\overline \rho}\left(\alpha + \frac{R\overline \vartheta \overline\rho}{\lambda}\right) \div u  - R \vartheta\right)I_{3},
$$
$$
\widehat f_{2} = f_{2} - \frac{R \overline \vartheta}{\lambda \overline \rho}\nabla f_{1}, \quad
\widehat g_{1} = \left( g_{1} +m^{-1}  \frac{R \overline \vartheta}{\lambda \overline \rho} \int_{\partial \oso} f_{1}  n \ d\gamma\right),
$$
$$
\widehat g_{2}  =  \left( g_{2} +J(0)^{-1}  \frac{R \overline \vartheta}{\lambda \overline \rho} \int_{\partial \oso} y \times  f_{1}  n \ d\gamma\right).
$$
If $(f_{1}, f_{2}, f_{3}, g_{1}, g_{2}) \in \my_{m}$, the above formulas imply that
$(\widehat f_{2}, \widehat g_{1}, \widehat g_{2}) \in L^{q}(\ofo)^{3} \times \ct \times \ct.$

We introduce the following notation:
\begin{gather*}
\widehat \mz = W^{2,q}(\ofo)^{3} \times \left(W^{2,q}(\ofo)\cap L^{q}_{m}(\ofo)\right),
\quad \widehat \mx  = L^{q}(\ofo)^{3} \times L^{q}_{m}(\ofo) \\
\widehat \my = \widehat \mx \times \ct \times \ct.
\end{gather*}

\begin{itemize}
\item  $\ds \widehat D_{s} \in \mathcal{L}(\ct \times \ct; \widehat \mz)$ and $\ds \widehat E_{s} \in \mathcal{L}(\ct \times \ct; \widehat \mz)$ defined by
\begin{align*}
\widehat D_{s} \begin{pmatrix}
\ell \\ \omega
\end{pmatrix} = \begin{pmatrix}
u_{s} \\ 0
\end{pmatrix}, \quad \widehat E_{s} \begin{pmatrix}
\ell \\ \omega
\end{pmatrix} = \begin{pmatrix}
\dfrac{R\bar\vartheta}{\bar\rho} \nabla \rho_{s} \\ 0
\end{pmatrix},
\end{align*}
where $(\rho_{s},u_{s})$ is the solution of the system \eqref{eq:lift-1}.
\item $\widehat A_{\lambda}$ defined by
$$
\mathcal{D}(\widehat A_{\lambda}) = \left\{(u,\vartheta) \in \widehat \mz \mid u = \frac{\partial \vartheta}{\partial n} = 0 \mbox{ on } \partial \ofo \right\}
$$
\begin{align*}
\widehat A_{\lambda} \begin{pmatrix}
u \\ \vartheta
\end{pmatrix} = \begin{pmatrix}
\ds \div \widehat\sigma_\lambda(u,\vartheta) \\
\ds  \frac{\kappa}{\overline\rho c_{v}} \Delta \vartheta  - \frac{R \overline \vartheta}{c_{v}} \div u
\end{pmatrix},
\quad
\begin{pmatrix}
u \\ \vartheta
\end{pmatrix}\in \mathcal{D}(\widehat A_{\lambda}).
\end{align*}
\item $\widehat C_{\lambda} \in \mathcal{L}( \mz, \ct \times \ct)$ defined by
\begin{align*}
\widehat C_{\lambda} \begin{pmatrix}
 u \\ \vartheta
\end{pmatrix} =
\begin{pmatrix}
\displaystyle - m^{-1} \int_{\partial \oso} \widehat \sigma_\lambda (u,\vartheta) n \ d\gamma   \\
\displaystyle  - J(0)^{-1} \int_{\partial \oso} y \times \widehat\sigma_\lambda (u,\vartheta) n \ d\gamma
\end{pmatrix}.
\end{align*}
\item ${\mathcal A}_{\lambda}$ defined by
\begin{align*}
\mathcal{D}(\mathcal{A}_{\lambda}) =  \left\{ ( u, \vartheta, \ell, \omega) \in \widehat \mz \times \ct \times \ct \mid  \widehat A_{\lambda} \begin{pmatrix}
 u \\ \vartheta \end{pmatrix}   - \widehat A_{\lambda} \widehat D_{s} \begin{pmatrix}
\ell \\ \omega
\end{pmatrix} \in \widehat\mx \right\},
\end{align*}
\begin{align*}
{\mathcal A}_{\lambda}  =
\begin{pmatrix}  \widehat A_{\lambda}  &    -  \widehat A_{\lambda}\widehat D_{s} + \widehat E_{s} \\  \widehat{C}_{\lambda} & 0  \end{pmatrix}.
\end{align*}
\end{itemize}
With the above notation, we can write \eqref{eq:R-unique} as
\begin{equation}\label{jjjj}
(\lambda I - \mathcal{A}_{\lambda}) \begin{pmatrix}
u \\ \vartheta \\ \ell \\ \omega
\end{pmatrix}  =   \begin{pmatrix}
\widehat f_{2} \\ f_{3} \\ \widehat  g_{1} \\ \widehat g_{2}
\end{pmatrix}.
\end{equation}

Proceeding as in \cref{thm:R2}, one can show that there exists $\widetilde\lambda>0$ such that
$\widetilde \lambda I - \mathcal{A}_{\lambda}$ is invertible. Consequently, we write \eqref{jjjj} as
\begin{equation}\label{aaaa}
\left[ I + (\lambda - \widetilde \lambda) (\widetilde \lambda I - \mathcal{A}_{\lambda})^{-1} \right] \begin{pmatrix}
u \\ \vartheta \\ \ell \\ \omega
\end{pmatrix}  =
(\widetilde \lambda I - \mathcal{A}_{\lambda})^{-1}
\begin{pmatrix}
\widehat f_{2} \\ f_{3} \\ \widehat  g_{1} \\ \widehat g_{2}
\end{pmatrix}
\end{equation}
and since $(\widetilde \lambda I - \mathcal{A}_{\lambda})^{-1}$
is a compact operator, in view of the Fredholm alternative theorem, the existence and the uniqueness of a solution of system \eqref{aaaa} are equivalent.

Assume $(u, \vartheta, \ell, \omega) \in \mathcal{D}(\mathcal{A}_{\lambda})$ satisfies
\begin{equation}\label{bbbb}
(\lambda I - \mathcal{A}_{\lambda}) (u,\ \vartheta,\ \ell,\ \omega)^{\top}=0.
\end{equation}
We first show that $(u, \vartheta) \in W^{2,2}(\ofo)^{3} \times W^{2,2}(\ofo)$. If $q\geq 2$, this is a consequence of H\"older's estimates. Assume $1<q<2$. In that case,
we can write \eqref{bbbb} as
\begin{equation}\label{cccc}
(\widetilde\lambda I - \mathcal{A}_{\lambda}) (u,\ \vartheta,\ \ell,\ \omega)^{\top}=(\widetilde\lambda-\lambda)(u,\ \vartheta,\ \ell,\ \omega)^{\top}
\end{equation}
and since $W^{2,q}(\ofo)\subset L^2(\ofo)$, we deduce from the invertibility of $\widetilde\lambda I - \mathcal{A}_{\lambda}$ that
$(u, \vartheta) \in W^{2,2}(\ofo)^{3} \times W^{2,2}(\ofo)$. We then rewrite \eqref{bbbb} as
\begin{equation} \label{eq:R-unique5}
\begin{cases}
\displaystyle \lambda u - \div\widehat \sigma_\lambda(u,\vartheta) = 0,  \mbox{ in }  \ofo,  \\
\displaystyle \lambda \vartheta  - \frac{\kappa}{\overline\rho c_{v}} \Delta \vartheta + \frac{R \overline \vartheta}{c_{v}} \div u = 0, \mbox{ in } \ofo, \\
\displaystyle u = \ell + \omega \times y \mbox{ on } \partial \oso \quad  u = 0 \mbox{ on } \partial\Omega, \quad \frac{\partial \vartheta}{\partial n} = 0 \mbox{ on } \partial\ofo,  \\
\displaystyle  \lambda \ell = - m^{-1} \int_{\partial \oso} \widehat \sigma_\lambda (u, \vartheta) \  n \ d\gamma    \\
\displaystyle  \lambda \omega  = - J(0)^{-1} \int_{\partial \oso} y \times \widehat \sigma_\lambda (u, \vartheta) n \ d\gamma.
\end{cases}
\end{equation}

Multiplying the first equation of \eqref{eq:R-unique5} by $\overline {u}$, the second equation by $\overline{\vartheta}$,
the forth equation by $\overline{\ell}$ and the fifth equation by $\overline{\omega}$, we deduce after integration by parts,
\begin{multline}
\Re \lambda \int_{\ofo} |u|^{2} \ dy
+ \frac{2\mu}{\overline \rho}\int_{\ofo}  |D(u)|^2 \ dy
+  \left(\frac{\alpha}{\overline \rho} + \frac{R\overline \vartheta \ \Re \lambda  }{|\lambda|^{2}} \right) \int_{\ofo} |\div u|^{2} \ dy
\\
+\frac{\Re \lambda  \  c_{v}}{\overline \vartheta} \int_{\ofo} |\vartheta|^{2} \ dy
+ \frac{\kappa}{\overline \rho \overline \vartheta} \int_{\ofo} |\nabla \vartheta|^{2} \ dy
+ \Re  \lambda m |\ell|^{2} + \Re  (\lambda J(0) \omega \cdot \overline{\omega}) = 0.
\end{multline}
Using
\begin{equation*}
|\div u |^{2} \leqslant  3 | D(u) |^2,
\end{equation*}
$\Re \lambda \geq 0$ and \eqref{tak1.3}, we obtain
$$
\int_{\ofo}  |D(u)|^2 \ dy+\int_{\ofo} |\nabla \vartheta|^{2} \ dy \leq 0.
$$
The above estimate and the fact that  $(u, \vartheta, \ell, \omega) \in \mathcal{D}(\mathcal{A}_{\lambda})$ imply that  $u = \vartheta = \ell = \omega = 0$.
\end{proof}

\begin{proof}[Proof of \cref{thm:stab-afs}]
By virtue of \cref{thm:resolvent-1} and \cref{thm:resolvent-2},
we deduce
$$
\{\lambda \in \mathbb{C} \ ; \ \Re \lambda \geq 0\} \subset \rho(\mathcal{A}_{FS}).
$$
Moreover, \cref{thm:R2} yields the existence of $C_1>0$ such that
for any $\lambda \in \gamma_3+\Sigma_{\pi-\beta_3}$ with $\beta_3<\pi/2$,
$$
\| (\lambda -\mathcal{A}_{FS})^{-1}\|_{\mathcal{L}(\mathcal{Y}_m)} \leq C_1.
$$
Since
$$
\{\lambda \in \mathbb{C} \ ; \ \Re \lambda \geq 0\} \setminus \left[\gamma_3+\Sigma_{\pi-\beta_3}\right]
$$
is a compact set, we deduce the existence of $C>0$ such that for any
$\lambda \in \mathbb{C}$ with $\Re \lambda \geq 0$
$$
\| (\lambda -\mathcal{A}_{FS})^{-1}\|_{\mathcal{L}(\mathcal{Y}_m)} \leq C.
$$
This yields that
$$
\{\lambda \in \mathbb{C} \ ; \ \Re \lambda \geq -\eta\} \subset \rho(\mathcal{A}_{FS}),
$$
for some $\eta>0$. Applying standard results on analytic semigroups (see, for instance, Proposition 2.9 in \cite[p.120]{BDDM}),
we deduce the exponential stability of the semigroup generated by the part of $\mathcal{A}_{FS}$ in $\my_{m}$.
\end{proof}

\section{Maximal $L^{p}$-$L^{q}$ Regularity for the Linearized Fluid-Structure System} \label{sec:max-lin-g}

  In this section, we study the maximal $L^{p}$-$L^{q}$ regularity of the system \eqref{eq:linearS} with
  source terms and boundary terms. More precisely, we consider the following system
 \begin{alignat}{2} \label{sys:NL-G-S}
&\partial_{t} \widetilde\rho + \overline \rho \div\widetilde u = f_{1} &  \mbox{ in } (0,\infty) \times \ofo, \notag \\
& \partial_{t} \widetilde u - \div \sigma_{l}(\widetilde \rho,\widetilde u,\widetilde\vartheta) = f_{2} & \mbox{ in } (0,\infty) \times \ofo, \notag \\
& \partial_{t} \widetilde \vartheta  - \frac{\kappa}{\overline\rho c_{v}} \Delta \widetilde\vartheta + \frac{R \overline \vartheta}{c_{v}} \div \widetilde u = f_{3} & \mbox{ in } (0,\infty) \times \ofo, \notag \\
&\widetilde  u = 0 & \mbox{ on } (0,\infty) \times \partial\Omega, \notag \\
& \widetilde u = \widetilde\ell + \widetilde\omega \times y & \mbox{ on } (0,\infty) \times \partial \oso \\
& \frac{\partial \widetilde \vartheta}{\partial n} = h  & \mbox{ on } (0,\infty) \times \partial \ofo,  \notag \\
& \frac{d}{dt} \widetilde \ell = - m^{-1} \int_{\partial\oso} \sigma_{l} (\widetilde\rho,\widetilde u,\widetilde\vartheta) n \ d\gamma + g_{1} & \quad t \in (0,\infty), \notag \\
&\frac{d}{dt} \omega = - J(0)^{-1} \int_{\partial\oso} y \times \sigma_{l} (\rho,u,\vartheta) n \ d\gamma  + g_{2} & \quad t \in (0,\infty)  \notag \\
&   \widetilde\rho_{m}(0) = \rho_{0} - \overline\rho , \quad u(0) = u_{0},  \quad  \varphi(0) = \vartheta_{0} - \overline \vartheta &  \quad \mbox{ in } \ofo, \notag \\
&   \ell(0) = \ell_{0},  \quad \omega(0) = \omega_{0}. \notag
\end{alignat}

We want to combine \cref{thm:max-reg-g}  and \cref{thm:stab-afs}.
However the latter is stated in $\my_{m}$, that is with the constraints that some quantities have to be with zero mean-value. As a consequence, we introduce the following standard decomposition: for any $f\in L^1(\ofo)$,
\begin{equation}\label{tak1.8}
f = f_{m} + f_{avg}, \quad\text{with}\quad  \int_{\ofo} f_{m} \ dy = 0, \quad f_{avg}= |\ofo|^{-1} \int_{\ofo} f(y) \ dy.
\end{equation}
We use the same decomposition and the same notation for a function in $L^1(\partial \ofo)$.
The next result is the main result of this section. We recall that $\mathcal{B}_{\infty,p,q}$ and $\mathcal{S}_{\infty,p,q}$ are defined in \eqref{solspace}
and \eqref{com0.5}.

Assume $1 < p < \infty$ and $1 < q < \infty$ satisfying the conditions
\begin{equation}\label{tak1.4}
\frac{1}{p} + \frac{1}{2q} \neq 1,\quad
\frac{1}{p} + \frac{1}{2q} \neq \frac12.
\end{equation}
We set
\begin{multline}
\mathcal{J}_{p,q}
=
\Big\{ (\rho_{0}, u_{0}, \vartheta_{0}, \ell_{0},\omega_{0}) \mid \rho_{0} \in W^{1,q}(\ofo)\cap L^q_m(\ofo), \
u_{0} \in B^{2(1-1/p)}_{q,p}(\ofo)^{3},
\\
\vartheta_{0} \in B^{2(1-1/p)}_{q,p}(\ofo),\
 \ell_{0} \in \mathbb{R}^{3},\  \omega_{0} \in \mathbb{R}^{3}\Big\},
\end{multline}
and we introduce the space of initial data
$$
\mathcal{J}^{cc}_{p,q} =\mathcal{J}_{p,q} \quad \text{if} \quad \displaystyle \frac{1}{p} + \frac{1}{2q} \geq 1,
$$
\begin{multline*}
\mathcal{J}^{cc}_{p,q} =
\Big\{(\rho_{0}, u_{0}, \vartheta_{0}, \ell_{0},\omega_{0}) \in \mathcal{J}_{p,q}
             \mid u_{0} = 0 \mbox{ on } \partial \Omega, \
 u_{0}(y) = \ell_{0} + \omega_{0} \times y \quad y\in \partial\oso  \Big\}
\\ \mbox{if} \quad \displaystyle \frac{1}{2} < \frac{1}{p} + \frac{1}{2q} < 1,
\end{multline*}
\begin{multline*}
\mathcal{J}^{cc}_{p,q} =
\Bigg\{(\rho_{0}, u_{0}, \vartheta_{0}, \ell_{0},\omega_{0}) \in \mathcal{J}_{p,q}
             \mid u_{0} = 0 \mbox{ on } \partial \Omega, \
 u_{0}(y) = \ell_{0} + \omega_{0} \times y \quad y\in \partial\oso,
\\
\displaystyle \frac{\partial \vartheta_{0}}{\partial n}  = 0 \mbox{ on } \partial\ofo \Bigg\}
\quad \mbox{if} \quad  \displaystyle \frac{1}{p} + \frac{1}{2q} < \frac{1}{2}.
\end{multline*}
The above definition is well-defined due to the trace theorem for Besov spaces (see, for instance \cite[p.200]{Triebel}).

  \begin{thm} \label{thm:nh-g}
  Let $1 < p < \infty$ and $1 < q < \infty$ satisfying \eqref{tak1.4}.
  Let $\overline \rho > 0$ and $\overline \vartheta > 0$ and  $\eta \in (0, \eta_{0})$, where
  $\eta_{0}$ is the constant introduced in  \cref{thm:stab-afs}.
  Then for any
  $$
  (\rho_{0} - \overline\rho,u_{0}, \vartheta_{0} - \overline \vartheta,\ell_{0},\omega_{0})\in \mathcal{J}^{cc}_{p,q}
  $$
and for any  $(f_{1}, f_{2}, f_{3},  h,   g_{1}, g_{2}) \in e^{-\eta(\cdot)}\mathcal{B}_{\infty,p,q}$
with $(f_{1,avg}, f_{3,avg}, h_{avg} )\in L^{1}(0,\infty)^{3}$, the system \eqref{sys:NL-G-S} admits a unique solution
$(\widetilde \rho, \widetilde u,\widetilde \vartheta,\widetilde \ell,\widetilde \omega)$ with
\begin{equation}\label{tak1.6}
(\widetilde \rho_{m}, \widetilde u, \widetilde \vartheta_{m},\widetilde \ell,\widetilde \omega) \in e^{-\eta(\cdot)} \mathcal{S}_{\infty,p,q},
 \quad
(\widetilde \rho_{avg}, \widetilde \vartheta_{avg}) \in L^{\infty}(0,\infty)^{2}.
\end{equation}
Moreover, there exists a positive constant $C_{L} $ depending only on $p,q$ and $\eta$ such that
\begin{multline}\label{tak1.7}
\left\| e^{\eta (\cdot)} (\widetilde\rho_{m}, \widetilde u, \widetilde\vartheta_{m},  \widetilde\ell, \widetilde\omega)\right\|_{{\mathcal S}_{\infty,p,q}}
+ \left \| (\widetilde \rho_{avg},\widetilde \vartheta_{avg})\right \|_{L^{\infty}(0,\infty)^2}
+ \left\|e^{\eta (\cdot)}(\partial_{t}\widetilde \rho_{avg}, \partial_{t}\widetilde \vartheta_{avg})\right\|_{L^{p}(0,\infty)^2}
\\
\leqslant C_{L} \Big( \left\| (\rho_{0} - \overline\rho,u_{0}, \vartheta_{0} - \overline \vartheta,\ell_{0},\omega_{0})\right\|_{\mathcal{J}_{p,q}}
+ \left\|e^{\eta (\cdot)}  (f_{1}, f_{2},  f_{3}, h, g_{1}, g_{2}) \right\|_{\mathcal{B}_{\infty,p,q}} \\
+ \left\|(f_{1,avg}, f_{3,avg}, h_{avg} ) \right\|_{L^{1}(0,\infty)^{3}} \Big).
\end{multline}
  \end{thm}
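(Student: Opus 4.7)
The plan is to recast the reduced system as an abstract Cauchy problem on the subspace $\mathcal{Y}_m$, and combine the exponential stability of the part of $\mathcal{A}_{FS}$ in $\mathcal{Y}_m$ (\cref{thm:stab-afs}) with the characterization of maximal $L^p$-regularity due to Weis (\cref{thm:max-reg-g}) in an exponentially weighted setting. Two preparatory reductions are required: split off the mean values $\widetilde\rho_{\mathrm{avg}}, \widetilde\vartheta_{\mathrm{avg}}$, which satisfy pure ODEs and do not live in $\mathcal{Y}_m$, and homogenize the Neumann boundary condition for $\widetilde\vartheta$ via a lifting $\Theta$ of the boundary datum $h$.

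First I would integrate the density and temperature equations over $\ofo$. The rigid velocity on $\partial\oso$ together with $u=0$ on $\partial\Omega$ yields $\int_{\ofo}\operatorname{div}\widetilde u\,dy = 0$, since the rigid motion $\widetilde\ell + \widetilde\omega\times y$ extends divergence-free to $\oso$ and thus has zero flux through the closed surface $\partial\oso$; similarly, the Neumann condition $\partial_n\widetilde\vartheta=h$ gives $\int_{\ofo}\Delta\widetilde\vartheta\,dy = \int_{\partial\ofo} h\,d\gamma$. These yield the scalar ODEs
\[
\dot{\widetilde\rho}_{\mathrm{avg}} = f_{1,\mathrm{avg}},
\qquad
\dot{\widetilde\vartheta}_{\mathrm{avg}} = f_{3,\mathrm{avg}} + c\,h_{\mathrm{avg}},
\]
with $c$ a geometric constant, $\widetilde\rho_{\mathrm{avg}}(0)=0$ (a consequence of the constraint $\rho_0-\overline\rho\in L^q_m$ built into $\mathcal{J}_{p,q}$) and $\widetilde\vartheta_{\mathrm{avg}}(0)=(\vartheta_0-\overline\vartheta)_{\mathrm{avg}}$. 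Using $(f_{1,\mathrm{avg}},f_{3,\mathrm{avg}},h_{\mathrm{avg}})\in L^1(0,\infty)^3$, direct integration provides the required $L^\infty$-bound, while the elementary inequalities $|f_{j,\mathrm{avg}}|\leq C\|f_j(t)\|_{L^q}$ and $|h_{\mathrm{avg}}|\leq C\|h(t)\|_{L^q(\partial\ofo)}$ control the weighted $L^p$-norms of $\dot{\widetilde\rho}_{\mathrm{avg}}$ and $\dot{\widetilde\vartheta}_{\mathrm{avg}}$ by the $\mathcal{B}_{\infty,p,q}$-norm of the data.

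Next I would construct a lifting $\Theta$ satisfying an auxiliary Neumann heat problem on $(0,\infty)\times\ofo$ with boundary data $h$, zero right-hand side and zero initial value, and obeying the weighted estimate for $e^{\eta(\cdot)}\Theta$ in $W^{2,1}_{q,p}(Q_\infty^F)$. This is a semi-infinite, weighted version of \cref{thm:heat} and follows from applying \cref{thm:max-reg-g} to the Neumann Laplacian shifted by a small positive constant (which remains $\mathcal{R}$-sectorial and generates a semigroup of negative exponential type). Subtracting $\Theta$ (together with its spatial mean, so that the zero-mean structure is preserved) from $\widetilde\vartheta-\widetilde\vartheta_{\mathrm{avg}}$ produces an unknown $\vartheta^\sharp$ satisfying a homogeneous Neumann condition, with zero mean, and with source terms that remain in the weighted class. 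Setting $\rho^\sharp:=\widetilde\rho-\widetilde\rho_{\mathrm{avg}}$ and $U:=(\rho^\sharp,\widetilde u,\vartheta^\sharp,\widetilde\ell,\widetilde\omega)$, the reduced system takes the form $U'=\mathcal{A}_{FS}U+F$, $U(0)=U_0$, in $\mathcal{Y}_m$ with $e^{\eta(\cdot)}F\in L^p(0,\infty;\mathcal{Y}_m)$ and $U_0\in(\mathcal{Y}_m,\mathcal{D}(\mathcal{A}_{FS}))_{1-1/p,p}$ by the compatibility built into $\mathcal{J}^{cc}_{p,q}$. Conjugating by $e^{\eta(\cdot)}$, setting $V:=e^{\eta(\cdot)}U$, and using that $\mathcal{A}_{FS}+\eta$ remains $\mathcal{R}$-sectorial of angle $<\pi/2$ (by \cref{thm:R2} and \cref{pr:perturb}) and generates a semigroup of negative exponential type on $\mathcal{Y}_m$ (by \cref{thm:stab-afs} combined with $\eta<\eta_0$), \cref{thm:max-reg-g} provides existence and uniqueness of $V\in L^p(0,\infty;\mathcal{D}(\mathcal{A}_{FS}))\cap W^{1,p}(0,\infty;\mathcal{Y}_m)$ together with the maximal regularity estimate. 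Undoing the decompositions and using \cref{lem:equiv-norm} to replace the graph norm of $\mathcal{A}_{FS}$ by the natural Sobolev norm gives \eqref{tak1.6}--\eqref{tak1.7}.

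The main obstacle will be the lifting step: the boundary datum $h$ sits in a Lizorkin--Triebel trace class, so $\Theta$ must be chosen compatible with both the zero-mean structure required to work in $\mathcal{Y}_m$ and the exponential weight $e^{\eta(\cdot)}$, while preserving the trace norm bound. A secondary technical point is to verify that the compatibility conditions of $\mathcal{J}^{cc}_{p,q}$ on $(u_0,\vartheta_0)$ match the real-interpolation space $(\mathcal{Y}_m,\mathcal{D}(\mathcal{A}_{FS}))_{1-1/p,p}$ after the decompositions, which reduces to tracking the Besov trace spaces through the shift by $\Theta$ and the mean-zero projection.
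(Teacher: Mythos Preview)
Your proposal is correct and follows essentially the same strategy as the paper: split off the spatial means via scalar ODEs, lift the inhomogeneous Neumann datum $h$ by an auxiliary damped heat problem, and reduce the remainder to an abstract Cauchy problem for $\mathcal{A}_{FS}$ in $\mathcal{Y}_m$ to which \cref{thm:max-reg-g} applies in the weighted setting. The only notable differences are cosmetic: the paper loads the initial datum $\vartheta_0-\overline\vartheta$ and the mean-free part of $f_3$ into its lifting $\varphi^1$ (so that the reduced temperature starts from zero), and for the lifting itself it invokes the inhomogeneous-boundary result \cite[Proposition~6.4]{DenkHieberPruss07} rather than \cref{thm:max-reg-g}, which as stated covers only interior forcing; your phrase ``shifted by a small positive constant'' should read ``with damping $\mu'>\eta$'' so that the shifted Neumann Laplacian has type below $-\eta$.
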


  \begin{proof}
 Let us first consider the case $\eta=0. $  We  consider the following heat equation
  \begin{alignat*}{2}
  &\partial_{t} \varphi^{1} + \mu' \varphi^{1} - \frac{\kappa}{\overline\rho c_{v}} \Delta \varphi^{1} = f_{3} - f_{3,avg} - \frac{\kappa|\partial\ofo|}{\overline\rho c_{v} |\ofo|} h_{avg} & \quad \mbox{ in } (0,\infty) \times  \ofo \\
  & \frac{\partial \varphi^{1}}{\partial n} = h & \mbox{ on } (0,\infty) \times \partial \ofo,  \\
 & \varphi^{1}(0) = \vartheta_{0} - \overline \vartheta & \mbox{ on } \ofo,
  \end{alignat*}
  with $\mu' > 0$. Using $(f_{1}, f_{2}, f_{3},  h,   g_{1}, g_{2}) \in \mathcal{B}_{\infty,p,q}$ and applying
Proposition 6.4 in \cite{DenkHieberPruss07} (taking $\mu'>0$ large enough), we deduce that the above system admits a unique solution
$\varphi^{1}\in W^{2,1}_{q,p} (Q_{\infty}^{F})$. Moreover, we have the
estimate
  \begin{multline}
  \|\varphi^{1}\|_{W^{2,1}_{q,p} (Q_{\infty}^{F})} \leqslant C \Big(  \|f_{3}\|_{L^{p}(0,\infty;L^{q}(\ofo))}  + \|h\|_{F^{(1-1/q)/2}_{p,q}(0,\infty;L^{q}(\partial \ofo)) \cap L^{p}(0,\infty;W^{1-1/q,q}(\partial \ofo))} \\
  + \|\vartheta_{0} - \overline \vartheta\|_{ B^{2(1-1/p)}_{q,p}(\ofo)}\Big).
  \end{multline}
  Standard calculation on the above system yields
  \begin{align*}
  \partial_{t} \varphi^{1}_{avg} + \mu' \varphi^{1}_{avg} = 0, \quad \varphi^{1}_{avg}(0) = \vartheta_{0,avg} - \overline \vartheta.
  \end{align*}
  Thus $\varphi^{1}_{avg}(t) = (\vartheta_{0,avg} - \overline \vartheta)e^{-\mu' t}$ and $\varphi^{1}_{avg} \in L^{r}(0,\infty)$ for any $1\leqslant r \leqslant \infty$.
 Next, we define
$$
  \varphi^{2}(t) = \int_0^t \left(f_{3,avg}(s) + \frac{\kappa|\partial\ofo|}{\overline\rho c_{v} |\ofo|} h_{avg}(s) + \mu' \varphi^{1}_{avg}(s)\right) \ ds.
 $$
 Since  $f_{3,avg}, h_{avg}, \varphi^{1}_{avg}\in L^{1}(0,\infty)$,
 we obtain $\varphi^{2} \in L^{\infty}(0,\infty)$.

 Integrating the first equation of \eqref{sys:NL-G-S} in $\ofo$ and using the boundary condition of $u$, we deduce that $\widetilde \rho_{avg}$ is solution of the following system
  \begin{align}
   \partial_{t}\widetilde \rho_{avg} = f_{1,avg} \quad t \in (0,\infty),  \quad \widetilde \rho_{avg}(0) = 0.
  \end{align}
  As $f_{1,avg}$ belongs to $L^{1}(0,\infty)$ we have $\widetilde \rho_{avg} \in L^{\infty}(0,\infty)$.
  We set
  $$
  \widetilde \varphi = \widetilde \vartheta -\varphi^1-\varphi^2.
  $$
  Then system \eqref{sys:NL-G-S} is transformed into the following system for
  $(\widetilde\rho_{m}, \widetilde u, \widetilde \varphi, \widetilde \ell, \widetilde \omega)$:
\begin{align}  \label{eq:11-7}
&\partial_{t} \widetilde\rho_{m} + \overline \rho \div\widetilde u = \widetilde f_{1}  \quad\mbox{ in } (0,\infty) \times \ofo, \notag \\
& \partial_{t} \widetilde u - \div\sigma_{l}(\widetilde \rho_{m},\widetilde u,\widetilde\vartheta) = \widetilde f_{2}  \quad \mbox{ in } (0,\infty) \times \ofo, \notag \\
& \partial_{t} \widetilde \varphi  - \frac{\kappa}{\overline\rho c_{v}} \Delta \widetilde\varphi + \frac{R \overline \vartheta}{c_{v}} \div\widetilde u = \widetilde f_{3} \quad \mbox{ in } (0,\infty) \times \ofo, \notag \\
&\widetilde  u = 0 \mbox{ on } (0,\infty) \times \partial\Omega, \quad \widetilde u = \widetilde\ell + \widetilde\omega \times y \mbox{ on } (0,\infty) \times \partial \oso \\
& \frac{\partial \widetilde \varphi}{\partial n} = 0  \mbox{ on } (0,\infty) \times \partial \ofo,  \notag \\
& \frac{d}{dt} \widetilde \ell = - m^{-1} \int_{\partial\oso} \sigma_{l} (\widetilde\rho_{m},\widetilde u,\widetilde\varphi) n \ d\gamma + \widetilde g_{1}, \quad t \in (0,\infty) \notag \\
&\frac{d}{dt} \omega = - J(0)^{-1} \int_{\partial\oso} y \times \sigma_{l} (\widetilde\rho_{m},\widetilde u,\widetilde \varphi) n \ d\gamma  + \widetilde g_{2}, \quad t \in (0,\infty)  \notag \\
&   \widetilde\rho(0) = \rho_{0} - \overline\rho , \quad u(0) = u_{0},  \quad  \widetilde \varphi(0) = 0 , \quad \mbox{ in } \ofo, \notag \\
&   \ell(0) = \ell_{0},  \quad \omega(0) = \omega_{0}, \notag
\end{align}
where
\begin{gather*}
\widetilde f_{1} = f_{1} - f_{1,avg}, \quad \widetilde f_{2} = f_{2} - R\nabla \varphi^{1}, \quad \widetilde f_{3} =  \mu'(\varphi^{1} -\varphi^{1}_{avg}) \\
\widetilde g_{1} = g_{1} + m^{-1}R \int_{\partial \oso} \varphi^{1} n \ d\gamma, \quad \widetilde g_{2} = g_{2} + J(0)^{-1} R \int_{\partial \oso} y \times \varphi^{1} n \ d\gamma,
\end{gather*}
and
where we recall that $\sigma_{l}$ is defined by \eqref{tak1.5}.
Using that $(f_{1}, f_{2}, f_{3},  h,   g_{1}, g_{2}) \in \mathcal{B}_{\infty,p,q}$ and that $\varphi^{1}\in W^{2,1}_{q,p} (Q_{\infty}^{F})$,
one can show that
$(\widetilde f_{1}, \widetilde f_{2}, \widetilde f_{3}, \widetilde g_{1}, \widetilde g_{2})$ belongs to $L^{p}(0,\infty;\my_{m})$.

From \cref{thm:R2} and \cref{thm:stab-afs}, we know that $\mathcal{A}_{FS}$ generates an analytic exponential stable semigroup on $\my_{m}$
and is a $\mr$-sectorial operator on $\my_{m}$.
Moreover, by hypothesis of \cref{thm:nh-g}, we have
 $(\rho_{0} - \overline\rho,u_{0}, 0,\ell_{0},\omega_{0}) \in \left(\my_{m}, \mathcal{D}({\mathcal A}_{FS}) \cap  \my_{m} \right)_{1-1/p,p}.$
 Therefore by \cref{thm:max-reg-g}, the system \eqref{eq:11-7} admits a unique solution
 $$
 (\widetilde \rho_{m}, \widetilde u,\widetilde \varphi,\widetilde \ell,\widetilde \omega) \in L^{p}(0,\infty;\mathcal{D}({\mathcal A}_{FS}) \cap  \my_{m} )
 \cap W^{1,p}(0,\infty; \my_{m} )
 \subset \mathcal{S}_{\infty,p,q}.
 $$
 We recover \eqref{tak1.6} and \eqref{tak1.7} by remarking that
$$
\widetilde \vartheta_{m}= \widetilde \varphi + \varphi^{1}_{m},
\quad
\widetilde \vartheta_{avg}= \varphi^{2} + \varphi^{1}_{avg}.
$$
The case $\eta > 0$ can be reduced to the previous case by multiplying all the functions by $e^{\eta t}$ and using the fact that ${\mathcal A}_{FS} + \eta$ is a $\mr$-sectorial operator with negative type.
  \end{proof}

\section{Estimates of the Nonlinear Terms} \label{sec:nl-est-g}
In this section, we are going to estimate the nonlinear terms $\mathcal{F}_{1},\mathcal{F}_{2},\mathcal{F}_{3},\mathcal{H}, \mathcal{G}_{1}$ and $\mathcal{G}_{2}$ defined in \eqref{F1-g} - \eqref{G-g}.

\medskip

Throughout this section we assume $2 < p < \infty$ and $3 < q < \infty$ satisfy
$\ds \frac{1}{p} + \frac{1}{2q} \neq \frac12$. Let $p'$
denote the conjugate of $p$, i.e.,
$\ds \frac{1}{p} + \frac{1}{p'} = 1$. In the following, when no confusion is possible, we use the notation
\[
\norm{  \cdot  }_{W^{r,p}(0,T;W^{s,q})} = \norm{ \cdot }_{W^{r,p}(0,T;W^{s,q}(\ofo))}.
\]
Let us fix  $\eta \in (0, \eta_{0})$, where $\eta_{0}$ is the constant introduced in  \cref{thm:stab-afs}. and  we introduce the following ball
$$
\widetilde{\mathcal S}_{\varepsilon} = \Big\{(\widetilde \rho, \widetilde u,\widetilde \vartheta,\widetilde \ell,\widetilde \omega) \mid
\left\| (\widetilde \rho, \widetilde u,\widetilde \vartheta,\widetilde \ell,\widetilde \omega)\right\|_{\mathcal{S}} \leqslant \varepsilon
 \Big\},
$$
where
\begin{multline} \label{ball-g}
\left\| (\widetilde \rho, \widetilde u,\widetilde \vartheta,\widetilde \ell,\widetilde \omega)\right\|_{\mathcal{S}}= \left\|(e^{\eta (\cdot)}\widetilde\rho_{m}, e^{\eta (\cdot)} \widetilde u,e^{\eta (\cdot)} \widetilde\vartheta_{m}, e^{\eta (\cdot)} \widetilde\ell, e^{\eta (\cdot)}\widetilde\omega)\right\|_{{\mathcal S}_{\infty,p,q}} \\ +  \|\widetilde \rho_{avg}, \widetilde \vartheta_{avg}\|_{L^{\infty}(0,\infty)^{2}} + \|e^{\eta (\cdot)}\partial_{t}\widetilde \rho_{avg}, e^{\eta (\cdot)}\partial_{t}\widetilde \vartheta_{avg}\|_{L^{p}(0,\infty)^{2}},
\end{multline}
and where we use the notation \eqref{tak1.8}.

Let us first show that $X$ be defined as in \eqref{Jacobi-g} is a $C^{1}$-diffeomorphism.
\begin{lem} \label{lem:diff-g}
Let $X$ be defined as in \eqref{Jacobi-g}. Then there exists a constant $C_{X} > 0,$ depending only on $p,q, \eta$ and $\ofo$ such that, for every    $(\widetilde \rho, \widetilde u,\widetilde \vartheta,\widetilde \ell,\widetilde \omega) \in \widetilde{\mathcal S}_{\varepsilon}$, we have
\begin{align}
\|\nabla X - I_{3}\|_{L^{\infty}((0,\infty) \times \ofo)}+ \norm{\nabla X - I_{3}}_{L^{\infty}(0,\infty;W^{1,q})} \leqslant C_{X} \varepsilon,\\
|X(\cdot, y_1)-X(\cdot, y_2)| \geq (1-C_X\varepsilon) |y_1-y_2| \quad (y_1,y_2\in \ofo). \label{tak1.9}
\end{align}
In particular for every $\varepsilon \in (0,\frac{1}{2C_{X}})$ and for every $(\widetilde \rho, \widetilde u,\widetilde \vartheta,\widetilde \ell,\widetilde \omega) \in \widetilde{\mathcal S}_{\varepsilon},$ we have
\begin{align} \label{inv-X}
\|\nabla X - I_{3}\|_{L^{\infty}((0,\infty) \times \ofo)} < \frac{1}{2}.
\end{align}
\end{lem}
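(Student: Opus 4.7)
The plan is to start from the identity \eqref{Jacobi-g}, which when differentiated in $y$ gives
\begin{equation*}
\nabla X(t,y) - I_3 = \int_0^t Q(s) \nabla \widetilde u(s,y) \, ds, \qquad \nabla^2 X(t,y) = \int_0^t Q(s) \nabla^2 \widetilde u(s,y) \, ds.
\end{equation*}
Since $Q(s) \in SO(3)$ (by \eqref{def-Q-g} we have $Q^\top Q = I_3$), the Euclidean operator norm of $Q(s)$ equals $1$, so these integrals are controlled pointwise by $\int_0^t \|\nabla \widetilde u(s)\|_{\cdot} \, ds$ and $\int_0^t \|\nabla^2 \widetilde u(s)\|_{L^q} \, ds$ respectively.

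First I would estimate such time integrals by inserting the exponential weight and applying H\"older's inequality in time with conjugate exponents $p, p'$:
\begin{equation*}
\int_0^\infty \|\widetilde u(s)\|_{W^{2,q}(\ofo)} \, ds
\leq \left(\int_0^\infty e^{-\eta p' s}\,ds\right)^{1/p'} \|e^{\eta (\cdot)}\widetilde u\|_{L^p(0,\infty; W^{2,q}(\ofo))}
\leq C(p,\eta)\, \varepsilon,
\end{equation*}
where the final inequality uses $(\widetilde \rho,\widetilde u,\widetilde\vartheta,\widetilde\ell,\widetilde\omega) \in \widetilde{\mathcal S}_\varepsilon$ together with \eqref{ball-g}. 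Since $q > 3$, the Sobolev embedding $W^{2,q}(\ofo) \hookrightarrow W^{1,\infty}(\ofo)$ lets me run the same computation with $L^\infty(\ofo)$ replacing $W^{2,q}(\ofo)$, producing $\int_0^\infty \|\nabla \widetilde u(s)\|_{L^\infty(\ofo)} \, ds \leq C \varepsilon$. Splitting the $W^{1,q}$ norm into its $L^q$ and $\nabla^2$ parts and feeding these bounds into the integral formulas above yields at once the first displayed inequality of the lemma, with a constant $C_X$ depending only on $p,q,\eta$ and $\ofo$.

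Next, the Lipschitz-type bound \eqref{tak1.9} follows by evaluating \eqref{Jacobi-g} at two points $y_1,y_2 \in \ofo$:
\begin{equation*}
X(t,y_1) - X(t,y_2) = (y_1-y_2) + \int_0^t Q(s)\bigl(\widetilde u(s,y_1) - \widetilde u(s,y_2)\bigr)\,ds,
\end{equation*}
and since $|\widetilde u(s,y_1)-\widetilde u(s,y_2)| \leq \|\nabla\widetilde u(s)\|_{L^\infty(\ofo)} |y_1-y_2|$, the reverse triangle inequality combined with $\int_0^\infty \|\nabla\widetilde u\|_{L^\infty}\,ds \leq C_X\varepsilon$ gives the desired lower bound. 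The last statement \eqref{inv-X} is then immediate by choosing $\varepsilon < 1/(2C_X)$. There is no real obstacle here: the lemma is a routine application of H\"older's inequality that exploits the exponential weight $e^{\eta t}$ hard-coded into the norm of $\widetilde{\mathcal S}_\varepsilon$, supplemented by the Sobolev embedding available because $q > 3$; the only points to keep in mind are the uniform bound $|Q(s)|=1$ and the splitting of the $W^{1,q}$-norm into two contributions estimated identically.
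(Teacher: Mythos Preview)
Your proof is correct and follows essentially the same approach as the paper: differentiate \eqref{Jacobi-g}, insert the exponential weight $e^{\eta t}e^{-\eta t}$, apply H\"older's inequality in time with exponents $p,p'$, and use the Sobolev embedding $W^{1,q}(\ofo)\hookrightarrow L^\infty(\ofo)$ (valid since $q>3$). The paper's write-up is terser (it bounds the $L^\infty$ norm via the $W^{1,q}$ norm in one step and simply declares \eqref{tak1.9} ``similar''), but the argument is the same; the only minor caveat is that your pointwise Lipschitz bound $|\widetilde u(s,y_1)-\widetilde u(s,y_2)|\le \|\nabla\widetilde u(s)\|_{L^\infty}|y_1-y_2|$ picks up a harmless geometric constant since $\ofo$ is not convex, which is absorbed into $C_X$.
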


\begin{proof}
From the definition of $X$ we obtain
\begin{align*}
\|\nabla X - I_{3}\|_{L^{\infty}((0,\infty) \times \ofo)} & \leqslant C \|\nabla X - I_{3}\|_{L^{\infty}(0,\infty;W^{1,q}(\ofo))} \\
& \leqslant C  \int_{0}^{\infty}e^{-\eta t} e^{\eta t}\|\nabla \widetilde u(t,\cdot)\|_{W^{1,q}(\ofo)} \ dt \\
&  \leqslant C  \left(\int_{0}^{\infty} e^{-p'\eta t } \ dt \right)^{1/p'} \|e^{\eta (\cdot)} \widetilde u\|_{W^{2,1}_{q,p} (Q_{\infty}^{F})} \\
&\leqslant C  \left( \frac{1}{p'\eta}\right)^{1/p'} \varepsilon,
\end{align*}
where $C$ depends only on $\ofo$.  The proof of \eqref{tak1.9} is similar.
This completes the proof of the lemma.
\end{proof}

From now on we assume that
\begin{align} \label{e0}
\varepsilon_{0} = \min \left\{1, \frac{1}{2C_{X}} \right\},
\end{align}
where $C_{X}$ is the constant in \cref{lem:diff-g}.

In the following lemma we estimate some other norms of $\nabla X$ and $[\nabla X]^{-1}$ that we need to estimate the nonlinear terms.
\begin{lem} \label{lem:X-O-G}
Let $X$ be defined as in \eqref{Jacobi-g} and $Z$ defined by \eqref{Z-g}.  Then there exists a constant $C > 0$ depending only on  $p,q,\eta$ and $\ofo$ such that, for every $\varepsilon \in (0,\varepsilon_{0})$ and for every  $(\widetilde \rho, \widetilde u,\widetilde \vartheta,\widetilde \ell,\widetilde \omega) \in \widetilde{\mathcal S}_{\varepsilon}$, we have
\begin{multline}
  \norm{\det(\nabla X - I_{3})}_{L^{\infty}(0,\infty;W^{1,q})}
  + \norm{\Cof(\nabla X - I_{3})}_{L^{\infty}(0,\infty;W^{1,q})}
  \\
+ \norm{Z - I_{3}}_{L^{\infty}(0,\infty;W^{1,q})}
+ \norm{\partial_{t} \nabla X}_{L^{p}(0,\infty;W^{1,q})}
+ \norm{\partial_{t}(Z - I_{3})}_{L^{p}(0,\infty;W^{1,q})} \\
+ \norm{Z - I_{3}}_{C^{1/p'}(0,\infty;W^{1,q})}
\leqslant C\varepsilon.
\end{multline}
\end{lem}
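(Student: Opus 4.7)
The starting point is the identity $\partial_t \nabla X = Q(t) \nabla \widetilde u$, which follows by differentiating \eqref{Jacobi-g} in time. Since $Q(t) \in SO(3)$, its entries are bounded by one, so for any $t$ and any $y\in \ofo$ the pointwise estimate $|\partial_t \nabla X|\leq |\nabla \widetilde u|$ holds, and the same kind of bound passes to the $W^{1,q}$ norm (the extra term $\dot Q \,\nabla \widetilde u$ that would appear in a second derivative is not needed here). Multiplying by the weight, I obtain
\begin{equation*}
\norm{\partial_t \nabla X}_{L^p(0,\infty;W^{1,q})}^p
\leq C\int_0^\infty e^{-p\eta t}\, \bigl(e^{\eta t}\norm{\nabla \widetilde u}_{W^{1,q}}\bigr)^p \, dt
\leq C \norm{e^{\eta\cdot} \nabla \widetilde u}_{L^p(0,\infty;W^{1,q})}^p \leq C \varepsilon^p.
\end{equation*}
To estimate $\nabla X - I_3$ in $L^\infty(0,\infty; W^{1,q})$, I integrate in time and apply H\"older with exponents $p,p'$, exploiting that $\int_0^\infty e^{-p'\eta s}\, ds <\infty$; this gives the desired bound by $C\varepsilon$ already stated in \cref{lem:diff-g}.

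The next step is to handle $\det \nabla X$ and $\Cof \nabla X$. Since $3<q$, $W^{1,q}(\ofo)$ is a Banach algebra. Both $\det$ and $\Cof$ are polynomials in the entries of the matrix, and for $A$ near $I_3$ one has $\det A - 1$ and $\Cof A - I_3$ expressible as polynomials in the entries of $A-I_3$ with no constant term. I apply this to $A = \nabla X$: using repeatedly the algebra property and the previously obtained bound on $\nabla X - I_3$, I get $\norm{\det \nabla X - 1}_{L^\infty(W^{1,q})} + \norm{\Cof \nabla X - I_3}_{L^\infty(W^{1,q})} \leq C\varepsilon$. From this, a similar polynomial expansion yields the bound on $\det(\nabla X - I_3)$ (whose entries are cubic in $\nabla X - I_3$), and the bound on $\Cof \nabla X$ transfers directly.

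For $Z - I_3$, the shortest route is the algebraic identity $Z - I_3 = -Z(\nabla X - I_3)$, obtained from $Z\, \nabla X = I_3$; combined with \eqref{inv-X} this yields, via a convergent Neumann-type series (or equivalently by applying the identity once and absorbing $\|Z\|_{L^\infty(W^{1,q})}$), the bound $\norm{Z-I_3}_{L^\infty(W^{1,q})}\leq C\varepsilon$. I then differentiate $Z\,\nabla X = I_3$ in time to get $\partial_t Z = -Z(\partial_t \nabla X) Z$; combining the already-established bounds on $Z$ and on $\partial_t \nabla X$ through the algebra property gives $\norm{\partial_t Z}_{L^p(0,\infty;W^{1,q})}\leq C\varepsilon$. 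Finally, the H\"older continuity of $Z$ in time with exponent $1/p'$ follows from writing $Z(t_2,\cdot) - Z(t_1,\cdot) = \int_{t_1}^{t_2}\partial_t Z(s,\cdot)\, ds$ and applying H\"older's inequality, which produces the $|t_2-t_1|^{1/p'}$ factor.

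No single step is a real obstacle: the computation is a cascade from $\nabla X$ to $\det, \Cof, Z, \partial_t Z$, with the Banach algebra structure of $W^{1,q}(\ofo)$ (for $q>3$) used throughout and \cref{lem:diff-g} ensuring invertibility of $\nabla X$. The only point that requires a little care is the interplay between the exponentially weighted hypothesis on $\widetilde u$ and the unweighted conclusions for $X$ and $Z$, which is handled uniformly by the integrability of $e^{-\eta t}$ on $(0,\infty)$.
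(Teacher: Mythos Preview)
Your proof is correct and follows essentially the same route as the paper's: the identity $\partial_t \nabla X = Q\nabla\widetilde u$, the Banach algebra property of $W^{1,q}(\ofo)$ for the polynomial quantities $\det$ and $\Cof$, the relation $Z-I_3=-Z(\nabla X-I_3)$, the differentiated relation $\partial_t Z=-Z(\partial_t\nabla X)Z$, and H\"older for the $C^{1/p'}$ bound. The only cosmetic difference is that the paper first bounds $\|Z\|_{L^\infty(W^{1,q})}$ via the explicit formula $Z=\Cof(\nabla X)/\det(\nabla X)$ together with the lower bound on $\det\nabla X$, whereas you invoke a Neumann-series/absorption argument; both are equivalent here.
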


\begin{proof}
The estimates of $\det(\nabla X - I_{3})$ and $\Cof(\nabla X - I_{3})$ follow from \cref{lem:diff-g}
and from
the fact that the space $L^{\infty}(0,\infty;W^{1,q}(\ofo))$ is an algebra for $q>3$.

From \eqref{inv-X}, we deduce that $\det \nabla X \geqslant C > 0$ in $(0,\infty) \times \ofo$ and thus from
\[ \displaystyle Z  = \frac{\mathrm{Cof}(\nabla X )}{\mathrm{det}(\nabla X )}\]
we obtain
\begin{align*}
\|Z\|_{L^{\infty}(0,\infty;W^{1,q}(\ofo))} \leqslant C.
\end{align*}
Therefore,
\begin{align*}
\|Z - I_{3}\|_{L^{\infty}(0,\infty;W^{1,q})} \leqslant \|Z\|_{L^{\infty}(0,\infty;W^{1,q}(\ofo))} \|\nabla X - I_{3}\|_{L^{\infty}(0,\infty;W^{1,q})} \leqslant C \varepsilon.
\end{align*}
Next notice  that,
\[ \partial_{t} \nabla X(t,y) = Q(t) \nabla\widetilde u(t,y).\]
Therefore
\begin{align*}
\|\partial_{t} \nabla X\|_{L^{p}(0,\infty;W^{1,q})} \leqslant
\|e^{\eta (\cdot)} \nabla \widetilde u\|_{L^{p}(0,\infty;W^{1,q})} \leqslant C\varepsilon.
\end{align*}
We also have
\[\partial_{t} (Z - I_{3}) = \partial_{t} Z = -\nabla X^{-1} \left( \partial_{t} \nabla X\right) \nabla X^{-1}  = - Z \left( \partial_{t} \nabla X\right) Z.\]
Thus
\begin{align*}
\|\partial_{t} (Z - I_{3})\|_{L^{p}(0,\infty;W^{1,q})} \leqslant \|Z\|^{2}_{L^{\infty}(0,\infty;W^{1,q})}  \|\partial_{t} \nabla X\|_{L^{p}(0,\infty;W^{1,q})} \leqslant C \varepsilon.
\end{align*}
Finally, by H\"older's  inequality we have
\begin{align*}
\left\|(Z -I_{3})(t_{2},\cdot) - (Z -I_{3})(t_{1},\cdot)\right\|_{W^{1,q}} &\leqslant  \int_{t_{1}}^{t_{2}} \|\partial_{t} (Z - I_{3})(s,\cdot)\|_{W^{1,q}}  \\
&\leqslant |t_{1} - t_{2}|^{1/p'} \|\partial_{t} (Z - I_{3})\|_{L^{p}(0,\infty;W^{1,q})}  \leqslant C\varepsilon |t_{1} - t_{2}|^{1/p'}.
\end{align*}
This completes the proof of the lemma.
\end{proof}

Now we are in a position to estimate the nonlinear terms in \eqref{F1-g} - \eqref{G-g}. More precisely, we prove the following
\begin{prop} \label{prop:NL-g}
Let $\varepsilon_{0}$ be the constant defined as in \eqref{e0}. Then there exists a constant $C_{N}$ depending only on $p,q,\eta,\ofo$ such that   for every $\varepsilon \in (0,\varepsilon_{0})$ and for every  $(\widetilde \rho, \widetilde u,\widetilde \vartheta,\widetilde \ell,\widetilde \omega) \in \widetilde{\mathcal S}_{\varepsilon},$ we have
\begin{align*}
&\left\|(e^{\eta (\cdot)} \mathcal{F}_{1}, e^{\eta (\cdot)} \mathcal{F}_{2}, e^{\eta (\cdot)} \mathcal{F}_{3}, e^{\eta (\cdot)} \mathcal{H} ,  e^{\eta (\cdot)} \mathcal{G}_{1}, e^{\eta (\cdot)} \mathcal{G}_{2}) \right\|_{\mathcal{B}_{\infty,p,q}} \\
& \qquad \qquad + \Big\|(\mathcal{F}_{1,avg}, \mathcal{F}_{3,avg}, \mathcal{H}_{avg} ) \Big\|_{L^{1}(0,\infty)^{3}} \leqslant C_{N} \varepsilon^{2}.
\end{align*}
\end{prop}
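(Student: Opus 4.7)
The overall strategy is that every summand in the definitions \eqref{F1-g}--\eqref{G-g} is at least bilinear in quantities that are small of order $\varepsilon$ in appropriate norms. These small quantities come in two families: the unknowns $\widetilde\rho,\widetilde u,\widetilde\vartheta,\widetilde\ell,\widetilde\omega$ themselves, and the geometric deviations $Q - I_3$, $\nabla X - I_3$, $Z - I_3$, $ZQ - I_3$, for which \cref{lem:diff-g} and \cref{lem:X-O-G} deliver the bound $C\varepsilon$ in $L^\infty(0,\infty; W^{1,q})$. Any remaining non-small factor sits in the algebra $L^\infty(0,\infty; W^{1,q}(\ofo))$, which is legitimate because $q>3$. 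The exponential weight $e^{\eta t}$ is placed on a single small factor controlled in $L^p(0,\infty;\cdot)$, while the second small factor is controlled in $L^\infty(0,\infty;\cdot)$ without weight. This division mirrors the bookkeeping of \cref{prop:estimate}, with the crucial difference that the closing smallness is extracted from $\varepsilon$ instead of $T^\delta$.

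First I would record the preparatory bounds of order $\varepsilon$. From $\dot Q = QA(\widetilde\omega)$, $Q(0)=I_3$, together with $\|e^{\eta(\cdot)}\widetilde\omega\|_{L^p}\leq\varepsilon$ and Hölder's inequality, one gets $\|Q\|_{L^\infty}\leq C$ and $\|Q-I_3\|_{L^\infty(0,\infty;\RR^{3\times3})}\leq C\varepsilon$. Writing $\widetilde\rho=\widetilde\rho_m+\widetilde\rho_{avg}$ and invoking the embedding $W^{1,p}(0,\infty; X)\cap L^p(0,\infty; Y)\hookrightarrow L^\infty(0,\infty;(X,Y)_{1-1/p,p})$ one gets $\|\widetilde\rho\|_{L^\infty(0,\infty; W^{1,q})}\leq C\varepsilon$ and the analogous bounds on $\widetilde u$, $\widetilde\vartheta-\widetilde\vartheta_{avg}$ in $L^\infty(0,\infty; B^{2(1-1/p)}_{q,p})$. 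A complex interpolation between $L^\infty(0,\infty; L^q)$ and $L^p(0,\infty; W^{2,q})$, exactly as in the derivation of \eqref{est:lisv}, then delivers
\begin{equation*}
\|e^{\eta(\cdot)}\widetilde u\|_{L^p(0,\infty; L^\infty)}+\|e^{\eta(\cdot)}\nabla\widetilde u\|_{L^p(0,\infty; L^\infty)}+\|e^{\eta(\cdot)}\nabla\widetilde\vartheta\|_{L^p(0,\infty; L^\infty)}\leq C\varepsilon,
\end{equation*}
which is the key ingredient for the quadratic nonlinearities in $\mathcal{F}_3$ and $\mathcal{G}_0$.

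With these preliminaries, the $L^p(0,\infty; W^{1,q})$-bound for $e^{\eta(\cdot)}\mathcal{F}_1$, the $L^p(0,\infty; L^q)$-bounds for $e^{\eta(\cdot)}\mathcal{F}_2,\ e^{\eta(\cdot)}\mathcal{F}_3$, the $L^p(0,\infty)$-bounds for $e^{\eta(\cdot)}\mathcal{G}_1,\ e^{\eta(\cdot)}\mathcal{G}_2$ (the latter after applying the trace theorem to the boundary integral of $\mathcal{G}_0$), and the $L^p(0,\infty; W^{1-1/q,q}(\partial\ofo))$-bound for $e^{\eta(\cdot)}\mathcal{H}$ all follow termwise by Hölder's inequality and the algebra property of $W^{1,q}(\ofo)$. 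These are immediate transcriptions of the computations in \cref{prop:estimate}: each small factor that in the local case contributed $T^{1/p'}\|\cdot\|_{W^{1,p}}$ or $T^{1/p}\|\cdot\|_{L^p}$ now contributes $C\varepsilon$ by the preparatory bounds above, and the desired quadratic factor $\varepsilon^2$ is produced without further effort.

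The one genuinely new technical point -- and the main obstacle -- is the Lizorkin-Triebel estimate for $e^{\eta(\cdot)}\mathcal{H}$ in $F^{(1-1/q)/2}_{p,q}(0,\infty; L^q(\partial\ofo))$ on the half-line. Writing the components of $\mathcal{H}_F$ and $\mathcal{H}_S$ as bilinear expressions of the type $(\delta_{jk}-Z_{kj})\,\partial_k\widetilde\vartheta$ on $\partial\ofo$, I would apply the half-line analogue of \cref{prop:TL-product} with $U_1=U_3=L^q(\partial\ofo)$, $U_2=W^{1-1/q,q}(\partial\ofo)$, and $\Phi(f,g)=fg$. The factor $g=I_3-Z^\top$ vanishes at $t=0$ and satisfies $\|g\|_{W^{1,p}(0,\infty; W^{1-1/q,q}(\partial\ofo))}\leq C\varepsilon$ by \cref{lem:X-O-G} and trace, while $f=e^{\eta(\cdot)}\partial_k\widetilde\vartheta|_{\partial\ofo}$ is controlled in $F^{(1-1/q)/2}_{p,q}(0,\infty; L^q(\partial\ofo))$ by \cref{prop:Ntrace} applied to $e^{\eta(\cdot)}\widetilde\vartheta\in W^{2,1}_{q,p}(Q^F_\infty)$. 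Retracing the proof of \cref{prop:TL-product} on $(0,\infty)$, the closing $T^\delta$ factor that was extracted there from $\|g\|_{L^\infty_t}\leq T^{1/p'}\|g\|_{W^{1,p}_t}$ is replaced by the $\varepsilon$ available directly from the smallness of $\|g\|_{W^{1,p}}$ itself; this gives $\|fg\|_{F^{(1-1/q)/2}_{p,q}(0,\infty; L^q(\partial\ofo))}\leq C\varepsilon^2$. Finally, the $L^1(0,\infty)$-estimates on the averages are essentially free once the $\mathcal{B}_{\infty,p,q}$-bound is in hand: since $|\mathcal{F}_{1,avg}(t)|\leq C\|\mathcal{F}_1(t)\|_{W^{1,q}(\ofo)}$, Hölder's inequality in time gives
\begin{equation*}
\|\mathcal{F}_{1,avg}\|_{L^1(0,\infty)}\leq C\,\|e^{-\eta(\cdot)}\|_{L^{p'}(0,\infty)}\,\|e^{\eta(\cdot)}\mathcal{F}_1\|_{L^p(0,\infty; W^{1,q})}\leq C\varepsilon^2,
\end{equation*}
and the same argument handles $\mathcal{F}_{3,avg}$ and $\mathcal{H}_{avg}$.
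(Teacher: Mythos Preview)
Your overall strategy matches the paper's, and the bulk of your argument---the termwise bilinear estimates for $\mathcal{F}_1,\mathcal{F}_2,\mathcal{F}_3,\mathcal{G}_1,\mathcal{G}_2$ and the $L^p(0,\infty;W^{1-1/q,q})$ part of $\mathcal{H}$---is correct and essentially identical to the paper's. Your treatment of the $L^1$-averages via H\"older with $e^{-\eta(\cdot)}\in L^{p'}$ is in fact slightly cleaner than the paper's direct computation.

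There is, however, a genuine gap in your Lizorkin--Triebel estimate for $e^{\eta(\cdot)}\mathcal{H}$. You claim that $g=I_3-Z^\top$ satisfies $\|g\|_{W^{1,p}(0,\infty;W^{1-1/q,q}(\partial\ofo))}\leq C\varepsilon$, but this is false: $Z(t,\cdot)-I_3$ converges as $t\to\infty$ to a generically nonzero limit (since $\nabla X(\infty)-I_3=\int_0^\infty Q\nabla\widetilde u\,ds$ need not vanish), so $g\notin L^p(0,\infty;U_2)$ and hence $g\notin W^{1,p}$. What \cref{lem:X-O-G} actually delivers is $g\in L^\infty(0,\infty;W^{1,q})$, $\partial_t g\in L^p(0,\infty;W^{1,q})$, and the H\"older bound $\|g\|_{C^{1/p'}([0,\infty);W^{1,q})}\leq C\varepsilon$. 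The paper uses precisely this last bound together with the pointwise multiplier theorem \cite[Theorem~2.8.2(ii)]{Triebel} (a $C^\alpha$ function multiplies $F^s_{p,q}$ boundedly when $\alpha>s$; here $1/p'>1/2>(1-1/q)/2$) to obtain the $F^{(1-1/q)/2}_{p,q}$ bound.

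Your route via a half-line analogue of \cref{prop:TL-product} can be repaired, but not by a straight transcription: you must (i) replace the hypothesis $g\in W^{1,p}$ by $g\in L^\infty$ together with $\partial_t g\in L^p$, and (ii) modify the $I_2$ estimate, since $\int_0^\infty h^{-1-sq+q/p'}\,dh$ diverges at infinity when $s+1/p<1$. Splitting the inner integral at $h=1$---using $\|g(t+h)-g(t)\|\leq h^{1/p'}\|\partial_t g\|_{L^p}$ for $h\leq 1$ and $\|g(t+h)-g(t)\|\leq 2\|g\|_{L^\infty}$ with $\int_1^\infty h^{-1-sq}\,dh<\infty$ for $h\geq 1$---closes the argument. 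Without this adjustment your stated hypothesis is not met and the $I_2$ integral does not converge.
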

\begin{proof}
The constants $C$ appearing in this proof depend only on $p,q,\eta, \ofo$ and are independent of $\varepsilon.$

For every  $(\widetilde \rho, \widetilde u,\widetilde \vartheta,\widetilde \ell,\widetilde \omega) \in \widetilde{\mathcal S}_{\varepsilon},$ we have
\begin{align} \label{d1}
\|\widetilde \rho\|_{L^{\infty}(0,\infty;W^{1,q})} &\leqslant \| e^{\eta (\cdot)}\widetilde \rho_{m}\|_{L^{\infty}(0,\infty;W^{1,q})} + \|\widetilde \rho_{avg}\|_{L^{\infty}(0,\infty)} \notag \\
& \leqslant C \| e^{\eta (\cdot)}\widetilde \rho_{m}\|_{W^{1,p}(0,\infty;W^{1,q})} + \|\widetilde \rho_{avg}\|_{L^{\infty}(0,\infty)} \leqslant C \varepsilon.
\end{align}
Since $2 < p < \infty,$ using the the following continuous embedding
\begin{align*}
W^{2,1}_{q,p}(Q_{\infty}^{F}) \hookrightarrow L^{\infty}(0,\infty;B^{2(1-1/p)}_{q,p}(\ofo)) \hookrightarrow  L^{\infty}(0,\infty;W^{1,q}(\ofo)),
\end{align*}
we can similarly deduce that
\begin{equation} \label{d2}
\|\widetilde u\|_{L^{\infty}(0,\infty;W^{1,q})^{3}} + \|\widetilde \vartheta\|_{L^{\infty}(0,\infty;W^{1,q})} \leqslant C \varepsilon.
\end{equation}
Using the fact that $L^{\infty}(\ofo) \hookrightarrow W^{1,q}(\ofo)$ for $q > 3$ it is easy to see that
\begin{align} \label{d3}
\|\widetilde u\|_{L^{p}(0,\infty;L^{\infty}(\ofo))} + \|\nabla \widetilde u\|_{L^{p}(0,\infty;L^{\infty}(\ofo))} + \|\nabla \widetilde \vartheta\|_{L^{p}(0,\infty;L^{\infty}(\ofo))} \leqslant C \varepsilon.
\end{align}

Let $Q$ be defined as in \eqref{def-Q-g}. Then
\begin{align} \label{d4}
\|Q - I_{3}\|_{L^{\infty}(0,\infty;\mathbb{R}^{3\times 3})} &\leqslant \|Q\|_{L^{\infty}(0,\infty;\mathbb{R}^{3\times 3})} \|e^{\eta (\cdot)} \widetilde \omega\|_{L^{\infty}(0,\infty;\rt)}  \int_{0}^{\infty} e^{- \eta s} ds \notag \\
& \leqslant \frac{C}{\eta} \|e^{\eta (\cdot)} \widetilde \omega\|_{W^{1,p}(0,\infty;\rt)}  \leqslant C \varepsilon.
\end{align}
Now we estimate the nonlinear terms in \eqref{F1-g} - \eqref{G-g}.
\paragraph{\underline{Estimates of  $\mathcal{F}_{1}$ and $\mathcal{F}_{1,avg}$}}
\begin{align} \label{d-f1}
\|e^{\eta(\cdot)} \mathcal{F}_{1}\|_{L^{p}(0,\infty;W^{1,q})} + \|\mathcal{F}_{1,avg}\|_{L^{1}(0,\infty)} \leqslant C\varepsilon^{2}.
\end{align}
Let us recall
\begin{equation*}
e^{\eta t}\mathcal{F}_{1}(\widetilde \rho,\widetilde  u,\widetilde  \vartheta,\widetilde  \ell,\widetilde \omega)
=   - e^{\eta t}(\widetilde \rho+\overline\rho) (Z^{\top} - I_{3}) : \nabla \widetilde u  - e^{\eta t}\widetilde \rho \div\widetilde u.
\end{equation*}
Using the \cref{lem:X-O-G} and estimates \eqref{d1}-\eqref{d3}, we obtain
\begin{align*}
&\left\| - e^{\eta t}(\widetilde \rho+\overline\rho) (Z^{\top} - I_{3}) : \nabla \widetilde u  - e^{\eta t}\widetilde \rho \div\widetilde u \right\|_{L^{p}(0,\infty;W^{1,q})} \\
& \leqslant \left\|(\widetilde \rho+\overline\rho) \right\|_{L^{\infty}(0,\infty;W^{1,q})} \left\|Z^{\top} - I_{3} \right\|_{L^{\infty}(0,\infty;W^{1,q})}  \left\| e^{\eta (\cdot)} \nabla \widetilde u\right\|_{L^{p}(0,\infty;W^{1,q})} \\
& \qquad \qquad \qquad + \|\widetilde \rho\|_{L^{\infty}(0,\infty;W^{1,q})} \left\| e^{\eta (\cdot)} \div\widetilde u\right\|_{L^{p}(0,\infty;W^{1,q})} \\
& \leqslant C \varepsilon^{2}.
\end{align*}
We have
\begin{align*}
\mathcal{F}_{1,avg} =  -\frac{1}{|\ofo|} \int_{\ofo} (\widetilde \rho+\overline\rho) (Z^{\top} - I_{3}) : \nabla \widetilde u \ dy  - \frac{1}{|\ofo|}\int_{\ofo} \widetilde \rho \div\widetilde u \ dy.
\end{align*}
We estimate the first term of $\mathcal{F}_{1,avg} $ as follows
\begin{align*}
&\left\| \int_{\ofo} (\widetilde \rho+\overline\rho) (Z^{\top} - I_{3}) : \nabla \widetilde u \ dy\right\|_{L^{1}(0,\infty)} \\
& \leqslant \|\widetilde \rho+\overline\rho\|_{L^{\infty}((0,\infty) \times \ofo)} \|(Z^{\top} - I_{3})\|_{L^{\infty}((0,\infty) \times \ofo)} \int_{0}^{\infty} \int_{\ofo} |\nabla \widetilde u | \ dy dt  \\
& \leqslant C \|\widetilde \rho+\overline\rho\|_{L^{\infty}(0,\infty;W^{1,q})} \left\|Z^{\top} - I_{3} \right\|_{L^{\infty}(0,\infty;W^{1,q})} \int_{0}^{\infty} e^{-\eta t} e^{\eta t} \|\nabla \widetilde u(t,\cdot)\|_{L^{q}(\ofo)} \ dt \\
&\leqslant C \varepsilon \left(\int_{0}^{\infty} e^{-p'\eta t} \ dt \right)^{1/p'} \left\| e^{\eta(\cdot)} \nabla \widetilde u\right\|_{L^{p}(0,\infty;L^{q}(\ofo))} \leqslant C\varepsilon^{2}.
\end{align*}
The other estimate  can be obtained similarly.
\paragraph{\underline{Estimates of  $\mathcal{F}_{2},$ $\mathcal{F}_{3}$ and $\mathcal{F}_{3,avg}$}}
\begin{align} \label{d-f2}
\|e^{\eta(\cdot)} \mathcal{F}_{2}\|_{L^{p}(0,\infty;L^{q})}+ \|e^{\eta(\cdot)} \mathcal{F}_{3}\|_{L^{p}(0,\infty;L^{q})} + \|\mathcal{F}_{3,avg}\|_{L^{1}(0,\infty)} \leqslant C\varepsilon^{2}.
\end{align}
The proof is similar to the proof of \eqref{d-f1}. Note that the terms of $\mathcal F_{2}$ and $\mathcal{F}_{3}$ are at least quadratic functions of $\widetilde \rho$, $\widetilde u$, $\widetilde \vartheta$, $Z^{\top} - I_{3}$ and $Q-I_{3}$. Therefore using \cref{lem:X-O-G} and estimates \eqref{d1} - \eqref{d4}, we obtain \eqref{d-f2}.

\paragraph{\underline{Estimates of  $\mathcal{H}_{F}$  and $\mathcal{H}_{S}$}}
\begin{multline} \label{d-h}
\|e^{\eta(\cdot)} \mathcal{H}_{F}\cdot n \|_{F^{(1-1/q)/2}_{p,q}(0,\infty;L^{q}(\partial \Omega)) \cap L^{p}(0,\infty;W^{1-1/q,q}(\partial \Omega))} + \|\mathcal{H}_{F,avg}\|_{L^{1}(0,\infty)} \\
+\|e^{\eta(\cdot)} \mathcal{H}_{S} \cdot n \|_{F^{(1-1/q)/2}_{p,q}(0,\infty;L^{q}(\partial \oso)) \cap L^{p}(0,\infty;W^{1-1/q,q}(\partial \oso)}  + \|\mathcal{H}_{S,avg}\|_{L^{1}(0,\infty)}
\leqslant C\varepsilon^{2}.
\end{multline}
Recall that
\[ e^{\eta t}\mathcal{H}_{F} = e^{\eta t}(I_3- Z^{\top}) \nabla \widetilde  \vartheta.\]
Using \cref{lem:X-O-G} and estimates \eqref{d2}-\eqref{d3}, we first obtain
\begin{align*}
&\left\|e^{\eta(\cdot)}(I_3- Z^{\top}) \nabla \widetilde  \vartheta\right\|_{L^{p}(0,\infty;W^{1-1/q,q}(\partial \Omega))} \\
&\leqslant C \left\|(I_3- Z^{\top}) e^{\eta(\cdot)} \nabla \widetilde  \vartheta\right\|_{L^{p}(0,\infty;W^{1,q}(\ofo))} \\
& \leqslant C \left\|(I_3- Z^{\top}) \right\|_{L^{\infty}(0,\infty;W^{1,q}(\ofo))} \left\|e^{\eta(\cdot)} \nabla \widetilde  \vartheta\right\|_{L^{p}(0,\infty;W^{1,q}(\ofo))} \leqslant C \varepsilon^{2}.
\end{align*}
We write
\begin{align*}
e^{\eta t}\mathcal{H}_{F}|_{\partial \Omega} \cdot n = \sum_{j,k} \left[\left(\delta_{j,k} - Z_{j,k}\right) e^{\eta t}\frac{\partial \widetilde \vartheta}{\partial y_{k}}\right] (t,y) n_{j}(y), \quad y \in \partial\Omega.
\end{align*}
We know $e^{\eta t} \widetilde \vartheta \in W^{2,1}_{q,p}(Q_{F}^{\infty})$. Thus by \cite[Proposition 6.4]{DenkHieberPruss07}
\begin{align*}
\left\| e^{\eta t}\frac{\partial \widetilde \vartheta}{\partial y_{k}}\Big|_{\partial \Omega}\right\|_{F^{(1-1/q)/2}_{p,q}(0,\infty;L^{q}(\partial \Omega)}  \leqslant C \|e^{\eta(\cdot)} \widetilde \vartheta \|_{W^{2,1}_{q,p}(Q_{F}^{\infty})} \leqslant C \varepsilon, \mbox{ for } k=1,2,3.
\end{align*}
Also from \cref{lem:X-O-G} we have
\begin{align*}
\left\| \left(\delta_{j,k} - Z_{j,k}\right) |_{\partial\Omega}\right\|_{C^{1/p'}([0,\infty);W^{1-1/q,q}(\partial \Omega))} \leqslant \left\|Z^{\top} - I_{3}\right\|_{C^{1/p'}([0,\infty);W^{1,q}(\ofo))} \leqslant C\varepsilon.
\end{align*}
Therefore by \cite[Theorem 2.8.2(ii)]{Triebel}, we obtain
\begin{align*}
&\left\| e^{\eta(\cdot)} \mathcal{H}_{F}\cdot n \right\|_{F^{(1-1/q)/2}_{p,q}(0,\infty;L^{q}(\partial \Omega))}  \\
& \leqslant C \sum_{j,k} \left\| e^{\eta t}\frac{\partial \widetilde \vartheta}{\partial y_{k}}\Big|_{\partial \Omega}\right\|_{F^{(1-1/q)/2}_{p,q}(0,\infty;L^{q}(\partial \Omega)} \left\| \left(\delta_{j,k} - Z_{j,k}\right) |_{\partial\Omega}\right\|_{C^{1/p'}([0,\infty);W^{1-1/q,q}(\partial \Omega))}  \\
&\leqslant C \varepsilon^{2}.
\end{align*}
The other estimates can be obtained similarly.
\paragraph{\underline{Estimates of  $\mathcal{G}_{1}$  and $\mathcal{G}_{2}$}}
\begin{align} \label{d-g}
\left\| e^{\eta(\cdot)}\mathcal{G}_{1}\right\|_{L^{p}(0,\infty)} + \left\| e^{\eta(\cdot)}\mathcal{G}_{2} \right\|_{L^{p}(0,\infty)} \leqslant C \varepsilon^{2}.
\end{align}
The proof is easy and left to the reader.
\end{proof}

\begin{prop} \label{prop:lip-g}
Let $\varepsilon_{0}$ be the constant defined as in \eqref{e0}.
 Let us set
\begin{align*}
& \mathcal{F}_{1}^{j} =  \mathcal{F}_{1}(\widetilde\rho^{j},\widetilde u^{j},\widetilde \vartheta^{j},\widetilde \ell^{j},\widetilde \omega^{j}), \quad \mathcal{F}_{2}^{j} =  \mathcal{F}_{2} (\widetilde\rho^{j},\widetilde u^{j},\widetilde \vartheta^{j},\widetilde \ell^{j},\widetilde \omega^{j}), \quad \mathcal{F}_{3}^{j} =  \mathcal{F}_{3}(\widetilde\rho^{j},\widetilde u^{j},\widetilde \vartheta^{j},\widetilde \ell^{j},\widetilde \omega^{j})  \\
& \mathcal{H}_{F}^{j} =  \mathcal{H}_{F}(\widetilde\rho^{j},\widetilde u^{j},\widetilde \vartheta^{j},\widetilde \ell^{j},\widetilde \omega^{j}), \quad \mathcal{H}_{S}^{j} =  \mathcal{H}_{S}(\widetilde\rho^{j},\widetilde u^{j},\widetilde \vartheta^{j},\widetilde \ell^{j},\widetilde \omega^{j}),  \mathcal{H}^{j} = \; \mathbbm{1}_{\partial \Omega} \mathcal{H}_{F}^{j} + \mathbbm{1}_{\partial \oso} \mathcal{H}_{S}^{j}  \\
& \mathcal{G}_{1}^{j} =  \mathcal{G}_{1}(\widetilde\rho^{j},\widetilde u^{j},\widetilde \vartheta^{j},\widetilde \ell^{j},\widetilde \omega^{j}), \quad \mathcal{G}_{2}^{j} =  \mathcal{G}_{2}(\widetilde\rho^{j},\widetilde u^{j},\widetilde \vartheta^{j},\widetilde \ell^{j},\widetilde \omega^{j}),
\end{align*}
Then there exists a constant $C_{lip} > 0$ depending only on $p,q,\eta,\ofo$ such that  for every for every $\varepsilon \in (0,\varepsilon_{0})$ and for all $(\widetilde \rho^{1}, \widetilde u^{1},\widetilde \vartheta^{1},\widetilde \ell^{1},\widetilde \omega^{1}) \in \widetilde{\mathcal S}_{\varepsilon}$ and $(\widetilde \rho^{2}, \widetilde u^{2},\widetilde \vartheta^{2},\widetilde \ell^{2},\widetilde \omega^{2}) \in \widetilde{\mathcal S}_{\varepsilon}$, we have
\begin{multline}
\left\|e^{\eta (\cdot)} \Big(\mathcal{F}_{1} - {\mathcal{F}}_{1}^{2},  \mathcal{F}_{2}^{1} - \mathcal{F}_{2}^{2},  \mathcal{F}_{3}^{1} - \mathcal{F}_{3}^{2},  \mathcal{H}^{1} - \mathcal{H}^{2} ,   \mathcal{G}_{1}^{1} - \mathcal{G}_{2}^{2}, \mathcal{G}_{2}^{1} - \mathcal{G}_{2}^{2}\Big) \right\|_{\mathcal{B}_{\infty,p,q}}  \\
 + \left\| \left(\mathcal{F}_{1,avg}^{1}- {\mathcal{F}}_{1,avg}^{2}, \mathcal{F}_{3,avg}^{1}- {\mathcal{F}}_{3,avg}^{2}, \mathcal{H}_{avg}^{1}- {\mathcal{H}}_{avg}^{2}\right)\right\|_{L^{1}(0,\infty)^{3}} \\
 \leqslant C_{lip} \varepsilon \left\|(\widetilde \rho^{1}, \widetilde u^{1},\widetilde \vartheta^{1},\widetilde \ell^{1},\widetilde \omega^{1}) -  (\widetilde \rho^{2}, \widetilde u^{2},\widetilde \vartheta^{2},\widetilde \ell^{2},\widetilde \omega^{2})\right\|_{\mathcal S}.
\end{multline}
\end{prop}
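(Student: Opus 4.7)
The plan is to mirror the structure of the proof of \cref{prop:NL-g}, replacing each quadratic (or higher) estimate by a telescoping-difference estimate. Since every nonlinear term in \eqref{F1-g}--\eqref{G-g} can be written as a sum of products of at least two factors chosen among the quantities
$\widetilde\rho^j,\ \widetilde u^j,\ \widetilde\vartheta^j,\ \widetilde\ell^j,\ \widetilde\omega^j,\ Z^j-I_3,\ Q^j-I_3,\ \partial_t\widetilde u^j,\ \partial_t\widetilde\vartheta^j,\ldots$, any difference $\mathcal{N}^1-\mathcal{N}^2$ decomposes as a sum where in each summand exactly one factor is a ``difference factor'' and the remaining factors come either from solution~$1$ or from solution~$2$. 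The uniform bounds of size $\leq C\varepsilon$ on those remaining factors (established exactly as in \cref{lem:diff-g}, \cref{lem:X-O-G} and the beginning of the proof of \cref{prop:NL-g}) will produce the desired prefactor $\varepsilon$, while the difference factor will be controlled by $\|\cdot\|_{\mathcal S}$.

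First I would establish the counterparts of \cref{lem:diff-g} and \cref{lem:X-O-G} for the differences. Let $X^j$, $Z^j=[\nabla X^j]^{-1}$, $Q^j$ be associated with $(\widetilde u^j,\widetilde\omega^j)$. From $\partial_t \nabla (X^1-X^2)=Q^1\nabla\widetilde u^1 -Q^2\nabla\widetilde u^2$ and $\dot Q^1-\dot Q^2 = Q^1 A(\widetilde\omega^1-\widetilde\omega^2)+(Q^1-Q^2)A(\widetilde\omega^2)$ with vanishing initial data, Gr\"onwall's lemma and the embeddings used in \cref{lem:diff-g} yield, for every $\varepsilon\in(0,\varepsilon_0)$,
\begin{align*}
\norm{Q^1-Q^2}_{L^\infty(0,\infty;\RR^{3\times3})}
&\leqslant C\,\norm{e^{\eta(\cdot)}(\widetilde\omega^1-\widetilde\omega^2)}_{W^{1,p}(0,\infty)},\\
\norm{\nabla X^1-\nabla X^2}_{L^\infty(0,\infty;W^{1,q})}
&\leqslant C\bigl(\varepsilon\norm{Q^1-Q^2}_{L^\infty}+\norm{e^{\eta(\cdot)}\nabla(\widetilde u^1-\widetilde u^2)}_{L^p(0,\infty;W^{1,q})}\bigr).
\end{align*}
Using the identity $Z^1-Z^2=-Z^1(\nabla X^1-\nabla X^2)Z^2$ together with the $L^\infty(0,\infty;W^{1,q})$-bound on $Z^j$ coming from \cref{lem:X-O-G}, and differentiating this identity in time, one obtains similarly
\[
\norm{Z^1-Z^2}_{L^\infty(0,\infty;W^{1,q})}+\norm{\partial_t(Z^1-Z^2)}_{L^p(0,\infty;W^{1,q})}+\norm{Z^1-Z^2}_{C^{1/p'}(0,\infty;W^{1,q})}
\leqslant C\,\|(\widetilde\rho^1-\widetilde\rho^2,\ldots)\|_{\mathcal S}.
\]
The same telescoping gives Lipschitz control for $\det\nabla X^j$ and $\Cof\nabla X^j$.

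Next I would treat each nonlinear term term by term, exactly as in \cref{prop:NL-g}, but with the bilinear product rule
\[
a^1b^1-a^2b^2=(a^1-a^2)b^1+a^2(b^1-b^2),
\]
(and its obvious trilinear extension). For instance, for $\mathcal{F}_1^1-\mathcal{F}_1^2$, writing
\[
\mathcal{F}_1^j=-\widetilde\rho^j\,\div\widetilde u^j-(\widetilde\rho^j+\overline\rho)(Z^{j,\top}-I_3):\nabla\widetilde u^j,
\]
each resulting term has one difference factor controlled by $\|\cdot\|_{\mathcal S}$ and the other(s) bounded by $C\varepsilon$ through the preliminary difference estimates above and \eqref{d1}--\eqref{d4}. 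This yields
\[
\|e^{\eta(\cdot)}(\mathcal F_1^1-\mathcal F_1^2)\|_{L^p(0,\infty;W^{1,q})}+\|\mathcal F_{1,avg}^1-\mathcal F_{1,avg}^2\|_{L^1(0,\infty)}\leqslant C\varepsilon\,\|(\widetilde\rho^1,\ldots)-(\widetilde\rho^2,\ldots)\|_{\mathcal S}.
\]
The analogous strategy handles $\mathcal F_2$, $\mathcal F_3$ (using \cref{lem:X-O-G}-type estimates on differences for the many quadratic and cubic summands involving $Q^j,Z^j,\partial_t\widetilde u^j,\nabla\widetilde\vartheta^j$, etc.), $\mathcal G_1,\mathcal G_2$ (via trace estimates after telescoping $\mathcal G_0^1-\mathcal G_0^2$), and $\mathcal H_F^1\cdot n-\mathcal H_F^2\cdot n$, $\mathcal H_S^1\cdot n-\mathcal H_S^2\cdot n$. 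For the Lizorkin--Triebel norm on the boundary one applies the multiplier result \cite[Theorem 2.8.2(ii)]{Triebel} used in \cref{prop:NL-g}, factoring each difference as (difference of $(I-Z)$)$\times$(trace of $\nabla\widetilde\vartheta$) plus $(I-Z)\times$(difference of traces of $\nabla\widetilde\vartheta$), and using the $C^{1/p'}(W^{1-1/q,q}(\partial\ofo))$-bound on $Z^1-Z^2$ together with the $F^{(1-1/q)/2}_{p,q}(L^q(\partial\ofo))$-bound on $\partial_{y_k}\widetilde\vartheta^j|_{\partial\ofo}$.

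The main obstacle will be handling the boundary Lizorkin--Triebel norm of $\mathcal H^1-\mathcal H^2$ and the terms of $\mathcal F_2,\mathcal F_3$ involving $\partial_t\widetilde u$ or $\partial_t\widetilde\vartheta$: a naive difference splitting there yields a product of a function in $W^{1,p}(0,\infty;W^{1,q})$ with one only in $L^p(0,\infty;L^q)$, and one must be careful to place the difference factor in a space for which the remaining factor lies in an algebra or in a pointwise multiplier class. The same care exercised in the proof of \cref{prop:NL-g}---namely, distributing the weight $e^{\eta t}$ on the factor that carries the time derivative and keeping the difference factor in $L^\infty(0,\infty;W^{1,q})$ via \eqref{d1}--\eqref{d4} and the $C^{1/p'}$-estimate from \cref{lem:X-O-G}---resolves this issue. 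No new technical ingredient beyond the difference versions of \cref{lem:diff-g}--\cref{lem:X-O-G} and the multiplier result \cite[Theorem 2.8.2(ii)]{Triebel} is needed.
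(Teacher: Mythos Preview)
Your proposal is correct and takes exactly the approach the paper indicates: the paper's own proof consists of the single sentence ``The proof of this proposition is similar to the proof of \cref{prop:NL-g},'' and your plan is precisely a careful spelling-out of that similarity via telescoping differences and the difference versions of \cref{lem:diff-g}--\cref{lem:X-O-G}. In fact you have supplied considerably more detail than the paper does, including the explicit identity $Z^1-Z^2=-Z^1(\nabla X^1-\nabla X^2)Z^2$ and the care needed for the Lizorkin--Triebel boundary norm, none of which the paper writes out.
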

\begin{proof}
The proof of this proposition is similar to the proof of \cref{prop:NL-g}
\end{proof}


\section{Proof of the Global Existence Theorem} \label{sec:global-existence}

This section is devoted to the proof of \cref{mainthm_glob} and \cref{cor_density}. First we prove a global existence theorem for \eqref{sys:NL-G} - \eqref{G0-g}. More precisely, we prove the following theorem, which implies  \cref{thm:g-f-2}.

\begin{thm}  \label{thm:g-f}
Let $2 < p < \infty$ and $3 < q < \infty$ satisfying the condition $\displaystyle \frac{1}{p} + \frac{1}{2q} \neq \frac12$. Assume that \eqref{nocontact} is satisfied. Let $\overline \rho > 0$ and $\overline \vartheta > 0$ be two given constants and  $\eta \in (0, \eta_{0})$, where $\eta_{0}$ is the constant introduced in  \cref{thm:stab-afs}.
 Then there exists a constant  $\widetilde\varepsilon_{0} > 0$ such that, for all $\varepsilon \in (0,\widetilde\varepsilon_{0})$ and  for any $(\rho_{0},u_{0}, \vartheta_{0},\ell_{0},\omega_{0})$ belongs to $\mathcal{I}^{cc}_{p,q}$ satisfying
 \begin{align*}
\frac{1}{|\ofo|} \int_{\ofo} \rho_{0} \ {\rm d}x = \overline\rho,
\end{align*}
and
\begin{align*}
\|(\rho_{0} - \overline\rho, u_{0}, \vartheta_{0} - \overline \vartheta, \ell_{0},\omega_{0})\|_{\mathcal{I}_{p,q}} \leqslant \frac{\varepsilon}{2C_{L}},
\end{align*}
where $C_{L}$ is the continuity constant appearing in \cref{thm:nh-g},
the system \eqref{sys:NL-G} - \eqref{G0-g} admits a unique solution
$(\widetilde \rho, \widetilde u,\widetilde \vartheta,\widetilde \ell,\widetilde \omega)$ with
\begin{gather*}
\left\|(e^{\eta t}\widetilde\rho_{m}, e^{\eta t} \widetilde u,e^{\eta t} \widetilde\vartheta_{m}, e^{\eta t} \widetilde\ell, e^{\eta t}\widetilde\omega)\right\|_{{\mathcal S}_{\infty,p,q}} +  \|\widetilde \rho_{avg}, \widetilde \vartheta_{avg}\|_{L^{\infty}(0,\infty)} \leqslant \varepsilon.
\end{gather*}
Moreover, $X \in L^{\infty}(0,\infty;W^{2,q}(\ofo))^{3} \cap W^{1,\infty}(0,\infty;W^{1,q}(\ofo))$ and 
$ X(t,\cdot) : \ofo\to \oft $ is a  $C^1$-diffeormorphim for all  $t\in [0,\infty).$
\end{thm}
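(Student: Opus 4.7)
The plan is to apply the Banach fixed point theorem on the complete metric space $\widetilde{\mathcal S}_\varepsilon$ defined in \eqref{ball-g}. For $\xi = (\widetilde \rho, \widetilde u, \widetilde \vartheta, \widetilde \ell, \widetilde \omega) \in \widetilde{\mathcal S}_\varepsilon$, I would define $\Phi(\xi) = (\widetilde \rho^*, \widetilde u^*, \widetilde \vartheta^*, \widetilde \ell^*, \widetilde \omega^*)$ to be the unique solution of the \emph{linear} system \eqref{sys:NL-G-S} supplied by \cref{thm:nh-g}, in which the source terms are taken to be
\[
(f_1,f_2,f_3,h,g_1,g_2) \;=\; \bigl(\mathcal{F}_1(\xi),\mathcal{F}_2(\xi),\mathcal{F}_3(\xi),\mathcal{H}(\xi),\mathcal{G}_1(\xi),\mathcal{G}_2(\xi)\bigr),
\]
and the initial data are $(\rho_0-\overline\rho,u_0,\vartheta_0-\overline\vartheta,\ell_0,\omega_0)$. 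The mean-value condition on $\rho_0$ ensures $\rho_0-\overline\rho \in L^q_m(\ofo)$, so these data lie in $\mathcal{J}^{cc}_{p,q}$.

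Next, I would verify the self-mapping property. By \cref{prop:NL-g}, for any $\xi \in \widetilde{\mathcal S}_\varepsilon$ with $\varepsilon \in (0,\varepsilon_0)$, the source terms above lie in $e^{-\eta(\cdot)}\mathcal{B}_{\infty,p,q}$, their mean-value parts lie in $L^1(0,\infty)^3$, and the combined norm appearing in the right-hand side of \eqref{tak1.7} is bounded by $C_N\varepsilon^2$. Combining this with \cref{thm:nh-g} and the smallness assumption $\|(\rho_0-\overline\rho,\ldots)\|_{\mathcal{J}_{p,q}} \leqslant \varepsilon/(2C_L)$ yields
\[
\|\Phi(\xi)\|_{\mathcal{S}} \;\leqslant\; C_L \Bigl( \tfrac{\varepsilon}{2 C_L} + C_N \varepsilon^2\Bigr) \;=\; \tfrac{\varepsilon}{2} + C_L C_N \varepsilon^2.
\]
Provided $\widetilde \varepsilon_0 \leqslant 1/(2 C_L C_N)$, the right-hand side is bounded by $\varepsilon$, so $\Phi$ maps $\widetilde{\mathcal S}_\varepsilon$ into itself. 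For the contraction property, observe that by linearity of \eqref{sys:NL-G-S}, for $\xi^1,\xi^2 \in \widetilde{\mathcal S}_\varepsilon$ the difference $\Phi(\xi^1) - \Phi(\xi^2)$ solves \eqref{sys:NL-G-S} with zero initial data and source terms equal to the differences of the nonlinear terms. \cref{thm:nh-g} combined with \cref{prop:lip-g} then gives
\[
\|\Phi(\xi^1) - \Phi(\xi^2)\|_{\mathcal{S}} \;\leqslant\; C_L C_{lip} \varepsilon \,\|\xi^1-\xi^2\|_{\mathcal S}.
\]
Choosing $\widetilde\varepsilon_0 \leqslant \min\{\varepsilon_0,\ 1/(2 C_L C_N),\ 1/(2 C_L C_{lip})\}$ then makes $\Phi$ a strict contraction, and the Banach fixed point theorem provides a unique fixed point of $\Phi$ in $\widetilde{\mathcal S}_\varepsilon$, which by construction solves \eqref{sys:NL-G}--\eqref{G0-g} with the announced decay properties.

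Finally, the statements on the flow $X$ follow directly from what has been prepared. Since $\varepsilon\leqslant \varepsilon_0$, \cref{lem:diff-g} yields $\|\nabla X - I_3\|_{L^\infty((0,\infty)\times\ofo)} < 1/2$ together with the bilipschitz lower bound \eqref{tak1.9}, so $X(t,\cdot)$ is a global diffeomorphism from $\ofo$ onto $X(t,\ofo)=\oft$; the no-contact inequality $\dist(\Omega_S(t),\partial\Omega)\geqslant \nu/2$ follows from the smallness of $\|e^{\eta(\cdot)}\widetilde\ell\|_{L^1(0,\infty)}$. The regularity $X \in L^\infty(0,\infty;W^{2,q}(\ofo))^3 \cap W^{1,\infty}(0,\infty;W^{1,q}(\ofo))$ is a consequence of \eqref{Jacobi-g} together with the integrability of $e^{\eta(\cdot)}\widetilde u$ in $L^p(0,\infty;W^{2,q}(\ofo))$. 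The genuine obstacle in the overall proof lies not here, but in the preparatory results (the maximal-regularity \cref{thm:nh-g} and the nonlinear/Lipschitz estimates in \cref{prop:NL-g} and \cref{prop:lip-g}); once these are granted, the argument above is essentially a careful bookkeeping that runs the contraction mapping on a norm carefully tailored to absorb both the exponentially decaying fluctuating part and the constant averages $\widetilde\rho_{avg}$, $\widetilde\vartheta_{avg}$.
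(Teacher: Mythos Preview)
Your proposal is correct and follows essentially the same approach as the paper: a Banach fixed point argument on $\widetilde{\mathcal S}_\varepsilon$, with the self-mapping and contraction estimates obtained by combining \cref{thm:nh-g} with \cref{prop:NL-g} and \cref{prop:lip-g}, and the same choice $\widetilde\varepsilon_0 = \min\{\varepsilon_0,\ 1/(2C_LC_N),\ 1/(2C_LC_{lip})\}$. Your discussion of the diffeomorphism property of $X$ via \cref{lem:diff-g} is slightly more explicit than the paper's, but the substance is identical.
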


\begin{proof}
Let us set
\begin{align}
\widetilde \varepsilon_{0} = \min\left\{\varepsilon_{0}, \frac{1}{2C_{L}C_{N}}, \frac{1}{2C_{L}C_{lip}} \right\},
\end{align}
where $\varepsilon_{0}$ is defined as in \eqref{e0} and $C_{L},$ $C_{N}$ and $C_{lip}$ are the constants appearing in \cref{thm:nh-g}, \cref{prop:NL-g} and \cref{prop:lip-g} respectively. Let us choose $\varepsilon \in (0, \widetilde\varepsilon_{0})$ and  $(\sigma, v, \varphi, k , \tau) \in \widetilde{\mathcal S}_{\varepsilon}$, where $\widetilde{\mathcal S}_{\varepsilon}$ is defined as in \eqref{ball-g}. We consider the following problem
\begin{align}  \label{eq:fp-g}
&\partial_{t} \widetilde\rho + \overline \rho \div\widetilde u = {\mathcal F}_{1}(\sigma, v, \varphi, k , \tau),  \mbox{ in } (0,\infty) \times \ofo, \notag \\
& \partial_{t} \widetilde u - \div\sigma_{l}(\widetilde \rho,\widetilde u,\widetilde\vartheta) = {\mathcal F}_{2}(\sigma, v, \varphi, k , \tau),  \mbox{ in } (0,\infty) \times \ofo, \notag \\
& \partial_{t} \widetilde \vartheta  - \frac{\kappa}{\overline\rho c_{v}} \Delta \widetilde\vartheta + \frac{R \overline \vartheta}{c_{v}} \div\widetilde u = {\mathcal F}_{3}(\sigma, v, \varphi, k , \tau), \mbox{ in } (0,\infty) \times \ofo, \notag \\
&\widetilde  u = 0 \mbox{ on } (0,\infty) \times \partial\Omega, \quad \widetilde u = \widetilde\ell + \widetilde\omega \times y \mbox{ on } (0,\infty) \times \partial \oso \\
& \frac{\partial \widetilde \vartheta}{\partial n} = {\mathcal H}_{F} (\sigma, v, \varphi, k , \tau)\cdot n \mbox{ on } (0,\infty) \times \partial \Omega, \quad \frac{\partial \widetilde \vartheta}{\partial n} = {\mathcal H}_{S} (\sigma, v, \varphi, k , \tau)\cdot n \mbox{ on } (0,\infty) \times \partial \oso, \notag \\
& \frac{d}{dt} \widetilde \ell = - m^{-1} \int_{\partial\oso} \sigma_{l} (\widetilde\rho,\widetilde u,\widetilde\vartheta) n \ d\gamma + {\mathcal G}_{1}(\sigma, v, \varphi, k , \tau), \quad t \in (0,\infty) \notag \\
&\frac{d}{dt} \omega = - J(0)^{-1} \int_{\partial\oso} y \times \sigma_{l} (\rho,u,\vartheta) n \ d\gamma  + {\mathcal G}_{2}(\sigma, v, \varphi, k , \tau), \quad t \in (0,\infty)  \notag \\
&   \widetilde\rho(0) = \rho_{0} - \overline\rho , \quad u(0) = u_{0},  \quad  \vartheta(0) = \vartheta_{0} - \overline \vartheta , \quad \mbox{ in } \ofo, \notag \\
&   \ell(0) = \ell_{0},  \quad \omega(0) = \omega_{0}.\notag
\end{align}
We are going to show, the mapping
\begin{align*}
\mathcal{N} : (\sigma, v, \varphi, k , \tau) \mapsto (\widetilde \rho, \widetilde u,\widetilde \vartheta,\widetilde \ell,\widetilde \omega)
\end{align*}
where  $(\widetilde \rho, \widetilde u,\widetilde \vartheta,\widetilde \ell,\widetilde \omega)$ is the solution to the system \eqref{eq:fp-g}, is a contraction in $\widetilde{\mathcal S}_{\varepsilon}$. Since $(\sigma, v, \varphi, k , \tau) \in \widetilde{\mathcal S}_{\varepsilon}$, we can  apply
\cref{thm:nh-g} and \cref{prop:NL-g} to the system  \eqref{eq:fp-g} and using \eqref{eq:ini-g} we obtain
\begin{align*}
\left\| (\widetilde \rho, \widetilde u,\widetilde \vartheta,\widetilde \ell,\widetilde \omega)\right\|_{\mathcal{S}} &\leqslant C_{L} \left\| (\rho_{0} - \overline\rho,u_{0}, \vartheta_{0} - \overline \vartheta,\ell_{0},\omega_{0})\right\|_{\mathcal{I}_{p,q}} + C_{L}C_{N} \varepsilon^{2} \leqslant \varepsilon.
\end{align*}
Thus $\mathcal{N}$ is a mapping from $\widetilde{\mathcal S}_{\varepsilon}$ to itself for all $\varepsilon \in (0, \widetilde\varepsilon_{0}).$

 Let $(\sigma^{1}, v^{1}, \varphi^{1}, k^{1} , \tau^{1})$ and $(\sigma^{2}, v^{2}, \varphi^{2}, k^{2} , \tau^{2})$ belong to $\widetilde{\mathcal S}_{\varepsilon}$. For $j=1,2$, we set \linebreak $\mathcal{N}(\sigma^{j}, v^{j}, \varphi^{j}, k^{j} , \tau^{j}) = (\widetilde \rho^{j}, \widetilde u^{j},\widetilde \vartheta^{j},\widetilde \ell^{j},\widetilde \omega^{j})$. Using \cref{thm:nh-g} and \cref{prop:lip-g}, we obtain
\begin{align}
&\left\| (\widetilde \rho^{1}, \widetilde u^{1},\widetilde \vartheta^{1},\widetilde \ell^{1},\widetilde \omega^{1})- (\widetilde \rho^{2}, \widetilde u^{2},\widetilde \vartheta^{2},\widetilde \ell^{2},\widetilde \omega^{2})\right\|_{\mathcal{S}} \notag \\
& \leqslant C_{L} C_{lip} \varepsilon \left\| (\sigma^{1}, v^{1}, \varphi^{1}, k^{1} , \tau^{1})- (\sigma^{2}, v^{2}, \varphi^{2}, k^{2} , \tau^{2})\right\|_{\mathcal{S}}
\end{align}
Using the definition of $\widetilde \varepsilon_{0}$ one can easily check that the mapping  $\mathcal{N}$ is a contraction in  $\widetilde{\mathcal S}_{\varepsilon}$.  This completes  the proof of the theorem.
\end{proof}

\begin{proof}[Proof of  \cref{mainthm_glob}]
Let $(\widetilde \rho, \widetilde u,\widetilde \vartheta,\widetilde \ell,\widetilde \omega)$ be the solution of \eqref{sys:NL-G} - \eqref{G0-g} constructed in \cref{thm:g-f}. Since   $X(t,\cdot)$ is $C^{1}-$ diffeomorphism from $\ofo$ into $\oft,$ we set $Y(t,\cdot) = X^{-1}(t,\cdot)$ and for $x \in \oft, \ t \geq 0$
\begin{align}
&\rho(t,x) = \widetilde \rho(t,Y(t,x)) + \overline\rho, \quad u(t,x) = Q(t)\widetilde u(t,Y(t,x)),  \quad \vartheta(t,x) = \widetilde \vartheta(t,Y(t,x)) + \overline \vartheta,  \notag \\
&\qquad \dot a(t) = Q(t) \widetilde \ell(t) ,  \quad  \omega(t) = Q(t)\widetilde \omega(t).
\end{align}
We can easily check that $(\rho,u,\vartheta,a,\omega)$ solves the original system \eqref{fluid-eq} - \eqref{inicond} satisfying the estimate \eqref{est:glob}. By choosing $\delta_{0}$ sufficiently small, from
\eqref{est:glob} it is easy to see that $\rho(t,x) \geqslant \ds \frac{\bar\rho}{2}$ for all $(t,x) \in (0,\infty) \times \oft.$  Finally from \eqref{def:ost} and \eqref{est:glob}, we obtain 
\begin{equation*}
\mathrm{dist}(\ost,\oso) \leqslant \|a(t)\|_{\rt} + \|Q(t) - I_{3}\|_{\mathbb{R}^{3\times 3}} |y| < \frac{\nu}{2} \quad \mbox{ for all } t \geqslant 0,
\end{equation*}
for sufficiently small $\delta_{0}.$ Therefore,  $\mathrm{dist}(\Omega_{S}(t),\partial\Omega) \geqslant \nu/2$ for all $t \in [0,\infty)$.
\end{proof}

\begin{proof}[Proof of \cref{cor_density}]
The first estimate obviously follows from \eqref{est:glob}.
To prove \eqref{den} we integrate the density equation \eqref{fluid-eq} over $\oft$ and using the boundary conditions and \eqref{eq:br} we obtain
\begin{equation*}
\frac{1}{|\oft|} \int_{\oft} \rho(t,x)  \ dx = \overline \rho \qquad\qquad(t\geqslant 0).
\end{equation*}
Since $\text{dist}(\ost, \partial\Omega) > \nu/2$ and $\ost$ has smooth boundary for every $t\in [0,\infty),$ by Poincar\'e-Wirtinger inequality we obtain
\begin{equation}\label{poincare-wirtinger}
\|\rho(t,x) - \overline\rho\|_{L^{q}(\oft)} \leqslant C \|\nabla \rho\|_{L^{q}(\oft)},
\end{equation}
where the constant $C$   can be chosen uniformly with respect to $t$ (see for instance \cite[Theorem 1]{BC07}). Thus we have
\begin{align*}
\|e^{\eta (\cdot)} (\rho - \overline\rho) \|_{W^{1,p}(0,\infty; W^{1,q}(\Omega_{F}(\cdot)))} \leqslant C \|e^{\eta (\cdot)}\nabla \rho\|_{W^{1,p}(0,\infty; L^{q}(\Omega_{F}(\cdot)))} \leqslant C \delta.
\end{align*}
and consequently \eqref{den} follows.
\end{proof}


\begin{thebibliography}{10}

\bibitem{Adams}
{\sc R.~A. Adams and J.~J.~F. Fournier}, {\em Sobolev spaces}, vol.~140 of Pure
  and Applied Mathematics (Amsterdam), Elsevier/Academic Press, Amsterdam,
  second~ed., 2003.

\bibitem{Agra}
{\sc M.~S. Agranovich}, {\em Elliptic boundary problems}, in Partial
  differential equations, {IX}, vol.~79 of Encyclopaedia Math. Sci., Springer,
  Berlin, 1997, pp.~1--144, 275--281.
\newblock Translated from the Russian by the author.

\bibitem{Amann}
{\sc H.~Amann}, {\em Linear and quasilinear parabolic problems. {V}ol. {I}},
  vol.~89 of Monographs in Mathematics, Birkh{\"a}user Boston, Inc., Boston,
  MA, 1995.
\newblock Abstract linear theory.

\bibitem{BDDM}
{\sc A.~Bensoussan, G.~Da~Prato, M.~C. Delfour, and S.~K. Mitter}, {\em
  Representation and control of infinite dimensional systems}, Systems \&
  Control: Foundations \& Applications, Birkh\"auser Boston, Inc., Boston, MA,
  second~ed., 2007.

\bibitem{BoulGuer}
{\sc M.~Boulakia and S.~Guerrero}, {\em A regularity result for a solid-fluid
  system associated to the compressible {N}avier-{S}tokes equations}, Ann.
  Inst. H. Poincar\'e Anal. Non Lin\'eaire, 26 (2009), pp.~777--813.

\bibitem{BC07}
{\sc A.~Boulkhemair and A.~Chakib}, {\em On the uniform {P}oincar\'e
  inequality}, Comm. Partial Differential Equations, 32 (2007), pp.~1439--1447.

\bibitem{ClePruss}
{\sc P.~Cl{{\'e}}ment and J.~Pr{{\"u}}ss}, {\em An operator-valued transference
  principle and maximal regularity on vector-valued {$L_p$}-spaces}, in
  Evolution equations and their applications in physical and life sciences
  ({B}ad {H}errenalb, 1998), vol.~215 of Lecture Notes in Pure and Appl. Math.,
  Dekker, New York, 2001, pp.~67--87.

\bibitem{DenkHieberPruss}
{\sc R.~Denk, M.~Hieber, and J.~Pr{{\"u}}ss}, {\em {$\mathscr R$}-boundedness,
  {F}ourier multipliers and problems of elliptic and parabolic type}, Mem.
  Amer. Math. Soc., 166 (2003), pp.~viii+114.

\bibitem{DenkHieberPruss07}
\leavevmode\vrule height 2pt depth -1.6pt width 23pt, {\em Optimal
  {$L^p$}-{$L^q$}-estimates for parabolic boundary value problems with
  inhomogeneous data}, Math. Z., 257 (2007), pp.~193--224.

\bibitem{DE00}
{\sc B.~Desjardins and M.~J. Esteban}, {\em On weak solutions for fluid-rigid
  structure interaction: compressible and incompressible models}, Comm. Partial
  Differential Equations, 25 (2000), pp.~1399--1413.

\bibitem{Dor91}
{\sc G.~Dore}, {\em {$L^p$} regularity for abstract differential equations}, in
  Functional analysis and related topics, 1991 ({K}yoto), vol.~1540 of Lecture
  Notes in Math., Springer, Berlin, 1993, pp.~25--38.

\bibitem{EngNag}
{\sc K.-J. Engel and R.~Nagel}, {\em One-parameter semigroups for linear
  evolution equations}, vol.~194 of Graduate Texts in Mathematics,
  Springer-Verlag, New York, 2000.
\newblock With contributions by S. Brendle, M. Campiti, T. Hahn, G. Metafune,
  G. Nickel, D. Pallara, C. Perazzoli, A. Rhandi, S. Romanelli and R.
  Schnaubelt.

\bibitem{ShibaEno13}
{\sc Y.~Enomoto and Y.~Shibata}, {\em On the {$\mathscr R$}-sectoriality and
  the initial boundary value problem for the viscous compressible fluid flow},
  Funkcial. Ekvac., 56 (2013), pp.~441--505.

\bibitem{FE03}
{\sc E.~Feireisl}, {\em On the motion of rigid bodies in a viscous compressible
  fluid}, Arch. Ration. Mech. Anal., 167 (2003), pp.~281--308.

\bibitem{FMNT}
{\sc E.~Feireisl, V.~M\'acha, v.~S. Ne\v{c}asov\'a, and M.~Tucsnak}, {\em
  {Analysis of the adiabatic piston problem via methods of continuum
  mechanics}}.
\newblock Submitted, 2016.

\bibitem{HB65}
{\sc J.~Happel and H.~Brenner}, {\em Low {R}eynolds number hydrodynamics with
  special applications to particulate media}, Prentice-Hall, Inc., Englewood
  Cliffs, N.J., 1965.

\bibitem{HiebMur}
{\sc M.~Hieber and M.~Murata}, {\em The {$L^p$}-approach to the fluid-rigid
  body interaction problem for compressible fluids}, Evol. Equ. Control Theory,
  4 (2015), pp.~69--87.

\bibitem{KunstmannWeis:Pisa2001}
{\sc P.~C. Kunstmann and L.~Weis}, {\em Perturbation theorems for maximal
  {$L\sb p-$}-regularity}, Ann. Scuola Norm. Sup. Pisa Cl. Sci. (4), 30 (2001),
  pp.~415--435.

\bibitem{KunstmannWeis:Levico}
{\sc P.~C. Kunstmann and L.~Weis}, {\em Maximal {$L\sb p$}-regularity for
  parabolic equations, {F}ourier multiplier theorems and {$H\sp
  \infty$}-functional calculus}, in Functional analytic methods for evolution
  equations, vol.~1855 of Lecture Notes in Math., Springer, Berlin, 2004,
  pp.~65--311.

\bibitem{MTT17}
{\sc D.~Maity, T.~Takahashi, and M.~Tucsnak}, {\em Analysis of a system
  modelling the motion of a piston in a viscous gas}, J. Math. Fluid Mech., 19
  (2017), pp.~551--579.

\bibitem{MT17}
{\sc D.~Maity and M.~Tucsnak}, {\em A Maximal Regularity Approach to the
  Analysis of Some Particulate Flows}, Springer International Publishing, Cham,
  2017, pp.~1--75.

\bibitem{sch12}
{\sc B.~Scharf, H.-J. Schmei\ss~er, and W.~Sickel}, {\em Traces of
  vector-valued {S}obolev spaces}, Math. Nachr., 285 (2012), pp.~1082--1106.

\bibitem{Sh_unique}
{\sc V.~V. Shelukhin}, {\em Unique solvability of the problem of the motion of
  a piston in a viscous gas}, Dinamika Sploshn. Sredy,  (1977), pp.~132--150.

\bibitem{Sh_temp}
\leavevmode\vrule height 2pt depth -1.6pt width 23pt, {\em Motion with a
  contact discontinuity in a viscous heat conducting gas}, Dinamika Sploshn.
  Sredy,  (1982), pp.~131--152.

\bibitem{ShibataMurata16}
{\sc Y.~Shibata and M.~Murata}, {\em On the global well-posedness for the
  compressible {N}avier-{S}tokes equations with slip boundary condition}, J.
  Differential Equations, 260 (2016), pp.~5761--5795.

\bibitem{Tem79}
{\sc R.~Temam}, {\em Navier-{S}tokes equations}, vol.~2 of Studies in
  Mathematics and its Applications, North-Holland Publishing Co., Amsterdam-New
  York, revised~ed., 1979.
\newblock Theory and numerical analysis, With an appendix by F. Thomasset.

\bibitem{Tri95}
{\sc H.~Triebel}, {\em Interpolation theory, function spaces, differential
  operators}, Johann Ambrosius Barth, Heidelberg, second~ed., 1995.

\bibitem{Triebel}
\leavevmode\vrule height 2pt depth -1.6pt width 23pt, {\em Theory of function
  spaces}, Modern Birkh\"auser Classics, Birkh\"auser/Springer Basel AG, Basel,
  2010.
\newblock Reprint of 1983 edition [MR0730762], Also published in 1983 by
  Birkh\"auser Verlag [MR0781540].

\bibitem{Tr87}
{\sc G.~M. Troianiello}, {\em Elliptic differential equations and obstacle
  problems}, The University Series in Mathematics, Plenum Press, New York,
  1987.

\bibitem{TW09}
{\sc M.~Tucsnak and G.~Weiss}, {\em Observation and control for operator
  semigroups}, Birkh\"auser Advanced Texts: Basler Lehrb\"ucher. [Birkh\"auser
  Advanced Texts: Basel Textbooks], Birkh\"auser Verlag, Basel, 2009.

\bibitem{wei05}
{\sc P.~Va\u\i~dema\u\i er}, {\em Lizorkin-{T}riebel spaces of vector-valued
  functions and sharp trace theory for functions in {S}obolev spaces with a
  mixed {$L_p$}-norm in parabolic problems}, Mat. Sb., 196 (2005), pp.~3--16.

\bibitem{Weis01}
{\sc L.~Weis}, {\em Operator-valued {F}ourier multiplier theorems and maximal
  {$L_p$}-regularity}, Math. Ann., 319 (2001), pp.~735--758.

\end{thebibliography}
\end{document}